\def\bfb{{\mathbf b}}
\def\bfj{{\boldsymbol j}}
\def\bfi{{\boldsymbol i}}
\def\bf1{{\boldsymbol 1}}
\def\bfa{{\boldsymbol a}}
\def\bfb{{\boldsymbol b}}
\def\bfx{{\boldsymbol x}}
\newtheorem{thm}{Theorem}[section]
\newtheorem{cor}{Corollary}[section]
\newtheorem{lem}{Lemma}[section]
\newtheorem{prop}{Proposition}[section]
\numberwithin{equation}{section} \numberwithin{thm}{section}
\numberwithin{lem}{section} \numberwithin{problem}{section}
\numberwithin{cor}{section}
\def\grm{{\mathfrak m}}\def\grM{{\mathfrak M}}\def\grN{{\mathfrak N}}\def\grX{{\mathfrak X}}
\newcommand{\NN}{\mathbb N}
\newcommand{\Nk}{\mathbb{N}_{0}^{k}}
\begin{document}
\title{On Vu's theorem in Waring's problem for thinner sequences}
\author[Javier Pliego]{Javier Pliego}
\address{Universit\`a di Genova, Dipartimento di Matematica. Via Dodecaneso 35, 16146 Genova, Italy}
\email{Javier.Pliego.Garcia@edu.unige.it}
\subjclass[2010]{Primary 11B13, 11B34, 11P05, 11P55, 05D40;}
\keywords{Asymptotic basis, probabilistic method, Waring's problem, circle method.}

\begin{abstract} Let $k\geq14 $ and $s\geq k(\log k+3.20032)$. Let $\NN_{0}^{k}$ be the set of $k$-th powers of nonnegative integers. Assume that $\psi(x)=o(\log x)$ is an increasing function tending to infinity and satifying some regularity conditions. Then, there exists a subsequence $\mathfrak{X}_{k}=\mathfrak{X}_{k}(s)\subset \NN_{0}^{k}$ for which the number of representations $R_{s}(n;\mathfrak{X}_{k})$ of each $n\in\NN$ as
$$n=x_{1}^{k}+\ldots+x_{s}^{k}\ \ \ \ \ \ \ \  \ \ \ \ \ \ \ x_{i}^{k}\in\mathfrak{X}_{k}$$ satisfies for almost all natural numbers $n$ the asymptotic formula
$$ R_{s}(n;\mathfrak{X}_{k})\sim \mathfrak{S}(n)\psi(n),$$ with $\mathfrak{S}(n)$ being the singular series associated to Waring's problem. If moreover $s\geq k(\log k+4.20032)$ the above conclusion holds for almost all $n\in [X,X+\log X]$ as $X\to\infty$.

Let $T(k)$ be the least natural number for which it is known that all large integers are the sum of $T(k)$ $k$-th powers of natural numbers. We also show for $k\geq 14$ and every $s\geq T(k)$ the existence of a sequence $\mathfrak{X}_{k}'\subset \NN_{0}^{k}$ satisfying $$R_{s}(n;\mathfrak{X}_{k}')\asymp \log n$$ for every sufficiently large $n$. The latter conclusion sharpens a result of Wooley and addresses a question of Vu. 
\end{abstract}
\maketitle
\section{Introduction}
Problems about asymptotic basis of order $s$, which are subsequences $\mathfrak{B}\subset \NN$ for which every sufficiently large natural number is the sum of $s$ elements in $\mathfrak{B}$, constitute a central topic in additive number theory. In particular, finding the least $G(k)$ such that the set $\NN_{0}^{k}$ of $k$-th powers of nonnegative integers is an asymptotic basis of order $G(k)$ plays a prominent role, a recent article of Brudern-Wooley \cite{Bru-Woo} delivering in its simplest formulation the bound
\begin{equation}\label{G(k)}G(k)\leq \lceil k(\log k+4.20032)\rceil.\end{equation}

It seems natural to enquire whether there is a subbasis $\mathfrak{X}_{k}\subset \NN_{0}^{k}$ with the corresponding number of representations of each natural number being small. We then denote for given $\mathfrak{X}_{k}\subset \NN_{0}^{k}$ and $n\in\NN$ by $R_{s}(n;\mathfrak{X}_{k})$ to the number of solutions of $$n=x_{1}^{k}+\ldots+x_{s}^{k},\ \ \ \ \ \ \ \ \ \ \ \ x_{i}^{k}\in\mathfrak{X}_{k}.$$ Answering a query of Nathanson \cite{Nat} about the existence for some $s=s(k)$ of an asymptotic basis $\mathfrak{X}_{k}\subset \NN_{0}^{k}$ of order $s$ satisfying $\lvert \mathfrak{X}_{k}\cap [1,X]\rvert\ll X^{1/s+o(1)}$, Vu \cite{Vu} in fact showed that there are indeed sequences as above satisfying the stronger proviso \begin{equation}\label{RsR}R_{s}(n;\mathfrak{X}_{k})\asymp \log n.\end{equation}
The above result remedied the deficiency of literature when $k\geq 3$ (see \cite{Din,Nat}) and matched what was known for the analogous linear problem \cite{ErTe} with respect to the size of $R_{s}(n;\mathfrak{X}_{k})$. In contrast, the conclusions pertaining to the number of variables were far beyond the bounds for $G(k)$ available, it being implicit in Vu's work that $s\gg k^{4}8^{k}$. This matter was essentially resolved soon after by Wooley \cite{WooVu}, the arguments of that paper thus roughly speaking showing that whenever technology from the Hardy-Littlewood method permits one to derive a bound of the shape $G(k)\leq T(k)$ for some function $T(k)$ then for every \begin{equation}\label{Tks}s\geq T(k)+2\end{equation} there would exist $\mathfrak{X}_{k}=\mathfrak{X}_{k}(s)\subset \NN_{0}^{k}$ satisfying (\ref{RsR}).

Two central questions emerge if one were to go beyond the aforementioned work. Namely, whether the conclusion (\ref{RsR}) may be refined by either an asymptotic formula or an analogous formula with $\log n$ being replaced by some $\psi(n)=o(\log n)$, and whether the two extra variables underlying (\ref{Tks}) could be eliminated. In order to address the second one we define for each $s$ the parameter $\Delta_{2s}=\Delta_{2s}(k)$ to be the unique solution of the equation $\Delta_{2s}e^{\Delta_{2s}/k}=ke^{1-2s/k}.$ We further present for convenience 
\begin{equation}\label{eq67}\tau(k)=\max_{w\in\mathbb{N}}\frac{k-2\Delta_{2w}}{4w^{2}},\ \ \ \ \ 
\ \ \ \ \ G_{0}(k)=\min_{\substack{v\geq 1\\ v\in\NN}}\Big(2v+\frac{\Delta_{2v}}{\tau(k)}\Big).\end{equation}
\begin{thm}\label{thm1.1}
Let $k,s\in \NN$ such that $s\geq \max( \lfloor G_{0}(k)\rfloor +1,4k+1).$ There exists $\mathfrak{X}_{k}=\mathfrak{X}_{k}(s)\subset \Nk$ for which whenever $n$ is a sufficiently large integer in terms of $k$ and $s$ then \begin{equation}\label{XXX}\log n\ll R_{s}(n;\mathfrak{X}_{k})\ll \log n.\end{equation}
In particular, the cardinality of the truncated sequence satisfies \begin{equation}\label{cardi}\lvert \grX_{k}\cap [1,X]\rvert\asymp (X\log X)^{1/s}. \end{equation}
\end{thm}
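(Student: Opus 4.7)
The plan is to construct $\mathfrak{X}_{k}$ by a probabilistic model in the spirit of Erd\H{o}s--R\'enyi and of Vu~\cite{Vu}, refined via the Hardy--Littlewood circle method in the style of Wooley~\cite{WooVu}. Specifically, I would include each $k$-th power $x^{k}$ in $\mathfrak{X}_{k}$ independently with probability
$$p_{x}=\min\Bigl(1,\,c\,x^{k/s-1}(\log x)^{1/s}\Bigr),$$
where $c>0$ is a fixed small constant. The exponent $k/s-1$ is calibrated so that $\mathbb{E}\,|\mathfrak{X}_{k}\cap[1,X]|\asymp(X\log X)^{1/s}$, matching the target (\ref{cardi}), and simultaneously $\mathbb{E}\,R_{s}(n;\mathfrak{X}_{k})\asymp\log n$.

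To justify the latter asymptotic I would work with the weighted generating function $F(\alpha)=\sum_{x}p_{x}e(x^{k}\alpha)$ and evaluate
$$\mathbb{E}\,R_{s}(n;\mathfrak{X}_{k})=\int_{0}^{1}F(\alpha)^{s}e(-n\alpha)\,d\alpha$$
by a Hardy--Littlewood dissection. The major arcs should deliver the expected main term $\mathfrak{S}(n)\log n\asymp \log n$ through the usual singular series and singular integral analysis carried out for the smoothly weighted sum $F$, while the hypothesis $s\geq\max(\lfloor G_{0}(k)\rfloor+1,\,4k+1)$ is precisely what is needed to render the minor arc contribution $o(\log n)$: the threshold $4k+1$ supplies mean value input of Hua--Vinogradov type adapted to $F$, while $\lfloor G_{0}(k)\rfloor+1$, through the parameters in (\ref{eq67}), delivers the required exceptional-set strength.

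It then remains to upgrade these expectation statements to pointwise bounds on every sufficiently large $n$, which is where the main difficulty lies. Applying Kim--Vu polynomial concentration to the degree-$s$ random polynomial $R_{s}(n;\mathfrak{X}_{k})$, together with a Borel--Cantelli union bound over $n\in\mathbb{N}$, should give $R_{s}(n;\mathfrak{X}_{k})\asymp\log n$ for a set of $n$ of density one, and in particular the upper half of (\ref{XXX}) for every large $n$. To obtain the lower bound $R_{s}(n;\mathfrak{X}_{k})\gg\log n$ for \emph{every} sufficiently large $n$---the refinement over \cite{WooVu} that eliminates the extra variables in (\ref{Tks})---I would peel off one variable, writing $n=x^{k}+m$ with $x^{k}\in\mathfrak{X}_{k}$. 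Letting $\mathcal{E}_{s-1}$ denote the exceptional set of those $m$ for which $R_{s-1}(m;\mathfrak{X}_{k})$ fails to be $\gg\log m$, a circle-method exceptional-set estimate of strength controlled by $G_{0}(k)$ shows that $|\mathcal{E}_{s-1}\cap[1,N]|$ is small enough that, exploiting the independence of the indicators $\mathbf{1}_{x^{k}\in\mathfrak{X}_{k}}$ and a union bound over $n$, almost surely for every large $n$ there exists some $x$ with $x^{k}\in\mathfrak{X}_{k}$ and $n-x^{k}\notin\mathcal{E}_{s-1}$, yielding the desired lower bound. The cardinality bound (\ref{cardi}) then follows from standard Chernoff-type concentration on $|\mathfrak{X}_{k}\cap[1,X]|$ together with the expectation already computed.
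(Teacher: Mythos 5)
Your proposal captures the right macroscopic framework — calibrate a random model so $\mathbb{E}\,R_{s}(n;\mathfrak{X}_{k})\asymp\log n$, compute the expectation by the circle method, then upgrade to pointwise bounds by polynomial concentration and Borel--Cantelli — which is indeed the Vu/Wooley blueprint that the paper follows. But there are three concrete gaps, each of which would prevent the argument from achieving the claimed threshold $s\geq\max(\lfloor G_{0}(k)\rfloor+1,4k+1)$.

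First, the random model you propose has $p_{x}\asymp x^{k/s-1}(\log x)^{1/s}$ with no restriction on $x$. The paper's measure (equation (\ref{probas})) places positive probability only on $k$-th powers of \emph{smooth} integers $x\in\mathcal{A}(x,x^{\eta})$. This restriction is not cosmetic: the entire circle-method machinery of Sections 2--7 is built around weighted smooth Weyl sums $f_{s},g_{s}$ precisely so that the pointwise minor-arc bound of Lemma \ref{lem3} and the mean-value estimates of Propositions \ref{prop2.1}--\ref{prop2222}, which inherit the Brudern--Wooley admissible exponents $\Delta_{t}$ and $\Delta_{t}^{*}$, become available. With unrestricted Weyl sums you lose this technology and the threshold that comes out is strictly worse than $G_{0}(k)$; you cannot recover (\ref{XXX}) in the stated range.

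Second, you cannot feed $R_{s}(n;\mathfrak{X}_{k})$ directly into a polynomial concentration inequality, because the degree-$s$ polynomial contains monomials with repeated indeterminates and monomials with very small indeterminates, both of which violate the ``positive simplified normal'' hypothesis (or degrade the conditional-expectation bounds). The paper therefore splits $R^{s}_{\mathfrak{X}}(n)=R^{+}_{\mathfrak{X},s}(n)+R^{0}_{\mathfrak{X},s}(n)+R^{=}_{\mathfrak{X},s}(n)$ (equations (\ref{zz}), (\ref{zzz})), applies concentration only to $R^{+}$, and controls $R^{0}$ and $R^{=}$ separately via the Erd\H{o}s--Rado sunflower lemma in the manner of Vu (leading to (\ref{RC}) and (\ref{RC1})). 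Your write-up never mentions this decomposition, and without it the concentration step is not legitimate.

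Third, your worry that concentration plus Borel--Cantelli only yields a density-one statement, and the proposed fix of peeling off a variable and invoking an exceptional-set bound for $R_{s-1}$, are both off target for Theorem \ref{thm1.1}. The point of the hypothesis $s\geq\lfloor G_{0}(k)\rfloor+1$ (combined with $s\geq 4k+1$, which gives $\frak{S}(n)\asymp1$) is that it forces $\Delta_{s}^{*}<0$ (see (\ref{all})--(\ref{ssssp})), and once $\Delta_{s}^{*}<0$ holds, Proposition \ref{prop7.1} yields the major-arc asymptotic $r_{s,k}(n,R)=c_{k,s}(\eta)\frak{S}(n)+O((\log n)^{-\nu/15})$ for \emph{every} $n$, with no exceptional set. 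Consequently $\mathbb{E}(R^{+}_{\mathfrak{X},s}(n))\asymp\log n$ and $\mathbb{E}_{A}(R^{+}_{\mathfrak{X},s}(n))\ll n^{-\tau_{0}^{2}/s}$ hold for every $n$, the failure probability in (\ref{schu}) is $O(n^{-2})$, and Borel--Cantelli immediately gives (\ref{XXX}) for all large $n$ with probability one. Peeling a variable is the device used later (and in Wooley's paper) to cope with genuine exceptional sets when $s$ is smaller; in Wooley it is responsible for the $+2$ in (\ref{Tks}), so if you keep it here you will not eliminate the extra variables. The whole point of the paper's argument for Theorem \ref{thm1.1} is that, with the new minor-arc estimates in Sections 2--6, peeling becomes unnecessary.
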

We remark that despite having only considered even numbers $2v$ in the definition (\ref{eq67}), as opposed to \cite[(6.11)]{Bru-Woo}, the same quantitative conclusions as therein are deduced by following their ideas to provide an upper bound for $G_{0}(k)$ in the upcoming corollaries. The term $4k+1$ may be improved with more work, such a refinement having no impact herein.
\begin{cor}\label{cooor1}
Let $k\in\mathbb{N}$ and $s\geq  k(\log k+4.20032)$. Then there exists a subset $\mathfrak{X}_{k}=\mathfrak{X}_{k}(s)\subset \NN_{0}^{k}$ satisfying (\ref{XXX}) for every sufficiently large integer $n$ and (\ref{cardi}).
\end{cor}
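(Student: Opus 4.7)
The plan is to deduce Corollary~\ref{cooor1} directly from Theorem~\ref{thm1.1}: given $s\geq k(\log k+4.20032)$, I must verify the two hypotheses $s\geq \lfloor G_{0}(k)\rfloor+1$ and $s\geq 4k+1$ imposed by the theorem, after which both (\ref{XXX}) and (\ref{cardi}) follow at once.

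The second inequality is immediate. For $k\geq 2$ one has $\log k+4.20032 > 4+1/k$, whence $k(\log k+4.20032) > 4k+1$; the case $k=1$ is covered by integer rounding, since then $s\geq \lceil 4.20032\rceil = 5 = 4k+1$.

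The substantive step is to establish $\lfloor G_{0}(k)\rfloor+1 \leq k(\log k+4.20032)$. I would replay the optimisation carried out in \cite{Bru-Woo}, whose display (6.11) minimises an expression of shape identical to $G_{0}(k)$ except that the auxiliary parameters range over all of $\NN$ rather than over even integers only; that analysis delivers (\ref{G(k)}). Appealing to the defining relation $\Delta_{2v}e^{\Delta_{2v}/k}=ke^{1-2v/k}$, neighbouring values $\Delta_{2v}$ and $\Delta_{2(v+1)}$ differ by a multiplicative factor $1+O(1/k)$ in the regime $v\asymp \tfrac{k}{2}\log k$ where the optima of $\tau(k)$ and $G_{0}(k)$ are located, so restricting to even indices perturbs both quantities by only $O(1)$. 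The Brudern--Wooley calculation then produces the same leading constant, which is precisely the content of the remark following Theorem~\ref{thm1.1}.

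The main obstacle I anticipate is this smoothness/rounding check: one must confirm that replacing the optimal choices in \cite{Bru-Woo} by their even-integer counterparts preserves the critical region at leading order, so that the $O(1)$ slack between $G_{0}(k)$ and $k(\log k+4.20032)$ absorbs the perturbation. In practice, one tracks the specific values of $v$ and $w$ selected in their analysis and verifies that their nearest even neighbours still satisfy the inequalities driving the estimate, which rigorously justifies the assertion that the same numerical threshold suffices in the even-restricted definition (\ref{eq67}).
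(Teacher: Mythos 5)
Your reduction to Theorem~\ref{thm1.1} is the right first move, and the check that $s\geq 4k+1$ is correct. However, there are two genuine gaps between this sketch and a complete proof.

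First, you treat the even-index restriction in the definition (\ref{eq67}) of $G_{0}(k)$ by a perturbation heuristic: that $\Delta_{2v}$ and $\Delta_{2(v+1)}$ differ by $O(1/k)$ near the critical region, so passing to even indices shifts $G_{0}(k)$ by $O(1)$. This is not quite a proof, and it is not what the paper does. If a change of $O(1)$ pushed $G_{0}(k)$ past the threshold $k(\log k+4.20032)-1$, your implication $s\geq k(\log k+4.20032)\Rightarrow s\geq \lfloor G_{0}(k)\rfloor+1$ would fail, so one must actually quantify the implicit constant or follow the Brudern--Wooley analysis with the even restriction. The paper's own resolution is sharper and simpler: it observes that the parameter selected in the Brudern--Wooley optimisation (the one producing the constant $4.20032$ after the display (7.8) of that paper) is already an even integer, so the bound for their unrestricted minimum is ipso facto a legitimate bound for the even-restricted $G_{0}(k)$ of (\ref{eq67}), with no perturbation estimate required.

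Second, your argument only applies where the Brudern--Wooley bound on $G_{0}(k)$ is available, which the paper invokes for $k>20$ (and, via the numerical table of Corollary~\ref{cooor2}, for $14\leq k\leq 20$). For smaller $k$ the corollary is still asserted for all $k\in\NN$, and the paper does not go through Theorem~\ref{thm1.1} at all in that range: it appeals to the thin-basis theorem in \cite[Theorem~5.1]{WooVu} for $3\leq k\leq 13$ (after checking $s\geq \mathfrak{H}(k)$), uses the admissible exponent $\Delta_{8}=0$ for $k=2$, and cites Erd\H{o}s--Tetali \cite{ErTe} outright for $k=1$. Your proposal leaves all of these cases untreated, so as written it proves the corollary only in the range where the Brudern--Wooley optimisation applies.
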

We observe in view of (\ref{G(k)}) that the above result is best possible with respect to $s$. In order to present another ensuing consequence we denote by $\omega$ to the unique real solution with $\omega\geq 1$ of the trascentental equation $\omega-2-1/\omega=\log \omega.$ We then put \begin{equation*}C_{1}=2+\log(\omega^{2}-3-2/\omega),\ \ \ \ \ \ \ \ \ \ \ \ C_{2}=\frac{\omega^{2}+3\omega-2}{\omega^{2}-\omega-2},\end{equation*} and note that $C_{1}=4.200189....$ and $C_{2}=3.015478...$.

\begin{cor}\label{cooor2}
Let $k\in\NN$ and $s\geq  \lceil k(\log k+C_{1})+C_{2}\rceil -1$. Then there exists a subset $\mathfrak{X}_{k}=\mathfrak{X}_{k}(s)\subset \Nk$ satisfying (\ref{XXX}) for every sufficiently large integer $n$, and (\ref{cardi}). Moreover, the same conclusion holds when $14\leq k\leq 20$ and $s\geq H(k)$, where $H(k)$ is defined as
\begin{table}[h!]
  \begin{center}

    \begin{tabular}{l l l c c c r r } 
      
      \hline
      $k$ & $14$ & $15$ & $16$ & $17$ & $18$ & $19$ & $20$\\
      $H(k)$ & $89$  & $97$ & $105$ & $113$ & $121$ & $129$ & $137$ \\
      \hline
    \end{tabular}
  \end{center}
\end{table}
\end{cor}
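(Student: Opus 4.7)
The plan is to apply Theorem \ref{thm1.1}; since for $k\geq 14$ the constraint $s\geq 4k+1$ is dominated both by $\lceil k(\log k+C_{1})+C_{2}\rceil-1$ and by every entry of the table, the task reduces to establishing upper bounds on $G_{0}(k)$ compatible with the stated hypothesis on $s$. Concretely, for the first assertion it suffices to show $G_{0}(k)\leq k(\log k+C_{1})+C_{2}-1$, and for the second to verify that $\lfloor G_{0}(k)\rfloor+1$ coincides with the tabulated value $H(k)$ for $14\leq k\leq 20$.

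For the analysis of $\tau(k)$ I would follow the Br\"udern--Wooley approach of \cite[\S6]{Bru-Woo}. The defining relation $\Delta_{2w}e^{\Delta_{2w}/k}=ke^{1-2w/k}$ invites the substitution $\Delta_{2w}=k/\omega_{0}$, under which it reads $2w=k(1+\log\omega_{0}-1/\omega_{0})$, whence
\[
\frac{k-2\Delta_{2w}}{4w^{2}}=\frac{1-2/\omega_{0}}{k(1+\log\omega_{0}-1/\omega_{0})^{2}}.
\]
Differentiating this expression in $\omega_{0}$ and clearing denominators collapses the first-order condition to $\log\omega_{0}=\omega_{0}-2-1/\omega_{0}$, which is precisely the equation defining $\omega$. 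Using the factorization $(\omega-2)(\omega+1)^{2}=\omega^{3}-3\omega-2$ one obtains, modulo the quantization of $w$, the lower bound $1/\tau(k)\leq k(\omega^{2}-3-2/\omega)+O(1)$.

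The same substitution applied to the equation for $\Delta_{2v}$ recasts $2v+\Delta_{2v}/\tau(k)$ as
\[
k(1+\log k)-k\log\Delta_{2v}-\Delta_{2v}+\Delta_{2v}/\tau(k),
\]
and a routine calculus step locates the continuous optimum at $\Delta_{2v}=k\tau(k)/(1-\tau(k))$, with value $2k+k\log\bigl((1-\tau(k))/\tau(k)\bigr)$. Inserting the bound for $\tau(k)$ and expanding the logarithm in powers of $1/k$ produces the leading term $k(\log k+C_{1})$; careful bookkeeping of the lower-order contributions—arising from the expansion of $\log(1-\tau(k))$, from the integrality of $v$ via a Taylor expansion around the continuous minimum, and from the $O(1)$ penalty in the bound for $1/\tau(k)$—should combine to yield the precise additive constant $C_{2}=(\omega^{2}+3\omega-2)/(\omega^{2}-\omega-2)$. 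I expect pinning this exact $C_{2}$ (rather than settling for an unspecified $O(1)$ slack) to be the most delicate part of the argument, since it requires tracking the first-order correction at the optimum while maintaining control of the discretization errors.

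For the tabulated range $14\leq k\leq 20$ the asymptotic form is no longer tight, so I would instead carry out an explicit numerical search: compute $\Delta_{2v}$ and $\Delta_{2w}$ to high precision from the defining transcendental equations, evaluate $\tau(k)$ as a finite maximum over small $w$, and locate the integer minimum of $2v+\Delta_{2v}/\tau(k)$ by scanning small $v$. The resulting value of $\lfloor G_{0}(k)\rfloor+1$ should match the $H(k)$ listed, and the dominance of $H(k)$ over $4k+1$ in each case is then a direct verification, so that Theorem \ref{thm1.1} applies.
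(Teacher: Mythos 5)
Your reduction to bounding $G_{0}(k)$ via Theorem~\ref{thm1.1}, and your substitution $\Delta_{2w}=k/\omega_{0}$ leading to the transcendental equation $\log\omega_{0}=\omega_{0}-2-1/\omega_{0}$ and the continuous optimum $2k+k\log\big((1-\tau(k))/\tau(k)\big)$, are the right calculations and track the Br\"udern--Wooley analysis accurately. However, the paper does not carry out this optimization itself: it invokes the proof of \cite[Theorem~1.2]{Bru-Woo} for the asymptotic bound on $G_{0}(k)$ and \cite[Section~8]{Bru-Woo} for the explicit values yielding $H(k)$ when $14\leq k\leq 20$, adding only the observation that the minimizing $v$ chosen there is even, so the restriction to even $2v$ in the definition (\ref{eq67}) is harmless. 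Your proposal therefore takes the more ambitious route of reconstructing the cited computation, but you stop at the decisive step: you acknowledge that pinning down the exact constant $C_{2}=(\omega^{2}+3\omega-2)/(\omega^{2}-\omega-2)$ from the expansion of $\log(1-\tau(k))$, the integrality of $v$ and $w$, and the discretization error in $1/\tau(k)$ is ``the most delicate part,'' and you do not carry it out. Without this bookkeeping the first assertion is not actually established.

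A second, more structural gap: Corollary~\ref{cooor2} is stated for all $k\in\NN$, yet your argument is confined to $k\geq 14$ from the first sentence (``since for $k\geq 14$ the constraint $s\geq 4k+1$ is dominated\ldots''). The paper treats $3\leq k\leq 13$ by a different mechanism entirely, appealing to the function $\mathfrak{H}(k)$ in \cite[Theorem~5.1]{WooVu}, and handles $k=2$ via the admissible exponent $\Delta_{8}=0$ and $k=1$ via \cite{ErTe}. For small $k$ one cannot expect $\lceil k(\log k+C_{1})+C_{2}\rceil-1$ to exceed $\lfloor G_{0}(k)\rfloor+1$, so the Theorem~\ref{thm1.1} route you are using is not even available; a separate argument as in the paper is genuinely required. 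Your numerical verification plan for $14\leq k\leq 20$ is sound and matches what the paper does by citation, but you should note that the $\Delta_{2s}$ entering the definition (\ref{eq67}) are the solutions of $\Delta_{2s}e^{\Delta_{2s}/k}=ke^{1-2s/k}$ rather than the sharper tabulated admissible exponents, so the computation must be done with the correct quantities to reproduce $H(k)$.
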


In view of the preceding corollaries and the conclusions in \cite{Bru-Woo} we note that the restriction on $s$ in the above results matches that in the literature pertaining to Waring's problem whenever $k\geq 14$. We thereby replace (\ref{Tks}) by $s\geq T(k)$ for the smallest currently known $T(k)$ satisfying $G(k)\leq T(k)$ and thus eliminate the two extra variables required hirtherto. We also announce our intention to investigate on another occasion the instance $k\leq 13.$

Despite asymptotic formulae of $R_{s}(n;\mathfrak{X}_{1})$ for the linear case being present in earlier work, no analogous evaluations had previously been obtained for higher powers. We then say that a function $\psi(t)$ is of \emph{uniform growth} when $\psi(t)$ is a positive function of a positive variable $t$ increasing monotonically to infinity. We also say that it is of \emph{uniform growth with exponent $\varepsilon$} if moreover one has $\psi(t) = O(t^{\varepsilon})$ for every $\varepsilon>0$. We may further consider functions with the additional property that there exists another function $\varphi$ of uniform growth such that
\begin{equation}\label{varphi}\psi\big(n/\varphi(n)\big)\sim\psi(n),\end{equation} and write
\begin{equation}\label{xz}\xi_{x}=1-\frac{\psi(x/\varphi(x)\big)}{\psi(x)},\ \ \ \ \ \ \ \ \ \ \ \ \xi(x)=\max_{y\in [x,2x]}\xi_{y}.\end{equation}
We also introduce for $k\geq 2$ and $s\geq \max(5,k+2)$ and $n\in\NN$ the singular series
\begin{equation}\label{SSi}\frak{S}(n)=\sum_{q=1}^{\infty}\sum_{\substack{a=1\\ (a,q)=1}}^{q}\Big(q^{-1}\sum_{r=1}^{q}e(ar^{k}/q)\Big)^{s}e(-an/q).\end{equation} 
\begin{thm}\label{thm1.2}
Let $k,s$ be as in either Theorem \ref{thm1.1}, Corollary \ref{cooor1} or \ref{cooor2}. Let $\psi$ be a function of uniform growth with exponent $\varepsilon$ satisfying (\ref{varphi}) and $\lim_{n\to\infty}\psi(n)/\log n=\infty$. There is a constant $\upsilon>0$ and $\mathfrak{X}_{k}=\mathfrak{X}_{k}(s)\subset \Nk$ such that for sufficiently large $n$ then $$R_{s}(n;\mathfrak{X}_{k})= \frak{S}(n)\psi(n)+O\Big(\psi(n)\Big( \xi(n)+\varphi(n)^{-\upsilon}+(\log n)^{-\upsilon}+\Big(\frac{\log n}{\psi(n)}\Big)^{1/2}\Big)\Big).$$ 

\end{thm}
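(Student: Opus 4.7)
The plan is to construct $\mathfrak{X}_{k}$ probabilistically, following the strategy developed by Vu \cite{Vu} and Wooley \cite{WooVu} but tuned so that the expected representation count reproduces $\mathfrak{S}(n)\psi(n)$ in place of the $\log n$ weight. For each $x$ in a suitable range I would place $x^{k}$ in $\mathfrak{X}_{k}$ independently with probability $p_{x}=\min(1, C\psi(x^{k})^{1/s}x^{k/s-1})$, with $C=C(k,s)$ a positive constant to be calibrated below; variables outside this range may be dealt with deterministically, and the hypothesis $\psi(x)=O(x^{\varepsilon})$ guarantees that $p_{x}<1$ throughout the main range.

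The first main task is the evaluation of $\E[R_{s}(n;\mathfrak{X}_{k})]=\sum_{x_{1}^{k}+\cdots+x_{s}^{k}=n}\prod_{i=1}^{s}p_{x_{i}}$. I would apply the Hardy--Littlewood circle method to the weighted exponential sum $f(\alpha)=\sum_{x}p_{x}e(\alpha x^{k})$. On the major arcs, substituting the asymptotic shape of $p_{x}$ and exploiting hypothesis (\ref{varphi}) permits the replacement of $\psi(x^{k})$ by $\psi(n)$ up to an error controlled by $\xi(n)$; the residual Dirichlet-type integral collapses, via a beta-function identity with parameters $1/s$, to a constant multiple of the classical singular integral, whence the main term $\mathfrak{S}(n)\psi(n)$ emerges after calibration of $C$. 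Truncation of $x$ to the window $[n^{1/k}\varphi(n)^{-O(1)},n^{1/k}\varphi(n)^{O(1)}]$ furnishes the $\varphi(n)^{-\upsilon}$ contribution, while minor arc estimates, reducible through H\"older's inequality to the mean value machinery of \cite{Bru-Woo} which also underpins Theorem \ref{thm1.1}, deliver the saving $(\log n)^{-\upsilon}$.

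The next step is concentration. The random variable $R_{s}(n;\mathfrak{X}_{k})$ is a multilinear polynomial of degree $s$ in the independent Boolean indicators $\mathbf{1}_{x^{k}\in\mathfrak{X}_{k}}$, and the partial expectations arising when one freezes some of these indicators are governed by the same circle method analysis. Kim--Vu polynomial concentration, or equivalently a fourth-moment variance estimate, then gives
$$\Pr\Big(|R_{s}(n;\mathfrak{X}_{k})-\E R_{s}(n;\mathfrak{X}_{k})|\geq \lambda\sqrt{\E R_{s}(n;\mathfrak{X}_{k})\log n}\Big)\ll n^{-c\lambda^{2}}$$
for some absolute $c>0$. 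Choosing $\lambda$ a large fixed constant and applying the Borel--Cantelli lemma shows that almost surely only finitely many $n$ violate the desired inequality, and dividing the deviation by $\psi(n)$ yields the $(\log n/\psi(n))^{1/2}$ error term. The same argument, run on the interval $[X,X+\log X]$ and paired with a union bound, recovers the short-interval variant alluded to in Corollary \ref{cooor1}.

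I expect the main obstacle to be the joint handling of the four error terms in the expectation computation, especially the propagation of the weight $\psi(x^{k})^{1/s}$ through the minor arc analysis: when $\psi(n)$ is allowed to grow like $n^{\varepsilon}$ one must exploit the uniform growth hypothesis rather delicately to preserve the pointwise upper bound on $f(\alpha)$ needed for an application of the mean value estimates. A secondary technical issue is ensuring that the concentration inequality produces exactly the square-root deviation $\sqrt{\psi(n)\log n}$, since any additional power of $\psi$ would destroy the summability required by Borel--Cantelli when $\psi$ is only marginally larger than $\log n$. The lower bound on $s$ inherited from Theorem \ref{thm1.1} enters precisely at the minor arc step, guaranteeing the availability of the requisite Hardy--Littlewood input.
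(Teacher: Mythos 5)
Your proposal captures the high-level architecture that the paper actually uses — a random subset of $k$-th powers with inclusion probabilities proportional to $\psi(x^k)^{1/s}x^{k/s-1}$, a circle-method evaluation of the expected representation count, polynomial concentration in the independent Boolean indicators, and Borel--Cantelli — so the skeleton matches. There is, however, one structural omission that would prevent the argument from closing at the stated thresholds on $s$, and two further points worth flagging.

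The central gap is that you include $x^k$ in $\mathfrak{X}_k$ with probability $p_x$ for \emph{all} $x$ in a range, whereas the paper's model (\ref{probas}) places $x^k$ in $\mathfrak{X}_k$ only when $x$ is a smooth number, $x\in\mathcal{A}(x,x^{\eta})$. This restriction is not cosmetic. The hypotheses allow $s$ as small as $\max(\lfloor G_0(k)\rfloor+1,4k+1)$, which is essentially the Waring threshold $G(k)\leq T(k)$ and far below the classical Hardy--Littlewood asymptotic-formula threshold for unrestricted $k$-th powers. The asymptotic evaluation of $\mathbb{E}[R_s(n;\mathfrak{X}_k)]$ at such small $s$ (Propositions \ref{prop7.1}, \ref{prop7.2}, \ref{prop9.1}) rests entirely on the mean-value and pointwise minor-arc estimates for the \emph{smooth} weighted Weyl sums $g_s(\alpha,P,R)$ and $f_s(\alpha,P,R)$ built in Sections \ref{sec2}--\ref{sec7}. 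The Bruedern--Wooley machinery you invoke applies to these smooth sums, not to the unrestricted weighted sum $\sum_x p_x e(\alpha x^k)$ you propose; without smoothing in the probability space the minor-arc savings do not exist at the claimed range of $s$, and the expectation computation fails.

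Secondly, you treat $R_s(n;\mathfrak{X}_k)$ as a single multilinear polynomial, but the counted tuples may have repeated coordinates (so that after collapsing $t_x^2=t_x$ the polynomial is no longer simplified-normal with coefficients bounded by $1$) and may have very small coordinates (which spoil the required bound on the partial expectations $\mathbb{E}_A$). The paper must therefore decompose $R_{\mathfrak{X}}^{s}(n)=R_{\mathfrak{X},s}^{+}(n)+R_{\mathfrak{X},s}^{0}(n)+R_{\mathfrak{X},s}^{=}(n)$ as in (\ref{zz})--(\ref{zzz}), apply Proposition \ref{prop81} only to $R^{+}$, and then control $R^{0}$ and $R^{=}$ by absolute constants with positive probability via a sunflower argument, as in (\ref{RC}) and (\ref{RC1}). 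Your proposal does not address this. Finally, the parenthetical claim that polynomial concentration is ``equivalently a fourth-moment variance estimate'' is not correct: a fourth moment yields at best a tail $\ll \lambda^{-4}(\log n)^{-2}$ rather than $n^{-c\lambda^2}$, and the former is not summable in $n$ for fixed $\lambda$, so the Borel--Cantelli step would not go through. One genuinely needs an exponential concentration inequality for low-degree polynomials, as in Proposition \ref{prop81}.
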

We recall that \cite[Chapter 4]{Vau} entails $\frak{S}(n)\asymp 1$ whenever $s\geq 4k+1.$ Other authors have considered different formulations of regularity conditions (see \cite{ErD,Taf}). In particular Erd\"os \cite{ErD} imposed $\psi'(t)$ to be continuous, no such strong assumption being required here. Investigating the sharpest possible conclusions though is not the purpose of this memoir.

We shift our attention to the discussion after (\ref{Tks}) and note that in the linear case $k=1$ it was conjectured by Erd\"os and Tur\'an \cite{Erd} that whenever $\mathfrak{X}_{1}$ is an asymptotic basis of order $2$ then $R_{s}(n;\mathfrak{X}_{1})$ cannot be bounded. It is commonly believed that the analogous conclusion for $s> 2$ should also hold, the first author suggesting that even \begin{equation}\label{errdi}\limsup_{n\to\infty}\frac{R_{s}(n;\mathfrak{X}_{1})}{\log n}>0\end{equation} might always occur (see \cite{ERD,Taf}). The preceding discussion thereby lends credibility to the belief that $R_{s}(n;\mathfrak{X}_{k})$ when $k\geq 2$ should satisfy the same properties.

In view of Theorem \ref{thm1.2} it also seems worth deliberating whether (\ref{RsR}) could be replaced by an asymptotic formula. We allude to \cite{ErD}, where it is conjectured that the statement 
$$\lim_{n\to\infty} \frac{R_{2}(n;\mathfrak{X}_{1})}{\log n}=c$$ for any constant $c\neq 0$ is false, it being reasonable to extend such a speculation to the instance $s>2$. In virtue of the formula stemming from the aforementioned theorem and the preceding remark one would predict for any $\psi(n)=O( \log n)$ of uniform growth that 
$$R_{s}(n;\mathfrak{X}_{k})\sim \frak{S}(n)\psi(n)$$ cannot hold. We note that as far as the author is concerned, no previous work in the literature hirtherto had given account of results concerning the above regime.
\begin{thm}\label{thm1.3}
Let $k\geq 14$ and $s\geq  k(\log k+3.20032)$. Let $\psi(n)=O( \log n)$ be of uniform growth satisfying (\ref{varphi}). Let $\upsilon>0$ be a sufficiently small constant and $\delta:\mathbb{R}\rightarrow (0,1)$ with \begin{equation}\label{delll}\delta(x)\geq C_{0}\big(\xi(x)+\varphi(x)^{-\upsilon}+(\log x)^{-\upsilon}+\psi(x)^{-1/2}(\log \psi(x))^{1/2}\big)\end{equation} for some large enough $C_{0}=C_{0}(k,s)>0$. Then there exists $\mathfrak{X}_{k}=\mathfrak{X}_{k}(s)\subset \Nk$ for which 
\begin{equation}\label{RsRs}R_{s}(n;\mathfrak{X}_{k})= \frak{S}(n)\psi(n)+O(\psi(n)\delta(n))\end{equation} holds for all but $O(Ne^{-\delta(N)^{2}\psi(N)})$ integers $n\in [1,N],$ and \begin{equation}\label{cardi1}\lvert \grX_{k}\cap [1,X]\rvert\asymp (X\psi( X))^{1/s}.\end{equation} Moreover, if $\psi(n)=o(\log n)$ one has for some constant $\zeta_{k}>0$ and all but $O(N^{1-\zeta_{k}})$ integers $n\in [1,N]$ the bound \begin{equation}\label{sss1}R_{s}(n;\mathfrak{X}_{k})\ll \frac{\log n}{\log\big(\frac{\log n}{\psi(n)}\big)}.\end{equation}
\end{thm}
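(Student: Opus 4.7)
The plan is to construct $\grX_k$ by a probabilistic sieve: include $m^k \in \grX_k$ independently with probability
\[ p_m = c \cdot m^{k/s - 1}\psi(m^k)^{1/s} \]
for a suitable constant $c = c(k,s) > 0$, calibrated so that the expected number of ordered representations of $n$ as a sum of $s$ powers from $\grX_k$ is asymptotic to $\frak{S}(n)\psi(n)$. Writing $\eta_m = \mathbf{1}_{m^k \in \grX_k}$, the quantity
\[ R_s(n;\grX_k) = \sum_{m_1^k+\cdots+m_s^k=n} \eta_{m_1}\cdots\eta_{m_s} \]
is an $s$-linear polynomial in independent Bernoullis, and the proof reduces to computing its mean, controlling deviations, and counting $\grX_k$.

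For the first moment, I would evaluate $\E R_s(n;\grX_k)$ by the circle method applied to the weighted Weyl sum $\sum_m p_m e(\alpha m^k)$. Splitting into major and minor arcs and invoking the hypothesis $s \geq k(\log k + 3.20032)$, which corresponds to the almost-all Waring threshold of Brudern-Wooley, the major-arc contribution delivers $\frak{S}(n)\psi(n)$ up to relative errors of the shape $\xi(n) + \varphi(n)^{-\upsilon} + (\log n)^{-\upsilon}$ appearing in (\ref{delll}). The regularity condition (\ref{varphi}) is what permits the replacement of $\psi$ on dyadically truncated ranges of $m_i$ by $\psi(n)$, with $\xi(n)$ quantifying the slippage. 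The minor-arc bound contributes to a further exceptional set of $n$ which can be absorbed into the failure probability in (\ref{RsRs}).

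The central probabilistic step is concentration. Since $\mu(n) := \E R_s(n;\grX_k) \asymp \psi(n)$ and $R_s(n;\grX_k)$ is an $s$-linear polynomial whose truncated expectations at all orders are controlled by the choice of $p_m$, a Kim-Vu or Janson style tail inequality yields
\[ \Pr\bigl(|R_s(n;\grX_k) - \mu(n)| > \lambda \sqrt{\psi(n) \log \psi(n)}\bigr) \ll \exp\bigl(-c \lambda^2 \log \psi(n)\bigr). \]
Taking $\lambda \sqrt{\log \psi(n)} \asymp \delta(n) \sqrt{\psi(n)}$ and summing over $n \leq N$ produces an expected exceptional set of size $O(Ne^{-\delta(N)^2 \psi(N)})$ for (\ref{RsRs}), and a first-moment argument realises this almost surely. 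The cardinality (\ref{cardi1}) follows from Chernoff applied to the independent sum $\sum_{m \leq X^{1/k}} \eta_m$, whose mean is $\asymp (X\psi(X))^{1/s}$ by direct calculation with $p_m$.

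For the sparse regime $\psi(n) = o(\log n)$ yielding (\ref{sss1}), one has $\mu(n) = o(\log n)$ and a Gaussian-type bound is too weak; instead I would use moment estimates of the form $\E R_s(n;\grX_k)^L \ll \mu(n)^L L!^{s-1}$ combined with Markov, which optimised over $L$ gives $\Pr(R_s(n;\grX_k) \geq t) \leq n^{-1-\zeta_k}$ at the threshold $t \asymp \log n / \log(\log n/\mu(n))$. Summation over $n \leq N$ then delivers the exceptional set of size $O(N^{1-\zeta_k})$. The principal technical obstacle throughout is executing the concentration uniformly in $n$: the polynomial $R_s(n;\grX_k)$ has coincidence (diagonal) contributions when the $m_i$ collide, so the Kim-Vu partial expectations of all orders must be estimated explicitly enough to sum to the required tail $e^{-\delta(N)^2 \psi(N)}$, a task made delicate precisely when $\psi(N)$ is only barely larger than $\log N$.
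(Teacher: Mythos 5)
Your proposal captures the correct high-level architecture --- a random model calibrated so that $\mathbb{E}R_s(n;\grX_k)\asymp\psi(n)$, circle method for the first moment, a concentration inequality for the tail, and a moment/deletion argument for the sharper upper bound --- but there are two substantive gaps and one delicate point glossed over.

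First, your random model drops the smoothness restriction. The paper does not include every $m^k$ with probability $p_m$: it includes $m^k$ only when $m\in\mathcal{A}(m,m^\eta)$ (i.e. $m$ is $m^\eta$-smooth). This is not cosmetic. The entire circle-method infrastructure --- the admissible exponents $\Delta_t$, the major-arc approximations via Dickman's $\rho$, the pointwise minor-arc bounds of Lemma \ref{lem3}, the pruning in Sections 5--7 --- is built for smooth Weyl sums, imported from Brudern--Wooley. Without the smooth restriction, you cannot invoke $U_t(P,R)\ll P^{t-k+\Delta_t+\varepsilon}$ or the improved minor-arc exponents $\Delta_t^*$, and the threshold $s\geq k(\log k+3.20032)$ is not attainable. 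Classical (non-smooth) Weyl sums would push $s$ far above the claimed range.

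Second, and this is the crux of why Theorem \ref{thm1.3} is harder than Theorems \ref{thm1.1}--\ref{thm1.2}: in the regime $s\geq k(\log k+3.20032)$ one is one variable \emph{below} the Waring threshold, so the circle method yields an asymptotic for $r_{s,k}(n,R)$ only for almost all $n$, not all $n$. But the Kim--Vu/Janson-type concentration inequalities require control of \emph{all} partial derivatives $\mathbb{E}_A(R_{\grX,s}^+(n))$, i.e., for every $A=\{a_1^k,\dots,a_d^k\}$ with $1\leq d\leq s-1$, a bound on the weighted number of representations of $m=n-\sum a_j^k$ as a sum of $s-d$ $k$-th powers. This is a $d$-parameter family of representation problems in the shifted variable $m$, and it is not a corollary of an almost-all Waring result for $n$: the set of bad $m$ grows as $a_1,\dots,a_d$ range, and the pigeonhole goes the wrong way. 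Your sentence ``the Kim--Vu partial expectations of all orders must be estimated explicitly enough'' names the issue but does not solve it. The paper's Section \ref{sec9}, especially Proposition \ref{prop911} and the definition of the exceptional sets $\mathcal{Z}_d(N)$ and $\widetilde{\mathcal{Z}}_{d,\bfa}(N)$, constitutes the genuine new input: it proves $F_{d,\bfa}(m)\ll m^{-\nu_0}$ for all but $O(N^{1-d/k-1/240s})$ integers $m$, with the saving $d/k$ in the exponent being precisely what is needed so that summing over the $\asymp N^{d/k}$ choices of $(a_1,\dots,a_d)$ still yields a power saving in $N$ for the union $\mathcal{Z}(N)$. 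This requires the elaborate dissection by sizes $P_{j_l}$, the split $B_1(m)+B_2(m)+C(m)$, and the choice of the parameter $c_k$ to make the exponents close; none of this is available off the shelf.

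Third, your claim that ``a first-moment argument realises this almost surely'' for the exceptional-set bound in (\ref{RsRs}) is too quick. You need a single $\grX_k$ for which the bound holds simultaneously for all large $N$. A Markov argument at each scale $N$ gives failure probability $\asymp 1/2$, which is not summable, so Borel--Cantelli does not apply directly. The paper circumvents this by considering, for each $N$, all $L$-element subsets $\mathcal{N}_L\subset\mathcal{M}_j(N)$ (with $L$ as in (\ref{ML})) and summing the failure probability over the $\binom{M}{L}$ such subsets (Theorem \ref{prop12.3}); the Stirling estimate (\ref{ale}) shows this is summable over $N$ precisely because of the calibration (\ref{kapa}). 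This is a genuinely different and necessary device, not a routine reduction from the single-$n$ concentration bound.

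Finally, on (\ref{sss1}): your proposed moment bound $\mathbb{E}R^L\ll\mu^L L!^{s-1}$ and Markov do give the right threshold $\log n/\log(\log n/\psi(n))$ once the partial expectations from the second point are in hand, so this piece of your plan is defensible as an alternative to the paper's Janson--Rucinski deletion method (Proposition \ref{prop10}). The paper remarks that Kim--Vu-type polynomial concentration alone is not sufficient here; your moment approach is closer in spirit to the deletion method and could plausibly be made to work. But that higher moments are controlled as you claim again depends on the family of almost-all bounds for $F_{d,\bfa}$, so the essential difficulty is the same.
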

We illustrate the discussion by putting $\psi(n)=\sqrt{\log n}$ and applying the above theorem.
\begin{cor}
Let $k,s$ be as in Theorem \ref{thm1.3}. There exists $\mathfrak{X}_{k}=\mathfrak{X}_{k}(s)\subset \Nk$ for which 
\begin{equation*}R_{s}(n;\mathfrak{X}_{k})= \frak{S}(n)\sqrt{\log n}+O\big((\log n)^{1/2-\upsilon}\big)\end{equation*} holds for all but $O(Ne^{-(\log N)^{1/2-\upsilon}})$ integers $n\in [1,N]$ and some fixed $\upsilon>0$, and $$\lvert \grX_{k}\cap [1,X]\rvert\asymp X^{1/s}(\log X)^{1/2s}.$$ Moreover,  one has for all but $O(N^{1-\zeta_{k}})$ integers $n\in [1,N]$ the bound  \begin{equation*}R_{s}(n;\mathfrak{X}_{k})\ll \frac{\log n}{\log\log n}.\end{equation*} 
\end{cor}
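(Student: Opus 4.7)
The plan is to deduce the corollary as an immediate specialization of Theorem \ref{thm1.3} with $\psi(n)=\sqrt{\log n}$, so the work is entirely in verifying the hypotheses and unwinding the resulting quantitative bounds.

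First I would check that $\psi(n)=\sqrt{\log n}$ is of uniform growth (positive, monotone, tending to infinity) and trivially satisfies $\psi(n)=O(\log n)$. To verify condition (\ref{varphi}) I would choose $\varphi(n)=\log n$ (any function with $\log\varphi(n)=o(\log n)$ will do), noting that
$$\psi\bigl(n/\varphi(n)\bigr)=\sqrt{\log n-\log\log n}\sim\sqrt{\log n}=\psi(n).$$
A first-order expansion of $\sqrt{1-\log\log x/\log x}$ then yields $\xi_{x}\ll \log\log x/\log x$, and by monotonicity of this estimate one gets $\xi(x)\ll \log\log x/\log x$ as well.

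Next I would insert these quantities into the lower bound (\ref{delll}) for $\delta$. The contributions coming from $\xi(x)$ and from $\psi(x)^{-1/2}(\log\psi(x))^{1/2}\ll (\log x)^{-1/4}(\log\log x)^{1/2}$ are both absorbed by the terms $\varphi(x)^{-\upsilon}+(\log x)^{-\upsilon}=2(\log x)^{-\upsilon}$ whenever $\upsilon>0$ is small enough. Consequently it is admissible to take $\delta(x)=C(\log x)^{-\upsilon}$ for a sufficiently large constant $C$, provided $\upsilon$ is chosen small enough that $\upsilon<1/4$ and the constraints of Theorem \ref{thm1.3} are satisfied. Relabelling $\upsilon$ if necessary (to absorb the factor $2$ in the exceptional-set exponent), Theorem \ref{thm1.3} then supplies a set $\mathfrak{X}_{k}\subset\Nk$ satisfying
$$R_{s}(n;\mathfrak{X}_{k})=\mathfrak{S}(n)\sqrt{\log n}+O\bigl(\sqrt{\log n}\cdot(\log n)^{-\upsilon}\bigr)=\mathfrak{S}(n)\sqrt{\log n}+O\bigl((\log n)^{1/2-\upsilon}\bigr),$$
with the exceptional set having cardinality $O\bigl(Ne^{-\delta(N)^{2}\psi(N)}\bigr)=O\bigl(Ne^{-(\log N)^{1/2-2\upsilon}}\bigr)$, as required after adjusting $\upsilon$.

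Finally I would read off (\ref{cardi1}) and (\ref{sss1}) directly. The cardinality bound is
$$|\grX_{k}\cap[1,X]|\asymp\bigl(X\sqrt{\log X}\bigr)^{1/s}=X^{1/s}(\log X)^{1/(2s)},$$
while since $\sqrt{\log n}=o(\log n)$ the conclusion (\ref{sss1}) applies and gives, for all but $O(N^{1-\zeta_{k}})$ integers $n\in[1,N]$,
$$R_{s}(n;\mathfrak{X}_{k})\ll \frac{\log n}{\log\bigl(\log n/\sqrt{\log n}\bigr)}=\frac{\log n}{(1/2)\log\log n}\ll\frac{\log n}{\log\log n}.$$
There is no real obstacle here: the corollary is purely an unpacking of Theorem \ref{thm1.3}, and the only mildly delicate point is picking $\varphi$ and $\upsilon$ so that all four terms inside (\ref{delll}) are dominated by $(\log x)^{-\upsilon}$.
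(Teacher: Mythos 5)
Your derivation is correct and is exactly the intended one: the paper does not write out a proof of this corollary, noting only that it follows by applying Theorem \ref{thm1.3} with $\psi(n)=\sqrt{\log n}$. Your choices $\varphi(n)=\log n$, the estimate $\xi(x)\ll\log\log x/\log x$, and $\delta(x)=C(\log x)^{-\upsilon}$ with $\upsilon$ small are precisely what is needed, and the readings of \eqref{cardi1} and \eqref{sss1} are right.

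One small caveat: the ``relabelling'' you invoke does not quite reconcile the two exponents if the corollary is read as requiring a single common $\upsilon$ in both the error term and the exceptional-set bound. With $\delta(x)\asymp(\log x)^{-\upsilon_0}$ one gets error $O\big((\log n)^{1/2-\upsilon_0}\big)$ together with the exceptional-set bound $O\big(Ne^{-(\log N)^{1/2-2\upsilon_0}}\big)$, and since $\delta(x)\psi(x)\ll(\log x)^{1/2-\upsilon}$ forces $\delta\ll(\log x)^{-\upsilon}$ while $\delta(x)^2\psi(x)\gg(\log x)^{1/2-\upsilon}$ forces $\delta\gg(\log x)^{-\upsilon/2}$, no choice of $\delta$ makes both exponents equal to $1/2-\upsilon$ for a single $\upsilon>0$. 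This is a minor imprecision in the statement of the corollary itself rather than a defect in your argument, but it would be cleaner to record the two exponents as genuinely different (for instance $1/2-\upsilon$ and $1/2-2\upsilon$) or to use distinct symbols.
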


We postpone the discussion concerning the conclusions depending on probabilistic arguments but note that the presence of the exceptional set pertaining to (\ref{sss1}) is only due to the limitations of the circle method. We derive though sharper results when $s$ lies on the range in Theorem \ref{thm1.1} and display the strongest conclusions available concerning the validity of the anticipated asymptotic formula when no number theoretic obstructions occur.
\begin{thm}\label{thm1.9}
Let $s$ and $k$ be as in Corollaries \ref{cooor1} or \ref{cooor2}. Let $\psi(n)=O(\log n)$ be a function of uniform growth satisfying (\ref{varphi}), and let $\delta:\mathbb{R}\rightarrow (0,1)$ with (\ref{delll}).  Let $\kappa\geq 1$ be a constant, and $\omega$ be a function of uniform growth such that $\omega(N)=O( e^{\delta(N)^{2}\psi(N)})$. Then, for any collection of sets $(\mathcal{M}_{j}(N))_{j=1}^{N^{\kappa}}$ with $\mathcal{M}_{j}(N)\subset [N,2N]$ for each $N\in\NN$ and $$\frac{(\log N)\omega(N)}{\delta(N)^{2}\psi(N)}\ll\lvert \mathcal{M}_{j}(N)\rvert\ll (\log N)e^{\delta(N)^{2}\psi(N)}$$ there exists $\mathfrak{X}_{k}=\mathfrak{X}_{k}(s)\subset \Nk$ for which for every sufficiently large $N\in \NN$ one has
\begin{equation}\label{RsRs2}R_{s}(n;\mathfrak{X}_{k})= \frak{S}(n)\psi(n)+O(\psi(n)\delta(n))\end{equation} for all but $O(\lvert \mathcal{M}_{j}(N)\rvert \omega(N)^{-1})$ integers $n\in \mathcal{M}_{j}(N)$. 
\end{thm}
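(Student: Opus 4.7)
The plan is to extend the probabilistic construction used for Theorem \ref{thm1.3}, refining the concentration analysis to control exceptions within each of the $N^{\kappa}$ clusters $\mathcal{M}_{j}(N)$. Take $\mathfrak{X}_{k}=\{m^{k}:m\in\Omega\}$ where $\Omega\subset\NN$ is random with independent membership probabilities $p_{m}\asymp \psi(m^{k})^{1/s}m^{k/s-k}$ tuned so that, via the Hardy--Littlewood method applied to the expected generating function, $\E[R_{s}(n;\mathfrak{X}_{k})]=\mathfrak{S}(n)\psi(n)+O(\psi(n)\delta(n))$ uniformly for $n\in[N,2N]$. This calibration is essentially identical to that in the proof of Theorem \ref{thm1.3}: the first-moment input requires no modification for the localised conclusion.

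The pivotal ingredient is a pointwise tail bound of the form
\begin{equation*}
\mathbb{P}\bigl(|R_{s}(n;\mathfrak{X}_{k})-\mathfrak{S}(n)\psi(n)|>\tfrac{1}{2}\delta(n)\psi(n)\bigr)\leq\exp\bigl(-C\delta(n)^{2}\psi(n)\bigr),
\end{equation*}
valid for every $n\in[N,2N]$, where the constant $C\geq 1$ may be chosen as large as desired by enlarging $C_{0}$ in (\ref{delll}). This would be obtained by applying Vu's concentration inequality for low-degree polynomials in independent Bernoulli variables to $R_{s}(n;\mathfrak{X}_{k})$, which is a multilinear polynomial of degree $s$ in the indicators $\mathbb{1}_{m\in\Omega}$, coupled with divisor-type bounds on the partial-derivative expectations that are in turn controlled by mean-value estimates for Waring-type equations.

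For each $j\leq N^{\kappa}$, write $\mathcal{E}_{j}(N)\subset\mathcal{M}_{j}(N)$ for the set of $n$ violating (\ref{RsRs2}). Linearity yields $\E|\mathcal{E}_{j}(N)|\leq|\mathcal{M}_{j}(N)|e^{-C\delta^{2}\psi}$, and since $\omega(N)=O(e^{\delta^{2}\psi})$, Markov's inequality alone is insufficient to absorb the $N^{\kappa}$ union bound when $\omega$ approaches its maximal admissible size. The heart of the argument is therefore an exponential concentration estimate for $|\mathcal{E}_{j}(N)|$ obtained by a second-moment computation: for well-separated $n,n'\in\mathcal{M}_{j}(N)$ the events $\{n\in\mathcal{E}_{j}(N)\}$ and $\{n'\in\mathcal{E}_{j}(N)\}$ depend predominantly on disjoint subsets of Bernoulli variables, and the residual correlation is controlled by joint Hardy--Littlewood estimates applied to pairs of targets. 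This should deliver a Chernoff-type bound
\begin{equation*}
\mathbb{P}\bigl(|\mathcal{E}_{j}(N)|>|\mathcal{M}_{j}(N)|/\omega(N)\bigr)\ll N^{-\kappa-2}
\end{equation*}
once $C$ is chosen sufficiently large in terms of $\kappa$; a union bound over $j\leq N^{\kappa}$ followed by a Borel--Cantelli argument over $N$ then extracts a single realisation of $\mathfrak{X}_{k}$ satisfying the claim for every sufficiently large $N$, and the cardinality bound analogous to (\ref{cardi1}) follows from the same concentration applied to $\sum_{m\leq X}\mathbb{1}_{m\in\Omega}$.

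The main obstacle is precisely this joint concentration step. In the extremal regime $\omega(N)\asymp e^{\delta^{2}\psi}$, the allowed exception count $|\mathcal{M}_{j}(N)|/\omega(N)$ barely exceeds the expected count $\E|\mathcal{E}_{j}(N)|$, so any slack must come from exponential decay of higher moments rather than the mean itself. Extracting that exponential saving requires a careful bookkeeping of the combinatorics of representations shared between multiple targets $n\in\mathcal{M}_{j}(N)$, which is a genuinely heavier analogue of the first-moment analysis underlying Theorem \ref{thm1.3} and is the technical crux of the proof.
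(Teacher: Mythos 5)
Your proposal diverges from the paper's proof at the crucial step, and the divergence conceals a genuine gap. The paper never attempts an exponential concentration bound for $\lvert\mathcal{E}_{j}(N)\rvert$ itself. Instead (Theorem \ref{prop12.3} and Section \ref{sec15}) it applies the Vu-type concentration inequality (Proposition \ref{prop81}) to the \emph{aggregate} random variable $R_{\mathfrak{X},s}^{+}(\mathcal{N}_{L})=\sum_{i=1}^{L}R_{\mathfrak{X},s}^{+}(n_{i})$ over every $L$-subset $\mathcal{N}_{L}\subset\mathcal{M}_{j}(N)$, with $L\asymp (\log N)/(\delta(N)^{2}\psi(N))$. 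Since $\mu_{\mathcal{N}_{L}}\asymp L\psi(N)\asymp\log N$, a single application of the inequality with deviation $\delta(N)\mu_{\mathcal{N}_{L}}$ already yields a tail of size $N^{-\kappa_{0}C}$; this is then union-bounded over the $\binom{\lvert\mathcal{M}_{j}\rvert}{L}N^{\kappa}$ choices via Stirling, the constants $c,C,\beta$ in (\ref{kapa}) being tuned so that the sum over $N$ converges. Borel--Cantelli then gives (\ref{pere}) uniformly in $\mathcal{N}_{L}$, and the exception-count bound follows by pigeonhole: if $L$ or more integers in $\mathcal{M}_{j}(N)$ deviated in the same direction by $\geq 3\Xi\delta(N)\psi(N)$, the aggregate sum over those $L$ would violate the concentration. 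No joint Hardy--Littlewood estimate for pairs of targets, and no second- or higher-moment computation for $\lvert\mathcal{E}_{j}(N)\rvert$, is ever needed: the partial-derivative expectations $\mathbb{E}_{A}$ for the aggregate are controlled by exactly the same single-$n$ lemmata (Lemma \ref{lem12.1}) used for Theorem \ref{thm1.3}.

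The gap in your plan lies precisely where you locate the ``technical crux''. A second-moment computation on $\lvert\mathcal{E}_{j}(N)\rvert$ gives at best a Chebyshev-type bound $\mathbb{P}(\lvert\mathcal{E}_{j}\rvert>t)\ll t^{-2}\text{Var}$, which is polynomial; it cannot yield the Chernoff-type bound $\mathbb{P}(\lvert\mathcal{E}_{j}\rvert>\lvert\mathcal{M}_{j}\rvert\omega(N)^{-1})\ll N^{-\kappa-2}$ you need to survive the $N^{\kappa}$ union bound. Obtaining true exponential concentration for $\lvert\mathcal{E}_{j}(N)\rvert$ would require controlling high joint moments of the indicator events $\{n\in\mathcal{E}_{j}\},\{n'\in\mathcal{E}_{j}\},\ldots$ simultaneously, i.e.\ estimates on the number of \emph{shared} representations across many targets $n,n',\ldots\in[N,2N]$, and for distinct $n,n'$ of the same order the representations involve largely overlapping Bernoulli variables, so the events are far from independent. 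This is exactly the correlation analysis in the style of Goguel \cite{Gog} and Landreau \cite{Lan}, and the paper explicitly remarks, after Corollary \ref{cor92}, that such a modification delivers only the global bound $O(Ne^{-\delta(N)^{2}\psi(N)})$ of Theorem \ref{thm1.3} and \emph{not} the localised conclusion of Theorem \ref{thm1.9}. In other words, the route you are trying to take is known to fall short of this particular theorem; the $L$-subset aggregate trick is what makes the localisation to each $\mathcal{M}_{j}(N)$ possible. (A secondary looseness: your claim that the constant $C$ in $e^{-C\delta^{2}\psi}$ can be ``chosen as large as desired by enlarging $C_{0}$'' conflates the fixed implicit constant of the concentration inequality with the size of the deviation threshold; enlarging $C_{0}$ enlarges $\delta^{2}\psi$ but does not alter the constant in front, and this distinction matters precisely because the exceptional rate $\omega(N)^{-1}$ is being pushed to its maximal admissible size $e^{-\delta^{2}\psi}$.)
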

We may then derive the following conclusion in the classical setting of short intervals.
\begin{cor}\label{cor92}
Let $s,k,\delta,\psi$ be as in Theorem \ref{thm1.9}. Then there exists $\mathfrak{X}_{k}=\mathfrak{X}_{k}(s)\subset \Nk$ for which the asymptotic formula (\ref{RsRs2}) holds for all but $O(\log X)$ natural numbers $n\in [X,X+(\log X)e^{\delta(X)^{2}\psi(X)}]$ as $X\to\infty$. Moreover, for every function $\omega(x)$ of uniform growth satisfying $\frac{\log n}{\psi(n)}=o(\omega(n))$ there exists $\mathfrak{X}_{k}=\mathfrak{X}_{k}(s)\subset \Nk$ such that \begin{equation}\label{halamad}R_{s}(n;\mathfrak{X}_{k})\sim\frak{S}(n)\psi(n)\end{equation} for all but $o(\omega(X))$ integers $n\in [X,X+\omega(X)]$ as $X\to\infty$.
\end{cor}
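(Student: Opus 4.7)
My plan is to derive both statements from Theorem \ref{thm1.9} by tailoring the family $(\mathcal{M}_j(N))$ of subsets and the auxiliary function (which I shall call $\omega_0$ or $\omega_1$, to avoid a clash with the corollary's $\omega$) to each regime.

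For the first assertion I would take the auxiliary function of Theorem \ref{thm1.9} to be $\omega_0(N):=e^{\delta(N)^2\psi(N)}$, which is of uniform growth (for a suitably regular choice of $\delta$) and tends to infinity because the lower bound (\ref{delll}) forces $\delta(N)^2\psi(N)\geq\log\psi(N)$. Setting $L(N):=\lfloor(\log N)e^{\delta(N)^2\psi(N)}\rfloor$, I would form the collection $\mathcal{M}_j(N):=[N+j-1,\,N+j-1+L(N)]\cap\mathbb{N}$ for $1\leq j\leq\lfloor N\rfloor$, so that $\kappa=1$ suffices and both size bounds in the hypotheses of Theorem \ref{thm1.9} are immediate. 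The theorem then produces $\mathfrak{X}_k$ for which each $\mathcal{M}_j(N)$ has at most $O(L(N)/\omega_0(N))=O(\log N)$ exceptional integers. For any large $X$, picking the dyadic scale $N$ with $X\in[N,2N]$ and using the uniform growth of $\delta,\psi$ to deduce $L(X)\asymp L(N)$, the interval $[X,\,X+(\log X)e^{\delta(X)^2\psi(X)}]$ meets $O(1)$ of the $\mathcal{M}_j(N)$, producing the desired $O(\log X)$ bound on exceptions.

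The second assertion requires $\delta(n)\to 0$ (so that the error $O(\psi\delta)$ in (\ref{RsRs2}) becomes $o(\psi)$), but also needs $\delta(n)^2\psi(n)\omega(n)/\log n\to\infty$ in order that a length of order $\omega(X)$ be compatible with the lower bound on $|\mathcal{M}|$ in Theorem \ref{thm1.9}. Writing the hypothesis $\log n/\psi(n)=o(\omega(n))$ as $\omega(n)=g(n)\log n/\psi(n)$ with $g(n)\to\infty$, I would set $\delta(n):=\max(\delta_{\min}(n),\,g(n)^{-1/4})$, where $\delta_{\min}$ denotes the lower bound coming from (\ref{delll}); both summands tend to zero, and a short case split on which piece dominates confirms $\delta(n)^2 g(n)\to\infty$, equivalently $\delta(n)^2\psi(n)\omega(n)/\log n\to\infty$. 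I would then choose $\omega_1(N)$ of uniform growth tending to infinity with $\omega_1(N)=O(e^{\delta(N)^2\psi(N)})$, put $\ell(N):=\lceil(\log N)\omega_1(N)/(\delta(N)^2\psi(N))\rceil$ (which by construction falls in the admissible range of $|\mathcal{M}|$), and take $\mathcal{M}_j(N):=[N+j-1,\,N+j-1+\ell(N)]\cap\mathbb{N}$ for $1\leq j\leq\lfloor N\rfloor$. Invoking Theorem \ref{thm1.9} with $\kappa=1$ then delivers the sequence $\mathfrak{X}_k$.

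To finish, for each large $X$ of dyadic scale $N$, I would cover $[X,X+\omega(X)]$ by $O(1+\omega(X)/\ell(N))$ consecutive members of the collection, which gives a total exceptional count of at most
\[
O\!\left(\frac{\log N}{\delta(N)^2\psi(N)}\right)+O\!\left(\frac{\omega(X)}{\omega_1(N)}\right).
\]
Both terms are $o(\omega(X))$ by the construction above: the first because $\delta^2\psi\cdot\omega/\log N\to\infty$, the second because $\omega_1\to\infty$. The principal technical obstacle is precisely this joint calibration of $\delta$ and $\omega_1$ in the second paragraph: $\delta$ must be small enough for the asymptotic (\ref{halamad}) to survive on the non-exceptional set, yet large enough that the admissible range of $|\mathcal{M}|$ in Theorem \ref{thm1.9} intersects lengths comparable to $\omega(X)$, and this is exactly what the regularity hypothesis $\log n/\psi(n)=o(\omega(n))$ enables.
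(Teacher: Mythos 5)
Your proposal is correct, and it is essentially the only reasonable way to derive the corollary from Theorem \ref{thm1.9} — the paper itself does not supply a separate argument, treating the corollary as an immediate consequence. Both parts of your argument (sliding windows $\mathcal{M}_j(N)$ with $\omega_0 = e^{\delta^2\psi}$ for the first claim; the calibration $\delta = \max(\delta_{\min}, g^{-1/4})$ with a slowly growing auxiliary $\omega_1$ for the second) are sound, and the key point — that $\delta\to 0$ and $\delta^2 g\to\infty$ can hold simultaneously — is exactly the content of the regularity hypothesis $\log n/\psi(n) = o(\omega(n))$. Two minor points you could tighten: (i) for $\omega_0(N) = e^{\delta(N)^2\psi(N)}$ to be of uniform growth one should (as you note parenthetically) take $\delta^2\psi$ nondecreasing, which is harmless since (\ref{delll}) is only a lower bound and the conclusion for a larger $\delta$ implies it for a smaller one on the error-exceptional-set side; and (ii) when $\omega(X)$ outgrows $X$ itself the window $[X, X+\omega(X)]$ spans several dyadic blocks $[N,2N], [2N,4N],\dots$, so the covering argument in the last paragraph should be summed over those blocks — but the monotonicity of $\omega_1$ and of $\log N/(\delta^2\psi)$ makes this sum still $o(\omega(X))$, so this is cosmetic.
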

It thereby stems that whenever $\psi(n)=O(\log n)$ is of uniform growth, satisfies (\ref{varphi}), and $k,s$ are as in Corollary \ref{cor92} then (\ref{halamad}) holds for almost all $n\in [X,X+\log X]$ as $X\to\infty$. If moreover $\psi(n)\asymp \log n$ then (\ref{halamad}) holds for any $\omega(x)$ of uniform growth and almost all $n\in [X,X+\omega(X)]$ as $X\to\infty$. We note though that an astute modification of the arguments in \cite{Gog,Lan} would have delivered a similar conclusion concerning the bound $O(Ne^{-\delta(N)^{2}\psi(N)})$ in Theorem \ref{thm1.3} but no further sharpenings as the ones in Theorem \ref{thm1.9} and Corollary \ref{cor92}. Moreover (\ref{sss1}), which in particular entails $R_{s}(n;\mathfrak{X}_{k})=o(\log n)$, holds for sufficiently large $n$, the property failing in view of (\ref{errdi}) being that of constituting an asymptotic basis. Note that almost all results in problems involving the circle method typically show evidence for the veracity of the statements, in contrast to what occurs herein. 

We remark that no estimates concerning (\ref{sss1}) had been obtained hirtherto for $s\geq 3$ (see \cite{Erdos} for a related result when $s=2$). Previous approaches to obtain upper bounds for similar random variables would typically have had their genesis on the Sunflower lemma \cite{ErTe} or on concentration inequalities of Vu \cite{Vu2} or Kim and Vu \cite{Kim}. We employ instead that of Janson and Rucinski \cite{Jan2}, the others delivering conclusions not sufficient for our purposes. 

The starting point of our proof of the above theorems is inspired by that in Vu \cite{Vu} with respect to the probabilistic ideas and in \cite{WooVu} with respect to the circle method input. We depart from the latter in the choice of smooth numbers latent in the analysis, the random sequences considered therein comprising $k$-th powers of integers lying in a subset of the smooth numbers presented via a suitable partition. In contrast, the smoothness condition herein permits one with the aid of new technology \cite{Bru-Woo} to eventually deduce an asymptotic evaluation and enables one to derive pointwise estimates over minor arcs of the same strength than those available in the literature for conventional smooth Weyl sums.

A handful of additional difficulties arise in the context of Theorem \ref{thm1.3} as soon as the number of variables $s$ distants from the thresholds presented in both Theorems \ref{thm1.1} and \ref{thm1.2}. Rather than merely showing that almost all natural numbers may be written as a sum of $s$ positive $k$-th powers, one should moreover prove for almost every $n$ that for every $1\leq d\leq s-1$ and fixed $(y_{1},\ldots,y_{d})\in [1,n^{1/k}]^{d}$, the number of solutions of \begin{equation}\label{eve}n-y_{1}^{k}-\ldots-y_{d}^{k}=x_{d+1}^{k}+\ldots+x_{s}^{k},\ \ \ \ x_{i}\in\NN\end{equation} counted with weights $(x_{d+1}\cdots x_{s})^{-1+k/s}$ with the variables satisfying some smoothing condition is $O(n^{-\tau})$ for some $\tau>0$. The preceding proviso is required for the application of probabilistic concentration inequalities. Such a big collection of additional counting problems drastically impairs the ensuing conclusions with respect to the range of $s$, robust estimates for exceptional sets of natural numbers not represented as sums of positive $k$-th powers being particularly useful in order to bound the above quantities. 

Several complications are encountered in the preceding endeavour. First, as in \cite{WooVu}, one is forced to make a dissection in order to consider sums running over smooth numbers of similar size. When expressing (\ref{eve}) via orthogonality as an integral of smooth Weyl sums and performing the above decompositions, one is left to analyse integrals of the shape
\begin{equation}\label{evel}\int_{0}^{1}\prod_{l=1}^{s-d}g_{s}(\alpha, P_{l},R)e(-\alpha m)d\alpha.\end{equation} Here the parameters $P_{l}$ satisfy $1\leq P_{l}\leq P$ with $P=m^{1/k}$ and
$$g_{s}(\alpha,P,R)=\sum_{x\in \mathcal{A}(P,R)\setminus \mathcal{A}(P/2,R)}x^{-1+k/s}e(\alpha x^{k}),$$ the set of smooth numbers $\mathcal{A}(P,R)$ being defined in (\ref{mucha}). However, in order to apply Bessel's inequality effectively to the end of deriving the strongest bounds possible, it transpires that for each collection $(P_{l})_{l\leq s-d}$ the choice of major and minor arcs should be uniform. This creates recalcitrant situations whenever the sizes of $P_{l_{1}}$ and $P_{l_{2}}$ for some $l_{1}\neq l_{2}$ are significantly different since $g_{s}(\alpha, P_{l_{2}},R)$ over the major arcs cognate to $P_{l_{1}}$ may no longer exhibit suitable major arc behaviour and viceversa.

Moreover, when $s$ is as in Theorems \ref{thm1.3} it stems from \cite[Theorem 5.2]{Bru-Woo} that, upon defining
\begin{equation}\label{prato}f(\alpha,P,R)=\sum_{x\in \mathcal{A}(P,R)}e(\alpha x^{k})\end{equation} then for small fixed $c>0$ and $k$ large one has on the set of extreme minor arcs $\grN(cP^{k/2},P)$ defined in (\ref{Tru}) the estimate
\begin{equation*}\int_{\grN(cP^{k/2},P)}\lvert f(\alpha,P,R)\rvert^{2s}d\alpha\ll P^{2s-k-\log k/10}.\end{equation*}
However, if one were to save a factor of $P^{\log k/10}$ over the trivial bound for the corresponding exceptional set, estimates for the $s$-th moment over truncated minor arcs of the shape $P^{s-k-\delta}$ for some $\delta>0$ should be obtained. This though may no longer be possible with the current knowledge when the height $Q$ associated to such minor arcs is of intermediate size. It is also worth noting in view of (\ref{evel}) that $P_{l}$ may be considerably smaller than $P$, it thereby no longer being possible saving such a factor by means of the above procedure. 

The outlined difficulties are partially surmounted with the aid of the new technology introduced in \cite{Bru-Woo}, which permits one to enlarge the range of $Q$ for which mean values restricted to major arcs of such heights may be appropiately estimated. We then make a division according to the sizes $P_{l}$ and reduce the problem to that of estimating integrals (\ref{evel}) with $(P_{l})_{l\leq s-d}$ being suitably close in size so that major arcs corresponding to some $P_{l_{0}}$ are contained in a moderately enlarged set of major arcs cognate to $P_{l}$ for each $l$.

The exposition is structured as follows. We start in Sections \ref{sec2} and \ref{sec3} by adapting the new machinery in \cite{Bru-Woo} to the setting of weighted smooth Weyl sums over dyadic intervals. Sections \ref{sec4} and \ref{sec5} are devoted to get major arc estimates by combining Abel summation with classical bounds. We perform a prunning process in Section \ref{sec6} which culminates in Section \ref{sec7} with the obtention of an asymptotic formula, an analogous one for almost all numbers being derived in Section \ref{sec8}. Section \ref{sec9} concludes the circle method part with an intrincate analysis to derive estimates for exceptional sets cognate to (\ref{eve}). We prepare the ground for the application of the probabilistic method in Section \ref{sec11}, the proofs of Theorems \ref{thm1.1} and \ref{thm1.2} being completed in Section \ref{sec12}. Sections \ref{sec13} and \ref{sec14} contain the bulk of the probabilistic part of Theorems \ref{thm1.3} and \ref{thm1.9}, the latter being devoted to provide upper bounds for the representation function and the proofs of the theorems being delivered in Section \ref{sec15}. 

\emph{Acknowledgements:}
The author is partially funded by the Curiosity Driven grant ``Value distribution of quantum modular forms'' of the Universita degli Studi di Genova.
\section{Mean value estimates for weighted Weyl sums}\label{sec2}
We shall devote the present section to prepare the ground by adapting the machinery developed in \cite{Bru-Woo} to the context of weighted smooth Weyl sums relevant to our current needs. To such an end we fix $k\geq 2$ and introduce for $R,P\geq 1$ the set of smooth numbers
\begin{equation}\label{mucha}\mathcal{A}(P,R)=\Big\{n\in [1,P]\cap \mathbb{Z}:\ \ p|n\Longrightarrow p\leq R\Big\}\end{equation} and $\tilde{\mathcal{A}}(P,R)=\mathcal{A}(P,R)\setminus \mathcal{A}(P/2,R)$. Recall (\ref{prato}) and define for each $t>0$ the mean value
\begin{equation}\label{eq4}U_{t}(P,R)=\int_{0}^{1}\lvert f(\alpha,P,R)\rvert^{t}d\alpha.\end{equation} We say that $\Delta_{t}\geq 0$ is an admissible exponent if for $\varepsilon>0$ and sufficiently small $\eta>0$ in terms of $k,s,\varepsilon$ then whenever $1\leq R\leq P^{\eta}$ and $P$ is sufficiently large one has $U_{t}(P,R)\ll P^{t-k+\Delta_{t}+\varepsilon},$ the implicit constant depending on $k,s,\varepsilon,\eta$. When $s\geq 2$ we also consider
\begin{equation}\label{fuste}g_{s}(\alpha,P,R)=\sum_{x\in \tilde{\mathcal{A}}(P,R)}x^{-1+k/s}e(\alpha x^{k}),\ \ \ \ \ \ \ \ f_{s}(\alpha,P,R)=\sum_{x\in \mathcal{A}(P,R)}x^{-1+k/s}e(\alpha x^{k}).\end{equation}
For future purposes it may be convenient defining for $q\in\mathbb{N}$ the set
$$\mathscr{C}_{q}(P,R)=\Big\{n\in\mathcal{A}(P,R):\ \ p|n\Longrightarrow p| q\Big\}$$ and denoting $\widetilde{\mathscr{C}}_{q}(P,R)=\mathscr{C}_{q}(P,R)\setminus \mathscr{C}_{q}(P/2,R)$. We also introduce for $M\geq 1$ the sums
$$g^{*}_{s,q}(\alpha,P,M,R)=\sum_{\substack{v\in\mathcal{A}(P,R)\\ v>M, \ \ (v,q)=1}}v^{-1+k/s}\sum_{u\in\widetilde{\mathscr{C}}_{q}(P/v,R)}u^{-1+k/s}e(\alpha(uv)^{k})$$ and 
$$g_{s,q}^{\dag}(\alpha,P,M,R)=\sum_{\substack{v\in\mathcal{A}(M,R)\\ (v,q)=1}}v^{-1+k/s}\sum_{u\in\widetilde{\mathscr{C}}_{q}(P/v,R)}u^{-1+k/s}e(\alpha(uv)^{k}).$$
In view of the preceding definitions, it transpires that for every $q\in \mathbb{N}$ then 
\begin{equation}\label{cafe}g_{s}(\alpha,P,R)=g^{*}_{s,q}(\alpha,P,M,R)+g_{s,q}^{\dag}(\alpha,P,M,R).\end{equation}

Equipped with the above formula we shall compute next mean values over a suitable set of major arcs, it being desirable introducing beforehand for any prime number $\pi$ the sum
$$g_{s,q,\pi}^{*}(\alpha,P,m,R)=\sum_{\substack{w\in\mathcal{A}(P/m,\pi)\\ (w,q)=1}}w^{-1+k/s}\sum_{u\in \widetilde{\mathscr{C}}_{q}(P/mw,R)}u^{-1+k/s}e(\alpha(uw)^{k}).$$ Moreover, we also take for $M\geq 1$ the set
$$\mathscr{B}(M,\pi,R)=\Big\{v\in \mathscr{A}(M\pi,R):\ v>M,\ \pi|v \ \text{and $\pi'|v\Longrightarrow \pi'\geq \pi$}\Big\},$$ and make a Hardy-Littlewood dissection of the unit interval as follows. When $1\leq Q\leq P^{k/2}$ and $q\in\NN$ satisfies $1\leq q\leq Q$ we define $\grM_{q}(Q,P)$ to be the union of the sets
$$\grM_{a,q}(Q,P)=\Big\{\alpha\in [0,1):\ \ \ \lvert q\alpha-a\rvert\leq QP^{-k}\Big\}$$ for $a\in\mathbb{Z}$ with $0\leq a\leq q$ and $(a,q)=1$ and write
$$\grM(Q,P)=\bigcup_{q=1}^{Q}\grM_{q}(Q,P).$$ It may be pertinent to consider for future use the dyadically truncated major arcs
\begin{equation}\label{Tru}\grN(Q,P)=\grM(Q,P)\setminus \grM(Q/2,P)\end{equation} and for $q\leq Q$ the associated collection $\grN_{q}(Q,P)=\grM_{q}(Q,P)\setminus \grM_{q}(Q/2,P).$ We shall often write $\grM_{a,q}(Q)=\grM_{a,q}(Q,P)$. We also consider for $\mathfrak{B}$ being $\mathfrak{M}$ or $\mathfrak{N}$ and $t> 1$ the sum
\begin{equation}\label{eq1}I_{q,t}(M,\mathfrak{B})=\sum_{\pi\leq R}\sum_{\substack{m\in\mathscr{B}(M,\pi,R)\\ (m,q)=1}}m^{-1+k/s}\int_{\mathfrak{B}_{q}(Q,P)}\lvert g_{s,q,\pi}^{*}(\alpha m^{k}, P,m,R)\rvert^{t}d\alpha.\end{equation}
\begin{lem}\label{lem2}
Let $s,t\geq 2$ and $1\leq Q\leq P^{k/2}$. Let $q\in\mathbb{N}$ with $1\leq q\leq Q$ and $M\geq R$. Then
$$\int_{\frak{B}_{q}(Q,P)}\lvert g_{s}(\alpha,P,R)\rvert^{t}d\alpha \ll (R^{1+k/s}M^{k/s})^{t-1}I_{q,t}(M,\frak{B})+QM^{t}P^{\varepsilon-k-t(1-k/s)}.$$
\end{lem}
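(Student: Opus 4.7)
\emph{Proof proposal.} The plan is to invoke the identity (\ref{cafe}) to split $g_s = g_{s,q}^* + g_{s,q}^\dag$, bound the two pieces separately via $|a+b|^t \ll |a|^t+|b|^t$, and show that $g_{s,q}^*$ produces the main term involving $I_{q,t}(M,\mathfrak{B})$ while $g_{s,q}^\dag$ contributes the error $QM^tP^{\varepsilon-k-t(1-k/s)}$.

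For the ``small'' piece $g_{s,q}^\dag(\alpha,P,M,R)$, the outer variable runs over $\mathcal{A}(M,R)$ while the inner variable $u$ lies in $\widetilde{\mathscr{C}}_q(P/v,R)$, a set of $R$-smooth integers in a short interval whose prime factors all divide $q$, hence of cardinality $O(P^{\varepsilon})$ uniformly in $q$. Bounding $u^{-1+k/s}$ by its extremal value on the interval $(P/(2v),P/v]$ and summing trivially over the at most $M$ admissible values of $v$ produces the pointwise estimate $|g_{s,q}^\dag|\ll M P^{\varepsilon-1+k/s}$, the cancellation of the $v$-dependence coming from the identity $v^{-1+k/s}(P/v)^{-1+k/s}=P^{-1+k/s}$. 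Raising to the $t$-th power and integrating against $|\mathfrak{B}_q(Q,P)|\leq 2QP^{-k}$ yields exactly the second term claimed, after relabelling $\varepsilon$.

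For the main piece, the key observation is that every $R$-smooth integer $v>M$ coprime to $q$ admits a unique greedy factorization $v=mw$ obtained by absorbing the prime factors of $v$ into $m$ in decreasing order of size (with multiplicity) until the running product first exceeds $M$. Writing $\pi$ for the last prime so absorbed, this places $m\in\mathscr{B}(M,\pi,R)$ and $w\in\mathcal{A}(P/m,\pi)$, and reorganising the double sum defining $g_{s,q}^*$ yields
\[
g_{s,q}^*(\alpha,P,M,R) = \sum_{\pi\leq R}\sum_{\substack{m\in\mathscr{B}(M,\pi,R)\\ (m,q)=1}} m^{-1+k/s}\, g_{s,q,\pi}^*(\alpha m^k,P,m,R).
\]
An application of Jensen's inequality to the convex function $x\mapsto x^t$ with weights $m^{-1+k/s}$ then delivers
\[
|g_{s,q}^*|^t \leq \Bigl(\sum_{\pi,m} m^{-1+k/s}\Bigr)^{t-1}\sum_{\pi,m} m^{-1+k/s}\, \bigl|g_{s,q,\pi}^*(\alpha m^k,P,m,R)\bigr|^t,
\]
and integrating over $\mathfrak{B}_q(Q,P)$ recovers $I_{q,t}(M,\mathfrak{B})$ on the right. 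The total-mass estimate $\sum_{\pi,m} m^{-1+k/s}\ll R^{1+k/s}M^{k/s}$ follows from parametrising $m=\pi m'$ with $m'\in(M/\pi,M]$, controlling the inner sum by its extremal value, and invoking a crude partial summation bound $\sum_{\pi\leq R}\pi^{k/s}\ll R^{1+k/s}$.

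The principal bookkeeping delicacy lies in verifying that the greedy factorization is genuinely bijective onto the index set appearing in the reorganised sum, so that the smoothness and coprimality constraints of $\mathscr{B}(M,\pi,R)$, $\mathcal{A}(P/m,\pi)$ and $\widetilde{\mathscr{C}}_q(P/(mw),R)$ align consistently with those of $v\in\mathcal{A}(P,R)$ and $u\in\widetilde{\mathscr{C}}_q(P/v,R)$; beyond this the proof is a routine combination of Jensen's inequality with elementary size estimates.
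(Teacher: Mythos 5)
Your proposal follows essentially the same route as the paper: decompose via (\ref{cafe}), bound $g_{s,q}^\dag$ pointwise using $\lvert\widetilde{\mathscr{C}}_q(\cdot,R)\rvert\ll P^{\varepsilon}$ together with the cancellation $v^{-1+k/s}(P/v)^{-1+k/s}=P^{-1+k/s}$, reorganise $g_{s,q}^*$ by the greedy (least-prime-factor) factorization $v=mw$ — which is precisely the content of \cite[Lemma 3.3]{Bru-Woo} that the paper invokes — then apply H\"older (your Jensen step is equivalent) with weights $m^{-1+k/s}$ and integrate against $\lvert\mathfrak{B}_q(Q,P)\rvert\ll QP^{-k}$. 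The one small wrinkle is your intermediate claim $\sum_{\pi\leq R}\pi^{k/s}\ll R^{1+k/s}$, which is not quite the natural step in bounding the total mass (since each $m$ has a unique associated $\pi$, namely its least prime factor, the double sum is at most $\sum_{M<m\leq MR}m^{-1+k/s}$), but your conclusion $\sum_{\pi,m}m^{-1+k/s}\ll R^{1+k/s}M^{k/s}$ is correct, so the argument goes through.
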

\begin{proof}
We draw the reader's attention to (\ref{cafe}) and start by noting that whenever $q\leq Q$ then \cite[Lemma 2.1]{Woo} yields $\lvert \widetilde{\mathscr{C}}_{q}(P,R)\rvert\ll P^{\varepsilon},$ and hence one trivially has
$$g_{s,q}^{\dag}(\alpha,P,M,R)\ll P^{-1+k/s}\sum_{\substack{v\in\mathcal{A}(M,R)\\ (v,q)=1}}\lvert \widetilde{\mathscr{C}}_{q}(P/v,R)\rvert\ll P^{-1+k/s+\varepsilon}M. $$
It also seems worth observing that the argument of \cite[Lemma 3.3]{Bru-Woo} entails
$$g_{s,q}^{*}(\alpha,P,M,R)=\sum_{\pi\leq R}\sum_{\substack{m\in\mathscr{B}(M,\pi,R)\\ (m,q)=1}}m^{-1+k/s}g_{s,q,\pi}^{*}(\alpha m^{k},P,m,R).$$
Consequently, an application of Holder's inequality would deliver the estimate
$$\lvert g_{s,q}^{*}(\alpha,P,M,R)\rvert^{t}\ll (R^{1+k/s}M^{k/s})^{t-1}\Big(\sum_{\pi\leq R}\sum_{\substack{m\in \mathscr{B}(M,\pi,R)\\ (m,q)=1}}m^{-1+k/s}\lvert g_{s,q,\pi}^{*}(\alpha m^{k},P,m,R)\rvert^{t}\Big).$$ Then by observing that $\lvert \frak{B}_{q}(Q,P)\rvert\ll QP^{-k}$ the lemma follows upon recalling (\ref{eq1}) by combining the preceding equations with (\ref{cafe}).  
\end{proof}
In what follows we prepare the ground to obtain a mean value estimate of the shape of \cite[Theorem 4.2]{Bru-Woo}. We thus fix $Q$ satisfying $1\leq Q\leq \frac{1}{2}P^{k/2}R^{-k}$ and $m\in \mathscr{B}(M,\pi,R)$, and
\begin{equation}\label{M}M=P(2Q)^{-2/k}R^{-1}.\end{equation}
The preceding assumptions assure that $R\leq M$ and that whenever $\pi\leq R$ and $m\in \mathscr{B}(M,\pi,R)$ then $m\leq M\pi\leq P(2Q)^{-2/k}$, and thus $Q\leq \frac{1}{2}(P/m)^{k/2},$ which entails that the arcs $\mathfrak{M}_{a,q}(Q,P/m)$ are disjoint. We recall (\ref{eq1}) and apply \cite[Lemma 2.3]{Bru-Woo} to obtain
\begin{equation}\label{eq3}I_{q,t}(M,\mathfrak{B})=\sum_{\pi\leq R}\sum_{\substack{m\in\mathscr{B}(M,\pi,R)\\ (m,q)=1}}m^{-k-1+k/s}\int_{\mathfrak{B}_{q}(Q,P/m)}\lvert g_{s,q,\pi}^{*}(\alpha , P,m,R)\rvert^{t}d\alpha.\end{equation}
In order to make further progress we introduce for $q\in\mathbb{N}$ and a prime $\pi\leq R$ the subset 
$$\mathscr{C}_{q,\pi}(P,R)=\Big\{n\in\mathscr{C}_{q}(P,R):\ \ p|n\Longrightarrow p>\pi\Big\}$$ and $\widetilde{\mathscr{C}}_{q,\pi}(P,R)=\mathscr{C}_{q,\pi}(P,R)\setminus \mathscr{C}_{q,\pi}(P/2,R)$. We then observe that the same argument employed in \cite[Lemma 4.1]{Bru-Woo} permits one to deduce 
\begin{align*}g_{s,q,\pi}^{*}(\alpha,P,m,R)&=\sum_{z\in \mathscr{C}_{q,\pi}(P/m,R)}z^{-1+k/s}\sum_{x\in \tilde{\mathcal{A}}(P/mz,\pi)}x^{-1+k/s}e(\alpha(xz)^{k})
\\
&=\sum_{0\leq j\leq \frac{\log (P/m)}{\log 2}}\ \ \sum_{z\in \widetilde{\mathscr{C}}_{q,\pi}(2^{-j}P/m,R)}z^{-1+k/s}g_{s}(\alpha z^{k},P/mz,\pi),
\end{align*}
where we made a dyadic dissection. Having been furnished with the previous identity we note when $t\geq 2$ that the bound $\lvert \widetilde{\mathscr{C}}_{q}(P,R)\rvert\ll P^{\varepsilon}$ combined with Holder's inequality delivers
\begin{align*}\lvert g_{s,q,\pi}^{*}(\alpha,P,m,R)\rvert^{t}&\ll P^{\varepsilon}\sum_{0\leq j\leq \frac{\log (P/m)}{\log 2}}(P2^{-j}/m)^{(-1+\frac{k}{s})t}\sum_{z\in \widetilde{\mathscr{C}}_{q,\pi}(2^{-j}P/m,R)}\big\lvert g_{s}(\alpha z^{k},P/mz,\pi)\big\rvert^{t}.
\end{align*}
Then, upon denoting
\begin{equation}\label{eq7}V_{t}(\pi,m,z,\mathfrak{B})=\int_{\frak{B}(Q,P/m)}\lvert g_{s}(\alpha z^{k},P/mz,\pi)\rvert^{t}d\alpha\end{equation} we deduce via the previous equation in conjunction with (\ref{eq3}) that 
\begin{align}\label{eq6}\sum_{1\leq q\leq Q}&I_{q,t}(M,\frak{B})\leq \sum_{\pi\leq R}\sum_{\substack{m\in\mathscr{B}(M,\pi,R)}}m^{-k-1+\frac{k}{s}}\sum_{1\leq q\leq Q}\int_{\frak{B}_{q}(Q,\frac{P}{m})}\lvert g_{s,q,\pi}^{*}(\alpha,P,m,R)\rvert^{t}d\alpha
\\ 
&\ll P^{\varepsilon}\max_{P_{0}\leq P} \Big(P_{0}^{(-1+k/s)t}\sum_{\pi\leq R}\sum_{\substack{m\in\mathscr{B}(M,\pi,R)}}m^{-k-1+k/s}\sum_{z\in \widetilde{\mathcal{A}}(P_{0},R)}V_{t}(\pi,m,z,\mathfrak{B})\Big),\nonumber
\end{align}
where we used the fact that the arcs $\frak{M}_{q}(Q,P/m)$ are disjoint as observed right before (\ref{eq3}). In order to bound the above inner sum it is desirable to consider for $Y\leq P$ the mean value
$$V_{t,s}(Y,R)=\int_{0}^{1}\lvert g_{s}(\alpha,Y,R)\rvert^{t}d\alpha,$$
and to furnish ourselves with the next lemma that will be employed throughout the paper. 
\begin{lem}\label{lem1}Let $t=2w$ with $w\in\mathbb{N}$. Let $\Delta_{t}$ be an admissible exponent.  There is some $\eta$ depending on $\varepsilon,k,s$ with the property that whenever $P$ is sufficiently large and $1\leq R\leq P^{\eta}$ then, uniformly in $Y\leq P$ one has  
$$V_{t,s}(Y,R)\ll P^{\varepsilon}Y^{tk/s-k+\Delta_{t}}.$$\end{lem}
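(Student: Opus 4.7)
The plan is to use Abel summation to detach the arithmetic weight $x^{-1+k/s}$ from the oscillatory factor $e(\alpha x^{k})$, thereby reducing the weighted mean value $V_{t,s}(Y,R)$ to averages of the standard unweighted smooth Weyl mean $U_{t}$, at which point the admissibility of $\Delta_{t}$ applies directly. Taking $t=2w$ with $w\in\mathbb{N}$ is convenient since $\|\cdot\|_{L^{t}}$ is then a genuine norm and the triangle inequality can be freely applied.

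The first step will be to introduce, for $Y/2\le u\le Y$, the truncated sum
$$T(u,\alpha)=\sum_{\substack{Y/2<x\le u\\ x\in \mathcal{A}(Y,R)}}e(\alpha x^{k}).$$
Choosing $\eta$ small enough in terms of $\varepsilon,k,s$ so that $R\le u^{\eta}$ throughout the range of interest, the condition $x\in\mathcal{A}(Y,R)$ coincides with $x\in\mathcal{A}(u,R)$ when $x\le u$, whence $T(u,\alpha)=f(\alpha,u,R)-f(\alpha,Y/2,R)$. Then Abel summation with weight $F(u)=u^{-1+k/s}$ gives
$$g_{s}(\alpha,Y,R)=Y^{-1+k/s}T(Y,\alpha)+\bigl(1-k/s\bigr)\int_{Y/2}^{Y}T(u,\alpha)\,u^{-2+k/s}\,du.$$

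The second step will be to take $L^{t}(d\alpha)$-norms on both sides and apply Minkowski's integral inequality to the second term. This yields
$$V_{t,s}(Y,R)^{1/t}\ll Y^{-1+k/s}\|T(Y,\cdot)\|_{t}+\int_{Y/2}^{Y}u^{-2+k/s}\|T(u,\cdot)\|_{t}\,du.$$
The triangle inequality supplies $\|T(u,\cdot)\|_{t}\le U_{t}(u,R)^{1/t}+U_{t}(Y/2,R)^{1/t}$, and admissibility of $\Delta_{t}$, valid because $\eta$ has been chosen small, produces the uniform estimate $\|T(u,\cdot)\|_{t}\ll Y^{1-(k-\Delta_{t})/t+\varepsilon/t}$ for every $u\in[Y/2,Y]$. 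Inserting this bound and performing the elementary integration $\int_{Y/2}^{Y}u^{-2+k/s}\,du\asymp Y^{-1+k/s}$ leaves
$$V_{t,s}(Y,R)^{1/t}\ll Y^{k/s+(\Delta_{t}-k)/t+\varepsilon/t},$$
so that raising to the $t$-th power and absorbing $Y^{\varepsilon}$ into $P^{\varepsilon}$ (allowed since $Y\le P$) delivers the desired bound.

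The main technical obstacle is ensuring that admissibility of $\Delta_{t}$ actually applies throughout $u\in[Y/2,Y]$: this requires $R\le (Y/2)^{\eta}$, which can fail when $Y$ is so small that $Y^{\eta}<R$. In that borderline regime I will instead invoke the trivial estimate $|\tilde{\mathcal{A}}(Y,R)|\ll Y$ together with orthogonality for $t=2w$, interpreting $V_{t,s}(Y,R)$ as a weighted count of solutions to $\sum_{i=1}^{w}x_{i}^{k}=\sum_{j=1}^{w}y_{j}^{k}$ with $x_{i},y_{j}\in\tilde{\mathcal{A}}(Y,R)$, each weighted by $(x_{i}y_{j})^{-1+k/s}\asymp Y^{-2w+2wk/s}$. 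Choosing $\eta$ small enough in terms of $\varepsilon,k,s$ so that $Y^{k-\Delta_{t}}\le P^{\varepsilon}$ throughout the borderline range $Y\le R^{1/\eta}$, the resulting loss is absorbed into the $P^{\varepsilon}$ factor, completing the uniform bound in $Y\le P$.
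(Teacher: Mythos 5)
Your route is genuinely different from the paper's, and in outline it is workable, but as written the borderline case contains a circularity that must be repaired.

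The paper's proof observes that $g_{s}(\alpha,Y,R)$ runs over the dyadic block $Y/2<x\leq Y$, on which $x^{-1+k/s}\asymp Y^{-1+k/s}$; by orthogonality $V_{t,s}(Y,R)$ is a weighted count of solutions of $x_{1}^{k}+\cdots+x_{w}^{k}=x_{w+1}^{k}+\cdots+x_{2w}^{k}$ with each $x_{i}\in\tilde{\mathcal{A}}(Y,R)$, so every weight is $\asymp Y^{tk/s-t}$ and hence $V_{t,s}(Y,R)\ll Y^{tk/s-t}U_{t}(Y,R)$; the uniform estimate $U_{t}(Y,R)\ll P^{\varepsilon}Y^{t-k+\Delta_{t}}$ over $1\leq Y\leq P$ is then taken directly from \cite[Lemma 2.1]{Bru-Woo}. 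Your Abel summation and Minkowski argument is heavier machinery than is needed for a dyadic block where the weight is essentially constant, and you re-prove the $Y\leq P$ uniformity rather than quoting it; both choices are legitimate, but the paper's reduction is quicker. (A small quibble: the identification $T(u,\alpha)=f(\alpha,u,R)-f(\alpha,Y/2,R)$ is an identity that holds for any $R$, not a consequence of choosing $\eta$ small.)

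The genuine problem is your final sentence. You use a single letter $\eta$ both for the lemma's threshold $R\leq P^{\eta}$ and for the admissibility requirement $R\leq u^{\eta}$. With that reading, the borderline range $Y\leq R^{1/\eta}\leq\bigl(P^{\eta}\bigr)^{1/\eta}=P$ is the entire range, and shrinking $\eta$ only \emph{enlarges} $R^{1/\eta}$; the trivial bound then loses a factor $Y^{k-\Delta_{t}}$ which can reach $P^{k-\Delta_{t}}$ with $k-\Delta_{t}\geq 1$, never $P^{\varepsilon}$. The repair is to decouple the two thresholds: let $\eta_{0}$ be an admissibility threshold for $\Delta_{t}$ at level $\varepsilon$, and take the lemma's $\eta$ strictly smaller, say $\eta\leq\eta_{0}\varepsilon/(k-\Delta_{t})$. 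Then whenever admissibility fails one has $Y\ll R^{1/\eta_{0}}\leq P^{\eta/\eta_{0}}$, so $Y^{k-\Delta_{t}}\leq P^{(k-\Delta_{t})\eta/\eta_{0}}\leq P^{\varepsilon}$ and the trivial count closes the argument. Your underlying idea (admissibility for $Y$ not too small, trivial bound otherwise) is exactly what one needs and is presumably the content of the cited \cite[Lemma 2.1]{Bru-Woo}; only the bookkeeping of the two exponents needs fixing.
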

\begin{proof}
We first note that by orthogonality, $V_{t,s}(Y,R)$ equals the number of solutions of $$x_{1}^{k}+\ldots+x_{w}^{k}=x_{w+1}^{k}+\ldots+x_{2w}^{k}$$ counted with weights $$(x_{1}\ldots x_{2w})^{-1+k/s}\asymp Y^{tk/s-t}.$$ It then transpires upon recalling (\ref{eq4}) that $V_{t,s}(Y,R)\ll Y^{tk/s-t}U_{t}(Y,R)$. The lemma follows by combining the preceding equation with \cite[Lemma 2.1]{Bru-Woo}.
\end{proof}
We are now prepared to present the following key proposition. We henceforth establish the convention that unless mentioned otherwise, whenever a statement involves the letter $R$, then for any $\varepsilon> 0$ there is $\eta>0$ such that the statement holds uniformly for $1\leq R\leq P^{\eta}$. 

\begin{prop}\label{prop2.1}
Let $s\geq 2$ and $t= 2w$ for some $w\in\mathbb{N}$ satisfying $\omega\geq 1$. Let $\Delta_{t}$ be an admissible exponent. Then whenever $1\leq Q\leq P^{k/2}$ one has the bound
$$\int_{\frak{M}(Q,P)}\lvert g_{s}(\alpha,P,R)\rvert^{t}d\alpha\ll P^{tk/s-k+\varepsilon}Q^{2\Delta_{t}/k}.$$
\end{prop}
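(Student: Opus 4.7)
The plan is to follow the template of the corresponding result in Bruedern--Wooley, specialized through the machinery of Lemma 2.2 and the reduction (2.6) already set up earlier in this section. The strategy hinges on the particular choice $M = P(2Q)^{-2/k}R^{-1}$ recorded in (2.4), which simultaneously ensures $R \leq M$ and that the truncated major arcs $\mathfrak{M}_{a,q}(Q, P/m)$ remain disjoint for each eligible $m$; these two properties are what make the subsequent rescaling and mean value estimates work in tandem.

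The first step is to apply Lemma 2.2, summed over $1 \leq q \leq Q$, producing
\begin{equation*}
\int_{\mathfrak{M}(Q,P)} |g_s(\alpha, P, R)|^t \, d\alpha \ll (R^{1+k/s}M^{k/s})^{t-1} \sum_{q \leq Q} I_{q,t}(M, \mathfrak{M}) + Q^2 M^t P^{\varepsilon - k - t(1-k/s)}.
\end{equation*}
With $M$ as chosen the error term collapses to $Q^{2-2t/k} R^{-t} P^{tk/s-k+\varepsilon}$, which lies comfortably within the target estimate once $R \leq P^\eta$. For the principal term I would invoke (2.6) to pass the problem onto the mean values $V_t(\pi, m, z, \mathfrak{M})$, then majorise each such mean value by the full unit interval version $V_{t,s}(P/mz, \pi)$, to which Lemma 2.3 applies (the hypothesis $\pi \leq R$ matching the statement of that lemma) and delivers $V_t(\pi,m,z,\mathfrak{M}) \ll P^\varepsilon (P/mz)^{tk/s - k + \Delta_t}$.

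The remaining task is a bookkeeping exercise. Writing $\beta = tk/s - k + \Delta_t$, the sum over $z \in \widetilde{\mathcal{A}}(P_0, R)$ with $z \asymp P_0$ contributes $P_0^{1-\beta}$; summing $m^{-k-1+k/s-\beta}$ over $m > M$ gives $M^{-(t-1)k/s - \Delta_t}$, the exponent being safely less than $-1$; the prime sum over $\pi \leq R$ is swallowed by $P^\varepsilon$. Combining with the prefactors $(R^{1+k/s}M^{k/s})^{t-1}$ and $P_0^{(-1+k/s)t}$ produces, after cancellation, a bound of the form $M^{-\Delta_t} P^{\beta + \varepsilon} P_0^{k+1-t-\Delta_t}$. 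Since in the relevant range of $t$ the $P_0$ exponent is non-positive, the supremum over dyadic $P_0$ is harmless, and substituting $M^{-\Delta_t} = P^{-\Delta_t}(2Q)^{2\Delta_t/k} R^{\Delta_t}$ converts the $M^{-\Delta_t}$ into the desired factor $Q^{2\Delta_t/k}$, completing the proposition.

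The principal obstacle is the bookkeeping in the final step: several arithmetic thresholds on $t$ (roughly $t \geq k+1-\Delta_t$ for the $P_0$-optimization and $t \geq k-\Delta_t$ for absorption of the Lemma 2.2 error term) must be checked to lie within the working range, and the various accumulated powers of $R$ from the iterated mean-value inequalities must be swept into the $P^\varepsilon$ slack afforded by choosing the admissibility parameter $\eta$ small enough. Beyond this bookkeeping the argument is a direct adaptation of the Bruedern--Wooley strategy to the weighted generating functions $g_s$ through the intermediate lemmas already erected.
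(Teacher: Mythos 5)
Your argument reproduces the paper's treatment of the main case, and your bookkeeping is correct: the prefactor $(R^{1+k/s}M^{k/s})^{t-1}$ combines with the $m$-sum to leave $M^{-\Delta_t}$, the $P_0$-exponent $k-t+1-\Delta_t$ is indeed non-positive because $\Delta_t\geq k-t/2$ (diagonal solutions), the spare powers of $R$ are absorbed into $P^\varepsilon$, and substituting $M=P(2Q)^{-2/k}R^{-1}$ produces the factor $Q^{2\Delta_t/k}$ while the Lemma~\ref{lem2} error term $Q^{2-2t/k}R^{-t}P^{tk/s-k+\varepsilon}$ is dominated because $\Delta_t\geq k-t$.

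There is, however, a genuine gap: your opening assertion that the choice $M=P(2Q)^{-2/k}R^{-1}$ ``simultaneously ensures $R\leq M$'' and the disjointness of $\mathfrak{M}_{a,q}(Q,P/m)$ is only valid when $Q\leq \tfrac{1}{2}P^{k/2}R^{-k}$, whereas the proposition ranges over $1\leq Q\leq P^{k/2}$. For $Q$ near $P^{k/2}$ one has $M<R$ (indeed $M\asymp R^{-1}$ at the endpoint), so the hypothesis $M\geq R$ of Lemma~\ref{lem2} fails, and the arcs $\mathfrak{M}_{a,q}(Q,P/m)$ for $m\in\mathscr{B}(M,\pi,R)$ are no longer guaranteed disjoint, which invalidates the derivation of (\ref{eq3}) and hence (\ref{eq6}). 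The paper handles this by splitting at $Q=\tfrac{1}{2}P^{k/2}R^{-k}$: in the complementary range $\tfrac{1}{2}P^{k/2}R^{-k}\leq Q\leq P^{k/2}$, one bypasses the whole dissection machinery and simply bounds the integral over $\mathfrak{M}(Q,P)$ by the unrestricted mean value $V_{t,s}(P,R)\ll P^{tk/s-k+\Delta_t+\varepsilon}$ via Lemma~\ref{lem1}, which suffices because in this regime $Q^{2\Delta_t/k}\gg P^{\Delta_t}R^{-2\Delta_t}\gg P^{\Delta_t-\varepsilon}$. You need to add this case; once you do, your argument for the range $Q<\tfrac{1}{2}P^{k/2}R^{-k}$ is essentially the paper's.
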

\begin{proof}
When $\frac{1}{2}P^{k/2}R^{-k}\leq Q\leq P^{k/2}$, it transpires that the previous lemma yields
$$\int_{\frak{M}(Q,P)}\lvert g_{s}(\alpha,P,R)\rvert^{t}d\alpha\ll V_{t,s}(P,R)\ll P^{tk/s-k+\Delta_{t}+\varepsilon}\ll P^{tk/s-k+\varepsilon}Q^{2\Delta_{t}/k},$$as desired. If instead $Q<\frac{1}{2}P^{k/2}R^{-k}$ we observe by (\ref{eq6}) and a change of variables that
\begin{align*}\sum_{ q\leq Q}I_{q,t}(M,\frak{M})&\ll P^{\varepsilon}\max_{P_{0}\leq P}\Big(P_{0}^{(-1+k/s)t}\sum_{\pi\leq R}\sum_{\substack{m\in\mathscr{B}(M,\pi,R)}}m^{-k-1+k/s}\sum_{z\in \tilde{\mathcal{A}}(P_{0},R)}V_{t,s}(P/mz,R)\Big)
\\
&\ll P^{\varepsilon}\max_{P_{0}\leq P}\Big(P_{0}^{(-1+k/s)t+1}\sum_{\pi\leq R}\sum_{\substack{m\in\mathscr{B}(M,\pi,R)}}m^{-k-1+k/s}(P/mP_{0})^{tk/s-k+\Delta_{t}}\Big),
\end{align*}
where we used Lemma \ref{lem1}. We then note that $t\geq 2$ and $\Delta_{t}\geq k-t/2$, the latter being a consequence of the presence of diagonal solutions in the equation underlying (\ref{eq4}), whence
\begin{align*}\sum_{ q\leq Q}I_{q,t}(M,\frak{M})&\ll \max_{P_{0}\leq P}\big(P_{0}^{k-t+1-\Delta_{t}}P^{\frac{tk}{s}-k+\Delta_{t}+\varepsilon}M^{-\Delta_{t}-(t-1)\frac{k}{s}}\big)\ll P^{\frac{tk}{s}-k+\Delta_{t}+\varepsilon}M^{-\Delta_{t}-(t-1)\frac{k}{s}}.
\end{align*}
Therefore, Lemma \ref{lem2} in conjunction with (\ref{M}) and the preceding equation delivers
\begin{align*}\int_{\frak{M}(Q,P)}\lvert g_{s}(\alpha,P,R)\rvert^{t}d\alpha&\ll \sum_{1\leq q\leq Q}\int_{\frak{M}_{q}(Q,P)}\lvert g_{s}(\alpha,P,R)\rvert^{t}d\alpha\ll P^{\frac{tk}{s}-k+\varepsilon}Q^{\frac{2\Delta_{t}}{k}}+P^{\frac{tk}{s}-k+\varepsilon}Q^{2-\frac{2t}{k}}.
\end{align*}  The lemma then follows by inserting the aforementioned inequality pertaining $\Delta_{t}$. 
\end{proof}

\section{Mean values restricted to minor arcs}\label{sec3}
We shall next obtain a pointwise bound for the weighted smooth exponential sum at hand which shall be more effective on extreme sets of minor arcs.
\begin{lem}\label{lem3}
Let $l=2w$ with $w\in\mathbb{N}$, let $\Delta_{l}$ be an admissible exponent and $s\geq 2k$. Let $b\in\mathbb{Z},$ $r\in\mathbb{N}$ with $(b,r)=1$ and let $\Theta=r+P^{k}\lvert r\alpha-b\rvert$. Then, one has
$$g_{s}(\alpha,P,R)\ll \Theta^{\varepsilon}P^{k/s+\varepsilon}\big(P^{\Delta_{l}}(\Theta^{-1}+P^{-k/2}+\Theta P^{-k})\big)^{2/l^{2}}+(P\Theta )^{\varepsilon}.$$
\end{lem}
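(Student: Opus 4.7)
I would reduce the weighted smooth exponential sum to an unweighted smooth Weyl sum via partial summation, and then invoke a classical Wooley-type pointwise bound for the latter. The hypothesis $s\geq 2k$ forces $-1+k/s\in[-1,-1/2]$, so that the weight $x^{-1+k/s}$ is smooth and of one sign on the dyadic range $\tilde{\mathcal{A}}(P,R)$; partial summation then yields
$$|g_s(\alpha,P,R)|\ll P^{-1+k/s}\sup_{P/2\leq Y\leq P}|f(\alpha,Y,R)|,$$
and it therefore suffices to prove, uniformly for $Y\asymp P$, the pointwise bound
$$f(\alpha,Y,R)\ll\Theta^{\varepsilon}Y^{1+\varepsilon}\bigl(Y^{\Delta_l}(\Theta^{-1}+Y^{-k/2}+\Theta Y^{-k})\bigr)^{2/l^2}+(Y\Theta)^{\varepsilon}.$$
Multiplying by $P^{-1+k/s}$ and using $Y\asymp P$ would then recover the claim.

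The pointwise bound for $f(\alpha,Y,R)$ I would derive by the now-standard recipe combining Weyl-differencing, Dirichlet approximation and the admissible $l$-th moment
$$\int_{0}^{1}|f(\alpha,Y,R)|^{l}\,d\alpha\ll Y^{l-k+\Delta_l+\varepsilon}$$
furnished by Lemma \ref{lem1}. Concretely, one raises $|f|$ to the $l^{2}$-th power by iterated Cauchy-Schwarz, extracts a Gauss-sum factor of classical shape $\Theta^{-1}+Y^{-k/2}+\Theta Y^{-k}$ by exploiting the rational approximation $b/r$, and balances this against the completed mean-value bound with $Y^{\Delta_l}$ appearing through admissibility. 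The exponent $2/l^{2}$ then emerges as the natural reciprocal of the number of Cauchy-Schwarz iterations once one takes $l^{2}$-th roots.

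\textbf{Main obstacle.} The principal delicacy lies in the fact that the approximating fraction $b/r$ need not be the Dirichlet convergent closest to $\alpha$: it is merely a reduced rational whose quality is measured by the general parameter $\Theta=r+P^{k}|r\alpha-b|$. Transferring between such an arbitrary approximation and the canonical convergent extracted by Dirichlet's theorem, whilst retaining the $\Theta^{\varepsilon}$ factor in the main term and without degrading the exponent $2/l^{2}$, is what produces the additive $(P\Theta)^{\varepsilon}$ remainder and constitutes the most bookkeeping-heavy part of the argument. A subsidiary concern is to ensure that the mean-value input from Lemma \ref{lem1} feeds into the pointwise bound with the admissible $\Delta_{l}$ undiminished, and here the condition $s\geq 2k$ appears to be just enough to allow the partial-summation reduction without introducing any such loss.
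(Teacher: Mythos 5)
Your route — dyadic partial summation to reduce $g_s$ to the unweighted smooth Weyl sum $f(\alpha,Y,R)$, then apply a Wooley-type pointwise bound, then transfer from the Dirichlet convergent to the arbitrary reduced fraction $b/r$ — is exactly the structure of the paper's proof. Two remarks, one of emphasis and one of attribution. First, you need not \emph{re-derive} the pointwise bound by iterated Cauchy--Schwarz and Weyl differencing: the paper simply invokes \cite[Lemma 3.1]{Woo2}, which already furnishes
\[
f(\alpha,Y,R)\ll q^{\varepsilon}Y^{1+\varepsilon}\big(Y^{\Delta_{l}}(q^{-1}+Y^{-k/2}+qY^{-k})\big)^{2/l^{2}}+Y^{1/2+\varepsilon}
\]
for the Dirichlet convergent $a/q$ with $|\alpha-a/q|\leq q^{-2}$, so that part of your plan is solved by citation. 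Second, you attribute the additive $(P\Theta)^{\varepsilon}$ remainder to the "bookkeeping" of passing from the convergent to $b/r$, but in fact the transference is handled in a single stroke by the transference principle \cite[Lemma 14.1]{Woo33}, which replaces $q$ by $\Theta$ throughout without producing additional terms. The $(P\Theta)^{\varepsilon}$ remainder actually originates in the $Y^{1/2+\varepsilon}$ error of the cited pointwise bound: after multiplying by the weight $\asymp P^{-1+k/s}$ from partial summation it becomes $P^{-1/2+k/s+\varepsilon}$, and the hypothesis $s\geq 2k$ (i.e.\ $k/s\leq 1/2$) is precisely what makes this $\ll P^{\varepsilon}$. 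So your intuition about the role of $s\geq 2k$ is right, but it acts on Wooley's secondary error term through the partial summation, not on the transference step.
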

\begin{proof}
We take $a\in\mathbb{Z}$ and $q\in\mathbb{N}$ with $(a,q)=1$ and $\lvert \alpha-a/q\rvert\leq 1/q^{2}$, and employ partial summation combined with \cite[Lemma 3.1]{Woo2} in the same vein as in \cite[Lemma 5.1]{Bru-Woo} to derive
\begin{align*}g_{s}(\alpha,P,R)\ll &q^{\varepsilon}P^{k/s+\varepsilon}\big(P^{\Delta_{l}}(q^{-1}+P^{-k/2}+q P^{-k})\big)^{2/l^{2}}+P^{-1/2+k/s+\varepsilon}
\\
&+\int_{P/2}^{P}y^{-2+k/s}\Big\lvert\sum_{x\in\mathcal{A}(y,R)\setminus \mathcal{A}(P/2,R)}e(\alpha x^{k})\Big\rvert dy.
\end{align*}
In order to examine the preceding integral $I_{P}$, we apply the aforementioned lemma to obtain
\begin{align*}I_{P}\ll& q^{\varepsilon}\int_{P/2}^{P}y^{-1+k/s+\varepsilon}\big(y^{\Delta_{l}}(q^{-1}+y^{-k/2}+qy^{-k})\big)^{2/l^{2}}dy+\int_{P/2}^{P}y^{-3/2+k/s+\varepsilon}dy.
\end{align*}
Computing the above integrals and combining the previous equations one gets
\begin{equation*}g_{s}(\alpha,P,R)\ll q^{\varepsilon}P^{k/s+\varepsilon}\big(P^{\Delta_{l}}(q^{-1}+P^{-k/2}+q P^{-k})\big)^{2/l^{2}}+(qP)^{\varepsilon}.\end{equation*}
The statement of the lemma thereby follows replacing $q$ in the preceding equation by $\Theta=r+P^{k}\lvert r\alpha-b\rvert$ via the transference principle (see \cite[Lemma 14.1]{Woo33}).
\end{proof}

We shall explore the potential of the preceding analysis and suppose that $(\Delta_{2w})_{w\in\mathbb{N}}$ is a collection of admissible exponents. We then observe by the equation before (5.1) of \cite{Bru-Woo} that \begin{equation}\label{taucer}\tau(k)\leq 1/4k,\end{equation} where $\tau(k)$ was defined in (\ref{eq67}). We also introduce for any real number $t\geq 2$ the parameter
\begin{equation}\label{eq8}\Delta_{t}^{*}=\min_{\substack{1\leq v\leq t/2\\ v\in\NN}}\Big(\max\Big(\Delta_{2v}-(t-2v)\tau(k), \Delta_{2v}-(t-2v)k/s\Big)\Big)\end{equation} and say that $\Delta_{t}^{*}$ is an \emph{admissible exponent for minor arcs}. The reader may observe that the preceding definition differs midly from that in \cite[(5.3)]{Bru-Woo}, though in practice the results involving it shall ultimately deliver consequences of the same strength.
\begin{prop}\label{prop222} Whenever $t\geq 2$ with $s\geq 2k$ and $1\leq Q\leq \frac{1}{2}P^{k/2}R^{-k}$ one has
$$\int_{\grN(Q,P)}\lvert g_{s}(\alpha,P,R)\rvert^{t}d\alpha\ll P^{tk/s-k+\varepsilon}Q^{2\Delta_{t}^{*}/k}.$$
\end{prop}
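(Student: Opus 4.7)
The plan is to interpolate between a pointwise bound for $g_s$ on extreme minor arcs, provided by Lemma \ref{lem3}, and the major arc mean value estimate of Proposition \ref{prop2.1}, via a H\"older-type decomposition governed by an integer $v$ with $1\leq v\leq t/2$.

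First, for $\alpha\in\grN(Q,P)$ I would verify that the quantity $\Theta$ appearing in Lemma \ref{lem3} satisfies $\Theta\asymp Q$ for an appropriate choice of approximating pair. Since $\alpha\in\grM(Q,P)$ there exist coprime $(b,r)$ with $r\leq Q$ and $|r\alpha-b|\leq QP^{-k}$, whence $\Theta:=r+P^k|r\alpha-b|\leq 2Q$; and since $\alpha\notin\grM(Q/2,P)$, any coprime pair $(b',r')$ must have either $r'>Q/2$ or $|r'\alpha-b'|>(Q/2)P^{-k}$, which forces $\Theta'\gg Q$. Combined with $Q\leq\tfrac{1}{2}P^{k/2}R^{-k}$, which ensures $\Theta^{-1}+P^{-k/2}+\Theta P^{-k}\ll Q^{-1}$, Lemma \ref{lem3} applied with $l=2w$ supplies
\[
g_s(\alpha,P,R)\ll P^{k/s+\varepsilon}\bigl(P^{\Delta_{2w}}/Q\bigr)^{1/(2w^2)}\qquad(\alpha\in\grN(Q,P)).
\]

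Next, for each $v\in\NN$ with $1\leq v\leq t/2$, the inclusion $\grN(Q,P)\subset\grM(Q,P)$ gives
\[
\int_{\grN(Q,P)}|g_s|^t\,d\alpha\leq \Bigl(\sup_{\alpha\in\grN(Q,P)}|g_s(\alpha,P,R)|\Bigr)^{t-2v}\int_{\grM(Q,P)}|g_s|^{2v}\,d\alpha,
\]
and Proposition \ref{prop2.1} bounds the second factor by $\ll P^{2vk/s-k+\varepsilon}Q^{2\Delta_{2v}/k}$. For the supremum I would invoke the pointwise bound above at $w=w_0$ realising $\tau(k)=(k-2\Delta_{2w_0})/(4w_0^2)$: once $Q\gtrsim P^{\Delta_{2w_0}}$, the identity $4w_0^2\tau(k)=k-2\Delta_{2w_0}$ allows one to recast this as effective savings of order $Q^{-2\tau(k)/k}$ relative to the trivial bound, whereas below that threshold one falls back on the trivial estimate $g_s\ll P^{k/s+\varepsilon}$ and extracts a $Q^{-2/s}$ saving by trading powers of $P$ for powers of $Q$ using $Q\leq P^{k/2}$. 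Raising each alternative to the $(t-2v)$-th power and combining with the mean value yields the two $Q$-exponents $2(\Delta_{2v}-(t-2v)\tau(k))/k$ and $2(\Delta_{2v}-(t-2v)k/s)/k$; taking the larger (to cover both regimes uniformly) and then minimising over $v$ produces the admissible exponent $\Delta_t^*$ from (\ref{eq8}), whence the claimed bound.

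The principal difficulty lies in orchestrating the interplay between the two regimes so that the max-structure of $\Delta_t^*$ emerges seamlessly across the whole range $1\leq Q\leq \tfrac{1}{2}P^{k/2}R^{-k}$, particularly in controlling the extraneous $P$-factors that arise when translating the trivial pointwise bound into $Q$-savings, and in matching the two regimes at the crossover $Q\sim P^{\Delta_{2w_0}}$. The remainder of the argument is then a matter of careful bookkeeping of the admissibility exponents and of the small parameter $\eta$ governing the smoothness window $R\leq P^{\eta}$.
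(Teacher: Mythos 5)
There is a genuine gap at the heart of the proposal: the pointwise bound you extract from Lemma \ref{lem3} on $\grN(Q,P)$ is not strong enough to give a $Q^{-2\tau(k)/k}$ saving when $Q$ is of intermediate height, and this is not a bookkeeping issue but a structural obstruction. From Lemma \ref{lem3} with $l=2w_0$ and $\Theta\asymp Q$ you correctly obtain $g_s(\alpha,P,R)\ll P^{k/s+\varepsilon}(P^{\Delta_{2w_0}}/Q)^{1/(2w_0^2)}$ on $\grN(Q,P)$. But this is \emph{not} $\ll P^{k/s+\varepsilon}Q^{-2\tau(k)/k}$ for $Q<P^{k/2}$: writing $\tau(k)=(k-2\Delta_{2w_0})/(4w_0^2)$, the ratio of your bound to the target is
$$\frac{P^{\Delta_{2w_0}/(2w_0^2)}Q^{-1/(2w_0^2)}}{Q^{-(k-2\Delta_{2w_0})/(2w_0^2k)}}=\Big(\frac{P^{k/2}}{Q}\Big)^{\Delta_{2w_0}/(w_0^2k)},$$
which is $\geq 1$ throughout the range $Q\leq \tfrac12 P^{k/2}R^{-k}$ and strictly exceeds any fixed power of $P$ when $Q=P^{\theta}$ with $\theta<k/2$ bounded away from $k/2$. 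Consequently, when you raise the supremum to the power $t-2v$ and multiply by the major-arc mean value from Proposition \ref{prop2.1}, you pick up an unwanted factor $P^{(t-2v)\Delta_{2w_0}/(2w_0^2)}$ that cannot be absorbed: for $Q=P^{\theta}$ with $\theta$ small this factor is a genuine positive power of $P$, and no rearrangement ``trading $P$ for $Q$'' can remove it (the inequality $Q\leq P^{k/2}$ produces bounds in the wrong direction). Your fallback branch, using the trivial estimate $g_s\ll P^{k/s+\varepsilon}$ together with the $2v$-th moment, produces $P^{tk/s-k+\varepsilon}Q^{2\Delta_{2v}/k}$ with no extra $Q^{-2(t-2v)/s}$ saving, so neither branch reaches $Q^{2\Delta_t^*/k}$.

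The paper avoids this precisely by \emph{not} applying Lemma \ref{lem3} directly to $g_s(\alpha,P,R)$. Instead it first passes through Lemma \ref{lem2}, the identity (\ref{cafe}) and the sums $g_{s,q,\pi}^{*}$, with the parameter $M=P(2Q)^{-2/k}R^{-1}$. This makes the relevant exponential sums $g_s(\alpha z^k,P/mz,\pi)$ run over lengths $P/mz\asymp (2Q)^{2/k}/z$ (up to $R$), so that $Q$ is commensurate with $(P/mz)^{k/2}$: the arcs $\grN(Q,P/m)$ are \emph{extreme} for these shorter sums, and Lemma \ref{lem3} \emph{does} yield the full $(P/mz)^{-\tau(k)}$ saving relative to the shorter trivial bound $(P/mz)^{k/s}$. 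Plugging this into $V_t(\pi,m,z,\frak{N})$ in (\ref{eq7}), inserting into (\ref{eq6}), and unwinding via Lemma \ref{lem2} with (\ref{M}) then produces exactly $P^{tk/s-k+\varepsilon}Q^{2\Delta_t^*/k}$ (plus a term $Q^{2-2t/k}$ which is absorbed). Your interpolation idea between a minor-arc supremum and a major-arc mean value is the right shape of argument, but it must be run on the \emph{factored} sums at scale $P/m$ rather than on $g_s(\alpha,P,R)$ itself; applied at the original scale $P$ the pointwise input is too weak and the proof does not close.
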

\begin{proof}
We recall (\ref{M}) and observe by the discussion after equation (5.5) of \cite{Bru-Woo} that whenever $\alpha\in\grN(Q,P/m)$ for some $m\in \mathscr{B}(M,\pi,R)$, there exist $b\in\mathbb{Z}$ and $r\in\mathbb{N}$ with $(b,r)=1$ and
$$\frac{1}{2}Qz^{-k}<r+\Big(\frac{P}{mz}\Big)^{k}\lvert r(\alpha z^{k})-b\rvert\leq 2Q.$$
Equipped with this remark we deduce from Lemma \ref{lem3} that for $\pi\leq R$ prime then
$$g_{s}(\alpha z^{k},P/mz,\pi)\ll P^{\varepsilon}\Big(\frac{P}{mz}\Big)^{k/s}\Big(\Big(\frac{P}{mz}\Big)^{\Delta_{l}}\Big(z^{k}/Q+\Big(\frac{mz}{P}\Big)^{k/2}+\Big(\frac{mz}{P}\Big)^{k}Q\Big)\Big)^{2/l^{2}}+P^{\varepsilon}.$$
We next take $l=2w$ corresponding to the maximum in (\ref{eq67}) and note upon recalling (\ref{M}) that whenever $M\leq m\leq MR$ and $\alpha\in\grN(Q,P/m)$ it transpires that
\begin{align*}g_{s}(\alpha z^{k},P/mz,\pi)&\ll P^{\varepsilon}\Big(\frac{P}{mz}\Big)^{k/s+(\Delta_{l}-k/2)\frac{2}{l^{2}}}z^{\frac{k}{l^{2}}}+P^{\varepsilon}\ll P^{\varepsilon}\Big(\frac{P}{mz}\Big)^{k/s-\tau(k)}z^{\frac{k}{l^{2}}}+P^{\varepsilon}.
\end{align*}
We consider $v$ minimising (\ref{eq8}) and set $t=t_{0}+2v$. Recalling equation (\ref{eq7}) one has
\begin{align*}V_{t}(\pi, m,z,\frak{N})&\ll P^{\varepsilon}\Big(\Big(\frac{P}{mz}\Big)^{t_{0}(k/s-\tau(k))}z^{t_{0}k/l^{2}}+1\Big)\int_{0}^{1}\lvert g_{s}(\alpha z^{k}, P/mz, \pi)\rvert^{2v}d\alpha.
\\
&\ll  P^{\varepsilon}\Big(\frac{P}{mz}\Big)^{tk/s-k+\Delta_{t}^{*}}z^{t_{0}k/l^{2}}+P^{\varepsilon}(P/mz)^{2vk/s-k+ \Delta_{2v}},
\end{align*}
where in the last step we routinarily applied orthogonality in conjunction with Lemma \ref{lem1}. We draw the reader's attention back to (\ref{eq6}) and apply the preceding bound to get
\begin{align*}\sum_{1\leq q\leq Q}I_{q,t}(M,\frak{N})\ll& P^{tk/s-k+\Delta_{t}^{*}+\varepsilon}M^{-(t-1)k/s-\Delta_{t}^{*}}\max_{P_{0}\leq P} P_{0}^{-t+k+1-\Delta_{2v}+2t_{0}(k-\Delta_{l})/l^{2}}
\\
&+P^{2vk/s-k+\Delta_{2v}+\varepsilon}M^{(1-2v)k/s-\Delta_{2v}}\max_{P_{0}\leq P}P_{0}^{-t+k+1-\Delta_{2v}+t_{0}k/s}.
\end{align*}

We observe next that as a consequence of the condition $t\geq 2$ and the inequalities $\Delta_{l}\geq k-l/2$ and $\Delta_{2v}\geq k-v$, as was observed in Proposition \ref{prop2.1}, one gets
$$-t+k+1-\Delta_{2v}+\max(2t_{0}(k-\Delta_{l})/l^{2},t_{0}k/s)\leq -t+1+\frac{t_{0}}{2}+v\leq 1-t/2\leq 0,$$
where we employed the provisos $s\geq 2k$ and $l\geq 2$. Combining the above bounds entails
$$\sum_{1\leq q\leq Q}I_{q,t}(M,\frak{N})\ll P^{tk/s-k+\Delta_{t}^{*}+\varepsilon}M^{-(t-1)k/s-\Delta_{t}^{*}}.$$
We may conclude the proof by recalling (\ref{M}) and observing that Lemma \ref{lem2} yields
\begin{align*}\int_{\frak{N}(Q,P)}\lvert g_{s}(\alpha,P,R)\rvert^{t}d\alpha& \ll P^{\varepsilon}M^{(t-1)k/s}\sum_{1\leq q\leq Q}I_{q,t}(M,\frak{N})+Q^{2}M^{t}P^{\varepsilon-k-t(1-k/s)}
\\
&\ll P^{tk/s-k+\varepsilon}\Big(Q^{2\Delta_{t}^{*}/k}+Q^{2-2t/k}\Big)\ll P^{tk/s-k+\varepsilon}Q^{2\Delta_{t}^{*}/k},\end{align*} where in the last step we used (\ref{taucer}), the inequality $\Delta_{2v}\geq k-v$ and $s\geq 2k$ to deduce 
$$\Delta_{t}^{*}\geq \max\Big(k-v-(t-2v)k/s,k-v-\frac{(t-2v)}{4k}\Big)\geq k-t/2\geq k-t.$$
\end{proof}
It thereby transpires that we have prepared the ground to obtain mean value estimates over minor arcs, these being defined for $1\leq Q\leq P^{k/2}$ by means of $\grm(Q)=[0,1)\setminus \grM(Q,P).$
\begin{prop}\label{prop2222}
Let $t\geq 2$ and $s\geq 2k$. Let $\theta>0$ be a real number satisfying $\theta\leq k/2$. Then, when $P^{\theta}\leq Q\leq P^{k/2}$ and $\Delta_{t}^{*}<0$ one has the estimate
$$\int_{\grm(Q)}\lvert g_{s}(\alpha, P, R)\rvert^{t}d\alpha\ll_{\theta} P^{tk/s-k+\varepsilon}Q^{-2\lvert \Delta_{t}^{*}\rvert/k}. $$
\end{prop}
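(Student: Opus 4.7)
The plan is to effect a dyadic decomposition of $\grm(Q)$ into truncated minor arc sets $\grN(Q_{j},P)$ with $Q_{j}=2^{j}Q$, apply Proposition \ref{prop222} on each piece where that proposition is available, and handle by an interpolation argument the few topmost dyadic blocks for which $Q_{j}$ falls outside the range of Proposition \ref{prop222}.

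First, by Dirichlet's theorem one has $[0,1)=\grM(P^{k/2},P)$. Letting $J$ be the least integer with $Q_{J}\geq P^{k/2}$ (so $J\ll_{\theta}\log P$), the sets $\grN(Q_{j},P)$ are pairwise disjoint and
\[
\grm(Q)=\bigcup_{j=1}^{J}\grN(Q_{j},P),
\]
whence the target integral splits as a sum of $O_{\theta}(\log P)$ integrals. For every $j$ with $Q_{j}\leq\tfrac{1}{2}P^{k/2}R^{-k}$, Proposition \ref{prop222} applies directly and yields
\[
\int_{\grN(Q_{j},P)}|g_{s}(\alpha,P,R)|^{t}\,d\alpha\ll P^{tk/s-k+\varepsilon}Q_{j}^{-2|\Delta_{t}^{*}|/k}.
\]

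For the remaining $O(\log R)$ blocks, namely those with $\tfrac{1}{2}P^{k/2}R^{-k}<Q_{j}\leq P^{k/2}$, each such $Q_{j}$ satisfies $Q_{j}\asymp P^{k/2}$ up to a factor $R^{k}=P^{O(\eta)}$ absorbable in $P^{\varepsilon}$. On any $\alpha\in\grN(Q_{j},P)$, combining Dirichlet's approximation with the complement of $\grM(Q_{j}/2,P)$ gives that the quantity $\Theta=r+P^{k}|r\alpha-b|$ arising from an approximation with $(b,r)=1$ and $r\leq P^{k/2}$ necessarily satisfies $\Theta\geq Q_{j}/2\gg P^{k/2-O(\eta)}$. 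Choosing $l=2w$ with $w$ attaining the maximum in (\ref{eq67}) and invoking Lemma \ref{lem3} then produces the pointwise bound
\[
|g_{s}(\alpha,P,R)|\ll P^{k/s-\tau(k)+\varepsilon}.
\]
Now let $v\in\NN$ be the index realising the minimum in (\ref{eq8}) and write $t_{0}=t-2v\geq 0$. Raising the pointwise bound to the power $t_{0}$ and applying Lemma \ref{lem1} to handle the remaining $L^{2v}$ mass yields
\[
\int_{\grN(Q_{j},P)}|g_{s}|^{t}\,d\alpha\leq\Big(\sup_{\grN(Q_{j},P)}|g_{s}|\Big)^{t_{0}}V_{2v,s}(P,R)\ll P^{tk/s-k+\Delta_{2v}-t_{0}\tau(k)+\varepsilon}.
\]
By the optimality of $v$, $\Delta_{2v}-t_{0}\tau(k)\leq\max(\Delta_{2v}-t_{0}\tau(k),\Delta_{2v}-t_{0}k/s)=\Delta_{t}^{*}$, so using $Q_{j}\asymp P^{k/2}$ to convert $P^{\Delta_{t}^{*}}$ into $Q_{j}^{-2|\Delta_{t}^{*}|/k}$ (absorbing a $P^{O(\eta)}$ into $P^{\varepsilon}$) produces the same shape of bound as in the lower range.

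The conclusion follows by summing over $j$: since $\Delta_{t}^{*}<0$, the sequence $(Q_{j}^{-2|\Delta_{t}^{*}|/k})_{j\geq 1}$ is a decreasing geometric progression with common ratio $2^{-2|\Delta_{t}^{*}|/k}<1$, and hence
\[
\int_{\grm(Q)}|g_{s}(\alpha,P,R)|^{t}\,d\alpha\ll P^{tk/s-k+\varepsilon}\sum_{j=1}^{J}Q_{j}^{-2|\Delta_{t}^{*}|/k}\ll_{\theta} P^{tk/s-k+\varepsilon}Q^{-2|\Delta_{t}^{*}|/k}.
\]
The hard part is the interpolation argument for the upper dyadic blocks: one must verify that the product of the Weyl-type pointwise bound and the $L^{2v}$ estimate produces an exponent no larger than $\Delta_{t}^{*}$, which hinges on the fact that our derivation always realises the first entry in the maximum defining $\Delta_{t}^{*}$ in (\ref{eq8}) and is therefore automatically dominated by the minimax itself, regardless of which of the two competing terms in (\ref{eq8}) is the larger.
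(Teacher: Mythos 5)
Your dyadic decomposition, though indexed upward as $Q_{j}=2^{j}Q$ rather than downward as $2^{-j}P^{k/2}$, produces exactly the same partition into truncated sets $\grN(\cdot,P)$ that the paper uses, and the division of labour between Proposition~\ref{prop222} on the mid-range blocks and a Weyl-type pointwise estimate (Lemma~\ref{lem3}) tensored with the $L^{2v}$ moment (Lemma~\ref{lem1}) on the few topmost blocks is precisely the paper's strategy. The only difference of substance is a technical omission in the top-block argument. Lemma~\ref{lem3}, applied with the optimal even $l=2w$ and $\Theta\asymp P^{k/2}$ up to $R^{O(1)}$, delivers $g_{s}(\alpha,P,R)\ll P^{k/s-\tau(k)+\varepsilon}+P^{\varepsilon}$, and you drop the secondary $P^{\varepsilon}$ term without justification. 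Since the proposition's hypotheses impose only $s\geq 2k$ and (\ref{taucer}) gives $\tau(k)\leq 1/(4k)$, nothing prevents $k/s<\tau(k)$ (e.g.\ for $s>4k^{2}$); in that regime $P^{k/s-\tau(k)+\varepsilon}$ is smaller than $P^{\varepsilon}$ and your stated pointwise bound is false, as is the subsequent claim that $\int_{\grN(Q_{j},P)}|g_{s}|^{t}\,d\alpha\ll P^{tk/s-k+\Delta_{2v}-t_{0}\tau(k)+\varepsilon}$.

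The gap is real but easily patched, because it is precisely what the second entry in the max defining $\Delta_{t}^{*}$ in (\ref{eq8}) is there to catch. Keeping both terms in the sup and raising to the power $t_{0}$, one obtains $P^{tk/s-k+\Delta_{2v}-t_{0}\tau(k)+\varepsilon}+P^{2vk/s-k+\Delta_{2v}+\varepsilon}$, i.e.\ two contributions whose exponents are, respectively, $\Delta_{2v}-t_{0}\tau(k)$ and $\Delta_{2v}-t_{0}k/s$ beyond $tk/s-k$. Both are at most $\Delta_{t}^{*}$ by the very definition of (\ref{eq8}) as a min over $v$ of the max of these two quantities. Your closing paragraph is therefore not quite right when it asserts that ``our derivation always realises the first entry in the maximum'': the derivation, done carefully, realises \emph{both} entries, one from the Weyl-type saving and one from the trivial $P^{\varepsilon}$ bound, and it is the max in (\ref{eq8}) that makes the argument close. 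With that correction inserted, your proof is correct and essentially identical to the paper's.
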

\begin{proof}
We begin by writing $J_{Q}=\big\lceil \frac{\log (P^{k/2}/Q)}{\log 2}\big\rceil$ and $J_{0}=\big\lceil \frac{\log (2 R^{k})}{\log 2}\big\rceil.$ It is a consequence of Dirichlet's approximation theorem (see the argument before (2.1) and (5.9) of \cite{Bru-Woo}) that
\begin{equation*}[0,1)=\bigcup_{0\leq j\leq J_{1}}\grN(2^{-j}P^{k/2},P),\ \ \ \ \ \ \ \ \ \grm(Q)\subset \bigcup_{0\leq j\leq J_{Q}}\grN(2^{-j}P^{k/2},P).\end{equation*}
We next observe that if $0\leq j\leq J_{0}$ then by the argument in \cite[Theorem 5.3]{Bru-Woo} it follows that whenever $\alpha\in \grN(2^{-j}P^{k/2},P)$ there exist $b\in\mathbb{Z}$ and $r\in\mathbb{N}$ with $(b,r)=1$ satisfying $P^{k/2}R^{-k}\ll r+P^{k}\lvert r\alpha-b\rvert\ll P^{k/2}.$ Therefore, Lemma \ref{lem3} combined with (\ref{eq67}) gives
\begin{equation*}g_{s}(\alpha,P,R)\ll P^{k/s-\tau(k)+\varepsilon}+P^{\varepsilon},\end{equation*} whence recalling (\ref{eq8}) and writing $t=t_{0}+2v$ and $P_{j}=2^{-j}P^{k/2}$, Lemma \ref{lem1} yields
\begin{align*}\label{chi}&\int_{\grN(P_{j},P)}\lvert g_{s}(\alpha, P, R)\rvert^{t}d\alpha\ll \Big(\sup_{\alpha\in \grN(P_{j},P)}\lvert  g_{s}(\alpha, P, R)\rvert\Big)^{t_{0}}\int_{0}^{1}\lvert g_{s}(\alpha, P, R)\rvert^{2v}d\alpha
\\
&\ll P^{tk/s-k+\Delta_{2v}-t_{0}\tau(k)+\varepsilon}+P^{2vk/s-k+\Delta_{2v}+\varepsilon}\ll P^{tk/s-k-\lvert \Delta_{t}^{*}\rvert+\varepsilon}\ll P^{tk/s-k+\varepsilon}Q^{-2\lvert \Delta_{t}^{*}\rvert/k}.\nonumber
\end{align*}
Consequently, the above bound and Proposition \ref{prop222} for the range $J_{0}<j\leq J_{Q}$ deliver
$$\int_{\grm(Q)}\lvert g_{s}(\alpha, P, R)\rvert^{t}d\alpha\ll \sum_{j=0}^{J_{Q}}\int_{\grN(2^{-j}P^{k/2},P)}\lvert g_{s}(\alpha,P,R)\rvert^{t}d\alpha \ll P^{tk/s-k+\varepsilon}Q^{-2\lvert \Delta_{t}^{*}\rvert/k}.$$ 
\end{proof}

\section{Preliminary major arc manoeuvres}\label{sec4}
We begin by stating some routinary estimates for weighted versions of auxiliary exponential sums and integrals over the major arcs, it being worth introducing beforehand \begin{equation}\label{i0}\tilde{i}_{s}=\Big\lceil\frac{ \log 2s}{k\log 2}\Big\rceil\ \ \ \ \ \ \ \ \ \ \ \ \ \ \ \  \ \ \ P_{-}=2^{-\tilde{i}_{s}-1}P,\end{equation} and for $\beta\in\mathbb{R}$ the exponential sums
\begin{equation}\label{po}w_{s}(\beta)=\frac{1}{k}\sum_{1\leq x\leq P^{k}}x^{-1+1/s}e(\beta x),\ \ \ \ \ \ \  \ \ \ \ \tilde{w}_{s}(\beta)=\frac{1}{k}\sum_{P_{-}^{k}< x\leq P^{k}}x^{-1+1/s}e(\beta x).\end{equation}

\begin{lem}\label{lem5.1}
Whenever $\lvert \beta\rvert\leq 1/2$ one has
$$w_{s}(\beta)\ll \frac{P^{k/s}}{(1+P^{k}\lvert \beta\rvert)^{1/s}}\ \ \ \ \ \ \text{and}\ \ \ \ \ \ \ \ \tilde{w}_{s}(\beta)\ll \frac{P^{k/s}}{1+P^{k}\lvert \beta \rvert}.$$
\end{lem}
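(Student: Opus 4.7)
The plan is to handle both sums by partial summation combined with the elementary estimate $|\sum_{M<x\leq N}e(\beta x)|\ll \min(N-M,|\beta|^{-1})$, valid for $|\beta|\leq 1/2$. The difference between the two bounds stems from the behaviour of the weight $x^{-1+1/s}$ over the corresponding summation ranges: for $\tilde w_s(\beta)$ the range is essentially a single dyadic block so the weight is of constant order, whereas for $w_s(\beta)$ the weight is strictly decreasing and concentrates the mass near $x=1$, which weakens the effect of any oscillation in $e(\beta x)$.

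First I would treat $\tilde w_s(\beta)$. Since $P_-^k<x\leq P^k$, one has $x^{-1+1/s}\asymp P^{k/s-k}$ throughout the range. Writing $S(t)=\sum_{P_-^k<x\leq t}e(\beta x)$, so that $|S(t)|\ll \min(P^k,|\beta|^{-1})$, Abel summation gives
$$k\,\tilde w_s(\beta)=P^{k(1/s-1)}S(P^k)+(1-1/s)\int_{P_-^k}^{P^k}t^{1/s-2}S(t)\,dt,$$
and since $\int_{P_-^k}^{P^k}t^{1/s-2}\,dt\ll P^{k/s-k}$, both contributions are bounded by $P^{k/s-k}\min(P^k,|\beta|^{-1})\ll P^{k/s}/(1+P^k|\beta|)$, which is the second estimate.

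Next I would attack $w_s(\beta)$ by splitting the sum at the threshold $N=\min(P^k,\lceil|\beta|^{-1}\rceil)$. Over the short initial segment $1\leq x\leq N$ the trivial estimate $\sum_{x\leq N}x^{-1+1/s}\ll N^{1/s}$ suffices. Over the complementary tail $N<x\leq P^k$ (which is empty unless $P^k|\beta|>1$), the weight $x^{-1+1/s}$ is monotonically decreasing, so Abel summation against partial sums of $e(\beta x)$ bounded by $|\beta|^{-1}$ produces a contribution of size $|\beta|^{-1}\cdot N^{-1+1/s}\asymp N^{1/s}$. Consequently $w_s(\beta)\ll N^{1/s}$, and inspection of the cases $P^k|\beta|\leq 1$ and $P^k|\beta|>1$ shows this equals $P^{k/s}/(1+P^k|\beta|)^{1/s}$, as required.

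Neither step presents a genuine obstacle. The only conceptual point worth flagging is the asymmetry in the strength of cancellation between the two sums: the factor $x^{-1+1/s}$ in $w_s$ forces the bulk of the mass towards small $x$, where $e(\beta x)$ provides no oscillation, and this is precisely what is responsible for the exponent $1/s$ appearing in the denominator of the first bound rather than the full saving present in the second.
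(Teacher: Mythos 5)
Your proof is correct. The paper itself gives no argument but simply cites Vaughan's \emph{The Hardy-Littlewood method}, Lemmas~2.8 and~6.2, and your partial-summation argument against the linear exponential-sum bound $\big|\sum_{M<x\leq N}e(\beta x)\big|\ll\min(N-M,\|\beta\|^{-1})$ is exactly the standard technique underlying those two lemmas, so this is the same route written out in full.
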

\begin{proof}
The first part follows by \cite[Lemma 2.8]{Vau} and the second one by \cite[Lemma 6.2]{Vau}.
\end{proof}
The next lemma shall deliver similar bounds for integral analogues of the preceding sums.
\begin{lem}\label{lem5.22}
Let $\theta_{0},\theta_{1}> 0$ be real numbers satisfying $\theta_{0}<k\theta_{1}$ and let $\beta\in\mathbb{R}$ such that $\lvert \beta\rvert\leq 1/2.$ If $c>0$ is some fixed constant then 
$$\int_{cP}^{P}\frac{y^{\theta_{0}-1}}{(1+y^{k}\lvert \beta\rvert)^{\theta_{1}}}dy\ll \frac{P^{\theta_{0}}}{(1+P^{k}\lvert \beta\rvert)^{\theta_{1}}},\ \ \ \ \ \ \ \ \ \ \ \int_{1}^{P}\frac{y^{\theta_{0}-1}}{(1+y^{k}\lvert \beta\rvert)^{\theta_{1}}}dy\ll \frac{P^{\theta_{0}}}{(1+P^{k}\lvert \beta\rvert)^{\theta_{0}/k}}.$$ 
\end{lem}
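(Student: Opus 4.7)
The plan is to bound both integrals by splitting according to the size of $y^{k}|\beta|$ relative to $1$. For the first integral, on the range $y\in [cP,P]$ (with $0<c\leq 1$ understood) one has $y^{k}|\beta|$ comparable to $P^{k}|\beta|$ up to constants depending only on $c$ and $k$, hence $1+y^{k}|\beta|\asymp 1+P^{k}|\beta|$ uniformly in $y$. Pulling this factor out of the integrand and invoking the crude bound $\int_{cP}^{P}y^{\theta_{0}-1}\,dy\ll P^{\theta_{0}}$ yields the claim immediately; this part does not use $\theta_{0}<k\theta_{1}$.

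For the second integral, I would first dispose of the regime $P^{k}|\beta|\leq 1$, in which $1+y^{k}|\beta|\asymp 1$ throughout $[1,P]$ and also $(1+P^{k}|\beta|)^{\theta_{0}/k}\asymp 1$, so the estimate reduces to $\int_{1}^{P}y^{\theta_{0}-1}\,dy\ll P^{\theta_{0}}$. In the complementary regime $P^{k}|\beta|>1$ I would introduce the threshold $y_{0}=|\beta|^{-1/k}$, which lies in $[1,P]$, and split the range at $y_{0}$. On $[1,y_{0}]$ the denominator satisfies $1+y^{k}|\beta|\asymp 1$, contributing at most $y_{0}^{\theta_{0}}=|\beta|^{-\theta_{0}/k}$. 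On $[y_{0},P]$ one has $1+y^{k}|\beta|\asymp y^{k}|\beta|$, so the integrand there is comparable to $|\beta|^{-\theta_{1}}y^{\theta_{0}-1-k\theta_{1}}$; the hypothesis $\theta_{0}<k\theta_{1}$ forces this exponent to be strictly less than $-1$, so the integral is controlled by its lower endpoint and evaluates to $\ll |\beta|^{-\theta_{1}}y_{0}^{\theta_{0}-k\theta_{1}}=|\beta|^{-\theta_{0}/k}$. Both pieces therefore satisfy the claimed bound upon noting that $P^{\theta_{0}}/(1+P^{k}|\beta|)^{\theta_{0}/k}\asymp |\beta|^{-\theta_{0}/k}$ in this regime.

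There is no serious obstacle; the argument is an elementary integration estimate of a standard type. The only point that genuinely uses the hypothesis is the tail integral on $[y_{0},P]$: the strict inequality $\theta_{0}<k\theta_{1}$ is precisely what makes that integral converge at its lower endpoint rather than accumulate mass near $y=P$. If $\theta_{0}=k\theta_{1}$ were allowed a logarithm would emerge, while a reversal of the inequality would shift the dominant contribution to $y=P$ and produce a bound of the form $P^{\theta_{0}-k\theta_{1}}|\beta|^{-\theta_{1}}$ rather than $|\beta|^{-\theta_{0}/k}$.
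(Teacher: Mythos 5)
Your proof is correct and follows essentially the same route as the paper: for the second integral the paper likewise splits the range at $y_{0}=\lvert\beta\rvert^{-1/k}$ when $P^{k}\lvert\beta\rvert>1$ (treating $P^{k}\lvert\beta\rvert\leq 1$ trivially) and bounds the two pieces exactly as you do. For the first integral your argument pulls out the uniformly comparable factor $(1+P^{k}\lvert\beta\rvert)^{\theta_{1}}$ on $[cP,P]$, whereas the paper instead bounds $1+y^{k}\lvert\beta\rvert\geq y^{k}\lvert\beta\rvert$ and integrates $y^{\theta_{0}-1-k\theta_{1}}\lvert\beta\rvert^{-\theta_{1}}$ over $[cP,P]$; both are one-line computations that, as you correctly observe, do not require the hypothesis $\theta_{0}<k\theta_{1}$.
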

\begin{proof}
In order to examine the second integral, we assume first $\lvert \beta\rvert^{-1}<P^{k}$ and divide the range of integration into $[1,\lvert \beta\rvert^{-1/k}]$ and $[\lvert \beta\rvert^{-1/k},P]$. We trivially obtain for the first one $$ \int_{1}^{\lvert \beta\rvert^{-1/k}}\frac{y^{\theta_{0}-1}}{(1+y^{k}\lvert \beta\rvert)^{\theta_{1}}}dy\ll  \int_{1}^{\lvert \beta\rvert^{-1/k}}y^{\theta_{0}-1}dy\ll \lvert \beta \rvert^{-\theta_{0}/k}\ll \frac{P^{\theta_{0}}}{(1+P^{k}\lvert \beta\rvert)^{\theta_{0}/k}}.$$ We then employ the restriction on the exponents and note that
$$ \int_{\lvert \beta\rvert^{-1/k}}^{P}\frac{y^{\theta_{0}-1}}{(1+y^{k}\lvert \beta\rvert)^{\theta_{1}}}dy\ll  \int_{\lvert \beta\rvert^{-1/k}}^{P}y^{-1-k\theta_{1}+\theta_{0}}\lvert \beta\rvert^{-\theta_{1}}dy\ll \lvert \beta \rvert^{-\theta_{0}/k}.$$ Combining the previous equations with a trivial estimate when $\lvert \beta \rvert^{-1}\geq P^{k}$ yields the second estimate in the statement of the lemma. For the first one we assume $\lvert \beta\rvert^{-1}<P^{k}$ and obtain
$$\int_{cP}^{P}\frac{y^{\theta_{0}-1}}{(1+y^{k}\lvert \beta\rvert)^{\theta_{1}}}dy\ll \int_{cP}^{P}y^{-1-k\theta_{1}+\theta_{0}}\lvert \beta\rvert^{-\theta_{1}}dy\ll P^{-k\theta_{1}+\theta_{0}}\lvert \beta\rvert^{-\theta_{1}},$$ as required. If instead $\lvert \beta \rvert^{-1}\geq P^{k}$ then the desired bound follows trivially.
\end{proof}
We shall present one last lemma concerning the customary approximation of the weighted smooth exponential sum on the major arcs by employing classical work. For such purposes, we recall (\ref{fuste}), (\ref{i0}) and introduce for $q\in\mathbb{N}$ and $a\in\mathbb{Z}$ with $(a,q)=1$ the sums
\begin{equation}\label{Sqa}S(q,a)=\sum_{r=1}^{q}e(ar^{k}/q),\ \ \ \ \ \ \ \tilde{f}_{s}(\alpha,P,R)=\sum_{\substack{x\in\mathcal{A}(P,R)\\ x>P_{-}}}x^{-1+k/s}e(\alpha x^{k}).\end{equation}
\begin{lem}\label{lem5.3}
Let $q\leq (\log P)^{1/8}$ and $a\in\mathbb{Z}$ such that $(a,q)=1$ and $\alpha\in \grM_{a,q}((\log P)^{1/8})$. Then, whenever $P^{\eta}\exp(-\eta(k^{-1}\log P)^{1/2})\leq R\leq P^{\eta}$ with $0<\eta<1/2$ one has
\begin{equation}\label{fff55}f_{s}(\alpha,P,R)-\rho(1/\eta)q^{-1}S(q,a)w_{s}(\alpha-a/q)\ll P^{k/s}(\log P)^{-1/2},\end{equation} where $\rho$ is the Dickman's function described, for instance, in \cite[Section 5]{Vau2}. The same estimate holds for $\tilde{f}_{s}$ and $\tilde{w}_{s}$ replacing $f_{s}$ and $w_{s}$ respectively.
\end{lem}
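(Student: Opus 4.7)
The plan is to pass between the weighted sum $f_s(\alpha,P,R)$ and the unweighted smooth Weyl sum $f(\alpha,P,R) = \sum_{x \in \mathcal{A}(P,R)} e(\alpha x^k)$ by Abel summation, invoke the classical major-arc approximation for $f$ at every height $y \in [1,P]$, and then reassemble to recover $w_s(\alpha - a/q)$.

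First I would write $F(y) = f(\alpha, y, R)$ and apply partial summation to obtain
$$f_s(\alpha,P,R) = P^{-1+k/s} F(P) + \Big(1-\frac{k}{s}\Big)\int_1^P F(y)\, y^{-2+k/s}\,dy + O(1),$$
the parallel identity expressing the main term of \eqref{fff55} in terms of $v_0(\beta,y) = \int_0^y e(\beta \gamma^k)\,d\gamma$ with $\beta = \alpha - a/q$. I would then invoke the classical approximation for smooth Weyl sums on major arcs, as in \cite[Section 5]{Vau2} and the literature on smooth-number methods for Waring's problem, to the effect that for $\alpha \in \mathfrak{M}_{a,q}((\log P)^{1/8})$ and $\log y \asymp \log P$ one has
$$F(y) = \rho\big(\log y / \log R\big)\, q^{-1} S(q,a)\, v_0(\beta, y) + O\big(y (\log y)^{-1/2}\big),$$
the nesting being legitimate because the major arcs at level $P$ sit inside the corresponding major arcs at level $y$ whenever $y$ is of order $P$.

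Next I would split the $y$-integral at $Y_0 = P\exp(-(\log P)^{1/2})$. On $[1, Y_0]$ the trivial bounds $|F(y)| \leq y$ and $|v_0(\beta,y)| \leq y$ yield a contribution bounded by $\int_1^{Y_0} y^{-1+k/s}\,dy \ll P^{k/s}\exp(-k(\log P)^{1/2}/s)$, which is absorbed into the claimed error. On $[Y_0, P]$ the very narrow prescribed range $R \in [P^{\eta}\exp(-\eta(k^{-1}\log P)^{1/2}), P^{\eta}]$ forces $\log y / \log R = 1/\eta + O((\log P)^{-1/2})$, so the local Lipschitz continuity of Dickman's function gives $\rho(\log y/\log R) = \rho(1/\eta) + O((\log P)^{-1/2})$. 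Pulling $\rho(1/\eta)\, q^{-1} S(q,a)$ outside, the main contribution takes the form
$$\rho(1/\eta)\, q^{-1} S(q,a) \bigg[ P^{-1+k/s} v_0(\beta, P) + \Big(1-\frac{k}{s}\Big)\int_{Y_0}^P v_0(\beta, y)\, y^{-2+k/s}\,dy \bigg].$$

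Reversing the partial summation identifies this bracket with $\int_0^P \gamma^{k/s-1} e(\beta\gamma^k)\,d\gamma$, and an Euler-summation comparison converts the integral into the sum $w_s(\beta)$ defined in \eqref{po}, with an error that is $O(P^{k/s}(\log P)^{-1/2})$ for $\beta$ in the permitted major arc range. Collecting the accumulated errors---from the smooth-Weyl-sum approximation weighted by $y^{-2+k/s}$, the Lipschitz error in $\rho$, the tail at small $y$, and the final integral-to-sum correction---each contributes $O(P^{k/s}(\log P)^{-1/2})$, producing \eqref{fff55}. The dyadic variant for $\tilde{f}_s$ and $\tilde{w}_s$ is identical once the ranges of summation and integration are restricted to $[P_-, P]$ and the trivial bound for the $[1, Y_0]$ tail is replaced by a tail vanishing altogether. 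The principal delicate point will be the replacement of the $y$-dependent $\rho(\log y / \log R)$ by the constant $\rho(1/\eta)$: keeping this replacement within $O((\log P)^{-1/2})$ uniformly on the main range is precisely the role of the narrow window for $R$ in the hypothesis, and any relaxation of that window would force a different argument.
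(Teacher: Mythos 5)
Your proposal is essentially correct and runs along the same lines as the paper's proof: Abel summation to transfer between weighted and unweighted smooth Weyl sums, the classical major-arc approximation of $f(\alpha,y,R)$ from Vaughan's iterative-method paper, and the observation that the narrow window $P^{\eta}\exp(-\eta(k^{-1}\log P)^{1/2})\leq R\leq P^{\eta}$ pins down $\rho$ to within $O((\log P)^{-1/2})$. The genuine difference is in the bookkeeping. The paper first isolates the Gauss-sum reduction by applying partial summation to the quantity $S_y=\sum_{x\in\mathcal{A}(y,R),\,x>R}\bigl(e(\alpha x^k)-q^{-1}S(q,a)e(\beta x^k)\bigr)$ and bounding it via Vaughan's eq.\ (5.18); it then expresses $B(y)=\sum_{x\in\mathcal{A}(y,R),\,x>R}e(\beta x^k)$ directly in the \emph{discrete} form $\tfrac1k\sum_{R^k\leq m\leq y^k}m^{1/k-1}\rho(\log m/(k\log R))e(\beta m)+O(\cdot)$ (Vaughan, equation preceding (5.19)), applies Abel summation once more, and arrives at $c(\eta)w_s(\beta)$ with no continuous intermediary. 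You instead pass through the continuous kernel $v_0(\beta,y)=\int_0^y e(\beta\gamma^k)\,d\gamma$, then reverse partial summation to get $\tfrac1k\int_0^{P^k}x^{1/s-1}e(\beta x)\,dx$, and finally invoke an Euler--Maclaurin comparison to recover the sum $w_s(\beta)$; this is valid but adds a conversion step the paper avoids by staying discrete throughout (note that $w_s$ is defined as a sum, so matching a sum to a sum is the shorter route).

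One point you should not leave implicit: the formula $F(y)=\rho(\log y/\log R)\,q^{-1}S(q,a)\,v_0(\beta,y)+O(y(\log y)^{-1/2})$, with $\rho$ pulled \emph{outside} the integral and evaluated at $y$, is not a single citable statement from Vaughan. What the literature gives keeps $\rho(\log t/\log R)$ under the integral (equivalently, under the sum over $m$). Passing to your form requires showing that $\int_1^y\bigl[\rho(\log t/\log R)-\rho(\log y/\log R)\bigr]e(\beta t^k)\,dt\ll y(\log P)^{-1/2}$ for $y\geq Y_0$, which does hold — the range $t<Y_0$ contributes $O(Y_0)=O(Pe^{-(\log P)^{1/2}})$ trivially, and on $[Y_0,y]$ both arguments of $\rho$ are $1/\eta+O((\log P)^{-1/2})$ so the mean value theorem and the continuity of $\rho'$ on $(1,\infty)$ give the bound — but this is an extra concentration argument, and it uses exactly the same narrow-window observation you already invoke later, so it should be spelled out rather than cited as known.
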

\begin{proof}
We set $\beta=\alpha-a/q$ and define for convenience when $y\leq P$ the auxiliary sums
$$S_{y}=\sum_{\substack{x\in\mathcal{A}(y,R)\\ x>R}}\Big(e(\alpha x^{k})-q^{-1}S(q,a)e(\beta x^{k})\Big),\ \ \ \ \ \ \ \ \ \ B(y)=\sum_{\substack{ x\in\mathcal{A}(y,R)\\ x>R}}e(\beta x^{k}).$$ A routinary application of summation by parts combined with equation (5.18) in \cite{Vau2} yields
\begin{align}\label{ll}\sum_{\substack{x\in\mathcal{A}(P,R)\\ x>R}}\big(e(\alpha x^{k})-q^{-1}S(q,a)e(\beta x^{k})\big)&x^{-1+k/s}\ll P^{-1+k/s}\lvert S_{P}\rvert +\int_{R}^{P}x^{-2+k/s}\lvert S_{x}\rvert dx
\\
& \ll qP^{k/s}(1+P^{k}\lvert \beta \rvert)(\log P)^{-1}\ll  P^{k/s}(\log P)^{-1/2}, \nonumber\end{align}
where on the last step we used the restrictions on $\alpha,q$. We next shift the attention to $B(y)$ and note by the equation before (5.19) in \cite[Lemma 5.4]{Vau2} that for $y> P(\log P)^{-1}$ one has
\begin{align*}B(y)&=\frac{1}{k}\sum_{R^{k}\leq m\leq y^{k}}m^{1/k-1}\rho\Big(\frac{\log m}{k\log R}\Big)e(\beta m)+O\Big(\frac{P}{\log P}(1+P^{k}\lvert \beta\rvert)\Big)
\\
&=k^{-1}\rho(1/\eta)\sum_{1\leq m\leq y^{k}}m^{1/k-1}e(\beta m)+O\big(P(\log P)^{-1/2}\big).
\end{align*}
 The reader may note that in the above formula we employed the continuity of $\rho'(u)$ whenever $u>1$ (see \cite[Section 5]{Vau2}) in conjunction with the mean value theorem. Equipped with the above utensils and recalling (\ref{po}) we may now employ Abel's summation to derive
\begin{align*}
&\sum_{x\in \mathcal{A}(P,R)}x^{-1+k/s}e(\beta x^{k})=P^{-1+k/s}B(P)+O(R^{k/s})+(1-k/s)\int_{R}^{P}y^{-2+k/s}B(y)dy
\\
&=c(\eta)\Big(\frac{1}{k}-\frac{1}{s}\Big)\sum_{m\leq P^{k}}m^{\frac{1}{k}-1}e(\beta m)\int_{m^{\frac{1}{k}}}^{P}y^{-2+\frac{k}{s}}dy+c(\eta)P^{-1+\frac{k}{s}}w_{k}(\beta)+O\big(P^{k/s}(\log P)^{-1/2}\big),
\end{align*} 
where $c(\eta)=\rho(1/\eta)$, whence integrating and rearranging terms one gets
$$\sum_{x\in \mathcal{A}(P,R)}x^{-1+k/s}e(\beta x^{k})=c(\eta)w_{s}(\beta)+O\big(P^{k/s}(\log P)^{-1/2}\big).$$ Combining the above expression with (\ref{ll}) we deduce (\ref{fff55}). In order to obtain the estimate pertaining to $\tilde{f}_{s}$ we observe that
$\tilde{f}_{s}(\alpha,P,R)=f_{s}(\alpha,P,R)-f_{s}(\alpha,P_{-},R)$ and apply (\ref{fff55}) to both of the summands.
\end{proof}

\section{Major arc estimates}\label{sec5}
We next employ the results derived in \cite{Bru-Woo2} to obtain suitable bounds via partial summation.
\begin{lem}\label{lem6.1}
Let $s\geq k+1$ with $k\geq 2$ and $2\leq R\leq P^{\eta}$ for $0<\eta<1/8$. Let $a\in\mathbb{Z},q\in\mathbb{N}$ with $(a,q)=1$. Then, when $\alpha\in \grM_{a,q}(P^{\theta})$ for some small enough $\theta=\theta(k,s)>0$ one has
\begin{align}\label{reef}f_{s}(\alpha,P,R)\ll \frac{(\log P)^{3}q^{\varepsilon-1/2k}P^{k/s}}{(1+P^{k}\lvert \alpha-a/q\rvert)^{1/s}},\ \ \ \ \ \ \ \ g_{s}(\alpha,P,R)\ll \frac{(\log P)^{3}q^{\varepsilon-1/2k}P^{k/s}}{(1+P^{k}\lvert \alpha-a/q\rvert)^{1/2}}.\end{align} 
\end{lem}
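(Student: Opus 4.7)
The plan is to derive both bounds of \eqref{reef} by Abel summation from a uniform major-arc estimate for the unweighted smooth Weyl sum $f(\alpha, y, R) = \sum_{x \in \mathcal{A}(y,R)} e(\alpha x^{k})$ supplied by \cite{Bru-Woo2}. Specifically, for $\theta = \theta(k,s) > 0$ sufficiently small, $(a,q) = 1$ and $\alpha \in \grM_{a,q}(P^{\theta}, P)$, one has
$$f(\alpha, y, R) \ll (\log y)^{3}\, q^{\varepsilon - 1/(2k)}\, y\, \bigl(1 + y^{k} |\alpha - a/q|\bigr)^{-1/2}$$
uniformly for $y$ in the relevant range below $P$. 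The containment $\grM_{a,q}(P^{\theta}, P) \subset \grM_{a,q}(y^{\theta}, y)$ for $y \leq P$, a consequence of $|q\alpha - a| \leq P^{\theta - k} \leq y^{\theta - k}$, propagates the major-arc hypothesis to each smaller scale.

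Writing $\beta = \alpha - a/q$ and noting that $-1 + k/s < 0$ when $s \geq k+1$, Abel summation against the monotone weight $x^{-1+k/s}$ yields
$$f_{s}(\alpha, P, R) = f(\alpha, P, R)\, P^{-1 + k/s} + (1 - k/s) \int_{1}^{P} f(\alpha, y, R)\, y^{-2 + k/s}\, dy,$$
together with the analogous identity for $g_{s}$ on $[P/2, P]$, with $f(\alpha, y, R)$ replaced by $f(\alpha, y, R) - f(\alpha, P/2, R)$. Substituting the displayed bound reduces the problem to controlling integrals of the shape $\int y^{-1 + k/s}(1 + y^{k} |\beta|)^{-1/2}\, dy$, which are exactly those handled by Lemma \ref{lem5.22}.

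For the $f_{s}$ estimate, apply the second inequality of Lemma \ref{lem5.22} with $\theta_{0} = k/s$ and $\theta_{1} = 1/2$, whose hypothesis $\theta_{0} < k\theta_{1}$ reads $s > 2$ and holds since $s \geq k + 1 \geq 3$. This produces the factor $P^{k/s}(1 + P^{k} |\beta|)^{-1/s}$, matching the exponent $1/s$ demanded by the first inequality of \eqref{reef}. For the $g_{s}$ estimate, apply instead the first inequality of Lemma \ref{lem5.22} on the short range $[P/2, P]$ with the same parameters; since on this interval $(1 + y^{k} |\beta|)^{-1/2}$ is essentially constant, the integral contributes the factor $P^{k/s}(1 + P^{k} |\beta|)^{-1/2}$, matching the exponent $1/2$. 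In both cases the boundary term $f(\alpha, P, R)\, P^{-1 + k/s}$ is already of the desired shape and is absorbed.

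The principal obstacle is the extraction of the major-arc bound for $f(\alpha, y, R)$ from \cite{Bru-Woo2} in a form that is genuinely uniform in the truncation parameter $y \leq P$; once this is secured, the remainder is a routine combination of Abel summation with Lemma \ref{lem5.22}, and the freedom to shrink $\theta = \theta(k,s)$ suffices to absorb any technical losses arising at the small end of the Abel integral (where one may fall back on the trivial estimate $|f(\alpha, y, R)| \leq y$ supplemented by the Diophantine information $|q\alpha - a| \leq P^{\theta-k}$ with $q \leq P^{\theta}$).
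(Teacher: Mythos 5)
Your overall route --- Abel summation applied to the Brüdern--Wooley major-arc bounds for smooth Weyl sums, with Lemma \ref{lem5.22} finishing off the integrals --- is exactly the paper's approach, and your treatment of the boundary terms and the degenerate range $y$ small is appropriate. There is, however, a genuine imprecision in the bound you quote from \cite{Bru-Woo2} as your input.

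You assert the uniform estimate $f(\alpha,y,R) \ll (\log y)^{3}\,q^{\varepsilon-1/2k}\,y\,(1+y^{k}|\alpha-a/q|)^{-1/2}$ and then feed this single bound into Abel summation for both $f_{s}$ and $g_{s}$. The decay exponent $1/2$ is too strong for the full smooth Weyl sum: what \cite[Theorem 1.1]{Bru-Woo2} gives for $f(\alpha,y,R)$ is decay $(1+y^{k}|\alpha-a/q|)^{-1/k}$, not $-1/2$. The improved $1/2$ decay is a \emph{separate} estimate there, valid only for the dyadic smooth sum $g(\alpha,P,R)=\sum_{x\in\tilde{\mathcal{A}}(P,R)}e(\alpha x^{k})$ (this parallels the contrast between Vaughan's Lemma 2.8 and Lemma 6.2, as in Lemma \ref{lem5.1}). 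Your $f_{s}$ conclusion still survives, almost by accident: the second inequality in Lemma \ref{lem5.22} outputs $(1+P^{k}|\beta|)^{-\theta_{0}/k}=(1+P^{k}|\beta|)^{-1/s}$ independently of $\theta_{1}$, and with the true $\theta_{1}=1/k$ the constraint $\theta_{0}<k\theta_{1}$ reads $k/s<1$, i.e.\ $s>k$, which $s\geq k+1$ supplies. But your constraint check ``$s>2$'' traces the wrong $\theta_{1}$, and for the $g_{s}$ estimate you cannot legitimately obtain the $1/2$ decay from the bound on $f(\alpha,y,R)$ alone: you must cite the dyadic-sum estimate in \cite[Theorem 1.1]{Bru-Woo2} directly and perform the Abel summation against that. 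This is precisely what the paper does (``combine partial summation with the bound for $g(\alpha,P,R)$ embodied in \cite[Theorem 1.1]{Bru-Woo2}''). So the fix is to record two inputs --- the $1/k$-decay bound for $f$ and the $1/2$-decay bound for the dyadic $g$ --- and run the Abel summation separately against each.
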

\begin{proof}
We note upon recalling (\ref{prato}) that for every $y\leq P$ then \cite[Theorem 1.1]{Bru-Woo2} delivers
\begin{align*}f(\alpha,y,R)&\ll (\log P)^{3}q^{\varepsilon}\Big(\frac{q^{-1/2k}y}{(1+y^{k}\lvert \alpha-a/q\rvert)^{1/k}}+P^{3/4}R^{1/2}(q+P^{k}\lvert q\alpha-a\rvert)^{1/8}\Big)
\\
&\ll \frac{(\log P)^{3}q^{\varepsilon-1/2k}y}{(1+y^{k}\lvert \alpha-a/q\rvert)^{1/k}},\end{align*} where we used the bound $(q+P^{k}\lvert q\alpha-a\rvert)\leq P^{\theta}$ for small $\theta$. Abel's summation then yields
\begin{align}\label{prl}f_{s}(\alpha,P,R)&\ll P^{-1+k/s}\lvert f(\alpha,P,R)\rvert+\int_{1}^{P}y^{-2+k/s}\lvert f(\alpha,y,R)\rvert dy
\\
&\ll\frac{(\log P)^{3}q^{\varepsilon-1/2k}P^{k/s}}{(1+P^{k}\lvert \alpha-a/q\rvert)^{1/s}}+(\log P)^{3}q^{\varepsilon-1/2k}\int_{1}^{P}\frac{y^{-1+k/s}}{(1+y^{k}\lvert \beta\rvert)^{1/k}}dy.\nonumber
\end{align} 
The first estimate in (\ref{reef}) thereby follows upon recalling $s>k$ by applying Lemma \ref{lem5.22} to the above integral.  In order to show the second one we combine partial summation with the bound for $g(\alpha,P,R)$ embodied in \cite[Theorem 1.1]{Bru-Woo2} and follow an analogous analysis. 
\end{proof}
\begin{lem}\label{lem5.2}
Let $s\geq k+1$ and $R,P$ be real numbers with $2\leq R\leq P^{\eta}$ for some $0<\eta<1/2$. Let $a\in\mathbb{Z}$, $q\in\mathbb{N}$ with $(a,q)=1$ and $\alpha\in\grM_{a,q}((\log P)^{A})$ for fixed $A>0$. Then one has
$$f_{s}(\alpha,P,R)\ll \frac{q^{\varepsilon-1/k}P^{k/s}}{(1+P^{k}\lvert \alpha-a/q\rvert)^{1/s}},\ \ \ \ \ \ g_{s}(\alpha,P,R)\ll \frac{q^{\varepsilon-1/k}P^{k/s}}{(1+P^{k}\lvert \alpha-a/q\rvert)}.$$ 
\end{lem}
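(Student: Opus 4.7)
\medskip

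\textbf{Proof plan for Lemma \ref{lem5.2}.} The strategy is to follow the same blueprint as Lemma \ref{lem5.3}: namely, establish a classical approximation for $f_s(\alpha,P,R)$ and $g_s(\alpha,P,R)$ on the extended major arcs in terms of the product $\rho(1/\eta)q^{-1}S(q,a)$ times the smooth integral analogue ($w_s(\beta)$ or $\tilde{w}_s(\beta)$), and then invoke Weyl's bound $|S(q,a)|\ll q^{1-1/k+\varepsilon}$ (as in \cite[Theorem 4.2]{Vau}) together with Lemma \ref{lem5.1}. The only substantive adjustment from Lemma \ref{lem5.3} is that we must accommodate the larger modulus range $q\leq (\log P)^A$ and the correspondingly larger $|\beta|\leq (\log P)^A/(qP^k)$, so the simple logarithmic error term available there has to be upgraded.

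Concretely, writing $\alpha=a/q+\beta$, I would split the smooth Weyl sum $f(\alpha,P,R)$ along residue classes modulo $q$ to obtain
\begin{equation*}
f(\alpha,P,R)=\sum_{r=1}^{q}e(ar^{k}/q)\sum_{\substack{x\in\mathcal{A}(P,R)\\ x\equiv r\,(\mathrm{mod}\,q)}}e(\beta x^{k}).
\end{equation*}
The Siegel--Walfisz type distribution of smooth numbers in arithmetic progressions (a sharper form of the ingredient underlying equation (5.18) in \cite{Vau2}, which is available uniformly for $q\leq (\log P)^{A}$) supplies, after a standard summation by parts in the $e(\beta x^{k})$ factor,
\begin{equation*}
f(\alpha,P,R)=\rho(1/\eta)q^{-1}S(q,a)\int_{0}^{P}e(\beta t^{k})dt+O\bigl(P\exp(-c\sqrt{\log P})\bigr),
\end{equation*}
with the error uniform in $q\leq (\log P)^A$. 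An Abel summation against the weight $x^{-1+k/s}$, exactly as in the proof of Lemma \ref{lem5.3}, then transfers this approximation to the weighted sums, producing
\begin{equation*}
f_{s}(\alpha,P,R)=\rho(1/\eta)q^{-1}S(q,a)w_{s}(\beta)+O\bigl(P^{k/s}\exp(-c'\sqrt{\log P})\bigr),
\end{equation*}
and analogously for $g_{s}$ with $w_s$ replaced by $\tilde{w}_{s}$.

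The main term is then controlled by combining $|q^{-1}S(q,a)|\ll q^{\varepsilon-1/k}$ with the two bounds in Lemma \ref{lem5.1}, giving the desired estimates $q^{\varepsilon-1/k}P^{k/s}(1+P^{k}|\beta|)^{-1/s}$ and $q^{\varepsilon-1/k}P^{k/s}(1+P^{k}|\beta|)^{-1}$ for $f_s$ and $g_s$ respectively. To verify that the residual error term is absorbed, note that under the hypotheses one has $q\leq (\log P)^A$ and $1+P^{k}|\beta|\leq 1+(\log P)^{A}/q\leq (\log P)^{A}$, whence $q^{\varepsilon-1/k}(1+P^{k}|\beta|)^{-1}\gg (\log P)^{-B}$ for some $B=B(A,k,s)$, while the error is super-polylogarithmically small.

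The main obstacle is precisely the first step: extracting the approximation with a sufficiently strong error, uniformly for $q\leq (\log P)^{A}$ and $|\beta|\leq (\log P)^{A}/(qP^{k})$. In Lemma \ref{lem5.3} the authors only needed the crude logarithmic saving of \cite[Lemma 5.4]{Vau2}, which suffices because both $q$ and $1+P^{k}|\beta|$ were restricted to size $(\log P)^{1/8}$. Here both quantities are of size $(\log P)^{A}$, so one must invoke the Siegel--Walfisz type equidistribution of smooth numbers in arithmetic progressions to replace $(\log P)^{-1}$ by $\exp(-c\sqrt{\log P})$; checking that the range $R\geq P^{\eta}\exp(-\eta(k^{-1}\log P)^{1/2})$ required in Lemma \ref{lem5.3} is compatible with the slightly weaker hypothesis $2\leq R\leq P^{\eta}$ in the current lemma is the only point that demands care.
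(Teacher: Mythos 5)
Your proposal takes a genuinely different route from the paper, and that route has a gap which I do not think can be repaired without in effect reproving the input the paper cites. The paper's proof is short because it simply invokes \cite[Theorem 1.2]{Bru-Woo2}, which supplies the pointwise upper bound
\[
f(\alpha,y,R)\ll \frac{q^{\varepsilon-1/k}y}{1+y^{k}\lvert\beta\rvert}+y(1+y^{k}\lvert\beta\rvert)\exp\bigl(-c(\log P)^{1/2}\bigr)
\]
for every $P^{\eta'}\leq y\leq P$ and \emph{every} $2\leq R\leq P^{\eta}$, with no lower bound on $R$. Plugging this into the Abel-summation inequality (\ref{prl}) and using Lemma \ref{lem5.22} (after discarding the range $y<P^{\eta'}$ trivially) immediately gives both estimates. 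Your plan instead attempts to reconstruct an \emph{asymptotic} of the shape $f(\alpha,P,R)=\rho(1/\eta)q^{-1}S(q,a)\int_0^P e(\beta t^k)\,dt+O(P\exp(-c\sqrt{\log P}))$ via a decomposition into residues mod $q$ and a Siegel--Walfisz type equidistribution of smooth numbers.

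The problem is the hypothesis $2\leq R\leq P^{\eta}$, which you dismiss at the end as ``slightly weaker'' than the hypothesis $P^{\eta}\exp(-\eta(k^{-1}\log P)^{1/2})\leq R\leq P^{\eta}$ of Lemma \ref{lem5.3}. These are not close: the lower bound in Lemma \ref{lem5.3} is $P^{\eta(1-o(1))}$, whereas here $R$ is allowed to be bounded. Once $R$ is no longer of size $P^{\eta+o(1)}$, the constant $\rho(1/\eta)$ is simply the wrong leading factor (for $R$ genuinely smaller, the density of $\mathcal{A}(P,R)$ is governed by $\rho(\log P/\log R)$, which can be super-polynomially small), and there is no clean asymptotic of the form you write down that is uniform over the whole range $2\leq R\leq P^{\eta}$. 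Likewise the ``Siegel--Walfisz type'' equidistribution you invoke for smooth numbers in arithmetic progressions is itself a nontrivial theorem whose uniformity in $R$ must be checked, and in the regime of small $R$ the count of smooth numbers in a residue class modulo $q$ is not simply $\phi(q)^{-1}$ times the total with a saving of $\exp(-c\sqrt{\log P})$. So the first display in your plan is not available under the stated hypotheses, and the error term cannot be shown to be dominated by the claimed main term.

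What the lemma actually requires is only an \emph{upper} bound, so the correct move is to use an upper bound for $f(\alpha,y,R)$ valid throughout $2\leq R\leq P^{\eta}$ --- which is exactly what \cite[Theorem 1.2]{Bru-Woo2} provides --- and combine it with Abel summation and the elementary integral estimates of Lemma \ref{lem5.22}. Your asymptotic-based route is the right idea for Lemma \ref{lem5.3}, where $R$ is pinned near $P^{\eta}$ and an asymptotic is needed; here it is both more than is required and, more importantly, not available in the generality needed.
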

\begin{proof}
We write $\beta=\alpha-a/q,$ observe that $q(1+y^{k}\lvert \alpha-a/q\rvert)\leq (\log P)^{A}$ and note upon recalling (\ref{prato}) that for every $ P^{\eta'}\leq y\leq P$ with $2\eta<\eta'<1$ then \cite[Theorem 1.2]{Bru-Woo2} yields
\begin{equation}\label{kk7}f(\alpha,y,R)\ll  \frac{q^{\varepsilon-1/k}y}{(1+y^{k}\lvert \beta\rvert)}+y(1+y^{k}\lvert \beta\rvert)\exp\big(-c(\log P)^{1/2}\big)\ll \frac{q^{\varepsilon-1/k}y}{(1+y^{k}\lvert \beta\rvert)},\end{equation} where $c>0$ is fixed. We recall (\ref{prl}) and observe first that the integral therein satisfies
$$ \int_{1}^{P}y^{-2+k/s}\lvert f(\alpha,y,R)\rvert dy=\int_{P^{\eta'}}^{P}y^{-2+k/s}\lvert f(\alpha,y,R)\rvert dy+O(P^{k\eta'/s}).$$ Moreover, the application of (\ref{kk7}) in conjunction with Lemma \ref{lem5.22} yields 
\begin{align*}\int_{P^{\eta'}}^{P}y^{-2+k/s}\lvert f(\alpha,y,R)\rvert dy\ll q^{\varepsilon-1/k}\int_{P^{\eta'}}^{P}\frac{y^{-1+k/s}dy}{(1+y^{k}\lvert \beta\rvert)^{1/k}}\ll q^{\varepsilon-1/k}\frac{P^{k/s}}{(1+P^{k}\lvert \beta\rvert)^{1/s}}.
\end{align*}We deduce the first estimate by combining the preceding bounds and inserting them on the first line of (\ref{prl}). The second one follows in a analogous manner.
\end{proof}

\section{Further pruning}\label{sec6}
We shall combine the work of last sections to estimate the contribution of major arcs of intermediate height. To such an end  we begin by defining for every $\alpha\in\grM_{a,q}(\frac{1}{2}P^{k/2},P)$ with $a\in\mathbb{Z}$ and $q\in\mathbb{N}$ satisfying $(a,q)=1$ and $0\leq a\leq q\leq \frac{1}{2}P^{k/2}$ the function
\begin{equation*}\Upsilon(\alpha)=q^{-2}(1+P^{k}\lvert \alpha-a/q\rvert)^{-1}.\end{equation*} If on the contrary $\alpha\notin \grM_{a,q}(\frac{1}{2}P^{k/2},P)$ for all $a,q$ as above we set $\Upsilon(\alpha)=0$. Observe that in view of the fact that the preceding intervals are disjoint this defines a function in $[0,1)$. In what follows we write $\grM(Q)$ to denote $\grM(Q,P)$ for simplicity.
\begin{lem}\label{kk}
Let $r>1$. For real numbers $Q_{0},Q_{1}>0$ satisfying $1\leq Q_{0}<Q_{1}\leq P^{k/2}$ then 
$$\int_{\grM(Q_{1})\setminus \grM(Q_{0})} \Upsilon(\alpha)^{r}d\alpha\ll P^{-k}Q_{0}^{1-r}.$$
\end{lem}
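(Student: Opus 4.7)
The plan is to reduce the integral to a dyadic sum and then carry out the computation of each piece by brute force. Set $Q_{j}=2^{j}Q_{0}$ for $j\geq 0$, and let $J$ be the smallest integer with $Q_{J}\geq Q_{1}$. Writing $\grN^{*}_{j}=\grM(Q_{j+1})\setminus\grM(Q_{j})$, one has
$$\int_{\grM(Q_{1})\setminus\grM(Q_{0})}\Upsilon(\alpha)^{r}d\alpha\leq\sum_{j=0}^{J-1}\int_{\grN^{*}_{j}}\Upsilon(\alpha)^{r}d\alpha,$$
so it suffices to show $\int_{\grN^{*}_{j}}\Upsilon^{r}d\alpha\ll P^{-k}Q_{j}^{1-r}$ uniformly in $j$; summing the geometric series $\sum_{j\geq 0}2^{j(1-r)}$ (convergent because $r>1$) then yields the desired bound.

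To estimate the integral over $\grN^{*}_{j}$ I would split according to the denominator $q$ of the unique fraction $a/q$ associated to $\alpha$ through the disjointness of the arcs $\grM_{a,q}(\tfrac{1}{2}P^{k/2},P)$. Two cases arise. If $Q_{j}<q\leq Q_{j+1}$, the whole arc $\grM_{a,q}(Q_{j+1})$ lies in $\grN^{*}_{j}$, and after the substitution $\beta=\alpha-a/q$ the integral reduces to $q^{-2r}\int_{|\beta|\leq Q_{j+1}/(qP^{k})}(1+P^{k}|\beta|)^{-r}d\beta\ll q^{-2r}P^{-k}$ by convergence of $\int_{0}^{\infty}(1+u)^{-r}du$ for $r>1$. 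Summing over $a$ coprime to $q$ costs a factor $q$, and summing over $Q_{j}<q\leq Q_{j+1}$ yields a contribution $\ll P^{-k}Q_{j}^{2-2r}$. If instead $q\leq Q_{j}$, then only the annular portion where $Q_{j}/(qP^{k})<|\beta|\leq Q_{j+1}/(qP^{k})$ contributes (the inner disk lies in $\grM_{a,q}(Q_{j})\subset\grM(Q_{j})$), and this integral is bounded by $q^{-2r}P^{-k}\int_{Q_{j}/q}^{\infty}u^{-r}du\ll q^{-1-r}P^{-k}Q_{j}^{1-r}$. Summing over $a$ gives $q^{-r}$, and $\sum_{q\leq Q_{j}}q^{-r}\ll 1$ since $r>1$, producing a contribution $\ll P^{-k}Q_{j}^{1-r}$.

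Since $r>1$ one has $Q_{j}^{2-2r}\leq Q_{j}^{1-r}$ for $Q_{j}\geq 1$, so both cases combine to $\int_{\grN^{*}_{j}}\Upsilon^{r}d\alpha\ll P^{-k}Q_{j}^{1-r}$, and the dyadic sum closes the argument. The only subtlety to bookkeep is that $\Upsilon$ is supported only on $\grM(\tfrac{1}{2}P^{k/2},P)$, but any portion of $\grM(Q_{1})$ outside this set contributes trivially zero and so may be ignored; the disjointness of the finer arcs, which underpins the case separation by $q$, is the one structural input needed. There is no serious obstacle here beyond bookkeeping: the computation is entirely elementary once the dyadic shell decomposition is in place, and all the dangerous sums over $q$ and $a$ converge thanks to $r>1$.
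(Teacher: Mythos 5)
Your proof is correct, but it takes a somewhat different route from the one in the paper. You first introduce a dyadic shell decomposition $Q_j=2^jQ_0$ and estimate $\int_{\grN^*_j}\Upsilon^r$ shell by shell, splitting each shell according to whether $Q_j<q\leq Q_{j+1}$ (whole arc contributes, bound $\ll q^{-2r}P^{-k}$) or $q\leq Q_j$ (only the annulus $|\beta|>Q_j/(qP^k)$ contributes, bound $\ll q^{-r}P^{-k}Q_j^{1-r}$ after summing over $a$), and then you sum the geometric series $\sum_j 2^{j(1-r)}$, which converges because $r>1$. The paper bypasses the dyadic step entirely: it observes directly that $\alpha\in\grM(Q_1)\setminus\grM(Q_0)$ forces, for the fraction $a/q$ attached to $\alpha$, either $q>Q_0$ or $|\alpha-a/q|>Q_0/(qP^k)$, and then bounds the integral by the sum of two double sums --- one with the $\beta$-integral starting at $Q_0/(qP^k)$ and summed over all $q\leq Q_1$, the other with $q>Q_0$ and the full $\beta$-integral --- each of which converges for $r>1$ and contributes $\ll P^{-k}Q_0^{1-r}$. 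Both approaches use precisely the same structural inputs (disjointness of the arcs, the substitution $\beta=\alpha-a/q$, and convergence of $\int(1+u)^{-r}du$ and $\sum q^{-r}$ for $r>1$); the paper's version is more economical since it avoids the dyadic bookkeeping, while your version isolates the geometric decay more explicitly, which some may find conceptually transparent. One small imprecision in your write-up: in the first case the claim that the whole arc $\grM_{a,q}(Q_{j+1})$ lies in $\grN^*_j$ is not needed (and can in fact fail when $Q_j$ is close to $P^{k/2}$), but this is harmless because your bound $q^{-2r}P^{-k}$ follows from the convergent integral over all $\beta$ regardless of the stated upper limit.
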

\begin{proof}
It seems worth noting first that whenever $\alpha\in\grM(Q_{1})\setminus \grM(Q_{0})$, there are $a\in\mathbb{Z}$ and $q\in\mathbb{N}$ satisfying $(a,q)=1$ and $0\leq a\leq q\leq Q_{1}$ with $\lvert \alpha-a/q\rvert\leq Q_{1}/qP^{k}$ such that either 
$\lvert \alpha-a/q\rvert> \frac{Q_{0}}{qP^{k}}$ or $q>Q_{0}.$ Consequently, the integral at hand may be estimated by
\begin{align*}&\int_{\grM(Q_{1})\setminus \grM(Q_{0})}\Upsilon(\alpha)^{r}d\alpha\ll \sum_{q\leq Q_{1}}\sum_{\substack{a=1\\ (a,q)=1}}^{q}q^{-2r}\int_{\lvert \beta\rvert> Q_{0}/qP^{k}}\frac{d\beta}{(1+P^{k}\lvert \beta\rvert)^{r}}
\\
&+\sum_{q>Q_{0}}\sum_{\substack{a=1\\ (a,q)=1}}^{q}q^{-2r}\int_{\lvert \beta\rvert\leq  Q_{1}/qP^{k}}\frac{d\beta}{(1+P^{k}\lvert \beta\rvert)^{r}}\ll  P^{-k}Q_{0}^{1-r}\sum_{q\leq Q_{1}}q^{-r}+P^{-k}\sum_{q>Q_{0}}q^{1-2r},
\end{align*}
which concludes the proof in view of the assumption $r>1$.
\end{proof}

Equipped with the above lemma we shall present yet another major arc type estimate.
\begin{lem}\label{lem6.2}
Let $s\geq k+1$ and $t\geq 4k+1$. Let $0<\theta<\theta_{0}(k,s)$ for some small enough $\theta_{0}(k,s)>0$ and assume that $1\leq Q\leq P^{\theta}$. Then one has
\begin{equation}\label{eq210}\int_{\grM(P^{\theta})\setminus \grM(Q)}\lvert g_{s}(\alpha,P,R)\rvert^{t}d\alpha\ll P^{tk/s-k}Q^{-1/53k}.\end{equation} 
\end{lem}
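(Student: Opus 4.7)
The plan is to bound $|g_s(\alpha,P,R)|^{t}$ pointwise by a power of $\Upsilon(\alpha)$ and to integrate via Lemma \ref{kk}, using Lemma \ref{lem5.2} where the denominator $q$ is very small and Lemma \ref{lem6.1} otherwise. Concretely, I fix a large constant $A=A(k,s,t)$, set $Q'=\max\bigl(Q,(\log P)^{A}\bigr)$, and decompose
\[
\grM(P^{\theta})\setminus\grM(Q)=\bigl(\grM(P^{\theta})\setminus\grM(Q')\bigr)\cup\bigl(\grM(Q')\setminus\grM(Q)\bigr),
\]
the latter set being empty whenever $Q\geq(\log P)^{A}$.

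On the inner piece $\grM(Q')\setminus\grM(Q)$ I invoke Lemma \ref{lem5.2} to write
\[
|g_s(\alpha,P,R)|^{t}\ll q^{\varepsilon t-t/k}P^{tk/s}(1+P^{k}|\alpha-a/q|)^{-t}
\]
and extract $\Upsilon(\alpha)^{r}$ with $r=t/(2k)-\varepsilon t/2$, so that the residual factor $q^{\varepsilon t-t/k+2r}(1+P^{k}|\alpha-a/q|)^{r-t}$ is $\leq 1$. Lemma \ref{kk} then gives a contribution $\ll P^{tk/s-k}Q^{1-r}$, and the hypothesis $t\geq 4k+1$ forces $1-r\leq -1-1/(2k)+O(\varepsilon t)\leq -1$ for $\varepsilon$ small, yielding a bound $\ll P^{tk/s-k}Q^{-1}$, comfortably within target.

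On the outer piece $\grM(P^{\theta})\setminus\grM(Q')$ I invoke Lemma \ref{lem6.1} to write
\[
|g_s(\alpha,P,R)|^{t}\ll (\log P)^{3t}\,q^{\varepsilon t-t/(2k)}P^{tk/s}(1+P^{k}|\alpha-a/q|)^{-t/2}
\]
and extract $\Upsilon(\alpha)^{r}$ with $r=t/(4k)-\varepsilon t/2$, the residual again being at most $1$. Lemma \ref{kk} yields
\[
\int_{\grM(P^{\theta})\setminus\grM(Q')}|g_s|^{t}\,d\alpha\ll (\log P)^{3t}P^{tk/s-k}(Q')^{1-r}.
\]
The hypothesis $t\geq 4k+1$ now provides only $r-1\geq 1/(4k)-O(\varepsilon t)$, a small but positive constant. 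When $Q\geq(\log P)^{A}$ one has $Q'=Q$, and this saving, for $A$ chosen sufficiently large, swallows the $(\log P)^{3t}$ loss and retains the required $Q^{-1/(53k)}$. When $Q<(\log P)^{A}$ one has $Q'=(\log P)^{A}$, and I pick $A$ large enough in terms of $k,s,t$ that $3t+A(1-r)\leq -A/(53k)$; then $(\log P)^{3t+A(1-r)}\leq (\log P)^{-A/(53k)}\leq Q^{-1/(53k)}$ thanks to $Q\leq(\log P)^{A}$.

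The only nontrivial obstacle is therefore the joint calibration of $\varepsilon$ and $A$: the positive slack $1/(4k)-1/(53k)$ must dominate both the logarithmic loss inherited from Lemma \ref{lem6.1} and the target saving, uniformly for $Q\in[1,P^{\theta}]$. Once this bookkeeping is settled, no analytic input beyond the tools already assembled in Sections \ref{sec4}--\ref{sec5} is needed, and in fact any exponent strictly less than $1/(4k)$ could replace $1/(53k)$ in the statement at the cost of enlarging $A$.
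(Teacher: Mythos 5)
Your proposal is correct and follows essentially the same route as the paper: both split at a logarithmic threshold (your $(\log P)^{A}$, the paper's $L_{P}=(\log P)^{13tk}$), apply Lemma~\ref{lem6.1} together with Lemma~\ref{kk} on $\grM(P^{\theta})\setminus\grM(\max(Q,(\log P)^{A}))$, and apply Lemma~\ref{lem5.2} together with Lemma~\ref{kk} on $\grM((\log P)^{A})\setminus\grM(Q)$ when $Q<(\log P)^{A}$. The only cosmetic difference is that you phrase the dichotomy via a single auxiliary parameter $Q'=\max(Q,(\log P)^{A})$ rather than a case distinction, and you track the $\Upsilon$-exponents a bit more explicitly; the bookkeeping you flag at the end is precisely what the paper resolves by taking $A=13tk$ and $\varepsilon$ sufficiently small.
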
 
\begin{proof}
We prepare the ground by applying Lemma \ref{lem6.1} whenever $\alpha\in\grM(P^{\theta})$ to derive
\begin{align*}g_{s}(\alpha,P,R)&\ll (\log P)^{3}P^{k/s}\Upsilon(\alpha)^{1/4k-\varepsilon}.
\end{align*}
Combining the preceding estimate with the conclusion of Lemma \ref{kk} and the fact that $\text{meas}(\grM(P^{\theta}))\leq P^{2\theta-k}$ and writing $\grM_{\theta,Q}=\grM(P^{\theta})\setminus \grM(Q)$ permits one to deduce
\begin{align*}\label{rrrrr}&\int_{\grM_{\theta,Q}}\lvert g_{s}(\alpha,P,R)\rvert^{t}d\alpha\ll (\log P)^{3t}P^{tk/s}\int_{\grM_{\theta,Q}}\Upsilon(\alpha)^{t/4k-\varepsilon}d\alpha\ll (\log P)^{3t}P^{tk/s-k}Q^{\varepsilon-1/4k}.
\end{align*}
Consequently, equation (\ref{eq210}) would follow provided $Q\geq L_{P}$, where $L_{P}=(\log P)^{13tk}$. If instead $Q< L_{P}$ one may apply Lemma \ref{lem5.2} to get whenever $ \alpha\in\grM(L_{P})\setminus \grM(Q)$ that 
\begin{equation*}g_{s}(\alpha,P,R)\ll P^{k/s}\Upsilon(\alpha)^{-\varepsilon+1/2k}.\end{equation*} Set $\grM_{1}=\grM(P^{\theta})\setminus\grM(L_{P})$, $\grM_{2}=\grM(L_{P})\setminus \grM(Q)$ and combine the above estimates to get
\begin{align*}
& \int_{\grM_{1}}\lvert g_{s}(\alpha,P,R)\rvert^{t}d\alpha+\int_{\grM_{2}}\lvert g_{s}(\alpha,P,R)\rvert^{t}d\alpha\ll P^{\frac{tk}{s}-k}(\log P)^{\varepsilon-t/4}+P^{\frac{tk}{s}}\int_{\grM_{2}}\Upsilon(\alpha)^{t/2k-\varepsilon}d\alpha,
\end{align*}
whence another application of Lemma \ref{kk} delivers 
$$\int_{\grM(P^{\theta})\setminus \grM(Q)}\lvert g_{s}(\alpha,P,R)\rvert^{t}d\alpha\ll P^{tk/s-k}(\log P)^{\varepsilon-t/4}+P^{tk/s-k}Q^{\varepsilon-1-1/2k},$$ which yields the desired result.
\end{proof}
Having been furnished with the preceding bounds we derive the following.
\begin{prop}\label{prop4}
Let $s\geq 2k$ and $ t\geq 4k+1$. Let $1\leq Q\leq P^{k/2}$ and assume that $\Delta_{t}^{*}<0$ is an admissible exponent for minor arcs. Then for every $\tilde{\nu}<\min(2\lvert \Delta_{t}^{*}\rvert/k, 1/53k)$ one has
\begin{equation}\label{eq21}\int_{\grm(Q)}\lvert g_{s}(\alpha,P,R)\rvert^{t}d\alpha\ll P^{tk/s-k}Q^{-\tilde{\nu}},\ \ \ \ \ \ \ \ \ \ \ \ \ \ \int_{0}^{1}\lvert g_{s}(\alpha,P,R)\rvert^{t}d\alpha\ll P^{tk/s-k}.\end{equation} 
\end{prop}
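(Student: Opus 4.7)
The plan is to split the minor arcs $\grm(Q)$ according to the size of $Q$ relative to $P^{\theta}$, where $\theta=\theta(k,s)>0$ is the small parameter furnished by Lemma \ref{lem6.2}, and treat the two regimes via Propositions \ref{prop222}--\ref{prop2222} and Lemma \ref{lem6.2} respectively.

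In the regime $P^{\theta}\leq Q\leq P^{k/2}$, the goal is a direct invocation of Proposition \ref{prop2222}. Whenever $Q\leq \tfrac{1}{2}P^{k/2}R^{-k}$ this proposition yields $\ll P^{tk/s-k+\varepsilon}Q^{-2|\Delta_{t}^{*}|/k}$; the $P^{\varepsilon}$ loss is absorbed into the $Q$-saving via $P^{\varepsilon}\leq Q^{\varepsilon/\theta}$ together with the strict inequality $\tilde{\nu}<2|\Delta_{t}^{*}|/k$, by choosing $\varepsilon$ sufficiently small in terms of the gap $2|\Delta_{t}^{*}|/k-\tilde{\nu}$. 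For the narrow band $\tfrac{1}{2}P^{k/2}R^{-k}<Q\leq P^{k/2}$, the monotonicity $\grm(Q)\subset \grm(\tfrac{1}{2}P^{k/2}R^{-k})$ reduces matters to the previous range, the additional factor $R^{O(1)}\ll P^{O(\eta)}$ being absorbed in the same fashion (using $Q\asymp P^{k/2}$ and $\eta$ small).

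In the regime $1\leq Q<P^{\theta}$, decompose
$$\grm(Q)=\grm(P^{\theta})\cup\bigl(\grM(P^{\theta})\setminus\grM(Q)\bigr).$$
The pruning contribution is handled by Lemma \ref{lem6.2}, giving a contribution $\ll P^{tk/s-k}Q^{-1/53k}\leq P^{tk/s-k}Q^{-\tilde{\nu}}$ thanks to $\tilde{\nu}<1/53k$. The integral over $\grm(P^{\theta})$ is controlled by the first regime applied with $Q$ replaced by $P^{\theta}$, and since $Q\leq P^{\theta}$ one has $P^{tk/s-k}(P^{\theta})^{-\tilde{\nu}}\leq P^{tk/s-k}Q^{-\tilde{\nu}}$, so both pieces combine to deliver the first estimate in (\ref{eq21}).

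For the total integral one writes $[0,1)=\grm(1)\cup \grM(1,P)$. The bound already established with $Q=1$ gives $\int_{\grm(1)}|g_{s}|^{t}\ll P^{tk/s-k}$. On the single arc $\grM(1,P)$, of measure $\ll P^{-k}$, the trivial pointwise estimate $|g_{s}(\alpha,P,R)|\leq \sum_{x\in \tilde{\mathcal{A}}(P,R)}x^{-1+k/s}\ll P^{k/s}$ produces $\int_{\grM(1,P)}|g_{s}|^{t}\ll P^{tk/s-k}$, finishing the proof. The main obstacle is the fine calibration of $\varepsilon$, $\eta$ and $\theta$ so that the $P^{\varepsilon}$ loss from Proposition \ref{prop2222} and the $R^{O(1)}$ loss in the narrow band do not consume the strict gap $\min(2|\Delta_{t}^{*}|/k,1/53k)-\tilde{\nu}$; beyond this bookkeeping no new analytic input is needed, all the heavy lifting being contained in Sections \ref{sec3} and \ref{sec6}.
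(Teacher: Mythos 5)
Your proposal is correct and follows essentially the same route as the paper: decompose $\grm(Q)$ according to whether $Q$ exceeds $P^{\theta}$, handle the large-$Q$ range with Proposition \ref{prop2222}, handle $\grM(P^{\theta})\setminus\grM(Q)$ with Lemma \ref{lem6.2}, and treat the full integral by adding the trivial bound on $\grM(1)$. The only divergence is that you introduce an extra sub-case for the narrow band $\tfrac{1}{2}P^{k/2}R^{-k}<Q\leq P^{k/2}$, which is actually unnecessary since Proposition \ref{prop2222} already covers the whole range $P^{\theta}\leq Q\leq P^{k/2}$ (the $R^{-k}$ truncation belongs to Proposition \ref{prop222}, not Proposition \ref{prop2222}); your explicit discussion of absorbing the $P^{\varepsilon}$ loss into the strict gap $\min(2\lvert\Delta_t^*\rvert/k,1/53k)-\tilde{\nu}$ is a welcome clarification that the paper leaves implicit.
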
 
\begin{proof}
In view of Proposition \ref{prop2222}, it suffices to show the first estimate whenever $1\leq Q\leq P^{\theta}$ for sufficiently small $\theta$. Such a proposition combined with Lemma \ref{lem6.2} thereby delivers
\begin{align*}\int_{\grm(Q)}\lvert g_{s}(\alpha,P,R)\rvert^{t}d\alpha&\ll \int_{\grm(P^{\theta})}\lvert g_{s}(\alpha,P,R)\rvert^{t}d\alpha+\int_{\grM(P^{\theta})\setminus \grM(Q)}\lvert g_{s}(\alpha,P,R)\rvert^{t}d\alpha\nonumber
\\
&\ll P^{tk/s-k-\theta\tilde{\nu}}+ P^{tk/s-k}Q^{-1/53k},
\end{align*}
as desired. In order to obtain the second estimate in (\ref{eq21}) we set $Q=1$ in (\ref{eq21}) to get
\begin{align*}\int_{\grm(1)}\lvert g_{s}(\alpha,P,R)\rvert ^{t} d\alpha+\int_{\grM(1)}\lvert g_{s}(\alpha,P,R)\rvert ^{t} d\alpha\ll P^{tk/s-k}+\int_{\grM(1)}\lvert g_{s}(\alpha,P,R)\rvert^{t}  d\alpha.
\end{align*} Inserting the bounds $\text{meas}(\grM(1))\leq P^{-k}$ and $g_{s}(\alpha,P,R)\ll P^{k/s}$ concludes the proof.
\end{proof}
\section{An asymptotic evaluation}\label{sec7}
We shall complete the major arc analysis in the context of Theorems \ref{thm1.1} and \ref{thm1.2} by deriving the relevant asymptotic formula. We thus consider for $N\in\mathbb{N}$ the parameter \begin{equation}\label{paar}P=(2N)^{1/k}\end{equation} and define for $a\in\mathbb{Z}$ and $q\in\mathbb{N}$ with $(a,q)=1$ and $0\leq a\leq q\leq (\log P)^{1/8}$ the arcs
\begin{equation}\label{vv}\frak{K}(a,q)=\Big\{\alpha\in [0,1):\ \lvert \alpha-a/q\rvert\leq (\log P)^{1/8}P^{-k}\Big\},\end{equation} denote $\frak{K}$ to the union of such sets and $\frak{k}=[0,1)\setminus \frak{K}.$ We further introduce for $0\leq j\leq s$ and whenever $n\in\NN$ with $N\leq n\leq 2N$ the integral 
\begin{equation}\label{rsk}r_{j}(n,R)=\int_{0}^{1}\tilde{f}_{s}(\alpha,P,R)^{j}f_{s}(\alpha,P,R)^{s-j}e(-\alpha n)d\alpha,\end{equation} 
which by orthogonality and upon recalling (\ref{Sqa}) satisfies
\begin{equation}\label{fasi} r_{j}(n,R)=\sum_{\substack{(x_{1},\ldots,x_{s})\in \mathcal{C}(n,R)\\ x_{l}> P_{-}^{k},\ \ l\leq j }}(x_{1}\cdots x_{s})^{-1+k/s},\end{equation} where 
\begin{equation}\label{pij}\mathcal{C}(n,R)=\Big\{(x_{1},\ldots,x_{s})\in \mathbb{N}:\ \ x_{1}^{k}+\ldots+x_{s}^{k}=n,\ \ \ \ x_{i}\in\mathcal{A}(P,R)\Big\}.\end{equation}
We also write $r_{s,k}(n,R)=r_{0}(n,R)$. In view of the preceding definitions it transpires that
\begin{equation}\label{ooo}r_{j}(n,R)=\int_{\frak{K}}\tilde{f_{s}}(\alpha,P,R)^{j}f_{s}(\alpha,P,R)^{s-j}e(-\alpha n)d\alpha+O\Big(\int_{\frak{k}}\lvert \tilde{f_{s}}(\alpha,P,R)^{j} f_{s}(\alpha,P,R)^{s-j}\rvert d\alpha\Big).\end{equation} 

We shall bound the contribution of the minor arcs with the aid of a more general estimate that shall be employed on multiple contexts, it being pertinent presenting the constant \begin{equation}\label{nu0}\nu=\min(\lvert \Delta_{s}^{*}\rvert/2sk, 1/107sk).\end{equation} We also introduce for a measurable set $\frak{B}\subset [0,1)$, a fixed real number $\gamma\in\mathbb{R}\setminus\{0\}$, any parameter $1\leq P_{0}\leq P$ and integers $1\leq j\leq s$ and $0\leq l\leq s-j$ the integral 
\begin{equation}\label{cho}I_{\frak{B},\gamma}^{j,l}(P,P_{0})=\int_{\frak{B}}\lvert \tilde{f_{s}}(\alpha,P,R)\rvert^{j}  \lvert f_{s}(\gamma\alpha,P,R)\rvert ^{s-j-l} \lvert f_{s}(\alpha,P_{0},R)\rvert^{l} d\alpha,\end{equation} and, for each $P_{1},P_{2},P_{3}\leq P$, its counterpart
$$I_{\frak{B},\gamma}^{j,l}(P_{1},P_{2},P_{3})=\int_{\frak{B}}\lvert g_{s}(\alpha,P_{1},R)\rvert^{j}  \lvert g_{s}(\gamma\alpha,P_{2},R)\rvert ^{s-j-l} \lvert g_{s}(\alpha,P_{3},R)\rvert^{l} d\alpha.$$
\begin{lem}\label{lem8.10}
Let $1\leq j\leq s$ and $0\leq l\leq s-j$. Let $1\leq P_{0}\leq P$ and $1\leq Q\leq P^{k/2}$ and assume that $s\geq 4k+1$ and $\gamma\in\mathbb{R}\setminus\{0\}$. Then, whenever $\Delta_{s}^{*}<0$ and $l\leq 1$ one has
\begin{equation}\label{hh}I_{\grm(Q),\gamma}^{j,l}(P,P_{0})\ll Q^{-\nu}.\end{equation} If instead $2\leq l\leq s-j$ but $P_{0}=1$, the same estimate holds.
\end{lem}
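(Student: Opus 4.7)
The strategy is to apply H\"older's inequality on $\grm(Q)$ with conjugate exponents $s/j$, $s/(s-j-l)$, and $s/l$ (omitting any factor whose exponent vanishes), separating the three types of sums into individual $L^{s}$-integrals. Writing
\[
I_{\grm(Q),\gamma}^{j,l}(P, P_0) \leq A^{j/s} B^{(s-j-l)/s} C^{l/s},
\]
where $A$, $B$, $C$ denote the integrals of $|\tilde{f}_{s}(\alpha, P, R)|^{s}$, $|f_{s}(\gamma\alpha, P, R)|^{s}$, and $|f_{s}(\alpha, P_0, R)|^{s}$ over $\grm(Q)$ respectively, one estimates each factor using the machinery of Sections \ref{sec2}--\ref{sec6}.

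For $A$, the dyadic decomposition $\tilde{f}_{s}(\alpha, P, R) = \sum_{i=0}^{\tilde{i}_{s}} g_{s}(\alpha, 2^{-i}P, R)$ has only $O_{k,s}(1)$ terms by (\ref{i0}), so by the power-mean inequality it suffices to bound $\int_{\grm(Q, P)} |g_{s}(\alpha, 2^{-i}P, R)|^{s} d\alpha$ for each $i$. A direct comparison of the definitions yields the inclusion $\grM(Q \cdot 2^{-ik}, 2^{-i}P) \subset \grM(Q, P)$, hence $\grm(Q, P) \subset \grm(Q \cdot 2^{-ik}, 2^{-i}P)$; applying Proposition \ref{prop4} with $P$ and $Q$ rescaled accordingly (and its second, unrestricted estimate to handle the regime $Q \cdot 2^{-ik} < 1$) delivers $A \ll_{k,s} Q^{-\tilde\nu}$. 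For $B$, we extend the integration to $[0, 1)$, exploit the $1$-periodicity of $f_{s}$ via the change of variable $\beta = \gamma\alpha \bmod 1$ to bound $B$ by a constant (depending on $\gamma$) times $\int_{0}^{1} |f_{s}(\beta, P, R)|^{s} d\beta$, and then use the full dyadic decomposition of $f_{s}$ together with the second estimate of Proposition \ref{prop4} to obtain $B \ll_{\gamma} (\log P)^{s}$. For $C$ the case $l = 0$ is vacuous; when $l = 1$ the same argument used for $B$ gives $C \ll (\log P)^{s}$; and when $l \geq 2$ the hypothesis $P_0 = 1$ forces $\mathcal{A}(1, R) = \{1\}$, so $|f_{s}(\alpha, 1, R)| \equiv 1$ and $C \leq 1$. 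Combining these bounds yields
\[
I_{\grm(Q),\gamma}^{j,l}(P, P_0) \ll_{k,s,\gamma} Q^{-\tilde\nu j/s} (\log P)^{s}.
\]

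The main obstacle is removing the polylogarithmic loss to obtain the uniform target $O(Q^{-\nu})$. By construction in (\ref{nu0}) the exponent $\nu$ lies strictly below $\tilde\nu/s$ by a factor of at least two, so whenever $Q \geq (\log P)^{C(k, s)}$ for $C(k, s)$ sufficiently large the $(\log P)^{s}$ factor is absorbed into $Q^{\tilde\nu/s - \nu}$. In the residual range $1 \leq Q \leq (\log P)^{C(k, s)}$ one refines the bounds on $B$ (and $C$ when $l=1$) by replacing their $L^{s}$-estimates with a pointwise minor arc bound via Lemma \ref{lem3}: since $\alpha \in \grm(Q, P)$ forces every Dirichlet approximant to satisfy $\Theta \geq Q$, one obtains $g_{s}(\alpha, P', R) \ll (P')^{k/s} Q^{-\kappa}$ uniformly over dyadic pieces $P' \leq P$ for some $\kappa > 0$, and inserting this pointwise saving inside the integrand eliminates the polylogarithmic loss at the cost of slightly diminished $Q$-savings. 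Patching the two regimes then delivers the claimed uniform estimate for all $Q \in [1, P^{k/2}]$.
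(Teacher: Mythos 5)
The decomposition and the large-$Q$ regime are handled correctly and in essentially the same spirit as the paper: both split the integrand via H\"older, put $\grm(Q)$ on a single $L^{s}$-factor (your $A$, whose dyadic pieces all have $P'\asymp P$ by (\ref{i0})), use Proposition \ref{prop4} there, extend the remaining factors to $[0,1)$, and absorb the $(\log P)^{O(1)}$ loss into the $Q$-saving once $Q$ exceeds a fixed power of $\log P$; the comparison between $\nu$, $\tilde\nu/s$ you invoke is also correct.

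The gap is in the residual range $1\leq Q\leq (\log P)^{C(k,s)}$. Lemma \ref{lem3} cannot give $g_s(\alpha,P',R)\ll (P')^{k/s}Q^{-\kappa}$ uniformly on $\grm(Q,P)$. Its bound carries the factor $\big(P^{\Delta_l}(\Theta^{-1}+\cdots)\big)^{2/l^2}$ with an admissible exponent $\Delta_l\geq 0$; with $\Theta\asymp Q=(\log P)^{O(1)}$ this is $\gg 1$ for every $l$ with $\Delta_l>0$, so the estimate is worse than trivial precisely on the intermediate-height arcs $\grM(L)\setminus\grM(Q)$ that cause the trouble. Moreover for dyadic pieces $P'\ll P$ (which occur in the decomposition of $f_s$, unlike $\tilde f_s$) the Dirichlet approximant at level $(P')^{k/2}$ need not have $\Theta'\geq Q$ even when $\alpha\in\grm(Q,P)$, and after your change of variable $\beta=\gamma\alpha$ the $\grm(Q)$ structure for $B$ is lost altogether, so a pointwise bound phrased in terms of $\alpha$ does not transfer to $f_s(\gamma\alpha,\cdot)$. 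The right tool on the problematic region is a \emph{major arc} pointwise estimate, not a minor arc one: the paper splits $\grm(Q)=\grm(L_\nu(P))\cup\big(\grM(L_\nu(P))\setminus\grM(Q)\big)$, uses its large-$Q$ estimate on the first piece, and on the second piece bounds the integrand pointwise via Lemma \ref{lem5.2} (giving savings in $q$ and $1+P^{k}|\alpha-a/q|$) and integrates using Lemma \ref{kk}, which returns exactly the $Q^{-1/2}$-type saving needed. To repair your argument you would need to replace the appeal to Lemma \ref{lem3} with this major-arc pruning step (Lemmata \ref{lem5.2} or \ref{lem6.1} plus Lemma \ref{kk}) on $\grM(L_\nu(P))\setminus\grM(Q)$, and keep the $\grm(Q)$ restriction on at least one factor rather than extending all of $B$ and $C$ to $[0,1)$ at the outset.
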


\begin{proof}
We make for convenience and upon recalling (\ref{i0}) the dyadic dissections \begin{equation}\label{oo}\tilde{f_{s}}(\alpha,P,R)=\sum_{0\leq i\leq\tilde{i}_{s}}g_{s}(\alpha,2^{-i}P,R),\ \ \ \ \ \ \ \ \ \ f_{s}(\alpha,P,R)=\sum_{0\leq i\leq \frac{\log P}{\log 2}}g_{s}(\alpha,2^{-i}P,R).\end{equation}
We observe first that an application of Holder's inequality and a change of variable deliver
\begin{align*}I&_{\grm(Q),\gamma}^{j,l}(P,P_{0})\ll (\log P)^{s-1}\sum_{0\leq i\leq \frac{\log P}{\log 2}}\ \ \sum_{0\leq r\leq \frac{\log P}{\log 2}}\max_{\overline{P}\asymp P}I_{\grm(Q),\gamma}^{j,l}(\overline{P},2^{-i}P,2^{-r}P)
\\
&\ll (\log P)^{s}\sum_{0\leq i\leq \frac{\log P}{\log 2}}\ \ \max_{\overline{P}\asymp P}\Big(\int_{\grm (Q)}\lvert g_{s}(\alpha,\overline{P},R)\rvert^{s}d\alpha\Big)^{1/s}  \Big(\int_{0}^{1}\lvert g_{s}(\alpha,2^{-i}P,R)\rvert^{s}d\alpha\Big)^{1-1/s}.
\end{align*}
In order to prepare the ground for the application of Proposition \ref{prop4} we note that if $\overline{P}\asymp P$ then $\grM(\overline{c}Q,\overline{P})\subset \grM(Q,P)$ whenever $\overline{c}>0$ is a sufficiently small constant. Therefore, 
\begin{equation}\label{dd}I_{\grm(Q),\gamma}^{j,l}(P,P_{0})\ll  (\log P)^{s+1}\max_{\overline{P}\asymp P}\Big(\int_{\grm (Q)}\lvert g_{s}(\alpha,\overline{P},R)\rvert^{s}d\alpha\Big)^{1/s}\ll (\log P)^{s+1}Q^{-2\nu}.\end{equation} Upon denoting $L_{\nu}(P)=(\log P)^{(s+1)/\nu}$, the bound (\ref{hh}) would follow unless $Q\leq L_{\nu}(P)$. Otherwise we apply Lemmata \ref{lem5.2} and \ref{kk} and recall $s\geq 4k+1$ to deduce for $l\leq 1$ that
\begin{align*}I_{\grM(L_{\nu}(P))\setminus \grM(Q),\gamma}^{j,l}(P,P_{0})&\ll P^{k}\int_{\grM(L_{\nu}(P))\setminus \grM(Q)}\Upsilon(\alpha)^{3/2}d\alpha\ll Q^{-1/2}.
\end{align*}
If instead $l\geq 2$ and $P_{0}=1$ then using the fact that $\text{meas}(\grM(L_{\nu}(P)))\ll P^{\varepsilon-k}$ in conjunction with the trivial bounds for the corresponding exponential sums delivers
$$I_{\grM(L_{\nu}(P))\setminus \grM(Q),\gamma}^{j,l}(P,1)\ll P^{-lk/s+\varepsilon}.$$
Consequently, the above estimates in conjunction with (\ref{dd}) for $Q=L_{\nu}(P)$ deliver
\begin{align*}I_{\grm(Q),\gamma}^{j,l}(P,P_{0})=I_{\grm(L_{\nu}(P)),\gamma}^{j,l}(P,P_{0})+I_{\grM(L_{\nu}(P))\setminus \grM(Q),\gamma}^{j,l}(P,P_{0})&\ll (\log P)^{-(s+1)}+Q^{-\frac{1}{2}}\ll Q^{-\nu}.\end{align*}
\end{proof}
Equipped with the above lemma we shall now bound the minor arc contribution in (\ref{ooo}).
\begin{cor}\label{cor1}
Let $s\geq 4k+1$ satisfying $\Delta_{s}^{*}<0$. Then, whenever $1\leq j\leq s$ one has
$$\int_{\frak{k}}\lvert \tilde{f_{s}}(\alpha,P,R)\rvert^{j} \lvert f_{s}(\alpha,P,R)\rvert^{s-j}d\alpha\ll (\log P)^{-\nu/15}.$$
\end{cor}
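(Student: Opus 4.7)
The plan is to reduce the claim to a direct application of Lemma \ref{lem8.10}. The key observation is that the Hardy--Littlewood major arcs $\frak{K}$ defined in (\ref{vv}) are contained in the set of major arcs $\grM((\log P)^{1/8},P)$ of Section \ref{sec2}, and therefore the complement $\frak{k}$ is contained in the corresponding minor arcs $\grm((\log P)^{1/8})$. This allows us to dominate the integral on $\frak{k}$ by an instance of the quantity $I_{\grm(Q_{0}),1}^{j,0}(P,P_{0})$ from (\ref{cho}), which is precisely what Lemma \ref{lem8.10} bounds.

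First I would verify the inclusion $\grM((\log P)^{1/8},P)\subset \frak{K}$. Writing $Q_{0}=(\log P)^{1/8}$, recall that $\grM_{a,q}(Q_{0},P)$ is the set of $\alpha$ with $\lvert \alpha-a/q\rvert\leq Q_{0}/(qP^{k})$. Since $q\leq Q_{0}$ and $(a,q)=1$, one finds $\lvert \alpha-a/q\rvert\leq Q_{0}P^{-k}=(\log P)^{1/8}P^{-k}$, placing $\alpha$ in $\frak{K}(a,q)$ by (\ref{vv}). Hence $\grM(Q_{0},P)\subset \frak{K}$ and consequently $\frak{k}\subset \grm(Q_{0})$.

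Next I would apply Lemma \ref{lem8.10} with $\gamma=1$, $l=0$, $Q=Q_{0}$ and any $P_{0}\leq P$ (the latter choice being immaterial since with $l=0$ the factor $\lvert f_{s}(\alpha,P_{0},R)\rvert^{l}$ in (\ref{cho}) equals $1$). The hypotheses $s\geq 4k+1$, $\Delta_{s}^{*}<0$ and $l\leq 1$ are all in force, whence
\begin{equation*}
\int_{\frak{k}}\lvert \tilde{f_{s}}(\alpha,P,R)\rvert^{j}\lvert f_{s}(\alpha,P,R)\rvert^{s-j}d\alpha\leq I_{\grm(Q_{0}),1}^{j,0}(P,P)\ll Q_{0}^{-\nu}=(\log P)^{-\nu/8}.
\end{equation*}
Since $\nu>0$, the exponent $-\nu/8$ is smaller than $-\nu/15$ for sufficiently large $P$, so the claimed bound follows at once. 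No step in this deduction is substantive: the corollary is essentially a packaging step transferring the minor-arc mean value of Lemma \ref{lem8.10} to the specific Hardy--Littlewood dissection $(\frak{K},\frak{k})$ adopted in Section \ref{sec7} for the asymptotic evaluation of $r_{j}(n,R)$.
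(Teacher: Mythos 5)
Your proof is correct and follows essentially the same route as the paper: verify that $\frak{k}\subset\grm(Q)$ for an appropriate $Q$ of logarithmic size and invoke Lemma \ref{lem8.10} with $l=0$, $\gamma=1$. The only cosmetic difference is that the paper chooses $Q=(\log P)^{1/15}$ (yielding exactly the stated exponent $-\nu/15$) while you use $Q=(\log P)^{1/8}$, which produces the sharper bound $(\log P)^{-\nu/8}$ and then trivially implies the claim.
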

\begin{proof}
By taking $Q=(\log P)^{1/15}$ we observe that $\grM(Q)\subset\frak{K}$, and hence $\frak{k}\subset \grm(Q)$. We thus apply Lemma \ref{lem8.10} for the choices $l=0$ and $\gamma=1$ and the previous remark to obtain
$$\int_{\frak{k}}\lvert \tilde{f_{s}}(\alpha,P,R)\rvert^{j} \lvert f_{s}(\alpha,P,R)\rvert^{s-j}d\alpha\ll I_{\grm(Q),1}^{j,0}(P,P)\ll (\log P)^{-\nu/15},$$ as desired.
\end{proof}
We shall next shift the reader's attention to (\ref{ooo}) for the purpose of computing the major arc contribution. We recall beforehand the singular series defined in (\ref{SSi}) and introduce
\begin{equation}\label{cks}c_{k,s}(\eta)=\frac{1}{k^{s}}\rho(1/\eta)^{s}\Gamma(1/s)^{s}.\end{equation}
\begin{prop}\label{prop7.1}
Let $s\geq 4k+1$. Then one has whenever $n\in\NN$ and $N\leq n\leq 2N$ that
\begin{equation*}\sum_{j=1}^{s}(-1)^{j+1}\binom{s}{j}\int_{\frak{K}}\tilde{f_{s}}(\alpha,P,R)^{j}f_{s}(\alpha,P,R)^{s-j}e(-\alpha n)d\alpha=c_{k,s}(\eta)\frak{S}(n)+O((\log n)^{-\frac{1}{16}}).\end{equation*}
Consequently, if $\Delta_{s}^{*}<0$ is an admissible exponent for minor arcs and the inequality $P^{\eta}\exp(-\eta(k^{-1}\log P)^{1/2})\leq R\leq P^{\eta}$ holds one obtains
\begin{equation}\label{parral}r_{s,k}(n,R)=c_{k,s}(\eta)\frak{S}(n)+O\big((\log n)^{-\nu/15 }\big).\end{equation}
\end{prop}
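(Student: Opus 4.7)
The plan is to combine an inclusion-exclusion identity with the major-arc approximation from Lemma \ref{lem5.3}. First, the binomial theorem yields the pointwise identity
\begin{equation*}
\sum_{j=1}^{s}(-1)^{j+1}\binom{s}{j}\tilde{f}_{s}(\alpha,P,R)^{j}f_{s}(\alpha,P,R)^{s-j} = f_{s}(\alpha,P,R)^{s} - \bigl(f_{s}(\alpha,P,R)-\tilde{f}_{s}(\alpha,P,R)\bigr)^{s}.
\end{equation*}
Since $f_{s}-\tilde{f}_{s}$ is supported on smooth numbers in $\mathcal{A}(P_{-},R)$, and the choice of $\tilde{i}_{s}$ in (\ref{i0}) forces $sP_{-}^{k}<N\le n$, the Fourier coefficient of $(f_{s}-\tilde{f}_{s})^{s}$ at $n$ over the full unit interval vanishes. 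Decomposing $[0,1)=\frak{K}\cup\frak{k}$, the claimed major-arc integral therefore coincides with $r_{s,k}(n,R)$ modulo its minor-arc contribution, the latter being bounded by $O((\log P)^{-\nu/15})$ via Corollary \ref{cor1}. Hence once the first assertion is established, (\ref{parral}) follows at once.

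To prove the first assertion, I would apply Lemma \ref{lem5.3} on each arc $\frak{K}(a,q)$. Writing $\beta=\alpha-a/q$ and $c_{\eta}=\rho(1/\eta)$, this delivers
\begin{equation*}
f_{s}(\alpha,P,R)=c_{\eta}q^{-1}S(q,a)w_{s}(\beta)+O\bigl(P^{k/s}(\log P)^{-1/2}\bigr),
\end{equation*}
together with the analogous expression for $\tilde{f}_{s}$ involving $\tilde{w}_{s}$. Telescoping these approximations across the $s$ factors in each summand and applying the a priori bounds supplied by Lemmas \ref{lem5.1} and \ref{lem5.2} to the unreplaced factors, one deduces that
\begin{equation*}
\sum_{j=1}^{s}(-1)^{j+1}\binom{s}{j}\tilde{f}_{s}^{j}f_{s}^{s-j} = c_{\eta}^{s}q^{-s}S(q,a)^{s}\bigl[w_{s}(\beta)^{s}-(w_{s}(\beta)-\tilde{w}_{s}(\beta))^{s}\bigr] + \mathcal{E}(\alpha),
\end{equation*}
where the cumulative error $\mathcal{E}$ integrates over $\frak{K}$ to $O((\log P)^{-1/16})$ after invoking Lemma \ref{lem5.22} to handle $\int(1+P^{k}|\beta|)^{-(s-1)/s}d\beta$ and the standard bound $|S(q,a)|\ll q^{1-1/k}$ over residues, the latter giving $\sum_{q}q^{1+(1-s)/k}=O(1)$ when $s\ge 4k+1$.

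The main term is then evaluated by running the binomial identity in reverse: invoking once more the inequality $sP_{-}^{k}<n$, the bracketed expression $w_{s}(\beta)^{s}-(w_{s}(\beta)-\tilde{w}_{s}(\beta))^{s}$ integrated against $e(-\beta n)$ over $[-1/2,1/2]$ coincides with $\int_{0}^{1}w_{s}(\beta)^{s}e(-\beta n)d\beta$. Extending the $\beta$-integration from $\frak{K}(a,q)$ to $[-1/2,1/2]$ produces only an admissible error via Lemma \ref{lem5.1}, and the classical evaluation of the linear Waring singular integral (through the beta-function identity for the simplex) yields $\int_{0}^{1}w_{s}(\beta)^{s}e(-\beta n)d\beta = k^{-s}\Gamma(1/s)^{s}+O((\log P)^{-\eta})$ for some $\eta>0$. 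Comparison with the definition (\ref{cks}), together with the absolute convergence of $\frak{S}(n)$ when $s\ge 4k+1$ and the truncation of the $q$-sum at $(\log P)^{1/8}$, then delivers the main term $c_{k,s}(\eta)\frak{S}(n)$.

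The principal obstacle lies in orchestrating the error estimation across the $s$ simultaneous approximations: each factor contributes a multiplicative $(\log P)^{-1/2}$ from Lemma \ref{lem5.3}, while the $(a,q)$-summation and $\beta$-integration each threaten to reintroduce logarithmic losses. These must be absorbed by preserving the polynomial decay $(1+P^{k}|\beta|)^{-1/s}$ from Lemma \ref{lem5.2} on all unreplaced factors, so that Lemma \ref{lem5.22} renders the total error below the target $(\log P)^{-1/16}$.
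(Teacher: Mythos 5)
Your proposal is correct and follows essentially the same route as the paper: the central observation that $sP_-^k<n$, so that the Fourier coefficient of $(f_s-\tilde f_s)^s$ at $n$ vanishes, is exactly the paper's remark at (\ref{rrrx}) that the condition $\max_i x_i>P_-$ is redundant, and the major-arc analysis via Lemma~\ref{lem5.3}, the truncated singular series and integral together with their tail estimates, and the classical evaluation of $\tilde J(n)$ all match the paper's argument. The only slip is cosmetic: Lemma~\ref{lem5.22} bounds integrals over the smooth variable $y$ and is not directly applicable to $\int(1+P^{k}|\beta|)^{-(s-1)/s}\,d\beta$, which is instead an elementary computation that does not affect the soundness of your error estimate.
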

\begin{proof}
We fix $Q=(\log P)^{1/8}$, recall (\ref{po}), (\ref{Sqa}) and consider when $1\leq j\leq s$ for convenience 
$$J_{j}(n,Q)=\int_{-\frac{Q}{P^{k}}}^{\frac{Q}{P^{k}}}\tilde{w}_{s}(\beta)^{j}w_{s}(\beta)^{s-j}e(-\beta n)d\beta,\ \ \ \ \ \ \frak{S}(n,Q)=\sum_{q=1}^{Q}\sum_{\substack{a=1\\ (a,q)=1}}^{q}q^{-s}S(q,a)^{s}e(-an/q).$$
We recall (\ref{vv}), observe that $\text{meas}(\frak{K})\ll Q^{3}P^{-k}$ and employ Lemma \ref{lem5.3} to get
\begin{equation}\label{kapa1}\int_{\frak{K}}\tilde{f_{s}}(\alpha,P,R)^{j}f_{s}(\alpha,P,R)^{s-j}e(-\alpha n)d\alpha=\rho(1/\eta)^{s}\frak{S}(n,Q)J_{j}(n,Q)+O((\log n)^{-1/8}).\end{equation}
It further seems worth denoting 
$$J_{j}(n)=\int_{-1/2}^{1/2} \tilde{w}_{s}(\beta)^{j} w_{s}(\beta)^{s-j}e(-\beta n)d\beta,$$ and observing that the application of Lemma \ref{lem5.1} permits one to deduce that 
\begin{align*}
J_{j}(n)-J_{j}(n,Q)&\ll \int_{QP^{-k}}^{1}\lvert \tilde{w}_{s}(\beta)\rvert^{j}\lvert w_{s}(\beta)\rvert^{s-j}d\beta\ll P^{k}\int_{QP^{-k}}^{1}(1+P^{k}\lvert \beta\rvert)^{-2+\frac{1}{s}}d\beta\ll Q^{-1+\frac{1}{s}}.
\end{align*}

In order to compute $J_{j}(n)$ we merely utilise orthogonality to observe 
$$J_{j}(n)=k^{-s}\sum_{\substack{ m_{1}+\ldots+m_{s}=n\\ m_{1},\ldots,m_{j}>P_{-}^{k}}}(m_{1}\cdots m_{s})^{1/s-1}, $$ 
and note that an inclusion-exclusion argument combined with the previous equation yields
$$\sum_{j=1}^{s}(-1)^{j+1}\binom{s}{j}J_{j}(n)=k^{-s}\sum_{\substack{ m_{1}+\ldots+m_{s}=n\\ \max\limits_{i\leq s} (m_{i})> P_{-}^{k} }}(m_{1}\cdots m_{s})^{\frac{1}{s}-1}=k^{-s}\sum_{\substack{ m_{1}+\ldots+m_{s}=n }}(m_{1}\cdots m_{s})^{\frac{1}{s}-1},$$
where we used the fact that the condition on $\max (m_{i})$ is redundant. We shall denote by $\tilde{J}(n)$ to the right side of the equation. We draw the reader's attention to \cite[Theorem 2.3]{Vau} and observe that using the notation therein and setting $k$ to be $s$ on that context one has $(s/k)^{s}J(n)=\tilde{J}(n)$, whence the application of such a theorem would yield
\begin{equation}\label{J} \tilde{J}(n)=\Big(\frac{s}{k}\Big)^{s}\Gamma\Big(1+\frac{1}{s}\Big)^{s}+O(n^{-1/s})=k^{-s}\Gamma\Big(\frac{1}{s}\Big)^{s}+O(n^{-1/s}).\end{equation}

The singular series is handled routinarily by invoking the estimate $S(q,a)\ll q^{1-1/k+\varepsilon}$ in \cite[Theorem 4.2]{Vau} to get whenever $s\geq 4k+1$ the bound $\mathfrak{S}(n)\ll 1$ and the approximation
\begin{equation}\label{S} \frak{S}(n,Q)=\frak{S}(n)+O(Q^{-2-1/k}).\end{equation} Then, combining (\ref{kapa1}), (\ref{J}) and (\ref{S}) and the previous observations one derives the first statement of the proposition. In order to prove (\ref{parral}) we recall (\ref{fasi}) and observe that
\begin{equation}\label{rrrx}r_{s,k}(n,R)=\sum_{\substack{(x_{1},\ldots,x_{s})\in\mathcal{C}(n,R) }}(x_{1}\cdots x_{s})^{-1+k/s}=\sum_{\substack{(x_{1},\ldots,x_{s})\in\mathcal{C}(n,R) \\ \max\limits_{i\leq s} (x_{i})> P_{-} }}(x_{1}\cdots x_{s})^{-1+k/s},\end{equation} whence by an inclusion-exclusion argument it follows that
\begin{equation}\label{main}r_{s,k}(n,R)=\sum_{j=1}^{s}(-1)^{j+1}\binom{s}{j}r_{j}(n,R).\end{equation}
One then may deduce by (\ref{rsk}), (\ref{ooo}) and the preceding equation that
\begin{align*}r_{s,k}(n,R)=&\sum_{j=1}^{s}(-1)^{j+1}\binom{s}{j}\int_{\frak{K}}\tilde{f_{s}}(\alpha,P,R)^{j}f_{s}(\alpha,P,R)^{s-j}e(-\alpha n)d\alpha
\\
&+O\Big(\sum_{j=1}^{s}\int_{\frak{k}}\lvert \tilde{f_{s}}(\alpha,P,R)\rvert^{j} \lvert f_{s}(\alpha,P,R)\rvert^{s-j}d\alpha\Big),
\end{align*}
whence the first statement combined with Corollary \ref{cor1} delivers (\ref{parral}), as desired.
\end{proof}
We shall conclude our analysis in this section by showing that the analogous counting function with one of the underlying variables being significantly smaller is negligible. To such an end it seems convenient defining for any function $\varphi(x)$ of uniform growth 
\begin{equation}\label{phi}r_{s,k}^{\varphi}(n,R)=\sum_{\substack{(x_{1},\ldots,x_{s})\in\mathcal{C}(n,R)\\ x_{i}> (n\varphi(n)^{-1})^{1/k} }}(x_{1}\cdots x_{s})^{-1+k/s}.\end{equation}

\begin{prop}\label{prop7.2}
Let $s\geq 4k+1$ and let $\Delta_{s}^{*}<0$ be an admissible exponent for minor arcs. Suppose that $P^{\eta}\exp(-\eta(k^{-1}\log P)^{1/2})\leq R\leq P^{\eta}$. Then for each $N\leq n\leq 2N$ one has
$$r_{s,k}^{\varphi}(n,R)=c_{k,s}(\eta)\frak{S}(n)+O\big((\log n)^{-\nu/15}+\varphi(n)^{-\nu/4s}\big).$$
\end{prop}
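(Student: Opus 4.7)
The plan is to adapt the proof of Proposition \ref{prop7.1} by realising $r_{s,k}^{\varphi}(n,R)$ as $r_{s,k}(n,R)$ minus a correction accounting for tuples with $\min_{i}x_{i}\leq P_{0}$, where $P_{0}:=(n/\varphi(n))^{1/k}$, and showing this correction is $O(\varphi(n)^{-\nu/4s})$. First I would apply inclusion--exclusion and symmetry to obtain
\begin{equation*}
r_{s,k}(n,R)-r_{s,k}^{\varphi}(n,R)=\sum_{j=1}^{s}(-1)^{j+1}\binom{s}{j}E_{j}(n),\qquad E_{j}(n)=\int_{0}^{1}f_{s}(\alpha,P_{0},R)^{j}f_{s}(\alpha,P,R)^{s-j}e(-\alpha n)d\alpha,
\end{equation*}
thereby reducing the task to proving $|E_{j}(n)|\ll \varphi(n)^{-j/s}$ for every $1\leq j\leq s$, since the dominant $j=1$ term yields the stronger bound $\varphi(n)^{-1/s}\leq \varphi(n)^{-\nu/4s}$ upon summation.

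Each $E_{j}(n)$ I would estimate through a Hardy--Littlewood dissection, choosing $Q_{1}=\max(1,\varphi(n)^{1/4s})$ when this lies below $(\log P)^{1/8}$, and $Q_{1}=(\log P)^{1/8}$ otherwise. On the minor arcs $\mathfrak{m}(Q_{1})$ I would dyadically decompose $f_{s}(\alpha,P,R)$ and $f_{s}(\alpha,P_{0},R)$ into $g_{s}$-summands via (\ref{oo}) and apply H\"older's inequality with equal $L^{s}$-weights as in the proof of Lemma \ref{lem8.10}, controlling the largest-scale factor via Proposition \ref{prop4}, to obtain $\int_{\mathfrak{m}(Q_{1})}|f_{s}(\alpha,P_{0})|^{j}|f_{s}(\alpha,P)|^{s-j}d\alpha\ll Q_{1}^{-\nu}$. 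In the first regime this delivers $\ll \varphi(n)^{-\nu/4s}$ directly, whereas in the second the bound $\ll (\log P)^{-\nu/8}$ is absorbed into the $(\log n)^{-\nu/15}$ term inherited from Proposition \ref{prop7.1}. On the major arcs $\mathfrak{M}(Q_{1},P)$ Lemma \ref{lem5.3} applies to both $f_{s}(\alpha,P,R)$ and $f_{s}(\alpha,P_{0},R)$ (the hypotheses at scale $P$ transfer to scale $P_{0}$ since $\mathfrak{M}_{a,q}(Q_{1},P)\subseteq \mathfrak{M}_{a,q}(Q_{1},P_{0})$ and $\log P_{0}\asymp \log P$ under the regime of interest), and the replacements $f_{s}(\alpha,P,R)\approx \rho(1/\eta)q^{-1}S(q,a)w_{s}(\beta;P)$ together with its analogue at scale $P_{0}$ reduce the contribution of $E_{j}$ over $\mathfrak{M}(Q_{1},P)$, modulo controllable errors, to $\rho(1/\eta)^{s}\mathfrak{S}(n,Q_{1})J_{j}(n,Q_{1})$, where
\begin{equation*}
J_{j}(n,Q_{1})=\int_{|\beta|\leq Q_{1}/P^{k}}w_{s}(\beta;P_{0})^{j}w_{s}(\beta;P)^{s-j}e(-\beta n)d\beta.
\end{equation*}

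To evaluate $J_{j}(n,Q_{1})$ I would mirror the Tauberian step in the proof of Proposition \ref{prop7.1}: extending the integration to $[-1/2,1/2]$ and applying orthogonality realises $J_{j}(n):=\lim_{Q_{1}\to\infty}J_{j}(n,Q_{1})$ as $k^{-s}\sum\prod m_{i}^{1/s-1}$ taken over $m_{1}+\cdots+m_{s}=n$ with $m_{1},\ldots,m_{j}\leq P^{k}/\varphi(n)$ and the remaining $m_{i}\leq P^{k}$. A rescaling $m_{i}=nu_{i}$ together with the Dirichlet/Beta identity underlying \cite[Theorem 2.3]{Vau} identifies this, up to $O(n^{-1/s})$, with a Dirichlet--Beta integral on the simplex $\{u_{1}+\cdots+u_{s}=1\}$ restricted by $u_{1},\ldots,u_{j}\leq 1/\varphi(n)$, whose value for $j\geq 1$ is $O(\varphi(n)^{-j/s})$ by direct integration of $\prod u_{i}^{1/s-1}$ subject to the constraint. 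The truncation error $J_{j}(n)-J_{j}(n,Q_{1})$ I would control via standard tail estimates from Lemma \ref{lem5.1} and absorb into the global error. The principal obstacle I anticipate is this evaluation: an absolute-value bound on the integrand of $J_{j}(n,Q_{1})$ yields only $O(\log P)$ and loses the critical $\varphi(n)^{-j/s}$ factor, so the oscillation in $e(-\beta n)$ must be genuinely exploited through the orthogonality identification, where the restriction $m_{1},\ldots,m_{j}\leq P^{k}/\varphi(n)$ generates the required gain via the local integrals $\int_{0}^{1/\varphi(n)}u^{1/s-1}du\asymp \varphi(n)^{-1/s}$.
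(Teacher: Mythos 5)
Your plan is built on the identity
\[
r_{s,k}(n,R)-r_{s,k}^{\varphi}(n,R)=\sum_{j=1}^{s}(-1)^{j+1}\binom{s}{j}E_{j}(n),\qquad E_{j}(n)=\int_{0}^{1}f_{s}(\alpha,P_{0},R)^{j}f_{s}(\alpha,P,R)^{s-j}e(-\alpha n)\,d\alpha,
\]
and you aim to show $|E_{j}(n)|\ll\varphi(n)^{-j/s}$ by a full Hardy--Littlewood analysis. The fundamental problem is that the $E_{j}$ carry \emph{no} dyadically-truncated factor $\tilde{f}_{s}$, and the argument genuinely needs one at two separate places. On the minor arcs: the H\"older step you invoke from the proof of Lemma~\ref{lem8.10} works precisely because $\tilde{f}_{s}^{j}$ guarantees a $g_{s}$-piece at scale $\asymp P$, so that $\grM(\bar{c}Q,\overline{P})\subset\grM(Q,P)$ and Proposition~\ref{prop4} yields a genuine $Q$-saving. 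When you decompose $f_{s}(\alpha,P,R)^{s-j}$ dyadically, there are cross-terms in which every scale is $<P/2$; for such terms there is no factor anchored at scale $\asymp P$, the arc inclusion fails in the required direction, and the minor-arc saving is lost. Lemma~\ref{lem8.10} itself is careful about this --- note that for $l\geq 2$ it is only stated for $P_{0}=1$ --- so you cannot simply cite it for $j\geq 2$ small-scale factors without a new argument.

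The second, and more serious, gap is in the Tauberian step. You observe (correctly) that an absolute-value estimate on $J_{j}(n,Q_{1})$ gives only $O(\log P)$, and propose to bypass this by identifying $J_{j}(n):=\lim_{Q_1\to\infty}J_{j}(n,Q_1)$ via orthogonality. But you then claim the truncation error $J_{j}(n)-J_{j}(n,Q_{1})$ is controlled by ``standard tail estimates from Lemma~\ref{lem5.1}'', i.e.\ by absolute values; these give exactly the same $O(\log P)$, not $o(1)$. Indeed, with only $(1+P^{k}|\beta|)^{-1/s}$ decay per factor of $w_{s}$, the tail $\int_{Q_{1}P^{-k}}^{1/2}\beta^{-1}\,d\beta\asymp\log P$ diverges: the requisite $(1+P^{k}|\beta|)^{-1}$ decay comes precisely from the $\tilde{w}_{s}$ factor in Lemma~\ref{lem5.1}, which you do not have. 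The cancellation that makes $J_{j}(n)\asymp\varphi(n)^{-j/s}$ is genuine, but the orthogonality identification applies only to the full integral over $[-1/2,1/2]$ and gives no handle on the partial integral $J_{j}(n,Q_{1})$ or its complement. Without the $\tilde{w}_{s}$ decay, the error cannot be absorbed.

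The paper's proof avoids both problems, and is much shorter, by not attempting an asymptotic for the correction at all. From (\ref{kkk}) it bounds $r_{s,k}(n,R)-r_{s,k}^{\varphi}(n,R)$ by a \emph{single} integral $\int_{0}^{1}\tilde{f}_{s}(\alpha,P,R)f_{s}(\alpha,P,R)^{s-2}f_{s}(\alpha,P_{0},R)\,e(-\alpha n)\,d\alpha$, using the observation that one variable must exceed $P_{-}$ to insert the anchoring $\tilde{f}_{s}$ and keeping only \emph{one} small-scale factor. It then takes $Q=\varphi(n)^{1/4s}$; on $\grm(Q)$ Lemma~\ref{lem8.10} with $l=1$ applies directly, while on $\grM(Q)$ the crude bound $f_{s}(\alpha,P_{0},R)\ll P^{k/s}\varphi(n)^{-1/s}$ combined with $\mathrm{meas}(\grM(Q))\ll\varphi(n)^{1/2s}P^{-k}$ already gives $O(\varphi(n)^{-1/2s})$ --- no Tauberian evaluation is needed because this is an error term, not a main term. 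To salvage your plan you would have to reinstate a $\tilde{f}_{s}$ factor by a further inclusion--exclusion on $f_{s}(\alpha,P,R)^{s-j}$ and then use $\tilde{w}_{s}$ in the Tauberian integral; at that point you are essentially reconstructing the paper's proof of Proposition~\ref{prop7.1} for each $j$, which is considerably more work than the paper's direct bound.
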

\begin{proof}
We begin by setting $P_{0}=P\varphi(n)^{-1/k}$ and noting upon recalling (\ref{rrrx}) that
\begin{align}\label{kkk}r_{s,k}(n,R)-r_{s,k}^{\varphi}(n,R)&=\sum_{\substack{(x_{1},\ldots,x_{s})\in\mathcal{C}(n,R)\\  \min\limits_{i\leq s} (x_{i})\leq (n\varphi(n)^{-1})^{1/k}}}(x_{1}\cdots x_{s})^{-1+k/s}
\\
&\ll\Big\lvert\int_{0}^{1} \tilde{f_{s}}(\alpha,P,R) f_{s}(\alpha,P,R)^{s-2} f_{s}(\alpha,P_{0},R)e(-\alpha n) d\alpha\Big\rvert.\nonumber
\end{align} 
We recall (\ref{cho}) and observe that the integral in the above equation thereby equals
\begin{align*}\int_{\grM(\varphi(n)^{1/4s})} \tilde{f_{s}}(\alpha,P,R) f_{s}(\alpha,P,R)^{s-2} f_{s}(\alpha,P_{0},R)e(-\alpha n) d\alpha+O\big(I_{\grm(\varphi(n)^{1/4s}),1}^{1,1}(P,P_{0})\big).
\end{align*} 
We find it worth noting that $\text{meas}(\grM(\varphi(n)^{1/4s}))=O( \varphi(n)^{1/2s}P^{-k})$, whence by the trivial bound $f_{s}(\alpha,P_{0},R)\ll P^{k/s}\varphi(n)^{-1/s}$, it follows that
\begin{equation}\label{hhhh}\int_{\grM(\varphi(n)^{1/4s})} \lvert \tilde{f_{s}}(\alpha,P,R) f_{s}(\alpha,P,R)^{s-2} f_{s}(\alpha,P_{0},R)\rvert d\alpha\ll \varphi(n)^{\frac{1}{2s}}\varphi(n)^{-\frac{1}{s}}=\varphi(n)^{-\frac{1}{2s}}.\end{equation}
Combining the previous estimates with the application of Lemma \ref{lem8.10} delivers 
\begin{equation}\label{ki}\lvert r_{s,k}(n,R)-r_{s,k}^{\varphi}(n,R)\rvert\ll \varphi(n)^{-\nu/4s},\end{equation}
which in conjunction with Proposition \ref{prop7.1} yields the desired result.
\end{proof}
We are now prepared to evaluate an analogue of the above counting function, namely
\begin{equation}\label{xxx}r_{s,k,\eta}^{\varphi}(n)=\sum_{\substack{n=x_{1}^{k}+\ldots+x_{s}^{k}\\  x_{i}\in\mathcal{A}(x_{i},x_{i}^{\eta})\\  x_{i}> (n\varphi(n)^{-1})^{1/k} }}(x_{1}\cdots x_{s})^{-1+k/s}.\end{equation}

\begin{cor}\label{cor2}
Let $s\geq 4k+1$ and $\Delta_{s}^{*}<0$ be an admissible exponent for minor arcs. Let $\varphi$ be of uniform growth satisfying $2\varphi(x)\leq \exp((\log x)^{1/2})$. Then for $N\leq n\leq 2N$ one has
$$r_{s,k,\eta}^{\varphi}(n)=c_{k,s}(\eta)\frak{S}(n)+O\big((\log n)^{-\nu/15}+\varphi(n)^{-\nu/4s}\big).$$
\end{cor}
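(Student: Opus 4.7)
The strategy is a direct sandwich argument comparing the new counting function $r_{s,k,\eta}^{\varphi}(n)$ with two instances of $r_{s,k}^{\varphi}(n,R)$ already evaluated in Proposition \ref{prop7.2}. The key observation is that as $x_i$ ranges over $((n/\varphi(n))^{1/k}, n^{1/k}]$, the variable smoothness threshold $x_i^\eta$ is trapped between two fixed values of $R$, both lying in the admissible window of Proposition \ref{prop7.2}, and since the main term $c_{k,s}(\eta)\frak{S}(n)$ there depends only on $\eta$ via the Dickman factor $\rho(1/\eta)$, both applications yield the same asymptotic.

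Concretely, I would set $R_1 = P^{\eta}(2\varphi(n))^{-\eta/k}$ and $R_2 = P^{\eta}$. The hypothesis $2\varphi(n) \leq \exp((\log n)^{1/2})$ combined with $n \leq P^k$ gives $\log(2\varphi(n)) \leq (k\log P)^{1/2}$, so $R_1 \geq P^{\eta}\exp(-\eta(k^{-1}\log P)^{1/2})$, placing both $R_1$ and $R_2$ inside the window required in Proposition \ref{prop7.2}. That proposition then delivers
$$r_{s,k}^{\varphi}(n,R_i) = c_{k,s}(\eta)\frak{S}(n) + O\big((\log n)^{-\nu/15} + \varphi(n)^{-\nu/4s}\big), \qquad i=1,2,$$
with identical main term and error, the former being independent of $R$.

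The sandwich itself stems from the chain
$$R_1 = P^{\eta}(2\varphi(n))^{-\eta/k} \leq (n/\varphi(n))^{\eta/k} < x_i^\eta \leq n^{\eta/k} \leq P^{\eta} = R_2,$$
valid for $(n/\varphi(n))^{1/k} < x_i \leq n^{1/k}$ using $n \geq N = P^k/2$. For such $x_i$, this forces the condition $x_i \in \mathcal{A}(P,R_1)$ to imply $x_i \in \mathcal{A}(x_i, x_i^\eta)$, which in turn implies $x_i \in \mathcal{A}(P, R_2)$. As every summand $(x_1\cdots x_s)^{-1+k/s}$ is positive, the pointwise inclusion of the three index sets yields
$$r_{s,k}^{\varphi}(n, R_1) \leq r_{s,k,\eta}^{\varphi}(n) \leq r_{s,k}^{\varphi}(n, R_2),$$
and combining this with the two applications of Proposition \ref{prop7.2} above completes the proof. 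No serious obstacle is anticipated; the argument rests on the invariance of the main term under the choice of $R \in [R_1, R_2]$, and the hypothesis $2\varphi(x) \leq \exp((\log x)^{1/2})$ is precisely calibrated to keep $R_1$ inside the Dickman window of Lemma \ref{lem5.3}.
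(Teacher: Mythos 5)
Your proof is correct and takes essentially the same sandwich approach as the paper: both bound $r_{s,k,\eta}^{\varphi}(n)$ between two instances of the fixed-$R$ counting function, using that the variable threshold $x_i^\eta$ is trapped in the window $[P^{\eta}\exp(-\eta(k^{-1}\log P)^{1/2}), P^{\eta}]$ once $x_i > (n/\varphi(n))^{1/k}$, and that the main term $c_{k,s}(\eta)\mathfrak{S}(n)$ is independent of $R$ in that window. The only cosmetic difference is that the paper's upper bound compares with $r_{s,k}(n,P^{\eta})$ via Proposition~\ref{prop7.1} rather than with $r_{s,k}^{\varphi}(n,P^{\eta})$ via Proposition~\ref{prop7.2}.
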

\begin{proof}
We first observe that one trivially has $r_{s,k,\eta}^{\varphi}(n)\leq r_{s,k}(n,P^{\eta}),$ where we employed the fact that $ x_{i}\leq n^{1/k}\leq P$ for each $i\leq s$, whence Proposition \ref{prop7.1} enables one to deduce
\begin{equation}\label{ppp}r_{s,k,\eta}^{\varphi}(n)-c_{k,s}(\eta)\frak{S}(n)\leq C_{1} (\log n)^{-\nu/15}\end{equation} for some constant $C_{1}>0$. On the other hand, upon recalling (\ref{phi}) it transpires that
\begin{align*}r_{s,k,\eta}^{\varphi}(n)\geq\sum_{\substack{(x_{1},\ldots,x_{s})\in\mathcal{C}(n,(n\varphi(n)^{-1})^{\eta/k})\\ x_{i}> (n\varphi(n)^{-1})^{1/k}  }}(x_{1}\cdots x_{s})^{-1+k/s}&\geq r_{s,k}^{\varphi}\big(n,P^{\eta}\exp\big(-\eta(k^{-1}\log P)^{1/2}\big)\big),
\end{align*}
where we employed the restriction on $\varphi$. Therefore, Proposition \ref{prop7.2} delivers
\begin{equation}\label{ppps}r_{s,k,\eta}^{\varphi}(n)-c_{k,s}(\eta)\frak{S}(n)\geq -C_{2}\big( (\log n)^{-\nu/15}+\varphi(n)^{-\nu/4s}\big)\end{equation} for some constant $C_{2}>0$. The corollary follows by combining both (\ref{ppp}) and (\ref{ppps}).
\end{proof}

\section{Unrepresentation evaluations}\label{sec8}
We shall explore in the present section the validity of the preceding asymptotic formulae if the condition $\Delta_{s}^{*}<0$ no longer holds. To such an end we recall (\ref{paar}), (\ref{nu0}), and introduce as is customary for a set $\frak{B}\subset [0,1)$, integers $1\leq j\leq s$ and $0\leq l\leq s-j$, some large $N>1$, a natural number $n\in[N,2N]$ and $1\leq P_{0}\leq P$ the auxiliary Fourier coefficient
\begin{equation*}I_{\frak{B},P_{0}}^{j,l}(n)=\int_{\frak{B}}\tilde{f_{s}}(\alpha,P,R)^{j} f_{s}(\alpha,P,R)^{s-j-l} f_{s}(\alpha,P_{0},R)^{l}e(-\alpha n) d\alpha.\end{equation*} 
\begin{lem}\label{lem8.1}
Let $s\geq 2k+1$ and $\Delta_{2s}^{*}<0$ be an admissible exponent for minor arcs. Let $0<\theta< k/2$ and $j,l$ as above. Then, for all but $O(N^{1-\nu\theta/2sk})$ integers $n\in [N,2N]$ one has
$$I_{\grm(P^{\theta}),P_{0}}^{j,l}(n)\ll N^{-\nu\theta/6sk}.$$ 
\end{lem}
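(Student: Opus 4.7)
The plan is to prove this via Parseval's identity followed by Chebyshev's inequality. The integral $I^{j,l}_{\mathfrak{m}(P^{\theta}),P_{0}}(n)$ is precisely the $n$-th Fourier coefficient of the function
$$F(\alpha) = \mathbf{1}_{\mathfrak{m}(P^{\theta})}(\alpha)\,\tilde{f}_{s}(\alpha,P,R)^{j}\,f_{s}(\alpha,P,R)^{s-j-l}\,f_{s}(\alpha,P_{0},R)^{l},$$
so Parseval's identity immediately yields
$$\sum_{n\in\mathbb{Z}}\lvert I^{j,l}_{\mathfrak{m}(P^{\theta}),P_{0}}(n)\rvert^{2} = \int_{\mathfrak{m}(P^{\theta})} \lvert \tilde{f}_{s}(\alpha,P,R)\rvert^{2j}\lvert f_{s}(\alpha,P,R)\rvert^{2(s-j-l)}\lvert f_{s}(\alpha,P_{0},R)\rvert^{2l}\,d\alpha.$$
In particular the sum restricted to $n\in[N,2N]$ is controlled by this $L^{2}$-norm.

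To estimate this $L^{2}$ integral, I would mimic the computation of Lemma \ref{lem8.10}: apply the dyadic dissections (\ref{oo}) to each of $\tilde{f}_{s}(\alpha,P,R)$, $f_{s}(\alpha,P,R)$ and $f_{s}(\alpha,P_{0},R)$, absorb the $O((\log P)^{C})$ loss from the resulting $O(\log P)$-fold sums, and apply Hölder's inequality with exponents $(s/j,\,s/(s-j-l),\,s/l)$ — with the usual convention when $s-j-l=0$ or $l=0$ — to decouple the three groups of factors. This reduces the $L^{2}$ norm to a product of three integrals of $|g_{s}(\alpha,\bar{P},R)|^{2s}$, where the factor attached to $\tilde{f}_{s}$ is restricted to $\mathfrak{m}(P^{\theta})$ (with $\bar{P}\asymp P$, thanks to the range of dyadic indices in the first half of (\ref{oo})) and the other two are integrated over $[0,1)$. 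Proposition \ref{prop4} with $t=2s$ then gives, for the first factor, a bound of shape $\bar{P}^{\,k}P^{-\theta\tilde{\nu}}$ for a fixed admissible $\tilde{\nu}>0$ (this is where the hypothesis $\Delta_{2s}^{*}<0$ enters), while the trivial $L^{2s}$-bound controls the remaining two factors by $\bar{P}'^{k}$ and $\bar{P}''^{k}$ respectively. Since $\bar{P}',\bar{P}''\leq P$ and $j\geq 1$, one concludes
$$\sum_{n\in\mathbb{Z}}\lvert I^{j,l}_{\mathfrak{m}(P^{\theta}),P_{0}}(n)\rvert^{2}\ll (\log P)^{C}P^{k-\theta\tilde{\nu}/s}\ll N^{1-\theta\tilde{\nu}/(sk)+\varepsilon}.$$

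The lemma then follows by Chebyshev: the number of $n\in[N,2N]$ for which $\lvert I^{j,l}_{\mathfrak{m}(P^{\theta}),P_{0}}(n)\rvert>N^{-\nu\theta/6sk}$ is at most $N^{\nu\theta/(3sk)}$ times the above $L^{2}$ bound, giving $O(N^{1-\theta\tilde{\nu}/(sk)+\nu\theta/(3sk)+\varepsilon})$. Recalling (\ref{nu0}) and the range available for $\tilde{\nu}$ in Proposition \ref{prop4}, one has $\tilde{\nu}\geq 5\nu/6$ with room to spare, and the exponential sum bound reduces to $O(N^{1-\nu\theta/(2sk)})$ as claimed.

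The only genuinely delicate point is the Hölder juggling in step two: one must organise the dyadic decomposition so that the factor carrying the minor-arc restriction corresponds to $\tilde{f}_{s}$, for otherwise the $g_{s}(\alpha,\bar{P}',R)$ with $\bar{P}'\ll P$ (and similarly with $P_{0}$) need not exhibit minor-arc cancellation of the form provided by Proposition \ref{prop4} at height $P^{\theta}$, since the $\mathfrak{m}(P^{\theta})$ is defined relative to $P$. The condition $j\geq 1$ is exactly what makes this allocation legitimate, and it is the reason the statement requires $1\leq j\leq s$ rather than merely $j\geq 0$.
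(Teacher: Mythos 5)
Your proposal is correct and follows essentially the same route as the paper: Parseval/Bessel to convert the sum of squares of Fourier coefficients to an $L^2$-norm over $\mathfrak{m}(P^\theta)$, H\"older with exponents $(s/j, s/(s-j-l), s/l)$ to decouple, the dyadic dissection (\ref{oo}) to force $\bar P\asymp P$ in the factor inheriting the minor-arc restriction (this is exactly where $j\geq 1$ is used), Proposition \ref{prop4} at $t=2s$ to exploit $\Delta_{2s}^*<0$, and pigeonhole/Chebyshev. The paper performs H\"older before the dyadic split and you do it after, but the two orderings are interchangeable and lead to the same bound $\sum_n \lvert I\rvert^2 \ll P^{k-\theta\nu/s+\varepsilon}$.
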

\begin{proof}
A routinary application of Bessel's and Holder's inequality permits one to obtain
\begin{align*}&\sum_{n\in [N,2N]}\lvert I_{\grm(P^{\theta},P),P_{0}}^{j,l}(n)\rvert^{2}\ll \int_{\grm}\lvert\tilde{f_{s}}(\alpha,P,R)\rvert^{2j} \lvert f_{s}(\alpha,P,R)\rvert^{2(s-j-l)} \lvert f_{s}(\alpha,P_{0},R)\rvert^{2l}d\alpha\nonumber
\\
&\ll \Big(\int_{\grm}\lvert \tilde{f_{s}}(\alpha,P,R)\rvert^{2s}d\alpha\Big)^{j/s}\Big(\int_{\grm}\lvert f_{s}(\alpha,P,R)\rvert^{2s}d\alpha\Big)^{1-(j+l)/s}\Big( \int_{\grm}\lvert f_{s}(\alpha,P_{0},R)\rvert^{2s}\Big)^{l/s},\end{align*}
where $\grm=\grm(P^{\theta})$. We insert (\ref{oo}) in the previous estimate and apply Holder's inequality to deduce that the right side of the above equation is bounded above by a constant times
$$(\log P)^{2s-2}\max_{P_{1}\leq P}\Big(\int_{0}^{1}\lvert{g_{s}}(\alpha,P_{1},R)\rvert^{2s}d\alpha\Big)^{1-j/s}\max_{P_{2}\asymp P}\Big(\int_{\grm}\lvert{g_{s}}(\alpha,P_{2},R)\rvert^{2s}d\alpha\Big)^{j/s}.$$ Consequently, in virtue of the negativity of $\Delta_{2s}^{*}$ one may employ Proposition \ref{prop4} to bound both integrals in the preceding line and thus derive
\begin{align*}\sum_{n\in [N,2N]}\lvert I_{\grm(P^{\theta},P),P_{0}}^{j,l}(n)\rvert^{2}&\ll (\log P)^{2s-2}P^{k(s-j)/s}\max_{P_{2}\asymp P}\Big(\int_{\grm}\lvert{g_{s}}(\alpha,P_{2},R)\rvert^{2s}d\alpha\Big)^{\frac{j}{s}}\ll P^{k-\frac{\theta\nu}{s}+\varepsilon}.
\end{align*} The lemma then follows by a simple pidgeonhole argument in conjunction with (\ref{paar}).
\end{proof}
 
Having been furnished with the preceding estimate we employ the major arc analysis in Section \ref{sec7} to derive for almost all integers an asymptotic evaluation of $r_{s,k}(n,R)$.
\begin{prop}\label{prop8.2}
Let $s\geq 4k+2$ with $\Delta_{2s}^{*}<0$ being an admissible exponent for minor arcs. Suppose that $P^{\eta}\exp(-\eta(k^{-1}\log P)^{1/2})\leq R\leq P^{\eta}$. Then, there exists $\zeta_{k,s}>0$ for which for all but $O(N^{1-\zeta_{k,s}})$ integers $n\in [N,2N]$ one has
\begin{equation*}r_{s,k}(n,R)=c_{k,s}(\eta)\frak{S}(n)+O\big((\log n)^{-\nu }\big).\end{equation*} 

\end{prop}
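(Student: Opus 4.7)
The plan is to combine the inclusion--exclusion identity (\ref{main}) with a three--way dissection of the unit interval tailored to exploit the weaker hypothesis $\Delta_{2s}^{*}<0$ via Bessel's inequality on almost all $n$. Concretely, write
$$r_{s,k}(n,R) = \sum_{j=1}^{s}(-1)^{j+1}\binom{s}{j}\int_{\mathfrak{K}}\tilde{f}_{s}^{j}f_{s}^{s-j}e(-\alpha n)d\alpha + \sum_{j=1}^{s}(-1)^{j+1}\binom{s}{j}\Big(\int_{\mathfrak{M}(P^{\theta})\setminus \mathfrak{K}} + \int_{\mathfrak{m}(P^{\theta})}\Big)\tilde{f}_{s}^{j}f_{s}^{s-j}e(-\alpha n)d\alpha,$$
for a small fixed $\theta>0$ to be chosen. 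The first sum yields $c_{k,s}(\eta)\mathfrak{S}(n)+O((\log n)^{-1/16})$ by direct appeal to the first assertion of Proposition \ref{prop7.1}, which relies only on the classical approximation Lemma \ref{lem5.3} and is therefore unaffected by the absence of the stronger admissibility hypothesis.

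For the intermediate major arcs I would proceed via deterministic pointwise bounds. Dyadically decomposing $\tilde{f}_{s}$ as in (\ref{oo}) and applying Lemma \ref{lem5.2} to each $g_{s}$--summand (each of whose truncation parameter is $\asymp P$, so that the corresponding major arc set is essentially preserved) delivers, uniformly on $\mathfrak{M}((\log P)^{A})$, the sharper decay $|\tilde{f}_{s}|\ll q^{\varepsilon-1/k}P^{k/s}(1+P^{k}|\beta|)^{-1}$; coupled with Lemma \ref{lem5.2} for $f_{s}$ itself, the integrand is majorised by $q^{\varepsilon-s/k}P^{k}(1+P^{k}|\beta|)^{-2+1/s}$. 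Since $s\geq 4k+2>2k$ the resulting sum $\sum_{q>(\log P)^{1/8}}q^{1+\varepsilon-s/k}$ converges and decays like $(\log P)^{-c_{1}}$; an analogous argument on $\mathfrak{M}(P^{\theta})\setminus \mathfrak{M}((\log P)^{A})$ uses the bound of Lemma \ref{lem6.1} with its $q^{-1/2k}$ saving, together with Lemma \ref{kk}, and yields a bound $(\log P)^{-c_{2}}$ once $A=A(k,s)$ is taken large enough.

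The deep minor arc piece is handled by Lemma \ref{lem8.1} applied with $l=0$ and $P_{0}=P$; its hypotheses $s\geq 2k+1$ and $\Delta_{2s}^{*}<0$ are both met, and its conclusion provides that for all but $O(N^{1-\nu\theta/2sk})$ integers $n\in[N,2N]$ one has $\big|I_{\mathfrak{m}(P^{\theta}),P}^{j,0}(n)\big|\ll N^{-\nu\theta/6sk}$. Summing these estimates against the inclusion--exclusion coefficients and setting $\zeta_{k,s}=\nu\theta/2sk$ for a suitably small fixed $\theta$, the aggregated error is $O((\log n)^{-\min(1/16,c_{1},c_{2})})+O(N^{-\nu\theta/6sk})$, which dominates the target $(\log n)^{-\nu}$ after a routine adjustment of constants (shrinking $\nu$ if necessary to the minimum of the available savings).

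The principal obstacle is the treatment of the intermediate region $\mathfrak{M}(P^{\theta})\setminus \mathfrak{K}$, where the weaker pointwise bound of Lemma \ref{lem6.1} alone is insufficient: one must split at height $(\log P)^{A}$ and combine the sharper bound of Lemma \ref{lem5.2} on small conductors with the Weyl--type bound on larger ones, verifying that the Dirichlet sums in $q$ converge. The rest of the argument is standard, with the interplay between the polynomial exceptional set coming from Bessel's inequality and the purely logarithmic error on the major arcs dictating the shape of $\zeta_{k,s}$.
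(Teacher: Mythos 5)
Your proposal is correct and follows essentially the same route as the paper: the same three-way dissection into $\mathfrak{K}$, $\mathfrak{M}(P^{\theta})\setminus\mathfrak{K}$, and $\mathfrak{m}(P^{\theta})$, with the first piece evaluated by the first assertion of Proposition \ref{prop7.1}, the intermediate piece treated by splitting at a $(\log P)^{A}$ threshold and combining Lemma \ref{lem5.2} (small conductors) with Lemma \ref{lem6.1} (larger conductors) through Lemma \ref{kk}, and the minor-arc piece controlled in mean square via Lemma \ref{lem8.1}, followed by the inclusion--exclusion identity (\ref{main}).
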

\begin{proof}
Set $P_{0}=P$, recall (\ref{vv}), (\ref{rsk}) and note for each $1\leq j\leq s$ and $0<\theta<k/2$ that
\begin{equation}\label{ui}r_{j}(n,R)=I_{\grm(P^{\theta}),P}^{j,0}(n)+I_{\grM(P^{\theta})\setminus \frak{K},P}^{j,0}(n)+I_{\frak{K},P}^{j,0}(n).\end{equation} 
We use Lemma \ref{lem6.1} to note when $\alpha\in \grM(P^{\theta})$, $s\geq 4k+2$ and $0\leq l\leq \min(s-j,1)$ that 
$$\lvert\tilde{f}_{s}(\alpha,P,R)\rvert^{j} \lvert f_{s}(\alpha,P,R)\rvert^{s-l-j}\lvert f_{s}(\alpha,P_{0},R)\rvert^{l}\ll (\log P)^{3s-3l}P^{k}\Upsilon(\alpha)^{1+1/s}.$$
Combining the above estimate with the conclusion of Lemma \ref{kk} for $Q<P^{\theta}$ yields
\begin{align}\label{pal}I_{\grM(P^{\theta})\setminus \grM(Q),1}^{j,l}(P,P_{0})&\ll (\log P)^{3s-3l}P^{k}\int_{\grM(P^{\theta})\setminus \grM(Q)}\Upsilon(\alpha)^{1+\frac{1}{s}}d\alpha \ll (\log P)^{3s-3l}Q^{-\frac{1}{s}}.
\end{align}
If moreover $Q< (\log P)^{4s^{2}}$, we use Lemma \ref{lem5.2} to note when $\alpha\in\grM((\log P)^{4s^{2}})\setminus \grM(Q)$ that
\begin{equation*}\lvert \tilde{f}_{s}(\alpha,P,R)\rvert^{j}\lvert f_{s}(\alpha,P,R)\rvert^{s-l-j}\ll P^{k-kl/s}\Upsilon(\alpha)^{2-2/s-\varepsilon}.\end{equation*} We thus combine the preceding estimate and Lemma \ref{kk} to deduce for $Q< (\log P)^{4s^{2}}$ that
\begin{align}\label{appl}
I_{\grM(P^{\theta})\setminus \grM(Q),1}^{j,l}(P,P_{0})\ll (\log P)^{-s}+P^{k}\int_{\grM((\log P)^{4s^{2}})\setminus \grM(Q)}\Upsilon(\alpha)^{2-\frac{2}{s}-\varepsilon}d\alpha\ll Q^{-1/4s}.
\end{align}
By the above discussion and the fact that whenever $Q=(\log P)^{\frac{1}{8}} $ then $\grM(Q)\subset \frak{K}$ one has
\begin{equation*}I_{\grM(P^{\theta})\setminus \frak{K},P}^{j,0}(n)\ll I_{\grM(P^{\theta})\setminus \grM(Q),1}^{j,0}(P,P)\ll (\log n)^{-1/32s}.\end{equation*}

We next observe that the first equation in Proposition \ref{prop7.1} yields
\begin{equation*}\sum_{j=1}^{s}(-1)^{j+1}\binom{s}{j}I_{\frak{K},P}^{j,0}(n)=c_{k,s}(\eta)\frak{S}(n)+O((\log n)^{-1/16}).\end{equation*} We conclude by estimating $I_{\grm(P^{\theta}),P}^{j,0}(n)$ via Lemma \ref{lem8.1} and combine such a conclusion with the above lines and (\ref{ui}) to get for all but $O(N^{1-\zeta_{k,s}})$ integers $n\in [N,2N]$ the equation
$$\sum_{j=1}^{s}(-1)^{j+1}\binom{s}{j}r_{j}(n,R)=c_{k,s}(\eta)\frak{S}(n)+O((\log n)^{-\nu}).$$ The result follows by the preceding line and the computation leading to (\ref{main}).
\end{proof}
We shall next deduce upon recalling (\ref{phi}) the analogue of Proposition \ref{prop7.2}. 
\begin{prop}\label{prop8.3}
Let $s,\Delta_{2s}^{*}$ be as in Proposition \ref{prop8.2} and $\varphi(x)\leq\exp((\log x)^{1/2})/2$ be of uniform growth. Let $P^{\eta}\exp(-\eta(k^{-1}\log P)^{1/2})\leq R\leq P^{\eta}$. For all but $O(N^{1-\zeta_{k,s}})$ integers $n\in [N,2N]$ then
\begin{equation*}r_{s,k}^{\varphi}(n,R)=c_{k,s}(\eta)\frak{S}(n)+O\big((\log n)^{-\nu}+\varphi(n)^{-1/16s^{2}}\big).\end{equation*} 
\end{prop}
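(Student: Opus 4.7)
\textbf{Proof plan for Proposition \ref{prop8.3}.} The strategy is to combine Proposition \ref{prop8.2} with a bound on $r_{s,k}(n,R)-r_{s,k}^{\varphi}(n,R)$ valid on almost all integers. Setting $P_{0}=P\varphi(n)^{-1/k}$ as in the proof of Proposition \ref{prop7.2}, equation (\ref{kkk}) shows that
\[
\big| r_{s,k}(n,R)-r_{s,k}^{\varphi}(n,R)\big|\ll \Big|\int_{0}^{1}\tilde{f}_{s}(\alpha,P,R)f_{s}(\alpha,P,R)^{s-2}f_{s}(\alpha,P_{0},R)e(-\alpha n)\,d\alpha\Big|,
\]
so it suffices to bound the right-hand side by $O((\log n)^{-\nu}+\varphi(n)^{-1/16s^{2}})$ for all but $O(N^{1-\zeta_{k,s}})$ integers $n\in[N,2N]$.

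I would dissect the unit interval into the three pieces $\grm(P^{\theta})$, $\grM(P^{\theta})\setminus \grM(Q)$, and $\grM(Q)$ with $\theta>0$ small and $Q=\varphi(n)^{1/4s}$; note that the hypothesis $\varphi(x)\leq \exp((\log x)^{1/2})/2$ ensures $Q<P^{\theta}$ for large $n$. On the deep minor arcs $\grm(P^{\theta})$, Lemma \ref{lem8.1} applied with $j=1$, $l=1$ yields $I_{\grm(P^{\theta}),P_{0}}^{1,1}(n)\ll N^{-\nu\theta/6sk}$ for all but $O(N^{1-\nu\theta/2sk})$ integers $n$. On the intermediate annulus $\grM(P^{\theta})\setminus \grM(Q)$ I would appeal to the major arc bounds (\ref{pal}) and (\ref{appl}) with $j=l=1$ uniformly in $n$, which together give the deterministic estimate
\[
\int_{\grM(P^{\theta})\setminus \grM(Q)}\big| \tilde{f}_{s}(\alpha,P,R)f_{s}(\alpha,P,R)^{s-2}f_{s}(\alpha,P_{0},R)\big|\,d\alpha \ll (\log P)^{3s-3}\,Q^{-1/s}=(\log P)^{3s-3}\varphi(n)^{-1/4s^{2}}.
\]
On the shallow arcs $\grM(Q)$ the computation of (\ref{hhhh}) applies verbatim, since it relies only on the trivial bounds $\tilde{f}_{s},f_{s}\ll P^{k/s}$, $f_{s}(\alpha,P_{0},R)\ll P^{k/s}\varphi(n)^{-1/s}$, and $\mathrm{meas}(\grM(Q))\ll Q^{2}/P^{k}$, producing the contribution $\ll \varphi(n)^{-1/2s}$.

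Gathering the three pieces, for all but $O(N^{1-\nu\theta/2sk})$ integers $n$ one has
\[
\big|r_{s,k}(n,R)-r_{s,k}^{\varphi}(n,R)\big|\ll \varphi(n)^{-1/2s}+(\log P)^{3s-3}\varphi(n)^{-1/4s^{2}}+N^{-\nu\theta/6sk}.
\]
When $\varphi(n)\geq (\log P)^{16s^{2}(s-1)}$ the middle term is absorbed into $\varphi(n)^{-1/16s^{2}}$; in the complementary regime the bound $\varphi(n)^{-1/16s^{2}}$ exceeds $(\log n)^{-\nu}$, which is already present from Proposition \ref{prop8.2}. Adding the exceptional sets from Proposition \ref{prop8.2} and from Lemma \ref{lem8.1} (the union still has size $O(N^{1-\zeta_{k,s}})$ for a suitably small $\zeta_{k,s}>0$) concludes the proof. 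The only delicate point is balancing the exponents on the intermediate major arcs so that the logarithmic factors in (\ref{pal}) are properly absorbed into $\varphi(n)^{-1/16s^{2}}$; this is the reason the stated exponent is weakened from the $\varphi(n)^{-\nu/4s}$ of Proposition \ref{prop7.2}, where the minor arc input was Lemma \ref{lem8.10} rather than the weaker almost-all version Lemma \ref{lem8.1}.
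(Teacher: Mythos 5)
Your overall strategy is the same as the paper's: start from (\ref{kkk}), split $[0,1)$ into $\grm(P^{\theta})$, $\grM(P^{\theta})\setminus\grM(Q)$ and $\grM(Q)$ with $Q=\varphi(n)^{1/4s}$, bound the deep minor arcs via Lemma~\ref{lem8.1}, the shallow arcs via (\ref{hhhh}), and the intermediate annulus via (\ref{pal})/(\ref{appl}), then combine with Proposition~\ref{prop8.2}. This is exactly the paper's route, so the decomposition and all the key inputs are correct.

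The one place where your write-up goes astray is the treatment of the intermediate annulus when $\varphi(n)$ is small. Having derived the bound $(\log P)^{3s-3}\varphi(n)^{-1/4s^2}$ from (\ref{pal}), you absorb the log factor when $\varphi(n)\geq(\log P)^{16s^2(s-1)}$, and in the complementary regime you assert that ``$\varphi(n)^{-1/16s^2}$ exceeds $(\log n)^{-\nu}$, which is already present from Proposition~\ref{prop8.2}.'' This is not a valid argument: the question is whether the actual intermediate contribution $(\log P)^{3s-3}\varphi(n)^{-1/4s^2}$ is absorbed by the allowed error $(\log n)^{-\nu}+\varphi(n)^{-1/16s^2}$, and in the small-$\varphi$ regime the extraneous factor $(\log P)^{3s-3}$ from (\ref{pal}) can overwhelm both targets. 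Moreover the claimed inequality $\varphi(n)^{-1/16s^2}>(\log n)^{-\nu}$ need not hold there anyway, since the threshold $(\log P)^{16s^2(s-1)}$ has nothing to do with $(\log n)^{16s^2\nu}$. What you actually need here—and what the paper does—is to switch estimates and apply (\ref{appl}) directly in this regime: your complementary range $Q=\varphi(n)^{1/4s}<(\log P)^{4s(s-1)}$ is contained in the hypothesis $Q<(\log P)^{4s^2}$ of (\ref{appl}), which then produces $Q^{-1/4s}=\varphi(n)^{-1/16s^2}$ with no log loss. With that substitution your proof is correct and essentially identical to the paper's.
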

\begin{proof}
We recall (\ref{kkk}) and write as in the previous proposition
\begin{equation*}\lvert r_{s,k}(n,R)-r_{s,k}^{\varphi}(n,R)\rvert\ll \big\lvert I_{\grm(P^{\theta}),P_{0}}^{1,1}(n)+I_{\grM(P^{\theta})\setminus \grM(Q),P_{0}}^{1,1}(n)+I_{\grM(Q),P_{0}}^{1,1}(n)\big\rvert,\end{equation*} where $Q=\varphi(n)^{\frac{1}{4s}}$ and $P_{0}=P\varphi(n)^{-\frac{1}{k}}$. The triangle inequality combined with (\ref{pal}) when $Q\geq (\log P)^{4s^{2}}$ and (\ref{appl}) if $Q< (\log P)^{4s^{2}}$ then yields
$$I_{\grM(P^{\theta})\setminus \grM(Q),P_{0}}^{1,1}(n)\ll I_{\grM(P^{\theta})\setminus \grM(Q),1}^{1,1}(P,P_{0})\ll \varphi(n)^{-1/16s^{2}}.$$
Therefore, the above lines in conjunction with Lemma \ref{lem8.1} and (\ref{hhhh}) permits one to deduce 
$$\lvert r_{s,k}(n,R)-r_{s,k}^{\varphi}(n,R)\rvert\ll \varphi(n)^{-1/16s^{2}}+\varphi(n)^{-1/2s}+N^{-\zeta_{k,s}}$$ for all but $O(N^{1-\zeta_{k,s}})$ integers, which combined with Proposition \ref{prop8.2} completes the proof.
\end{proof}

\begin{cor}\label{cor4}
Let $k,s,\varphi$ be as in Proposition \ref{prop8.3}. Then, for all but $O(N^{1-\zeta_{k,s}})$ integers $n\in [N,2N]$ one has
\begin{equation*}r_{s,k,\eta}^{\varphi}(n)=c_{k,s}(\eta)\frak{S}(n)+O\big((\log n)^{-\nu/15}+\varphi(n)^{-\nu/4s}\big).\end{equation*} 
\end{cor}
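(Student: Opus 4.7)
The plan is to follow the template of Corollary \ref{cor2} essentially verbatim, merely upgrading the two deterministic inputs---Propositions \ref{prop7.1} and \ref{prop7.2}---to their almost-all counterparts Propositions \ref{prop8.2} and \ref{prop8.3}. Both of these allow exceptional sets of size $O(N^{1-\zeta_{k,s}})$, so a union over two applications leaves an exceptional set of the same order.

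First I would obtain the upper bound trivially: since every tuple $(x_1,\ldots,x_s)$ counted in $r_{s,k,\eta}^{\varphi}(n)$ satisfies $x_i\le n^{1/k}\le P$, and hence $x_i\in \mathcal{A}(x_i,x_i^{\eta})\subset \mathcal{A}(P,P^{\eta})$, one obtains $r_{s,k,\eta}^{\varphi}(n)\le r_{s,k}(n,P^{\eta})$. Applying Proposition \ref{prop8.2} with $R=P^{\eta}$ then yields an upper bound of the desired shape outside an exceptional set of size $O(N^{1-\zeta_{k,s}})$.

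For the lower bound I would set $R_n=(n/\varphi(n))^{\eta/k}$ and check that any $(x_1,\ldots,x_s)$ counted by $r_{s,k}^{\varphi}(n,R_n)$ is also counted by $r_{s,k,\eta}^{\varphi}(n)$: if $x_i>(n/\varphi(n))^{1/k}$ is $R_n$-smooth, then $R_n\le x_i^{\eta}$ and so $x_i\in \mathcal{A}(x_i,x_i^{\eta})$. The restriction $2\varphi(x)\le \exp((\log x)^{1/2})$ is precisely what is needed to place $R_n$ in the admissible range $[P^{\eta}\exp(-\eta(k^{-1}\log P)^{1/2}),P^{\eta}]$ appearing in the hypotheses of Proposition \ref{prop8.3}, by the same elementary computation carried out in the proof of Corollary \ref{cor2}. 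Invoking that proposition then delivers the matching lower bound, with an error term sharper than the one appearing in the statement.

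Combining the two estimates after a union of the two exceptional sets of size $O(N^{1-\zeta_{k,s}})$ yields the conclusion, since the errors $(\log n)^{-\nu}$ and $\varphi(n)^{-1/16s^{2}}$ produced by Propositions \ref{prop8.2} and \ref{prop8.3} are each dominated by the weaker quantity $(\log n)^{-\nu/15}+\varphi(n)^{-\nu/4s}$ advertised in the statement. No serious obstacle arises; the only genuine verification is the matching of the smoothness parameter $R_n$ with the admissible range of Proposition \ref{prop8.3}, and this has already been performed implicitly in the proof of Corollary \ref{cor2}.
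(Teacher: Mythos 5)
Your proposal is correct and follows essentially the same route as the paper's own (one‑line) proof, which simply says to run the argument of Corollary \ref{cor2} with Propositions \ref{prop7.1} and \ref{prop7.2} replaced by Propositions \ref{prop8.2} and \ref{prop8.3}; you have supplied exactly the details that this instruction compresses, including the correct choice of smoothness level $R_n=(n/\varphi(n))^{\eta/k}$ for the lower bound and the union of the two exceptional sets. The one check worth making explicit, which you correctly flag as the "only genuine verification," is that the condition $2\varphi(x)\le\exp((\log x)^{1/2})$ keeps $R_n$ in the admissible range $[P^{\eta}\exp(-\eta(k^{-1}\log P)^{1/2}),P^{\eta}]$, and that the errors $(\log n)^{-\nu}$ and $\varphi(n)^{-1/16s^{2}}$ are indeed no larger than $(\log n)^{-\nu/15}+\varphi(n)^{-\nu/4s}$, the latter because $\nu\le 1/107sk\le 1/4s$ forces $1/16s^{2}\ge\nu/4s$.
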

\begin{proof}
The desired result follows by employing the argument of Corollary \ref{cor2} replacing the use of Propositions \ref{prop7.1} and \ref{prop7.2} therein by their counterparts Propositions \ref{prop8.2} and \ref{prop8.3}. 
\end{proof}

\section{Almost all estimates for families of weighted representation functions}\label{sec9}
As foreshadow in the introduction, it is the analysis which we shall perform in the upcoming section which ultimately impairs the restriction on the number of variables in Theorem \ref{thm1.3}. It is then opportune to introduce for fixed $d\in\mathbb{N}$, fixed vector $\bfa\in [1,s]^{s-d}$ and every natural number $m\in [N,2N]$ the Fourier coefficient
\begin{equation}\label{Fda}F_{d,\bfa}(m)=\int_{0}^{1} \tilde{f}_{s}(a_{1}\alpha,P,R)\prod_{j=2}^{s-d}f_{s}(a_{j}\alpha,P,R)e(-\alpha m)d\alpha,\end{equation}
wherein we take $P$ as in (\ref{paar}). We also recall (\ref{eq67}) and fix any positive real number $D=D(k)\geq 1$ which shall be determined later satisfying the bound
\begin{equation}\label{tau}\tau(k)\geq (Dk)^{-1}.\end{equation} We also introduce for $s\geq 2$ and any $d\in\mathbb{N}$ the parameter 
\begin{equation}\label{Td}T_{d}(k)=\frac{s^{3}d}{2c_{k}k^{3}},\ \ \ \ \ \ \ \ \ \ \ \  c_{k}=\frac{39d}{40}\Big(1-\frac{(d+1)s}{k^{2}}-\frac{d}{k}\Big)\frac{s}{\frac{2s}{D}+dk},\end{equation} and for every positive integer $T_{0}\leq T_{d}(k)$ the constant
\begin{equation}\label{sto}s_{T_{0}}=2\Big\lfloor \frac{s-d-T_{0}}{2}\Big\rfloor.\end{equation} 
\begin{prop}\label{prop911}
Let $k\geq 100$ and $\max(15/2,\sqrt{D}/2)k\leq s\leq Dk^{2}$. Let $d\in\mathbb{N}$ with $d\leq \frac{s}{Dk}+\frac{k}{4s}$ and $\bfa\in [1,s]^{s-d}$. Assume that there is some natural number $s_{0}>s$ with the property that there is an admissible exponent for minor arcs $\Delta_{s_{0}}^{*}<0$, that $T_{d}(k)\leq 3k/4$, that $2(s-d-k)>s_{0}$ with $s_{0}-1-k\leq s<s_{0}$ and that $D\leq 4c_{k}$. Suppose that for every $T_{0}\leq T_{d}(k)$ then $\Delta_{s_{T_{0}}}$ is an admissible exponent satisfying \begin{equation}\label{deelta}0\leq \Delta_{s_{T_{0}}}\leq \frac{1}{D}\Big(1+\frac{T_{0}+d+1}{k}+\Big(\frac{T_{0}+d+1}{k}\Big)^{2}\Big).\end{equation}
Then for all but $O(N^{1-d/k-1/240s})$ integers $m\in [N,2N]$ one has
\begin{equation}\label{primo}F_{d,\bfa}(m)\ll m^{-k/4s^{3}}.\end{equation} 
\end{prop}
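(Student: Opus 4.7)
The plan is to apply a Hardy--Littlewood dissection $[0,1)=\grM(Q)\cup\grm(Q)$ with height $Q=P^{\theta}$ for a suitable small $\theta=\theta(k,s)$, writing $F_{d,\bfa}(m)=F^{\grM}(m)+F^{\grm}(m)$. The major arc piece will be controlled pointwise and uniformly in $m$, whereas the minor arc piece will be estimated in mean square and fed into Chebyshev's inequality to produce the stated exceptional set. A preliminary observation used throughout is that, since $1\leq a_{j}\leq s$, whenever $\alpha\in\grm(Q,P)$ we have $a_{j}\alpha\pmod 1\in\grm(cQ,P)$ for some constant $c=c(s)>0$, and likewise for the major arc case; the dilations thus cost only constants depending on $k$.

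For the major arc piece, combine Lemmas \ref{lem5.2} and \ref{lem6.1} (covering $q\leq(\log P)^{A}$ and $q\leq P^{\theta}$ respectively) to derive
$$|f_{s}(a_{j}\alpha,P,R)|\ll (\log P)^{3}P^{k/s}\Upsilon(a_{j}\alpha)^{1/(2k)-\varepsilon}\qquad(\alpha\in\grM(P^{\theta})),$$
and similarly for $\tilde{f}_{s}$. Coupling this with Lemma \ref{kk} (to integrate the product $\Upsilon^{(s-d)/(2k)}$) and the pruning machinery of Lemma \ref{lem6.2} delivers
$$|F^{\grM}(m)|\leq \int_{\grM}|\tilde{f}_{s}(a_{1}\alpha)|\prod_{j=2}^{s-d}|f_{s}(a_{j}\alpha)|\,d\alpha\ll P^{(s-d)k/s-k}Q^{-\sigma}$$
for some explicit $\sigma=\sigma(k,s)>0$. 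The hypothesis $d\leq s/(Dk)+k/(4s)$ with $s\geq (15/2)k$ ensures $dk/s\geq k^{2}/(4s^{3})$, so that the resulting estimate is $\ll m^{-k/(4s^{3})}$ uniformly in $m$ upon choosing $\theta\sigma$ slightly larger than $1/(4s^{2})$.

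For the minor arc piece apply Bessel's inequality to obtain
$$\sum_{m\in [N,2N]}|F^{\grm}(m)|^{2}\leq \int_{\grm}|\tilde{f}_{s}(a_{1}\alpha)|^{2}\prod_{j=2}^{s-d}|f_{s}(a_{j}\alpha)|^{2}\,d\alpha.$$
Split the $2(s-d)$ factors by H\"older, placing $s_{T_{0}}$ of them inside an $L^{s_{T_{0}}}$ mean value and the remaining $2(s-d)-s_{T_{0}}$ into an $L^{\infty}$ supremum:
$$\int_{\grm}\cdots\,d\alpha\ll \Big(\sup_{\alpha\in\grm(Q)}|f_{s}(\alpha,P,R)|\Big)^{2(s-d)-s_{T_{0}}}\max_{j}\int_{\grm}|f_{s}(a_{j}\alpha,P,R)|^{s_{T_{0}}}\,d\alpha.$$
The supremum is controlled by Lemma \ref{lem3} with $l$ realising the maximum in (\ref{eq67}), yielding $\sup_{\grm(Q)}|f_{s}|\ll P^{k/s-\tau(k)+\varepsilon}$. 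The $L^{s_{T_{0}}}$ mean value is treated by Proposition \ref{prop4}, leveraging the admissibility of $\Delta^{*}_{s_{T_{0}}}$ obtained from $\Delta_{s_{T_{0}}}$ via (\ref{eq8}) and the hypothesis (\ref{deelta}). Varying $T_{0}$ over $\{0,1,\ldots,\lfloor T_{d}(k)\rfloor\}$ and optimising, the constraints $D\leq 4c_{k}$ and (\ref{tau}) conspire so that the total $P$-exponent satisfies
$$(2(s-d)-s_{T_{0}})\bigl(k/s-\tau(k)\bigr)+s_{T_{0}}k/s-k+\Delta_{s_{T_{0}}}\leq k\bigl(1-d/k-1/(240s)-k/(2s^{3})\bigr)+\varepsilon.$$

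Combining, Chebyshev's inequality with threshold $t=N^{-k/(4s^{3})}/2$ yields
$$\bigl\lvert\{m\in[N,2N]:|F^{\grm}(m)|>t\}\bigr\rvert\ll N^{k/(2s^{3})}\sum_{m}|F^{\grm}(m)|^{2}\ll N^{1-d/k-1/(240s)},$$
and outside this set the uniform major arc bound completes the proof of (\ref{primo}). The main obstacle is precisely the parameter juggling in the minor arc step: one must verify that for at least one admissible $T_{0}\leq T_{d}(k)$, the loss $\Delta_{s_{T_{0}}}$ permitted by (\ref{deelta}) is more than compensated by the $\tau(k)$-savings on the $2d+T_{0}+O(1)$ supremum factors, and the quantitative form of $T_{d}(k)$ and $c_{k}$ given in (\ref{Td}) is exactly what forces this optimisation to succeed in the stated range of $d,s,k$.
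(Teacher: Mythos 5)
Your outline has a genuine gap at the heart of the minor-arc step, and it also skips the paper's central structural device. The claim $\sup_{\alpha\in\grm(Q)}\lvert f_s(\alpha,P,R)\rvert\ll P^{k/s-\tau(k)+\varepsilon}$ from Lemma \ref{lem3} is only valid when the Dirichlet parameter $\Theta=r+P^k\lvert r\alpha-b\rvert$ is comparable to $P^{k/2}$ (equivalently, on extreme minor arcs $\grN(Q',P)$ with $Q'\gg P^{k/2-\eta}$). On $\grm(Q)$ with $Q=P^\theta$ for small $\theta$, there are points with $\Theta\asymp Q$, and there the first term of Lemma \ref{lem3} gives only $g_s\ll P^{k/s}(P^{\Delta_l}Q^{-1})^{2/l^2}$, which is not $\ll P^{k/s-\tau(k)}$ unless $Q\gg P^{k/2}$. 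So your $L^\infty$ factor cannot deliver $P^{-\tau(k)}$ per variable at the height $Q=P^\theta$ you fixed for the major-arc analysis; you are trying to have $Q$ small (for a pointwise major-arc bound) and $Q$ near $P^{k/2}$ (for the minor-arc supremum) simultaneously. Since the trivial Bessel bound is only $\sum_m\lvert F^{\grm}(m)\rvert^2\ll P^{k-2dk/s}$, and the stated exceptional set requires the $L^2$ moment to be $\ll P^{k-d-k/(240s)-k^2/(2s^3)}$, one must save a factor $\approx P^{d(1-2k/s)+k/(240s)}$; neither a small $Q^{-\sigma}$ pruning saving nor a supremum at height $P^\theta$ comes close to this.

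What your plan omits entirely is the dyadic dissection of each $f_s$ into blocks $g_s(a_l\alpha,P_{j_l},R)$, followed by partitioning tuples $\bfj$ according to the number $T_0$ of ``small'' blocks (those with $P_{j_l}\leq P^{1-c_k(k/s)^2}$). This is exactly the issue the introduction flags around (\ref{evel}): different dyadic scales must be made compatible, and one cannot pick a single major/minor height that works uniformly for all factors of $f_s$. The actual proof discards the $T_0$ small factors by the trivial bound (\ref{triv}), which, by the precise calibration of $T_d(k)$ and $c_k$ in (\ref{Td}), provides a genuine saving of $P^d$ precisely when $T_0$ is near $T_d(k)$; the remaining factors are all of size $\asymp P^{1-c_k(k/s)^2}$, so a single major/minor dissection at an intermediate height $P^{\frac{k^2Dd}{2s}(1-V(T_0)k/s)}$ is well defined for all of them, and Proposition \ref{prop2222} (an $L^t$ minor-arc estimate, not a supremum bound) supplies the minor-arc saving in $B_1$, while Proposition \ref{prop2.1} and the hypothesis (\ref{deelta}) supply the major-arc bound for $B_2$ --- which is taken pointwise, not through Bessel. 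Without the $T_0$-partition and the careful choice of intermediate major-arc height, the exponent $1-d/k-1/(240s)$ is out of reach.
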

\begin{proof}
In order to derive the best possible estimates for the exceptional set it is appropiate to consider the dyadic dissection underlying (\ref{oo}), making a distinction between the corresponding sizes of the underlying variables being a desideratum to the end of applying the pertinent approach accordingly. We thereby recall (\ref{paar}), denote for each $j\in\mathbb{N}$ the parameter $P_{j}=2^{-j}P$ and introduce for each integer $0\leq T_{0}\leq s-d$ the sets
$$\mathcal{J}_{T_{0}}(P)=\Big\{\bfj\in \Big[0,\Big\lfloor \frac{\log P}{\log 2}\Big\rfloor\Big]^{s-d}:\  P_{j_{l}}\geq P^{1-c(\frac{k}{s})^{2}},\   l> T_{0};\ \ \ P_{j_{l}}\leq P^{1-c(\frac{k}{s})^{2}}, \ 0\leq l\leq T_{0} \Big\}$$ with $c=c_{k}$ and $P_{j_{0}}=1$. We take for each $\bfj\in \mathcal{J}_{T_{0}}(P)$ and $\mathfrak{B}\subset [0,1)$ the Fourier coefficient 
$$I_{\bfj}(m,\mathfrak{B})=\int_{\frak{B}}\prod_{l=1}^{s-d}g_{s}(a_{l}\alpha, P_{j_{l}},R)e(-\alpha m)d\alpha,$$
wherein we dropped the dependence on $\bfa$ for brevity, employ the dissection (\ref{oo}) and write
\begin{equation}\label{Fdm}F_{d,\bfa}(m)\ll B(m)+C(m),\end{equation} where
\begin{equation*}B(m)=\sum_{T_{0}\leq T_{d}(k)}\sum_{\substack{\bfj\in\mathcal{J}_{T_{0}}(P)}}\big\lvert I_{\bfj}(m,[0,1))\big\rvert,\ \ \ \ \ C(m)=\sum_{T_{0}> T_{d}(k)}\sum_{\substack{\bfj\in\mathcal{J}_{T_{0}}(P)}}\big\lvert I_{\bfj}(m,[0,1))\big\rvert.\end{equation*}

We shall examine first $B(m)$. We make a distinction between major and minor arcs suitably chosen and write $B(m)=B_{1}(m)+B_{2}(m),$ where
$$B_{j}(m)=\sum_{\substack{T_{0}\leq T_{d}(k)\\ \bfj\in\mathcal{J}_{T_{0}}(P)}}\big\lvert  I_{\bfj}\big(m,\grN_{j}\big)\big\rvert,\  j=1,2,\ \ \ \ \ \  \ \ \ \grN_{2}= \grM\big(P^{\frac{k^{2}Dd}{2s}(1-V(T_{0})k/s)},P^{1-c(k/s)^{2}}) $$
and $\grN_{1}=[0,1)\setminus \grN_{2}$ for $V(T_{0})$, that will be shown to satisfy $V(T_{0})<s/k,$ being defined as
\begin{equation}\label{VV}V(T_{0})=\frac{(d+1+T_{0})s}{k^{2}}+c\Big(\frac{2}{Dd}+\frac{k}{s}\Big).\end{equation} 
Indeed, employing the constraint $T_{d}(k)\leq 3k/4$ and the bound $c_{k}< Dd/2$ one gets
\begin{equation}\label{frac}s^{3}d\leq \frac{3}{2}k^{4}c\leq \frac{3Ddk^{4}}{4},\end{equation} from where it follows since $s\geq 15k/2$ that $s^{2}/Dk\leq   k^{2}/10.$
We then utilise the aforementioned collection of ingredients and the bounds $k\geq 100$ and $s\geq 15k/2$ to derive 
\begin{align*}(d+1+T_{0})s+ck^{2}\Big(\frac{2}{dD}+\frac{k}{s}\Big)&\leq \frac{s^{2}}{Dk}+\frac{k}{4}+\frac{304sk}{400}+k^{2}\leq \frac{s^{2}}{Dk}+\frac{304sk}{400}+\frac{401k^{2}}{400}
\\
&\leq \frac{441k^{2}}{400}+\frac{304sk}{400}\leq \Big(\frac{147}{1000}+\frac{304}{400}\Big)sk<sk.
\end{align*}
Note that in the first step we used the bound $T_{0}\leq 3k/4$, that pertaining to $d$ in the statement and the estimates $c_{k}< (\frac{2}{Dd}+\frac{k}{s})^{-1}$ and $k\geq 100$, and on the third step we employed $s^{2}/Dk\leq k^{2}/10$. The condition $V(T_{0})<s/k$ then follows accordingly. In view of the bound $c_{k}< Dd/2$ and the restrictions on $d$ and $s$ we also observe that 
$$c_{k}< \frac{s}{2k}+\frac{Dk}{8s}\leq \frac{1}{2}\Big(\frac{s}{k}\Big)^{2}+\frac{D}{8}\leq \Big(\frac{s}{k}\Big)^{2}.$$

We shall next focus on the minor arc contribution and write $V=V(T_{0})$. A routinary application of Bessel's and Holder's inequality permits one to obtain
\begin{align}\label{B1}&\sum_{m\in[N,2N]}\lvert B_{1}(m) \rvert^{2}\ll (\log P)^{s-d}\sum_{\substack{T_{0}\leq T_{d}(k)\\ \bfj\in\mathcal{J}_{T_{0}}(P)}}\sum_{m\in[N,2N]}\big\lvert  I_{\bfj}\big(m,\grm\big(P^{\frac{k^{2}Dd}{2s}(1-Vk/s)},P^{1-c(k/s)^{2}})\big)\big\rvert^{2}\nonumber
\\
&\ll (\log P)^{s-d}\sum_{\substack{T_{0}\leq T_{d}(k)}}\sum_{\bfj\in\mathcal{J}_{T_{0}}(P)}\int_{\grm\big(P^{\frac{k^{2}Dd}{2s}(1-Vk/s)},P^{1-c(k/s)^{2}}\big)}\prod_{l=1}^{s-d}\lvert g_{s}(a_{l}\alpha, P_{j_{l}},R)\rvert^{2}d\alpha. 
\end{align}
We then insert the trivial bound \begin{equation}\label{triv}g_{s}(a_{l}\alpha, P_{j_{l}},R)\ll P_{j_{l}}^{k/s}\end{equation} on the factors with $l\leq T_{0}$ to the end of estimating the above integral by a constant times
\begin{equation}\label{pia}\sum_{T_{0}\leq T_{d}(k)}\sum_{\substack{j_{l}\leq \frac{\log P}{\log 2}\\ P_{j_{l}}\geq P_{k,s},\  l> T_{0}}}(P^{1-\frac{ck^{2}}{s^{2}}})^{\frac{2kT_{0}}{s}}\int_{\grm\big(P^{\frac{k^{2}Dd}{2s}(1-\frac{Vk}{s})},P^{1-\frac{ck^{2}}{s^{2}}}\big)}\prod_{l=T_{0}+1}^{s-d}\lvert g_{s}(a_{l}\alpha, P_{j_{l}},R)\rvert^{2}d\alpha
\end{equation}
with $P_{k,s}=P^{1-ck^{2}/s^{2}}$. We next observe in view of $P_{k,s}\leq P_{j_{l}}\leq P$ for each $l> T_{0}$ that \begin{equation}\label{grm}\grm\big(P^{\frac{k^{2}Dd}{2s}(1-k/s)},P^{1-c(k/s)^{2}}\big)\subset \grm\big(P^{\frac{k^{2}Dd}{2s}(1-k/s)},P_{j_{l}}\big).\end{equation} It may also be opportune to assume first for each $T_{0}$ that
$d<(s^{2}-ck^{2})/Dk(s-Vk)$ and emphasize that whenever $l\geq T_{0}$ then in light of the above restriction it transpires that
\begin{equation}\label{psps}P_{j_{l}}^{\frac{k^{2}Dd}{2s}(1-Vk/s)}\leq P^{\frac{k^{2}Dd}{2s}(1-Vk/s)}\leq P_{j_{l}}^{\frac{k^{2}Dd(s-Vk)}{2(s^{2}-ck^{2})}}\leq P_{j_{l}}^{k/2}.\end{equation}
We further note for prompt convenience that by hypothesis one has \begin{equation}\label{99}2(s-d-T_{0})\geq 2(s-d-T_{d}(k))\geq 2(s-d-k)>s_{0}>s.\end{equation}
We draw the reader's attention back to (\ref{eq8}) to note in view of the provisos $ s\leq Dk^{2}$ and (\ref{tau}) and the preceding conclusion that then
\begin{equation}\label{deltas}\Delta_{2(s-d-T_{0})}^{*}\leq  \Delta_{s_{0}}^{*}-\big(2(s-d-T_{0})-s_{0})\tau(k)<-\big(2(s-d-T_{0})-s_{0})\tau(k).\end{equation}
In view of equations (\ref{grm}), (\ref{psps}) and (\ref{deltas}) we have reached a position from which to apply Proposition \ref{prop2222} to the integral in (\ref{pia}). We thus denote $\grm_{0}=\grm\big(P^{\frac{k^{2}Dd}{2s}(1-\frac{Vk}{s})},P^{1-c(\frac{k}{s})^{2}}\big)$ for simplicity and get
\begin{align*}\int_{\grm_{0}}\prod_{l=T_{0}+1}^{s-d}\lvert g_{s}(a_{l}\alpha, P_{j_{l}},R)\rvert^{2}d\alpha&\ll \max_{l\geq T_{0}}\int_{\grm\big(P^{\frac{k^{2}Dd}{2s}(1-\frac{Vk}{s})},P_{j_{l}}\big)}\lvert g_{s}(a_{l}\alpha, P_{j_{l}},R)\rvert^{2(s-d-T_{0})}d\alpha
\\
&\ll\max_{l\geq T_{0}}P_{j_{l}}^{2(s-d-T_{0})k/s-k}P^{-\frac{dDk}{s}(1-\frac{Vk}{s})\tau(k)(2(s-d-T_{0})-s_{0})+\varepsilon}
\\
&\ll P^{2(s-d-T_{0})k/s-k-\frac{dDk}{s}(1-\frac{Vk}{s})\tau(k)(2(s-d-T_{0})-s_{0})+\varepsilon},
\end{align*}the last step being a consequence of (\ref{99}). We employ (\ref{tau}) and insert the preceding estimate in (\ref{B1}) and (\ref{pia}) to deduce that
\begin{equation}\label{B1B1}\sum_{m\in[N,2N]}\lvert B_{1}(m) \rvert^{2}\ll \sum_{T_{0}\leq T_{d}(k)}P^{k-\frac{2kd}{s}+\varepsilon-\frac{d}{s}(1-Vk/s)(2(s-d)-s_{0})+T_{0}\big(\frac{2d}{s}(1-Vk/s)-2ck^{3}/s^{3}\big)}.
\end{equation}

We examine the exponent $\alpha(T_{0})$ in the above equation, the proviso $s_{0}-k-1\leq s$ entailing \begin{align*}\alpha(T_{0})&\leq k-\frac{2kd}{s}-d\big(1-\frac{Vk}{s}\big)(1-2d/s-k/s-1/s)+T_{0}\Big(\frac{2d}{s}\big(1-\frac{Vk}{s}\big)-\frac{2ck^{3}}{s^{3}}\Big),
\end{align*}
and hence $\alpha(T_{0})\leq k-d+k\beta_{k}(T_{0})/s,$ wherein $$\beta_{k}(T_{0})=-d+dV\Big(1-\frac{k}{s}\Big)+\frac{d(2d+1)}{k}+\frac{2T_{0}d}{k}-\frac{2ck^{2}T_{0}}{s^{2}}.$$ 
In order to proceed in the proof we insert (\ref{VV}) into the line defining $\beta_{k}(T_{0})$ to obtain
\begin{equation}\label{jdjd}\beta_{k}(T_{0})\leq T_{0}\Big(\frac{ds}{k^{2}}-\frac{2k^{2}c}{s^{2}}+\frac{d}{k}\Big)+\frac{d(d+1)s}{k^{2}}+\frac{d^{2}}{k}-d+\frac{2c}{D}+\frac{cdk}{s}.\end{equation}
It is then worth noting that the aforementioned choice for $c$ delivers the identity 
$$\frac{d(d+1)s}{k^{2}}+\frac{d^{2}}{k}-d+\frac{2c}{D}+\frac{cdk}{s}=-\frac{d}{40}\Big(1-\frac{(d+1)s}{k^{2}}-\frac{d}{k}\Big).$$ We also remark that (\ref{frac}) in conjunction with the condition $s\geq 15k/2$ delivers $$\frac{s^{2}d}{2k^{3}}\Big(\frac{s}{k}+1\Big)\leq \frac{2s^{3}d}{3k^{4}}\leq c.$$ The above inequality then entails that the coefficient of $T_{0}$ in (\ref{jdjd}) is not positive. Therefore, by employing the preceding estimates and noting that the bound $D\leq 4c_{k}$ entails 
$$\frac{1}{39}\Big(\frac{1}{2}+\frac{Ddk}{4s}\Big)\leq \frac{d}{40}\Big(1-\frac{(d+1)s}{k^{2}}-\frac{d}{k}\Big),$$
 we obtain $\alpha(T_{0})< k-d-k/78s.$ We then combine the above lines with (\ref{B1B1}) to get
\begin{equation}\label{bb1}\sum_{m\in[N,2N]}\lvert B_{1}(m) \rvert^{2}\ll P^{k-d-k/78s}.\end{equation}
Observe that the above discussion is correct subject to the validity of the assumption on $d$ right above (\ref{psps}), deducing that the upper bound therein is superior to that in the statement of the proposition being a desideratum. Indeed, by noting in view of the restriction on $d$ and $s\geq \sqrt{D}k/2$, these in turn entailing
$d\leq  2s/kD,$ that the choice in (\ref{VV}) combined with the above bound yields $V\geq 2ck/s$, one gets when applying the latter in conjunction with the assumption $D\leq 4c_{k}$ that
\begin{equation*}\frac{s^{2}-ck^{2}}{Dk(s-Vk)}=\frac{s}{Dk}+\frac{sV-ck}{D(s-Vk)}\geq \frac{s}{Dk}+\frac{ck}{Ds}\geq  \frac{s}{Dk}+\frac{k}{4s}.\end{equation*}

In order to proceed we shall estimate $B_{2}(m)$ appropiately. We thus start by noting that
\begin{align}\label{B2B2}&\lvert B_{2}(m)\rvert\ll \sum_{T_{0}\leq T_{d}(k)} \sum_{\substack{\bfj\in\mathcal{J}_{T_{0}}(P)}}\int_{\grM\big(P^{\frac{k^{2}Dd}{2s}(1-\frac{Vk}{s})},P^{1-c(k/s)^{2}}\big)}\prod_{l=1}^{s-d}\lvert g_{s}(a_{l}\alpha, P_{j_{l}},R)\rvert d\alpha
\\
&\ll \sum_{T_{0}\leq T_{d}(k)} (P^{1-ck^{2}/s^{2}})^{\frac{kT_{0}}{s}}\sum_{\substack{j_{T_{0}+1},\ldots, j_{s-d}\\ P_{j_{l}}\geq P^{1-ck^{2}/s^{2}}}}\int_{\grM\big(P^{\frac{k^{2}Dd}{2s}(1-\frac{Vk}{s})},P^{1-\frac{ck^{2}}{s^{2}}}\big)}\prod_{l=T_{0}+1}^{s-d}\lvert g_{s}(a_{l}\alpha, P_{j_{l}},R)\rvert d\alpha,\nonumber
\end{align}
where in the last step we inserted the trivial bound (\ref{triv}) on the factors with $l\leq T_{0}$. We then observe as is customary that whenever $P^{1-c(k/s)^{2}}\leq P_{j_{l}}\leq P$ then one has
$$\grM\big(P^{\frac{k^{2}Dd}{2s}(1-Vk/s)},P^{1-c(k/s)^{2}}\big)\subset \grM\big(P^{\frac{k^{2}Dd}{2s}(1-Vk/s)+ck^{3}/s^{2}},P_{j_{l}}\big).$$
Equipped with the above relation it seems worth denoting $\grM_{0}=\grM\big(P^{\frac{k^{2}Dd}{2s}(1-Vk/s)},P^{1-c(\frac{k}{s})^{2}})$, recalling (\ref{sto}) and alluding to both Proposition \ref{prop2.1} and Lemma \ref{lem1} in order to derive 
\begin{align*}\int_{\grM_{0}}\prod_{l=T_{0}+1}^{s-d}\lvert g_{s}(a_{l}\alpha, P_{j_{l}},R)\rvert d\alpha&\ll \max_{l\geq T_{0}}\int_{ \grM\big(P^{\frac{k^{2}Dd}{2s}(1-Vk/s)+ck^{3}/s^{2}},P_{j_{l}}\big)}\lvert g_{s}(a_{l}\alpha, P_{j_{l}},R)\rvert^{s-d-T_{0}}d\alpha
\\
&\ll\max_{l\geq T_{0}}P_{j_{l}}^{(s-d-T_{0})k/s-k}P^{\frac{dDk}{s}(1-\frac{Vk}{s})\Delta_{s_{T_{0}}}+2c\Delta_{s_{T_{0}}}k^{2}/s^{2}+\varepsilon}
\\
&\ll P^{(d+T_{0})(\frac{ck^{3}}{s^{3}}-k/s)+\frac{dDk}{s}(1-\frac{Vk}{s})\Delta_{s_{T_{0}}}+2c\Delta_{s_{T_{0}}}k^{2}/s^{2}+\varepsilon},
\end{align*}
the latter being employed in view of the inequality $\Delta_{s_{T_{0}}}\geq 0$ when the height of the major arcs exceeds $P_{j_{l}}^{k/2}$. Note that in the last step we employed the fact that $P^{1-c(k/s)^{2}}\leq P_{j_{l}}$ for every $l\geq T_{0}$. Inserting the preceding estimate into (\ref{B2B2}) then yields the bound
\begin{equation}\label{pra}\lvert B_{2}(m)\rvert\ll \sum_{T_{0}\leq T_{d}(k)} P^{-dk/s+ck^{3}d/s^{3}+\frac{dDk}{s}(1-\frac{Vk}{s})\Delta_{s_{T_{0}}}+2c\Delta_{s_{T_{0}}}k^{2}/s^{2}+\varepsilon}.\end{equation}
We denote by $\gamma(T_{0})$ to the above exponent and note that the assumption (\ref{deelta}) yields
\begin{align*}&\gamma(T_{0})-cdk^{3}/s^{3}\leq  -dk/s+\Big(\frac{dk}{s}\Big(1-\frac{Vk}{s}\Big)+\frac{2ck^{2}}{Ds^{2}}\Big)\Big(1+\frac{T_{0}+d+1}{k}+\Big(\frac{T_{0}+d+1}{k}\Big)^{2}\Big)\nonumber
\\
&\leq \frac{k^{2}}{s^{2}}\Big(\frac{2c}{D}-dV\Big)\Big(1+\frac{T_{0}+d+1}{k}+\Big(\frac{T_{0}+d+1}{k}\Big)^{2}\Big)+\frac{dT_{0}}{s}+\frac{d(d+1)}{s}+\frac{d(T_{0}+d+1)^{2}}{ks}.
\end{align*}
We recall the choice (\ref{VV}) to get $\gamma(T_{0})<  -k^{2}d^{2}c/s^{3},$ which combined with (\ref{pra}) delivers
\begin{equation}\label{B2m}\lvert B_{2}(m)\rvert\ll P^{-k^{2}d^{2}c/s^{3}}.\end{equation} 

We conclude the proof by analysing the contribution flowing from $C(m)$. We first apply in a routinary manner as above Bessel's and Holder's inequalities subsequently to obtain
\begin{align*}\sum_{m\in[N,2N]}\lvert C(m) \rvert^{2}&\ll (\log P)^{s-d}\sum_{m\in[N,2N]}\sum_{T_{0}> T_{d}(k)}\sum_{\substack{\bfj\in\mathcal{J}_{T_{0}}(P)}}\big\lvert  I_{\bfj}(m,[0,1))\big\rvert^{2}\nonumber
\\
&\ll (\log P)^{s-d}\sum_{T_{0}> T_{d}(k)}\sum_{\substack{\bfj\in\mathcal{J}_{T_{0}}(P)}}\int_{0}^{1}\prod_{l=1}^{s-d}\lvert g_{s}(a_{l}\alpha, P_{j_{l}},R)\rvert^{2}d\alpha. 
\end{align*}
We utilise (\ref{triv}) to estimate trivially the factors in the above line with $l\leq T_{d}(k)$ and get
\begin{align}\label{BB1} \sum_{m\in[N,2N]}\lvert& C(m) \rvert^{2}\ll P^{\varepsilon}\sum_{T_{0}> T_{d}(k)}\sum_{\substack{j_{T_{0}+1},\ldots, j_{s-d}\\ P_{j_{l}}\geq P^{1-c(k/s)^{2}}}}(P^{1-c(\frac{k}{s})^{2}})^{\frac{2kT_{d}(k)}{s}}\int_{0}^{1}\prod_{l=T_{d}(k)+1}^{s-d}\lvert g_{s}(a_{l}\alpha, P_{j_{l}},R)\rvert^{2}d\alpha\nonumber
\\
&\ll  P^{\varepsilon}(P^{1-c(k/s)^{2}})^{\frac{2kT_{d}(k)}{s}}\max_{P_{j_{l}}\leq P}\int_{0}^{1}\lvert g_{s}(\alpha, P_{j_{l}},R)\rvert^{2(s-T_{d}(k)-d)}d\alpha,
\end{align}
where in the last step we employed orthogonality to eliminate the coefficient $a_{l}$. Equipped with the inequalities in (\ref{99}), the trivial bound (\ref{triv}) and the restriction $P_{j_{l}}\leq P$ one may estimate the integral in the preceding equation via Proposition \ref{prop4}, namely
\begin{align*}\int_{0}^{1}\lvert g_{s}(\alpha, P_{j_{l}},R)\rvert^{2(s-T_{d}(k)-d)}d\alpha&\ll (P_{j_{l}}^{k/s})^{2(s-T_{d}(k)-d)-s_{0}}\int_{0}^{1}\lvert g_{s}(\alpha, P_{j_{l}},R)\rvert^{s_{0}}d\alpha 
\\
&\ll P_{j_{l}}^{2(s-T_{d}(k)-d)\frac{k}{s}-k}\ll P^{2(s-T_{d}(k)-d)\frac{k}{s}-k},
\end{align*}
wherein we used the positivity of the corresponding exponents, it in turn stemming from (\ref{99}). We then recall (\ref{Td}) and insert the above estimate in (\ref{BB1}) to derive
\begin{equation*}\sum_{m\in[N,2N]}\lvert C(m) \rvert^{2}\ll P^{k-d-2dk/s+\varepsilon}.
\end{equation*}

We thus combine the above line with (\ref{Fdm}), (\ref{bb1}) and (\ref{B2m}) to get for $m\in [N,2N]$ that 
\begin{equation*}F_{d,\bfa}(m)\ll \lvert B_{1}(m)\rvert+\lvert C(m)\rvert+N^{- kd^{2}c/s^{3}},\end{equation*} wherein by a routine pidgeonhole argument one has for all but $O(N^{1-d/k-1/240s})$ integers $m\in [N,2N]$ that $\lvert B_{1}(m)\rvert=O( N^{-1/240s})$ and for all but $O(N^{1-d/k-d/s})$ integers $m\in [N,2N]$ that $\lvert C(m)\rvert=O( N^{-d/3s}).$ The proposition follows by the preceding discussion. 
\end{proof}

The upcoming lemma shall instead encompass the instance in which $d$ exceeds the threshold presented above. For such purposes we assume for some integer $1\leq d_{0}\leq s-2$ that $\delta_{d_{0}}=\Delta_{2\lfloor \frac{s-d_{0}}{2}\rfloor}$ is an admissible exponent and denote
$\tau_{d_{0}}=\frac{1}{2k}\lvert \delta_{d_{0}}-d_{0}k/s\rvert /(s-d_{0}).$
\begin{lem}\label{lem9.1111}Let $d_{0}\in\NN$ and $s\geq  d_{0}+2$ such that $s\delta_{d_{0}}<d_{0}k$. Suppose that $d\in\mathbb{N}$ with $d_{0}\leq d\leq s-1$ and let $\bfa\in [1,s]^{s-d}$. Then for every integer $m\in [N,2N]$ one has
$$F_{d,\bfa}(m)\ll m^{-\tau_{d_{0}}}.$$
\end{lem}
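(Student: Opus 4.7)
The plan is to open with Holder's inequality applied with all $s-d$ exponents set to $s-d$ to the $s-d$ exponential sum factors in the definition (\ref{Fda}) of $F_{d,\mathbf{a}}(m)$. This yields
\begin{equation*}
|F_{d,\mathbf{a}}(m)| \leq \Bigl(\int_0^1 |\tilde{f}_s(a_1\alpha,P,R)|^{s-d}\, d\alpha\Bigr)^{\!1/(s-d)} \prod_{j=2}^{s-d} \Bigl(\int_0^1 |f_s(a_j\alpha,P,R)|^{s-d}\, d\alpha\Bigr)^{\!1/(s-d)}.
\end{equation*}
By the periodicity of $f_s$ and the integrality of each $a_j$, the change of variable $\beta=a_j\alpha$ reduces each factor with $j\geq 2$ to $V_{s-d,s}(P,R)^{1/(s-d)}$. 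For the factor involving $\tilde{f}_s$ the same conclusion can be obtained by invoking the identity $\tilde{f}_s(\alpha,P,R)=f_s(\alpha,P,R)-f_s(\alpha,P_-,R)$, the triangle inequality, and the relation $P_-\asymp P$. Combining these observations yields the clean bound $|F_{d,\mathbf{a}}(m)|\ll V_{s-d,s}(P,R)$.

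The second step is to compare $V_{s-d,s}(P,R)$ with $V_{t,s}(P,R)$ for $t=2\lfloor(s-d_0)/2\rfloor$. By hypothesis $\delta_{d_0}=\Delta_t$ is an admissible exponent, so Lemma \ref{lem1} delivers $V_{t,s}(P,R)\ll P^{tk/s-k+\delta_{d_0}+\varepsilon}$. A natural bifurcation arises: when $s-d\leq t$, the monotonicity of $L^p$-norms on $[0,1)$ gives $V_{s-d,s}(P,R)\leq V_{t,s}(P,R)^{(s-d)/t}$, while when $s-d>t$, applying the trivial bound $|f_s|\ll P^{k/s}$ to the $s-d-t$ extra factors defining $V_{s-d,s}(P,R)$ gives $V_{s-d,s}(P,R)\ll P^{(s-d-t)k/s}V_{t,s}(P,R)$. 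Collecting exponents produces, respectively,
\begin{equation*}
V_{s-d,s}(P,R)\ll P^{(s-d)k/s-(s-d)(k-\delta_{d_0})/t+\varepsilon}\quad\text{and}\quad V_{s-d,s}(P,R)\ll P^{(s-d)k/s-k+\delta_{d_0}+\varepsilon}.
\end{equation*}

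The final step is an arithmetic verification that each of the above exponents is bounded above by $-k\tau_{d_0}+\varepsilon$, which together with $m\asymp P^k$ yields the claim. In the regime $s-d>t$, the requirement reduces to the inequality $dk/s-\delta_{d_0}\geq k\tau_{d_0}$, which follows immediately from $d\geq d_0$, the hypothesis $s\delta_{d_0}<d_0 k$ and the proviso $s-d_0\geq 2$. In the regime $s-d\leq t$, one uses that $t\leq s-d_0$ forces $s-t\geq d_0$, whence the quantity $(s-t)k-s\delta_{d_0}$ is bounded below by the positive number $d_0 k-s\delta_{d_0}$; cancelling this common positive factor reduces the check to the elementary inequality $2(s-d)(s-d_0)\geq t$, which holds because $s-d\geq 1$ and $2(s-d_0)\geq t$. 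I do not foresee a conceptual obstacle here; the main points of care are the uniform handling of the two regimes $s-d\leq t$ and $s-d>t$, and the elementary reduction from $\tilde{f}_s$ to $f_s$ needed to convert Holder's inequality into a clean bound on $V_{s-d,s}(P,R)$.
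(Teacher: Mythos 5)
The central gap is in the passage from the H\"older-factorised integrals to $V_{s-d,s}(P,R)$. Recall that $V_{t,s}(Y,R)=\int_0^1\lvert g_s(\alpha,Y,R)\rvert^t\,d\alpha$ is defined in terms of the \emph{dyadic block} $g_s$, not the full sum $f_s$. The change of variable $\beta=a_j\alpha$ legitimately reduces $\int_0^1\lvert f_s(a_j\alpha,P,R)\rvert^{s-d}\,d\alpha$ to $\int_0^1\lvert f_s(\alpha,P,R)\rvert^{s-d}\,d\alpha$, but this quantity is not $V_{s-d,s}(P,R)$; indeed, since $1\in\mathcal{A}(P,R)$ the corresponding moment of $f_s$ contains the trivial diagonal term coming from $x=1$ and is therefore $\gg 1$, whereas the bound you ultimately want, $V_{s-d,s}(P,R)\ll P^{\delta_{d_0}-dk/s+\varepsilon}$, tends to $0$. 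In fact, dyadically decomposing $f_s$ via (\ref{oo}) and applying Lemma \ref{lem1} gives $\int_0^1\lvert f_s\rvert^{s-d}\ll(\log P)^{s-d}\max_{Y\le P}V_{s-d,s}(Y,R)$, and the maximum is attained at $Y\asymp 1$ (the relevant exponent is negative), so the $f_s$ factors contribute only powers of $\log P$ and \emph{no} power saving. The identical observation shows that replacing $\tilde f_s$ by $f_s(\cdot,P,R)-f_s(\cdot,P_-,R)$ and using the triangle inequality is fatal: both of these $f_s$ sums carry dyadic pieces all the way down to $Y\asymp 1$, so the localization of $\tilde f_s$ to $x>P_-\asymp P$ — which is the \emph{only} source of the power saving — is thrown away.

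The paper avoids this by performing the dyadic dissections (\ref{oo}) \emph{before} applying H\"older, so that each factor is a single $g_s(\cdot,2^{-i_l}P,R)$; for the $\tilde f_s$ factor the index $i_1$ ranges only over $[0,\tilde i_s]$ so $2^{-i_1}P\asymp P$, and it is precisely this factor that supplies $(P^{\delta_{d_0}-d_0k/s})^{1/(s-d_0)}$, while the remaining $s-d-1$ factors are bounded by $O(P^\varepsilon)$ since $\max_{Y\le P}Y^{\delta_{d_0}-d_0k/s}=1$. (The paper also applies H\"older with exponent $s-d_0$, exploiting that $[0,1)$ has measure $1$ and $s-d\le s-d_0$.) After correcting the set-up, the bound one obtains is $\lvert F_{d,\mathbf{a}}(m)\rvert\ll P^\varepsilon V_{s-d,s}(P,R)^{1/(s-d)}$, not $V_{s-d,s}(P,R)$; and the arithmetic that must be verified is correspondingly that the exponent \emph{divided by $s-d$} is $\le -k\tau_{d_0}$. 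To your credit, the inequalities you verify are the right shape, and the normalized versions do hold by the same reasoning ($d\ge d_0$, $s-t\ge d_0$, $t\le 2(s-d_0)$, $s\ge d_0+2$), so the lemma is true via a corrected version of your plan; but as written, the claim $\lvert F_{d,\mathbf{a}}(m)\rvert\ll V_{s-d,s}(P,R)$ is false, and the handling of $\tilde f_s$ loses the one feature that makes the estimate nontrivial.
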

\begin{proof}
We allude to the dyadic dissections (\ref{oo}) and observe that inserting those in (\ref{Fda}) in conjunction with an application of Holder's inequality and a change of variables delivers
\begin{align*}F_{d,\bfa}(m)&\ll  \sum_{\substack{\bfi\in[0,\tilde{i}_{s}]\times \big[ \big\lfloor \frac{\log P}{\log 2}\big\rfloor\big]^{s-d-1}}}\prod_{l=1}^{s-d}\Big(\int_{0}^{1}\lvert g_{s}(\alpha,2^{-i_{l}}P,R)\rvert^{s-d_{0}}d\alpha\Big)^{1/(s-d_{0})},
\end{align*}
it being worth recalling (\ref{i0}) and (\ref{paar}) and remarking that $d\geq d_{0}$. We conclude by noting that $2^{-i_{1}}P\asymp P$, invoking Lemma \ref{lem1} and employing the proviso $s\delta_{d_{0}}<d_{0}k$ to get
\begin{align*}F_{d,\bfa}(m)&\ll P^{\varepsilon}(P^{\delta_{d_{0}}-\frac{d_{0}k}{s}})^{\frac{1}{s-d_{0}}}\max_{P_{1}\leq P}(P_{1}^{\delta_{d_{0}}-\frac{d_{0}k}{s}})^{\frac{s-d-1}{s-d_{0}}}\ll (m^{\delta_{d_{0}}-d_{0}k/s+\varepsilon})^{\frac{1}{k(s-d_{0})}}\ll m^{-\tau_{d_{0}}}.
\end{align*}
\end{proof}
We note for futher convenience that the above lemma yields for $d_{0}\leq d\leq s-1$ the existence of a constant $K_{2}>0$ for which for every $m\in\NN$ then 
\begin{equation}\label{begi}F_{d,\bfa}(m)\leq K_{2}m^{-\tau_{d_{0}}}.\end{equation} Let $K_{1}$ be the implicit constant underlying (\ref{primo}), and write $\tilde{K}=\max_{1\leq i\leq 3}K_{i},$ the constant $K_{3}$ being defined later on right before (\ref{deac}). We introduce for $n\in\mathbb{N}$, tuples $\bfx\in [1,n^{1/k}]^{d}$ and $\bfb\in [1,s]^{d}$ the function $Y_{\bfb}(n,\bfx)=n-b_{1}x_{1}^{k}-\ldots-b_{d}x_{d}^{k}.$ We also recall (\ref{nu0}) and write \begin{equation}\label{tau0}\nu_{0}=\frac{2\tau_{0}}{s},\ \ \ \ \ \ \ \ \ \ \ \ \ \ \ \ \tau_{0}=\min\big(\nu/16,s\min_{1\leq d_{0}\leq s-1}\tau_{d_{0}}/2,k/8s^{2}\big),\ \ \ \ \ \ \end{equation} and, whenever $\bfa\in [1,s]^{s-d}$ with $1\leq d\leq s-1$ the sets
\begin{equation}\label{eqa}\widetilde{\mathcal{Z}}_{d,\bfa}(N)=\Big\{n\in [1,2N]:\ \  F_{d,\bfa}(n)> \tilde{K}n^{-\nu_{0}} \Big\}.\end{equation} For each $ l\leq s-1$ we further consider $\mathcal{B}_{l}= [1,s]^{s-l}\times [1,s]^{d}\times [1,n^{1/k}]^{d}\cap\NN^{s+2d-l}$ and \begin{equation}\label{hhj}\mathcal{Z}_{d}(N)=\bigcup_{l=d}^{s-1}\Big\{n\in [1,2N]: \exists (\bfa,\bfb,\bfx)\in \mathcal{B}_{d}: Y_{\bfb}(n,\bfx)=m>0,   F_{l,\bfa}(m)>\tilde{K} m^{-\nu_{0}}\Big\}.\end{equation}
Likewise, we recall (\ref{nu0}), (\ref{cks}) and (\ref{xxx}), set $\upsilon_{0}=\min(\nu/30,\nu/8s)$ and define 
\begin{equation}\label{hhh}\mathcal{Z}_{0}(N)=\bigcup_{r\in\{r_{s,k},r_{s,k,\eta}^{\varphi}\}}\Big\{n\in [1,2N]: \lvert r(n,R)-c_{k,s}(\eta)\frak{S}(n)\rvert\geq (\log n)^{-\upsilon_{0}}+\varphi(n)^{-\upsilon_{0}}\Big\},\end{equation} where in the above line we are taking integers $n$ for which the preceding inequality holds for $r(n,R)$ being either $r_{s,k}(n,R)$ or $r_{s,k,\eta}^{\varphi}(n,R)$. In what follows we shall write
\begin{equation}\label{sas}\widetilde{\mathcal{Z}}(N)= \bigcup_{d=1}^{s-1}\bigcup_{\bfa\in [1,s]^{s-d} }\widetilde{\mathcal{Z}}_{d,\bfa}(N)  \   \ \ \ \ \text{and} \ \ \ \ \ \mathcal{Z}(N)=\widetilde{\mathcal{Z}}(N)\cup\bigcup_{d=0}^{s-1} \mathcal{Z}_{d}(N).\end{equation}
 It then transpires whenever $n\in [1,2N]\setminus \mathcal{Z}(N)$ that for every $1\leq d\leq l\leq s-1$, each $(\bfa,\bfb)\in [1,s]^{s-l}\times [1,s]^{d}$, every tuple of natural numbers $\bfx\in [1,n^{1/k}]^{d}$ and $m=Y_{\bfb}(n,\bfx)$ with $m>0$ one gets $F_{l,\bfa}(m)\leq \tilde{K} m^{-\nu_{0}}.$ Moreover, in view of (\ref{begi}) and (\ref{tau0}) one has for $d_{0}\leq d\leq s-1$ and $\bfa\in [1,s]^{s-d}$ that \begin{equation}\label{afri} \widetilde{\mathcal{Z}}_{d,\bfa}(N)=\o,\ \ \ \ \ \ \ \ \ \ \ \ \ \ \ \ \ \ \ \ \ \mathcal{Z}_{d}(N)=\o.\end{equation} We deduce from the analysis in the present section bounds for the size of the preceding set.
\begin{cor}\label{cor9.1}
Under the assumptions in Corollary \ref{cor4} and Proposition \ref{prop911} and the existence of some $d_{0}\leq \big\lceil\frac{s}{Dk}+\frac{k}{4s}\big\rceil$ as in Lemma \ref{lem9.1111}, there is some $\zeta_{k,s}>0$ for which
$$\lvert \mathcal{Z}(N)\rvert\ll N^{1-\zeta_{k,s}}.$$ 
\end{cor}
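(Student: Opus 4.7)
The plan is to decompose $\mathcal{Z}(N)$ according to (\ref{sas}) and to bound $\mathcal{Z}_0(N)$, $\widetilde{\mathcal{Z}}(N)$ and the sets $\mathcal{Z}_d(N)$ for $d\geq 1$ separately. The piece $\mathcal{Z}_0(N)$ defined in (\ref{hhh}) is handled by dyadically decomposing $[1,2N]$ and invoking Proposition \ref{prop8.2} for $r_{s,k}$ and Corollary \ref{cor4} for $r_{s,k,\eta}^{\varphi}$: since $\upsilon_0=\min(\nu/30,\nu/(8s))$ is strictly smaller than the exponents $\nu/15$ and $\nu/(4s)$ controlling those error terms, the threshold $(\log n)^{-\upsilon_0}+\varphi(n)^{-\upsilon_0}$ dominates the errors for $n$ sufficiently large, producing an exceptional set of size $O(N^{1-\zeta_{k,s}})$.

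For the remaining pieces, equation (\ref{afri}) eliminates all $\widetilde{\mathcal{Z}}_{d,\bfa}(N)$ and $\mathcal{Z}_d(N)$ with $d\geq d_0$; moreover, the same argument applied to the inner index $l$ (via (\ref{begi}) together with the inequality $\nu_0\leq\tau_{d_0}$ that follows from (\ref{tau0})) shows that within each $\mathcal{Z}_d(N)$ with $d<d_0$ only the terms with $l<d_0$ contribute. The hypothesis $d_0\leq\lceil s/(Dk)+k/(4s)\rceil$ places every surviving index within the range of applicability of Proposition \ref{prop911}. For $\widetilde{\mathcal{Z}}(N)$, one observes from (\ref{tau0}) that $\nu_0\leq k/(4s^3)$, so membership in $\widetilde{\mathcal{Z}}_{d,\bfa}(N)$ requires $F_{d,\bfa}(n)>\tilde{K}n^{-k/(4s^3)}$, an inequality restricted by Proposition \ref{prop911} (after dyadic dissection in $n$) to at most $O(N^{1-d/k-1/(240s)})$ integers. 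Taking the union over the $O(s^s)$ choices of $\bfa$ and over $1\leq d<d_0$ yields $|\widetilde{\mathcal{Z}}(N)|\ll N^{1-1/(240s)}$.

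The most delicate piece is $\mathcal{Z}_d(N)$ for $1\leq d<d_0$, which requires a transference argument. Membership forces the existence of $d\leq l<d_0$, $(\bfa,\bfb)\in[1,s]^{s-l}\times[1,s]^d$ and $\bfx\in[1,n^{1/k}]^d$ such that $m=Y_{\bfb}(n,\bfx)>0$ is itself exceptional for $F_{l,\bfa}$, i.e.\ $F_{l,\bfa}(m)>\tilde{K}m^{-\nu_0}$. Proposition \ref{prop911}, dyadically in $m$, permits at most $O(N^{1-l/k-1/(240s)})$ such $m\in[1,2N]$; for each such $m$, the number of $n=m+b_1x_1^k+\cdots+b_dx_d^k$ with $\bfb\in[1,s]^d$ and $x_i\in[1,(2N)^{1/k}]$ is crudely bounded by $s^d\cdot N^{d/k}$. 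Summing over $\bfa$, $\bfb$ and $l$ delivers
\begin{equation*}
|\mathcal{Z}_d(N)|\ll\sum_{l=d}^{d_0-1}N^{1-(l-d)/k-1/(240s)}\ll N^{1-1/(240s)},
\end{equation*}
the $l=d$ term dominating. Assembling the three bounds yields the corollary with $\zeta_{k,s}$ chosen as the minimum of the corresponding savings. The main obstacle is this transference step: one relies crucially on the inequality $l\geq d$ to guarantee that the saving $N^{-l/k}$ furnished by Proposition \ref{prop911} in the $m$-variable absorbs the combinatorial cost $N^{d/k}$ of enumerating $\bfx$, and only through this cancellation does the residual saving $N^{-1/(240s)}$ persist uniformly across $l$.
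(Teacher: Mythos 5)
Your proposal is correct and follows essentially the same route as the paper: you bound $\mathcal{Z}_0(N)$ by Proposition \ref{prop8.2} and Corollary \ref{cor4}, bound $\widetilde{\mathcal{Z}}_{d,\bfa}(N)$ for $d<d_0$ via Proposition \ref{prop911} after comparing $\nu_0$ with $k/4s^3$, observe that (\ref{afri}) annihilates all pieces with index at least $d_0$, and absorb $\mathcal{Z}_d(N)$ into a union of translates of the sets $\widetilde{\mathcal{Z}}_{l,\bfa}(N)$ with $l\geq d$, the inequality $l\geq d$ being exactly what makes the counting $N^{d/k}\cdot N^{1-l/k-1/240s}\ll N^{1-1/240s}$ close. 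This is the paper's argument in all essential respects, and your added remark on the dyadic dissection of $[1,2N]$ is a harmless point the paper leaves implicit.
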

\begin{proof}
We first note for fixed $\bfa\in [1,s]^{s-d}$ with $1\leq d\leq d_{0}-1$ that Proposition \ref{prop911} yields $F_{d,\bfa}(n)\leq K_{1} n^{-k/4s^{3}}$ for all but $O(N^{1-d/k-1/240s})$ integers $n\in [N,2N]$. By (\ref{eqa}) then  \begin{equation}\label{Ztil}\lvert \widetilde{\mathcal{Z}}_{d,\bfa}(N)\rvert\ll N^{1-d/k-d/240s}\ll N^{1-1/240s}.\end{equation} 
Moreover, it transpires by Proposition \ref{prop8.2} and Corollary \ref{cor4} that whenever $s\geq 4k+2$ with $\Delta_{2s}^{*}<0$ then $\lvert \mathcal{Z}_{0}(N)\rvert\ll N^{1-\zeta_{k,s}}$. We note in view of (\ref{afri}) for each $1\leq d\leq s-1$ that 
$$\mathcal{Z}_{d}(N)\subset\bigcup_{l=d}^{s-1}\bigcup_{\substack{ (\bfa,\bfb)\in [1,s]^{s-l+d} }}\Big\{n\in [1,2N]:\ \ n=m+b_{1}x_{1}^{k}+\ldots+b_{d}x_{d}^{k},\ \ \ m\in \widetilde{\mathcal{Z}}_{l,\bfa}(N),\ \ x_{i}\in \NN\Big\}.$$ It thereby follows by (\ref{afri}) and (\ref{Ztil}) that $ \mathcal{Z}_{d}(N)=\o$ if $d_{0}\leq d\leq s-1$ and otherwise
$$\lvert \mathcal{Z}_{d}(N)\rvert\ll N^{d/k} \sum_{l=d}^{d_{0}-1}\lvert \widetilde{\mathcal{Z}}_{l,\bfa}(N)\rvert \ll N^{d/k} \sum_{l=d}^{d_{0}-1}N^{1-l/k-1/240s}\ll N^{1-1/240s},$$ as desired. The corollary then holds by combining the above equations.
\end{proof}

\section{The probabilistic method}\label{sec11}
We shall make use of Halberstam-Roth \cite[Theorem 13, \S 3]{Hal} in order to construct for a function $\psi$ of uniform growth with exponent $\varepsilon$ and upon recalling (\ref{cks}) the probabilistic space $\mathcal{S}_{\psi}(k,s,\eta)$ of sequences $\frak{X}\subset\mathbb{N}$ for which
\begin{equation}\label{probas}\mathbb{P}(y\in \frak{X})=\left\{
	       \begin{array}{ll}
	x^{-1+k/s}c_{k,s}(\eta)^{-1/s}\psi(x^{k})^{1/s}\ \ \ \ \ \ \ \ \ \ \ \ \ \ \ \ \ \ \ \ \ \ \text{if $y=x^{k}$ for some $x\in\mathcal{A}(x,x^{\eta})$}   \\
		0 \ \ \ \ \ \ \ \ \ \ \ \ \ \ \ \ \ \ \ \ \ \ \ \ \ \ \ \ \ \   \ \ \  \ \ \    \ \  \ \ \    \ \ \  \ \ \   \ \ \  \ \ \   \ \ \text{otherwise.}         \\
	       \end{array}
	     \right.
 \end{equation}
We denote for simplicity \begin{equation}\label{phi1}\psi_{1}(y)=c_{k,s}(\eta)^{-1}\psi(y),\end{equation} and consider for each $x\in\mathcal{A}(x,x^{\eta})$ the independent random variables $t_{x}$ defined by $t_{x}=1$ if $x^{k}\in \frak{X}$ and $t_{x}=0$ else. Equipped with the preceding objects we introduce 
\begin{equation}\label{jjs}R_{\frak{X}}^{s}(n)=\sum_{\substack{n=x_{1}^{k}+\ldots+x_{s}^{k}\\ x_{i}\in\mathcal{A}(x_{i},x_{i}^{\eta}) }}\prod_{j=1}^{s}t_{x_{j}}\end{equation}
for each $n\in\mathbb{N}$. We shall express the above random variable for convenience as
\begin{equation}\label{zz}R_{\frak{X}}^{s}(n)=R_{\frak{X},s}^{\neq}(n)+R_{\frak{X},s}^{=}(n),\end{equation} where the term $R_{\frak{X},s}^{\neq}(n)$ is defined analogously but with the underlying variables satisfying $x_{i}\neq x_{j}$ for $1\leq i<j\leq s.$ Likewise, $R_{\frak{X},s}^{=}(n)$ is defined similarly but comprising instead tuples with $x_{i}=x_{j}$ for some $1\leq i<j\leq s.$ Moreover, we recall (\ref{tau0}) and write
\begin{equation}\label{zzz}R_{\frak{X},s}^{\neq}(n)=R_{\frak{X},s}^{+}(n)+R_{\frak{X},s}^{0}(n),\end{equation} where $R_{\frak{X},s}^{+}(n)$ is defined by imposing to the underlying tuples the proviso $x_{i}>n^{\tau_{0}/k}$ for each $1\leq i\leq s$, and where $R_{\frak{X},s}^{0}(n)$ comprises tuples satisfying $x_{j}\leq n^{\tau_{0}/k}$ for some $1\leq j\leq s$.

In order to utilise suitable concentration inequalities it seems pertinent introducing first some notation. Let $t_{1},\ldots,t_{m}$ be independent Bernoulli random variables and let $Y(\mathbf{t})=Y(t_{1},\ldots,t_{n})$ be a \emph{positive simplified normal} polynomial of degree $d$, i.e. a polynomial of degree $d$ with positive coefficients of size at most $1$ with each of the factors $t_{i}$ in the monomials appearing at most once. We shall write for $A\subset [1,n]$ the symbol $\partial_{A}(F)$ to denote the partial derivatives of $Y$ with respect to the variables given by the indexes in $A$, and abbreviate by $\mathbb{E}_{A}(Y)$ the expected value $\mathbb{E}\big(\partial_{A}(Y(\mathbf{t}))\big).$ We also introduce for any $0\leq j\leq s-1$ the auxiliary expectation $\mathbb{E}_{j}(Y)=\max_{A\subset [1,n], j\leq  \lvert A\rvert\leq s-1}\mathbb{E}_{A}(Y).$ 
\begin{prop}\label{prop81}
Let $Y$ be a positive simplified normal polynomial of degree at most $s$. Then, for any $\varepsilon_{0},\lambda,B>0$ and every sufficiently large $K\geq K(k,s,B)$ there is some $C=C(K,k,s,B)>0$ such that when $\mathbb{E}_{1}(Y)\leq \varepsilon_{0}\leq 1$ and $4sK\lambda\leq \mathbb{E}(Y)=o(n)$ one has
$$\mathbb{P}\big(\lvert Y-\mathbb{E}(Y)\rvert\geq (4sK\lambda \mathbb{E}(Y))^{1/2}\big)\leq 2s e^{-\lambda/4}+Cn^{s}\varepsilon_{0}^{B}.$$
\end{prop}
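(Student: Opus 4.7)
The plan is to adapt the concentration inequality of Janson and Rucinski \cite{Jan2} to the present setting of positive simplified normal polynomials on independent Bernoulli variables. The strategy splits naturally into (i) a truncation step which discards an exceptional event of probability at most $Cn^s\varepsilon_0^B$, and (ii) a martingale-based concentration argument applied to the truncated random variable, the factor $2s$ in the conclusion accumulating through iteration across the $s$ possible degrees.

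First I would fix a threshold $K_0=K_0(K,k,s,B)$ and introduce the good event $\mathcal G=\{\partial_A Y\leq K_0\,\E_A(Y) \text{ for every } A\subset [1,n] \text{ with } 1\leq |A|\leq s-1\}$. Since $Y$ is positive simplified normal, each $\partial_A Y$ is itself a positive simplified normal polynomial of degree at most $s-|A|$ whose monomials are products of pairwise distinct Bernoulli variables of magnitude at most $1$. Markov's inequality applied to the $B$-th moment then gives
$$\mathbb{P}\bigl(\partial_A Y>K_0\E_A(Y)\bigr)\leq (K_0\E_A(Y))^{-B}\,\E\bigl((\partial_A Y)^B\bigr).$$
The required moment estimate $\E((\partial_A Y)^B)\ll_{s,B}\E_A(Y)\cdot\max_{A'\supsetneq A}\E_{A'}(Y)^{B-1}$ is produced by expanding the power, grouping the resulting monomials according to their coalescing index patterns, and exploiting the simplified normal structure together with $t_i^m=t_i$ to collapse repeated occurrences. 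Together with the hypothesis $\E_1(Y)\leq\varepsilon_0\leq 1$ and a union bound over the at most $n^s$ choices of $A$, this yields $\mathbb{P}(\mathcal G^{c})\leq Cn^s\varepsilon_0^B$ for a suitable $C=C(K,k,s,B)$, provided $K_0$ is taken sufficiently large.

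Next, on the event $\mathcal G$, I would concentrate $Y$ via the Doob martingale $Z_j=\E(Y\mid t_1,\ldots,t_j)$, whose increments satisfy $|Z_j-Z_{j-1}|\leq \partial_{\{j\}}Y\leq K_0\,\E_{\{j\}}(Y)$ and whose conditional variances telescope into a quantity bounded by $O(\varepsilon_0\,\E(Y))$. Since $4sK\lambda\leq\E(Y)$, applying a Freedman-type inequality once yields a deviation probability $\leq 2e^{-\lambda/4}$ for the top-level fluctuation. However, to verify that the conditional expectations $\E(\partial_{\{j\}}Y\mid\mathcal F_{j-1})$ themselves remain close to $\E_{\{j\}}(Y)$, one must repeat the argument on the derived polynomial, which has degree $s-1$; iterating through degrees $1,2,\ldots,s$ and taking a union bound over the $s$ resulting concentration events produces the factor $2s$ in the final estimate. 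Combining with the tail bound for $\mathcal G^c$ delivers the proposition.

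The principal obstacle is the moment estimate for $\E((\partial_A Y)^B)$, since a naive expansion of the $B$-th power produces terms whose expectations need not decouple cleanly: contributions from heavily overlapping index patterns can in principle be as large as $\E_A(Y)^{B/s}$, which would ruin the bound. The fix is to use the simplified normal hypothesis to force monomials of $Y$ to be products of distinct variables, so that repetitions reduce under $t_i^m=t_i$ in a controlled way, and to absorb the combinatorial loss into the constants $K=K(k,s,B)$ and $C=C(K,k,s,B)$. A secondary delicate point is maintaining compatibility of the good events across all $s$ iterative levels of the concentration argument, so that the single truncation tail term $Cn^s\varepsilon_0^B$ suffices rather than compounding to $sCn^s\varepsilon_0^B$ or worse at each level — this is handled by defining $\mathcal G$ once at the outset in terms of \emph{all} subsets $A$ with $|A|\leq s-1$, rather than re-introducing truncation events at each inductive step.
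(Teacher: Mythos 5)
Your opening misattributes the result. The paper cites this proposition directly to \cite[Theorem 1.3]{Vu2}, and its proof consists of exactly that citation and nothing more. The Janson--Rucinski inequality \cite[Theorem 3.10]{Jan2} that you name is used in the paper for a different, one-sided upper-tail estimate (Proposition \ref{prop10}); the introduction even explicitly contrasts the two. So your proposal is an attempted reconstruction of Vu's theorem, not a parallel to a proof present in the text.

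As a reconstruction, the high-level architecture you describe (truncate on partial derivatives via a bad event of probability $\ll n^{s}\varepsilon_0^B$, then martingale concentration, with $s$ degree levels producing the $2s$) does match the shape of Vu's argument. But the truncation step has a genuine gap. The moment estimate you assert, $\E\bigl((\partial_A Y)^B\bigr)\ll_{s,B}\E_A(Y)\max_{A'\supsetneq A}\E_{A'}(Y)^{B-1}$, is not justified, and even if granted, Markov with your threshold $K_0\E_A(Y)$ only gives
$\mathbb{P}\bigl(\partial_A Y>K_0\E_A(Y)\bigr)\ll K_0^{-B}\bigl(\max_{A'\supsetneq A}\E_{A'}(Y)/\E_A(Y)\bigr)^{B-1}$,
and that ratio is not controlled by $\varepsilon_0$: the hypothesis gives $\E_A(Y)\leq\mathbb{E}_1(Y)\leq\varepsilon_0$ only as an upper bound, so $\E_A(Y)$ can be arbitrarily small relative to $\varepsilon_0$ and to $\E_{A'}(Y)$, and the per-$A$ tail need not be $O(\varepsilon_0^B)$. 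Consequently the union bound over the $n^s$ subsets does not close. Vu's actual proof chooses the truncation thresholds uniformly at each degree level (tied to $\varepsilon_0$ and to the global scale $\mathcal{E}$, not to the individual $\E_A(Y)$) and pushes the moment bounds through a careful induction on degree; that uniformity is precisely what makes the $Cn^s\varepsilon_0^B$ term emerge. The idea of your outline is plausible, but the pivotal estimate is both unproved and, as stated, insufficient.
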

\begin{proof}
See \cite[Theorem 1.3]{Vu2}.
\end{proof}

We shall next apply the procedure utilised in \cite{Vu}, it being desirable computing some auxiliary expectations with the aid of the arithmetic results obtained in previous sections.
\begin{lem}\label{lem9.1}
Let $s\geq 4k+1$ and $\Delta_{s}^{*}<0$ be an admissible exponent for minor arcs. Then, whenever $n\in [N,2N]$ one has
$$\mathbb{E}(R_{\frak{X},s}^{0}(n))\ll n^{-\tau_{0}/s}.$$
\end{lem}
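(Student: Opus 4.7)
The plan is to bound $\E(R_{\frak{X},s}^{0}(n))$ by reducing the probabilistic average to a deterministic weighted representation count already controlled in Section \ref{sec7}, exploiting the independence of the indicators $(t_{x})$ across distinct $x$.

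First, since the tuples contributing to $R_{\frak{X},s}^{0}(n)$ comprise pairwise distinct entries, the independence of $(t_{x_{j}})_{j\leq s}$ combined with the probability assignment (\ref{probas}) delivers
\begin{equation*}\E(R_{\frak{X},s}^{0}(n))=c_{k,s}(\eta)^{-1}\sum_{\substack{n=x_{1}^{k}+\ldots+x_{s}^{k}\\ x_{i}\in\mathcal{A}(x_{i},x_{i}^{\eta}),\ \text{pairwise distinct}\\ \min_{j}x_{j}\leq n^{\tau_{0}/k}}}\prod_{j=1}^{s}x_{j}^{-1+k/s}\psi(x_{j}^{k})^{1/s}.\end{equation*}
The monotonicity of $\psi$ together with $x_{j}^{k}\leq n$ yields $\prod_{j}\psi(x_{j}^{k})^{1/s}\leq \psi(n)$. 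Dropping the pairwise distinctness constraint enlarges the sum, and since $\mathcal{A}(x_{i},x_{i}^{\eta})\subseteq \mathcal{A}(P,P^{\eta})$ whenever $x_{i}\leq P=(2N)^{1/k}$, the resulting expression is, in the notation of (\ref{phi}), exactly $r_{s,k}(n,P^{\eta})-r_{s,k}^{\varphi}(n,P^{\eta})$ for the choice $\varphi(n)=n^{1-\tau_{0}}$. Consequently
\begin{equation*}\E(R_{\frak{X},s}^{0}(n))\ll \psi(n)\bigl(r_{s,k}(n,P^{\eta})-r_{s,k}^{\varphi}(n,P^{\eta})\bigr).\end{equation*}

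Next I invoke the intermediate estimate $|r_{s,k}(n,R)-r_{s,k}^{\varphi}(n,R)|\ll \varphi(n)^{-\nu/4s}$ occurring in the proof of Proposition \ref{prop7.2}. That estimate rests solely on Lemma \ref{lem8.10} together with a trivial major-arc bound, and the only constraint it places on $\varphi$ is the mild requirement $\varphi(n)^{1/4s}\leq P^{k/2}$ needed to apply Lemma \ref{lem8.10}; the choice $\varphi(n)=n^{1-\tau_{0}}$ meets this comfortably. One thus obtains
\begin{equation*}\E(R_{\frak{X},s}^{0}(n))\ll \psi(n)\cdot n^{-(1-\tau_{0})\nu/4s}.\end{equation*}

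Finally, the uniform-growth hypothesis on $\psi$ with exponent $\varepsilon$ provides $\psi(n)\ll n^{\varepsilon}$ for any preassigned $\varepsilon>0$. The bound $\tau_{0}\leq \nu/16$ from (\ref{tau0}) yields after a short calculation that $(1-\tau_{0})\nu/4s-\tau_{0}/s\geq \nu/8s$; choosing $\varepsilon\leq \nu/8s$ thus produces the desired conclusion $\E(R_{\frak{X},s}^{0}(n))\ll n^{-\tau_{0}/s}$. The only delicate point in the argument is arranging for the polynomial factor $\psi(n)$ to be dominated by the polynomial saving $\varphi(n)^{-\nu/4s}$, but this competition is precisely what the calibration of $\tau_{0}$ relative to $\nu$ in (\ref{tau0}) is designed to secure.
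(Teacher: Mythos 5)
Your proof is correct and follows essentially the same route as the paper's: bound the expectation by a weighted representation count using independence and monotonicity of $\psi$, recognize that count as $r_{s,k}(n,P^{\eta})-r_{s,k}^{\varphi}(n,P^{\eta})$ with $\varphi(n)=n^{1-\tau_{0}}$ via (\ref{kkk}), invoke (\ref{ki}), and close with the calibration $\tau_{0}\leq\nu/16$ from (\ref{tau0}). The only cosmetic difference is that the paper absorbs the $\psi$ and smoothness-enlargement factors into a single $P^{\varepsilon}$, whereas you track $\psi(n)$ explicitly before invoking $\psi(n)\ll n^{\varepsilon}$.
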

\begin{proof}
By recalling the definitions in the previous page, (\ref{paar}) and (\ref{pij}) it transpires that
\begin{align*}\mathbb{E}(R_{\frak{X},s}^{0}(n))& \ll P^{\varepsilon}\sum_{\substack{(x_{1},\ldots,x_{s})\in\mathcal{C}(n,P^{\eta})\\ \min\limits_{i\leq s}(x_{i})\leq n^{\tau_{0}/k} }}(x_{1}\cdots x_{s})^{-1+k/s}.\end{align*}
We then note upon setting $\varphi(n)=n^{1-\tau_{0}}$ that (\ref{kkk}) yields
$$\mathbb{E}(R_{\frak{X},s}^{0}(n))\ll P^{\varepsilon}\lvert r_{s,k}(n,P^{\eta})-r_{s,k}^{\varphi}(n,P^{\eta})\rvert,$$
whence the lemma follows by (\ref{ki}), (\ref{tau0}) and the assumption $\Delta_{s}^{*}<0$.
\end{proof}

We next recall (\ref{cho}) and introduce first an auxiliary lemma used on multiple occassions.
\begin{lem}\label{lem9.65}
Let $s,\Delta_{s}^{*}$ be as in Lemma \ref{lem9.1} and $\gamma\neq 0$. For each $1\leq l\leq s-1$ then
\begin{equation*}I_{[0,1),\gamma}^{1,s-l}(m^{1/k},1)\ll m^{-2\tau_{0} /s}.\end{equation*}
\end{lem}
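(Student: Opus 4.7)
The plan is to decompose the integral into contributions over major and minor arcs and balance them with a suitable choice of height. Observe first that $\mathcal{A}(1,R) = \{1\}$, so that $|f_{s}(\alpha,1,R)| = 1$ for all $\alpha$; recalling (\ref{cho}), the integral in question simplifies to
\[
I_{[0,1),\gamma}^{1,s-l}(m^{1/k},1) = \int_{0}^{1} |\tilde{f}_{s}(\alpha,P,R)|\,|f_{s}(\gamma\alpha,P,R)|^{l-1}\, d\alpha,
\]
with $P = m^{1/k}$. I will then set $Q = m^{(s-l)/(s(2+\nu))}$, which satisfies $1 \leq Q \leq P^{k/2}$, and split $[0,1) = \grm(Q) \cup \grM(Q)$.

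On the minor arcs, I would apply Lemma \ref{lem8.10} with $j=1$, secondary index $s-l$, and $P_{0}=1$. When $l = s-1$ the secondary index equals $1$ and the first clause of that lemma applies; when $l \leq s-2$ the secondary index is at least $2$ and, since $P_{0}=1$, the second clause applies. In either case one obtains
\[
\int_{\grm(Q)} |\tilde{f}_{s}(\alpha,P,R)|\,|f_{s}(\gamma\alpha,P,R)|^{l-1}\, d\alpha \;=\; I_{\grm(Q),\gamma}^{1,s-l}(P,1) \;\ll\; Q^{-\nu}.
\]

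On the major arcs, invoking the trivial bounds $|\tilde{f}_{s}(\alpha,P,R)|,\,|f_{s}(\gamma\alpha,P,R)| \ll P^{k/s}$ together with $\text{meas}(\grM(Q)) \ll Q^{2}P^{-k}$ yields
\[
\int_{\grM(Q)} |\tilde{f}_{s}(\alpha,P,R)|\,|f_{s}(\gamma\alpha,P,R)|^{l-1}\, d\alpha \;\ll\; P^{lk/s}Q^{2}P^{-k} \;=\; m^{l/s-1}Q^{2}.
\]

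A brief computation shows that the choice of $Q$ above is precisely what makes both estimates coincide at $m^{-\nu(s-l)/(s(2+\nu))}$, and since $l \leq s-1$ one has $s-l \geq 1$. Combined with $\nu \leq 1$ from (\ref{nu0}) and $\tau_{0} \leq \nu/16$ from (\ref{tau0}), this gives $\nu(s-l)/(s(2+\nu)) \geq \nu/(3s) \geq 2\tau_{0}/s$, delivering the desired bound $m^{-2\tau_{0}/s}$. The only point to verify is that both clauses of Lemma \ref{lem8.10} cover the entire range $1 \leq l \leq s-1$, which they do; there is no real obstacle beyond this bookkeeping.
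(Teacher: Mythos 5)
Your proof is correct and follows the same strategy as the paper: split $[0,1)$ into major and minor arcs, control the minor arcs via Lemma \ref{lem8.10} (noting that the two clauses cover the full range $1\leq l\leq s-1$), and estimate the major arcs trivially via the measure bound and $|\tilde f_s|,|f_s|\ll P^{k/s}$. The only difference is your $l$-dependent choice $Q=m^{(s-l)/(s(2+\nu))}$ balancing the two contributions, whereas the paper simply fixes $Q=m^{1/4s}$; both choices comfortably deliver the stated bound $O(m^{-2\tau_0/s})$.
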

\begin{proof}
We apply Lemma \ref{lem8.10} for the choice $Q=m^{1/4s}$ and $P=m^{1/k}$ to deduce that $I_{\grm(Q),\gamma}^{1,s-l}(m^{1/k},1)=O( m^{-\nu /4s}),$ where $\nu$ was defined in (\ref{nu0}). On the other hand, the customary remark $\text{meas}(\grM(Q))\leq m^{1/2s-1}$ combined with the trivial bounds as in (\ref{hhhh}) yields
$$I_{\grM(Q),\gamma}^{1,s-l}(m^{1/k},1)\ll m^{l/s+1/2s-1}\ll m^{- 1/2s}.$$ The combination of the preceding estimates delivers the desired result.
\end{proof}
\begin{lem}\label{lem9.2}
Let $s,\Delta_{s}^{*}$ be as in Lemma \ref{lem9.1}. Then, for every $1\leq l\leq s-1$ one has
\begin{equation*}\mathbb{E}(R_{\frak{X},l}^{\neq }(m))\ll m^{-\tau_{0}/s}.\end{equation*}
\end{lem}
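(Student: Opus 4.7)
My plan is to reduce the desired bound to the Fourier integral estimate provided by Lemma \ref{lem9.65}. First I would expand the expectation using the independence of the Bernoulli variables $t_{x}$ with probabilities prescribed by (\ref{probas}), obtaining
\[
\mathbb{E}(R_{\frak{X},l}^{\neq}(m)) = c_{k,s}(\eta)^{-l/s}\sum_{\substack{m=x_{1}^{k}+\ldots+x_{l}^{k}\\ x_{i}\in\mathcal{A}(x_{i},x_{i}^{\eta}),\ \text{distinct}}}\prod_{j=1}^{l}x_{j}^{-1+k/s}\psi(x_{j}^{k})^{1/s}.
\]
Since $\psi$ is monotone increasing and each $x_{j}^{k}\leq m$, every factor $\psi(x_{j}^{k})^{1/s}$ may be replaced by $\psi(m)^{1/s}$, so that $\psi(m)^{l/s}$ factors out. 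Dropping the distinctness condition is only a loss, and invoking the inclusion $\mathcal{A}(x,x^{\eta})\subset\mathcal{A}(P,P^{\eta})$ valid for any $x\leq P:=(2N)^{1/k}$ enlarges the smoothness constraint to $P^{\eta}$-smoothness across the board.

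Next I would recognise the resulting weighted counting function as a Fourier integral. By the pigeonhole principle at least one variable satisfies $x_{j_{0}}^{k}\geq m/l$, hence $x_{j_{0}}\geq(m/l)^{1/k}$. A short check using the definition of $P_{-}$ in (\ref{i0}) together with the proviso $l\leq s-1$ and the inequality $2^{k\tilde{i}_{s}}\geq 2s$ shows that $(m/l)^{1/k}>P_{-}$. By symmetry, and at the cost of a factor $l$, I may assume that this largest variable is $x_{1}$, so that $x_{1}\in(P_{-},P]$ and the sum over $x_{1}$ is precisely captured by the truncated exponential sum $\tilde{f}_{s}(\alpha,P,P^{\eta})$. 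Orthogonality then yields
\[
\mathbb{E}(R_{\frak{X},l}^{\neq}(m))\ll\psi(m)^{l/s}\int_{0}^{1}\bigl|\tilde{f}_{s}(\alpha,P,P^{\eta})\bigr|\,\bigl|f_{s}(\alpha,P,P^{\eta})\bigr|^{l-1}d\alpha,
\]
and the integral on the right is exactly $I_{[0,1),1}^{1,s-l}(P,1)$ in the notation of (\ref{cho}).

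Finally I would invoke Lemma \ref{lem9.65} with $\gamma=1$, which bounds this integral by $O(m^{-2\tau_{0}/s})$. Combining with $\psi(m)^{l/s}\ll m^{\tau_{0}/s}$, a consequence of the exponent-$\varepsilon$ condition on $\psi$ (or of the stronger $\psi=O(\log n)$ hypothesis in the Theorem \ref{thm1.3} regime) upon choosing $\varepsilon$ small enough, delivers the desired bound $\mathbb{E}(R_{\frak{X},l}^{\neq}(m))\ll m^{-\tau_{0}/s}$. The only mildly delicate point in the plan is the verification $(m/l)^{1/k}>P_{-}$, which is a short computation exploiting $l<s$ and the definition of $\tilde{i}_{s}$; all remaining manipulations are routine. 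The main analytic content sits in Lemma \ref{lem9.65}, so once the reduction to the $I^{1,s-l}$ integral is effected, the estimate is essentially immediate.
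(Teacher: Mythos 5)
Your proposal is correct and follows the same route as the paper: expand the expectation, absorb the $\psi$-weight into a small power of $m$, drop distinctness, use pigeonhole to isolate one large variable, express the resulting weighted count by orthogonality as $I_{[0,1),1}^{1,s-l}$, and then invoke Lemma \ref{lem9.65}. The paper compresses all of this into ``a similar argument as in Lemma \ref{lem9.1}'' culminating in $\mathbb{E}(R_{\frak{X},l}^{\neq}(m))\ll m^{\varepsilon}I_{[0,1),1}^{1,s-l}(m^{1/k},1)$, so your spelled-out version is a faithful reconstruction.

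One point to tighten: you set $P=(2N)^{1/k}$, apparently inheriting the ambient $N$ of (\ref{paar}). That choice would break the pigeonhole step $(m/l)^{1/k}>P_{-}$ precisely in the downstream application (Lemma \ref{lem9.4}), where $m=n-\sum_{y^{k}\in A}y^{k}$ can be as small as $N^{\tau_{0}}$. Lemma \ref{lem9.2} is a statement about the free parameter $m$ alone, and Lemma \ref{lem9.65} is calibrated to $P=m^{1/k}$; so one must take $P\asymp m^{1/k}$ (equivalently, choose $N$ locally with $m\in[N,2N]$). With that reading your short verification $2^{k(\tilde{i}_{s}+1)}>2l$ is exactly what is needed, and the remainder goes through unchanged.
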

\begin{proof}
By recalling (\ref{ooo}) and a similar argument as in Lemma \ref{lem9.1} it is apparent that
$$\mathbb{E}(R_{\frak{X},l}^{\neq }(m))\ll m^{\varepsilon}I_{[0,1),1}^{1,s-l}(m^{1/k},1).$$
The lemma follows by combining the above estimate in conjunction with Lemma \ref{lem9.65}.
\end{proof}

We recall (\ref{tau0}), write for $\bfx\in\mathbb{N}^{s}$ and henceforth $\text{Set}(\bfx)=\{x_{1},\ldots,x_{s}\}$ and introduce 
\begin{equation}\label{Rn}\mathcal{R}^{+}(n)=\Big\{\bfx\in \mathbb{N}^{s}:  n=x_{1}^{k}+\ldots+x_{s}^{k}, x_{i}\in\mathcal{A}(x_{i},x_{i}^{\eta}):  x_{i}> n^{\tau_{0}/k},  x_{i}\neq x_{j}  \text{ for $i\neq j$}\Big\}.
\end{equation} 
\begin{lem}\label{lem9.4}
Let $s\geq 4k+1$ and $\Delta_{s}^{*}<0$ be an admissible exponent for minor arcs. Let $A=\{a_{1}^{k},\ldots, a_{l}^{k}\}\subset\mathbb{N}_{0}^{k}$ where $\lvert A\rvert=l$ with $a_{j}\in \mathcal{A}(a_{j},a_{j}^{\eta})$ and $1\leq l\leq s-1$. Then,
$$\mathbb{E}_{A}(R_{\frak{X},s}^{+ }(n))\ll n^{-\tau_{0}^{2}/s}.$$ If moreover $n\in [1,N]\setminus \mathcal{Z}(N)$ one gets $\mathbb{E}_{A}(R_{\frak{X},s}^{+ }(n))=O( n^{-\nu_{0}\tau_{0}/2}).$
\end{lem}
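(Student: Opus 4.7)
The approach is to expand $\mathbb{E}_A(R_{\frak{X},s}^+(n))$ combinatorially and then identify the resulting sum with a Fourier coefficient controlled by the arithmetic estimates of Sections \ref{sec2}--\ref{sec9}. Taking the partial derivative of the positive polynomial $R_{\frak{X},s}^+(n)$ with respect to $t_{a_1},\ldots,t_{a_l}$ selects the monomials containing all of these variables; since the tuples in $\mathcal{R}^+(n)$ are ordered with pairwise distinct coordinates, this yields a combinatorial factor $s!/(s-l)!$ multiplying a sum over the $s-l$ unfixed positions. Combining with (\ref{probas})--(\ref{phi1}) one obtains
\begin{equation*}
\mathbb{E}_A(R_{\frak{X},s}^+(n))\ll \sum_{\bfy\in\mathcal{R}_m}\prod_{j=1}^{s-l} y_j^{-1+k/s}\,\psi_1(y_j^k)^{1/s},
\end{equation*}
where $\mathcal{R}_m$ is the collection of ordered tuples $\bfy$ of pairwise-distinct $y_j^{\eta}$-smooth naturals $y_j>n^{\tau_0/k}$ avoiding $\{a_1,\ldots,a_l\}$ and satisfying $\sum_j y_j^k=m:=n-\sum_i a_i^k$. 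The condition $y_j^k>n^{\tau_0}$ forces $m>(s-l)n^{\tau_0}$, so throughout I may assume $m\geq n^{\tau_0}$.

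Monotonicity of $\psi$ gives $\psi_1(y_j^k)^{1/s}\leq \psi_1(n)^{1/s}\ll n^{\varepsilon/s}$, costing only a factor $n^{\varepsilon}$. For the residual unweighted sum I would drop the distinctness and avoidance conditions and relax the smoothness to $\mathcal{A}(m^{1/k},m^{\eta/k})$; the enlarged inclusion $\mathcal{A}(y_j,y_j^{\eta})\subset\mathcal{A}(m^{1/k},m^{\eta/k})$ is automatic from $y_j\leq m^{1/k}$ (a consequence of the sum constraint). Orthogonality then expresses the enlarged sum as the Fourier coefficient at $m$ of $f_s(\alpha,m^{1/k},m^{\eta/k})^{s-l}$; a symmetry argument, based on the observation that some $y_j$ must attain the maximum and so have size $\asymp m^{1/k}$, permits the extraction of one $\tilde{f}_s$-factor in the manner underlying Lemma \ref{lem9.2}. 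Taking absolute values produces
\begin{equation*}
\mathbb{E}_A(R_{\frak{X},s}^+(n))\ll n^{\varepsilon}\,I^{1,l}_{[0,1),1}(m^{1/k},1),
\end{equation*}
whereas retaining the signed Fourier coefficient gives the parallel estimate $\mathbb{E}_A(R_{\frak{X},s}^+(n))\ll n^{\varepsilon} F_{l,\mathbf{1}}(m)$. The passage from the parameter $m^{1/k}$ used in the symmetry step to the paper's $P=(2N)^{1/k}$ in the definition of $F_{l,\mathbf{1}}$ is harmless, since smooth $y>m^{1/k}$ cannot appear in a representation of $m$ and so contributes nothing to the Fourier coefficient at $m$.

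For the unconditional bound I would invoke Lemma \ref{lem9.65} with the substitution $l\mapsto s-l$, obtaining $I^{1,l}_{[0,1),1}(m^{1/k},1)\ll m^{-2\tau_0/s}$. Combined with $m\geq n^{\tau_0}$ this produces $\mathbb{E}_A\ll n^{\varepsilon-2\tau_0^2/s}\ll n^{-\tau_0^2/s}$ upon absorbing any sufficiently small $\varepsilon<\tau_0^2/s$. For the conditional bound when $n\in[1,N]\setminus\mathcal{Z}(N)$, I would note that the triple $(\mathbf{1},\mathbf{1},(a_1,\ldots,a_l))\in\mathcal{B}_l$ satisfies $Y_{\mathbf{1}}(n,(a_1,\ldots,a_l))=m>0$, so the hypothesis $n\notin\mathcal{Z}_l(N)\subseteq\mathcal{Z}(N)$ together with the definition (\ref{hhj}) forces $F_{l,\mathbf{1}}(m)\leq \tilde K m^{-\nu_0}$. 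Inserting this in the signed bound and using $m\geq n^{\tau_0}$ delivers $\mathbb{E}_A\ll n^{\varepsilon-\nu_0\tau_0}=O(n^{-\nu_0\tau_0/2})$. The principal technical obstacle is the reduction step in the second paragraph: verifying that the cascade of relaxations (distinctness, smoothness, the symmetry-based extraction of $\tilde{f}_s$, and the parameter change in $F_{l,\mathbf{1}}$) preserves inequalities in the correct direction and that the Fourier coefficient at $m$ is genuinely insensitive to smooth numbers of size exceeding $m^{1/k}$.
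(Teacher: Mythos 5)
Your proposal is correct and follows the paper's own route: expand $\partial_A(R_{\frak X,s}^+(n))$ as a sum over the $s-\lvert A\rvert$ free variables summing to $m=n-\sum_j a_j^k\gg n^{\tau_0}$, pass to a Fourier coefficient at $m$, invoke Lemma \ref{lem9.65} (the paper cites Lemma \ref{lem9.2}, but that lemma is proved precisely by reducing to Lemma \ref{lem9.65}) for the unconditional bound, and read off $F_{\lvert A\rvert,\bf1}(m)\le\tilde K m^{-\nu_0}$ from $n\notin\mathcal Z_{\lvert A\rvert}(N)$ for the conditional one. The $P$-parameter subtlety you flag at the end is real but is a feature of the paper's own conventions, not of your argument: $F_{d,\bfa}(m)$ is to be read with $P$ scaled to the dyadic range containing $m$ rather than $n$, which is exactly what makes the $\tilde f_s$-extraction legitimate, and the paper's own proof relies on this tacitly.
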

\begin{proof}
We write for convenience $m=n-\sum_{y^{k}\in A}y^{k}$ and $l=s-\lvert A\rvert.$ One then has that
$$\partial_{A}(R_{\frak{X},s}^{+ }(n))=\sum_{\substack{\bfx\in\mathcal{R}^{+}(n)\\ A\subset \text{Set}(\bfx)}}\prod_{x_{j}\in \text{Set}(\bfx)\setminus A}t_{x_{j}}.$$ We note that by relabelling if necessary, each $\bfx=(x_{1},\ldots,x_{s})$ in the preceding sum satisfies
$$m=x_{1}^{k}+\ldots+x_{l}^{k},$$ whence upon employing the definition (\ref{Rn}) it transpires that $m\geq x_{1}^{k}> n^{\tau_{0}}.$ We assume first $\Delta_{s}^{*}<0$ and observe that the application of Lemma \ref{lem9.2} yields
$$\mathbb{E}(\partial_{A}(R_{\frak{X},s}^{+ }(n)))\ll \mathbb{E}(R_{\frak{X},l}^{\neq }(m))\ll m^{-\tau_{0}/s}\ll n^{-\tau_{0}^{2}/s},$$ 
as desired. If $n\notin \mathcal{Z}(N)$ then $n\notin\mathcal{Z}_{\lvert A\rvert}(N)$ by (\ref{sas}), it in turn entailing $F_{\lvert A\rvert,\bf1}(m)=O( m^{-\nu_{0}})$. Therefore, the same argument in Lemma \ref{lem9.2} and orthogonality yield
\begin{align*}\mathbb{E}(\partial_{A}(R_{\frak{X},s}^{+ }(n)))&\ll m^{\varepsilon}\sum_{\substack{m=x_{1}^{k}+\ldots+x_{l}^{k}\\ x_{i}\in\mathcal{A}(m^{1/k},m^{\eta/k})}}(x_{1}\cdots x_{l})^{-1+k/s}\ll m^{\varepsilon} F_{\lvert A\rvert,\bf1}(m)\ll m^{-\nu_{0}/2}\ll n^{-\nu_{0}\tau_{0}/2},
\end{align*} wherein we employed the same devise as above.
\end{proof}
We shall next analyse the contribution of tuples with two of the components being equal. To such an end we introduce when $1\leq l\leq s-1$ for tuples $\boldsymbol{a}\in [1,s]^{l}$ the set
\begin{equation}\label{Rnb}\mathcal{R}_{\boldsymbol{a},l}(n)=\Big\{\bfx\in\NN^{l}:\ \ \  \ n=a_{1}x_{1}^{k}+\ldots+a_{l}x_{l}^{k},\ \ \ x_{i}\neq x_{j} \ \text{for $i\neq j$}, \ \ \ \ x_{i}\in\mathcal{A}(x_{i},x_{i}^{\eta})\  \Big\},
\end{equation} and consider the random variable
\begin{equation}\label{elli}R_{\mathfrak{X},\bfa}^{l}(n)=\sum_{\bfx\in \mathcal{R}_{\boldsymbol{a},l}(n)}\prod_{x_{j}\in\text{Set}(\bfx)}t_{x_{j}}.\end{equation}
\begin{lem}\label{lem9.5}
Let $s\geq 4k+1$ and let $\Delta_{s}^{*}<0$ be an admissible exponent for minor arcs. Let $1\leq l\leq s-1$ and $\boldsymbol{a}\in [1,s]^{l}$ with $a_{1}+\ldots+a_{l}\leq s$. Then, one has 
$$ \mathbb{E}(R_{\frak{X},\bfa}^{l }(n))\ll n^{-\tau_{0}/s},\ \ \ \ \ \ \ \ \ \mathbb{E}(R_{\frak{X},s}^{= }(n))\ll n^{-\tau_{0}/s}. $$
\end{lem}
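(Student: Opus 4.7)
The plan is to derive both estimates by first reducing $\mathbb{E}(R_{\mathfrak{X},\bfa}^{l}(n))$ to a coefficient-weighted Fourier integral analogous to that in Lemma \ref{lem9.65}, and then deducing the second bound from the first via a multiplicity decomposition of $R_{\mathfrak{X},s}^{=}(n)$. Taking expectations in (\ref{elli}) via (\ref{probas}), the independence of the $t_{y}$ together with the uniform growth of $\psi$ with exponent $\varepsilon$ deliver
$$\mathbb{E}(R_{\mathfrak{X},\bfa}^{l}(n))\ll n^{\varepsilon}\sum_{\bfx\in\mathcal{R}_{\bfa,l}(n)}\prod_{j=1}^{l}x_{j}^{-1+k/s}.$$
Dropping the distinctness constraint on the $x_{j}$ for an upper bound, invoking the inclusion $\mathcal{A}(x_{j},x_{j}^{\eta})\subset\mathcal{A}(P,P^{\eta})$ for $P=(2n)^{1/k}$, and applying orthogonality, one dominates this by
$$\int_{0}^{1}\prod_{j=1}^{l}f_{s}(a_{j}\alpha,P,R)\,e(-\alpha n)\,d\alpha\qquad\text{with }R=P^{\eta}.$$
The hypothesis $\sum a_{j}\leq s$ forces some $j_{0}$ to satisfy $a_{j_{0}}x_{j_{0}}^{k}\geq n/l$, so $x_{j_{0}}\gg P$, and since $2^{k\tilde{i}_{s}}\geq 2s$ we moreover have $x_{j_{0}}>P_{-}$ in the parameter range considered. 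Applying the dyadic dissection (\ref{oo}) to isolate the top shell for this variable and summing over the choices of $j_{0}$ by symmetry, one arrives, up to logarithmic losses absorbed by the $n^{\varepsilon}$, at
$$\mathbb{E}(R_{\mathfrak{X},\bfa}^{l}(n))\ll n^{\varepsilon}\sum_{j_{0}=1}^{l}\int_{0}^{1}\lvert\tilde{f}_{s}(a_{j_{0}}\alpha,P,R)\rvert\prod_{j\neq j_{0}}\lvert f_{s}(a_{j}\alpha,P,R)\rvert\,d\alpha.$$

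Each of these integrals will be estimated by the Hardy-Littlewood dissection of Lemma \ref{lem9.65}, splitting $[0,1)$ at $Q=n^{1/4s}$. On $\grM(Q,P)$, the trivial pointwise estimates $\lvert\tilde{f}_{s}\rvert,\lvert f_{s}\rvert\ll P^{k/s}$ together with $\mathrm{meas}(\grM(Q,P))\ll Q^{2}P^{-k}$ yield a contribution $\ll P^{lk/s}\cdot Q^{2}P^{-k}\ll n^{-1/(2s)}$, since $l\leq s-1$. On $\grm(Q,P)$, I would insert $s-l$ dummy factors of modulus at most one and apply Holder's inequality with $s$ equal exponents, reducing the integral to the product (raised to $1/s$) of the $s$-th moment of the $\tilde{f}_{s}(a_{j_{0}}\alpha,\cdot,R)$ factor on $\grm(Q,P)$ and the $s$-th moments of the $f_{s}(a_{j}\alpha,\cdot,R)$ on $[0,1)$. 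The latter, after the integer change of variable $\beta=a_{j}\alpha$ and the $1$-periodicity of $g_{s}$, each equal $\int_{0}^{1}\lvert g_{s}(\beta,\cdot,R)\rvert^{s}d\beta=O(1)$ by Proposition \ref{prop4}. For the former I would use the observation that $\alpha\in\grm(Q,P)$ implies $a_{j_{0}}\alpha\bmod 1\in\grm(Q/s,P)$: any rational $a/q$ with $(a,q)=1$ and $q\leq Q/s$ close to $a_{j_{0}}\alpha-k$ for some integer $k$ lifts to a rational of denominator at most $a_{j_{0}}q\leq Q$ close to $\alpha$, contradicting $\alpha\in\grm(Q,P)$; hence after the change of variable
$$\int_{\grm(Q,P)}\lvert g_{s}(a_{j_{0}}\alpha,\bar{P},R)\rvert^{s}d\alpha\ll\int_{\grm(Q/s,P)}\lvert g_{s}(\beta,\bar{P},R)\rvert^{s}d\beta\ll Q^{-\tilde{\nu}}$$
by Proposition \ref{prop4} applied with $t=s$. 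Combining the two arc contributions and invoking (\ref{tau0}) delivers the first estimate.

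The second estimate follows by decomposing $R_{\mathfrak{X},s}^{=}(n)$ according to multiplicity type. Every tuple $(x_{1},\ldots,x_{s})$ with $x_{i}=x_{j}$ for some $i\neq j$ is determined, up to reordering, by its distinct values $y_{1},\ldots,y_{l}$ (with $l\leq s-1$) and the multiplicities $a_{1},\ldots,a_{l}\in\NN$ satisfying $a_{1}+\cdots+a_{l}=s$. Since $t_{y}^{r}=t_{y}$ for Bernoulli variables, the product $\prod_{j=1}^{s}t_{x_{j}}$ collapses to $\prod_{i=1}^{l}t_{y_{i}}$, and counting orderings via the multinomial coefficient gives
$$R_{\mathfrak{X},s}^{=}(n)=\sum_{l=1}^{s-1}\sum_{\substack{\bfa\in\NN^{l}\\ a_{1}+\cdots+a_{l}=s}}\binom{s}{a_{1},\ldots,a_{l}}R_{\mathfrak{X},\bfa}^{l}(n).$$
The outer sums contain $O_{s}(1)$ terms and each $\bfa$ satisfies the hypothesis of the first part, so the second estimate follows by applying the first part term by term. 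The principal obstacle is the minor-arc transformation under multiplication by a bounded coefficient $a_{j_{0}}>1$, which is handled by the inclusion $a_{j_{0}}\cdot\grm(Q,P)\bmod 1\subset\grm(Q/s,P)$ established above.
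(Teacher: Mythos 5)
Your proof is correct and follows essentially the same route as the paper's: the paper reduces $\mathbb{E}(R_{\frak{X},\bfa}^{l}(n))$ to the Fourier coefficient $F_{s-l,\bfa}(n)$, H\"olders down to two-factor integrals $I_{[0,1),a_{v}/a_{1}}^{1,s-l}(P,1)$, and then invokes Lemma \ref{lem9.65} (itself a Hardy--Littlewood dissection at $Q=m^{1/4s}$ feeding into Lemma \ref{lem8.10} and Proposition \ref{prop4}); you are inlining that machinery rather than citing it, and your explicit inclusion $a_{j_{0}}\cdot\grm(Q,P)\bmod 1\subset\grm(Q/s,P)$ is the honest form of the ``change of variable'' step inside Lemma \ref{lem8.10}. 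Note only that your claimed identity $R_{\frak{X},s}^{=}(n)=\sum_{l}\sum_{\bfa}\binom{s}{a_{1},\ldots,a_{l}}R_{\frak{X},\bfa}^{l}(n)$ overcounts by a factor of $l!$ for each $l$ (ordered $\bfa$ and ordered $\bfx$ both carry a permutation; the paper's own (\ref{pps}) has a parallel imprecision in the other direction), but since only an upper bound on expectations is needed and the discrepancy is $O_{s}(1)$, the conclusion is unaffected.
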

\begin{proof}
By definition and upon writing $P=n^{1/k}$ it follows that
\begin{align*}\mathbb{E}(R_{\frak{X},\bfa}^{l }(n))&= \sum_{\substack{\boldsymbol{x}\in\mathcal{R}_{\boldsymbol{a},l}(n)\\  }}\mathbb{P}(x_{1}^{k},\ldots,x_{l}^{k}\in\frak{X}) \ll n^{\varepsilon}\sum_{\substack{n=a_{1}x_{1}^{k}+\ldots+a_{l}x_{l}^{k}\\ x_{i}\in\mathcal{A}(P,P^{\eta})}}\prod_{i=1}^{l}x_{i}^{-1+k/s}.
\end{align*}
Then it transpires by (\ref{i0}), (\ref{Fda}) and orthogonality that $\mathbb{E}(R_{\frak{X},\bfa}^{l }(n))=O( n^{\varepsilon}\lvert F_{s-l,\bfa}(n)\rvert),$ where we employed the fact, by relabelling if necessary, that $x_{1}>P_{-}$. We recall (\ref{cho}) and note that Holder's inequality in conjunction with a change of variables thereby entails
\begin{align*}\mathbb{E}(R_{\frak{X},\bfa}^{l }(n))&\ll n^{\varepsilon} \sum_{v=2}^{l} \int_{0}^{1}\lvert \tilde{f}_{s}(a_{1}\alpha,P,R)\rvert \lvert f_{s}(a_{v}\alpha,P,R)\rvert^{l-1}d\alpha\ll n^{\varepsilon}\sum_{v=2}^{l} I_{[0,1),a_{v}/a_{1}}^{1,s-l}(P,1).
\end{align*}
We apply Lemma \ref{lem9.65} to each of the terms on the inner sum in the above equation to get
\begin{equation}\label{RF}\mathbb{E}(R_{\frak{X},\bfa}^{l }(n))\ll n^{-\tau_{0}/s},\end{equation} as desired. The second estimate follows by observing that 
\begin{equation}\label{pps}R_{\frak{X},s}^{= }(n)=\sum_{l=1}^{s-1}\sum_{\substack{a_{1}+\ldots+ a_{l}=s\\ \bfa\in [1,s]^{l}}}R_{\frak{X},\bfa}^{l }(n),\end{equation} averaging on both sides and applying (\ref{RF}).
\end{proof}
Observe that the argument leading to (\ref{RF}) implies in particular upon recalling (\ref{Fda}) that whenever $\Delta_{s}^{*}<0$ then for any $1\leq d\leq s-1$, any $\bfa\in [1,s]^{s-d}$ and $m\in\NN$ one has $F_{d,\bfa}(m)\leq K_{3}m^{-2\tau_{0}/s}$ for some constant $K_{3}>0$, the definition (\ref{sas}) entailing
\begin{equation}\label{deac}\mathcal{Z}(N)=\mathcal{Z}_{0}(N).\end{equation}
We also record for future purposes upon alluding to (\ref{Fda}) and writing 
\begin{equation}\label{RRLL}R_{s}^{= }(n)=\sum_{l=1}^{s-1}\sum_{\substack{a_{1}+\ldots+ a_{l}=s\\ \bfa\in [1,s]^{l}}}F_{s-l,\bfa}(n)\end{equation} that the above procedure yields $R_{s}^{= }(n)\ll  n^{-2\tau_{0}/s}.$

We may now employ the argument in the last paragraph of \cite[page 128]{Vu} with Lemmata \ref{lem9.2} and \ref{lem9.1} in the present memoir replacing respectively their counterparts Lemmata 3.4 and 3.6 of the aforementioned paper to show that there exists some constant $C_{1}=C_{1}(s,k,\eta)>0$ such that with probability at least $4/5$ one has for every $n\in\mathbb{N}$ the bound \begin{equation}\label{RC}R_{\frak{X},s}^{0}(n)\leq C_{1}.\end{equation}
Likewise, one might utilise the aforementioned argument in \cite[page 128]{Vu} with Lemma \ref{lem9.5} herein (see also Lemma \ref{lem12.4}) replacing both Lemmata 3.4 and 3.6 therein  to show for some $C_{2}=C_{2}(s,k,\eta)>0$ that with probability at least $4/5$ one has for every $n\in\mathbb{N}$ the bound \begin{equation}\label{RC1}R_{\frak{X},s}^{=}(n)\leq C_{2}.\end{equation}

After the above sequel it remains to compute $\mathbb{E}(R_{\frak{X},s}^{+}(n))$. We then take a function $\varphi$ of uniform growth, recall (\ref{xz}), (\ref{hhh}) and note that one may assume $2\varphi(x)\leq \exp\big((\log x)^{1/2}\big)$.
\begin{prop}\label{prop9.1}
Let $s\geq 4k+1$ and $\psi,\varphi$ be of uniform growth with $\lvert\xi_{n}-1\rvert\asymp 1$ and $2\varphi(x)\leq \exp\big((\log x)^{1/2}\big)$. Whenever $\Delta_{s}^{*}<0$ is an admissible exponent for minor arcs then  \begin{equation}\label{mm}\mathbb{E}(R_{\frak{X},s}^{+}(n))\asymp_{k,s}\psi(n)\end{equation} for every $n\in [N,2N]$. If moreover $\xi(n)=o(1)$ then for every integer $n\in [N,2N]$ one has
\begin{equation}\label{bram}\mathbb{E}(R_{\frak{X},s}^{+}(n))=\frak{S}(n)\psi(n)+O\big(\psi(n)\big((\log n)^{-\upsilon_{0}}+\varphi(n)^{-\upsilon_{0}}+\xi_{n}\big)\big).\end{equation}
If instead $\Delta_{s}^{*}\geq 0$, both (\ref{mm}) and (\ref{bram}) also hold for every $n\in [N,2N]\setminus \mathcal{Z}(N).$
\end{prop}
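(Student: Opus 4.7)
The plan is to compute $\mathbb{E}(R^{+}_{\mathfrak{X},s}(n))$ directly from the probability assignment (\ref{probas}) and reduce it to the arithmetic counting function $r_{s,k,\eta}^{\varphi}(n)$ analysed in Section \ref{sec7}. By independence of the random variables $t_{x}$ one obtains
\begin{equation*}
\mathbb{E}(R^{+}_{\mathfrak{X},s}(n)) = c_{k,s}(\eta)^{-1}\sum_{\bfx\in\mathcal{R}^{+}(n)} \prod_{j=1}^{s} x_{j}^{-1+k/s}\psi(x_{j}^{k})^{1/s},
\end{equation*}
which I would denote by $c_{k,s}(\eta)^{-1}S(n)$. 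Splitting $S(n) = S_{1}(n)+S_{2}(n)$ according as every $x_{j}^{k}\geq n/\varphi(n)$ or at least one of them is smaller permits treating the two pieces by distinct methods.

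For $S_{1}(n)$, the monotonicity of $\psi$ and the definition of $\xi_{n}$ in (\ref{xz}) yield $\psi(x_{j}^{k})\in [(1-\xi_{n})\psi(n),\psi(n)]$ throughout the range of summation; a Taylor expansion then delivers $\prod_{j=1}^{s}\psi(x_{j}^{k})^{1/s}= \psi(n)(1+O(\xi_{n}))$ whenever $\xi(n)=o(1)$, while the same bounds also force $\prod_{j=1}^{s}\psi(x_{j}^{k})^{1/s}\asymp \psi(n)$ under the weaker hypothesis $|\xi_{n}-1|\asymp 1$. The remaining unweighted sum agrees with $r_{s,k,\eta}^{\varphi}(n)$ modulo the contribution of tuples with some $x_{i}=x_{j}$; by the observation preceding (\ref{deac}), the latter is controlled by $R^{=}_{s}(n)\ll n^{-2\tau_{0}/s}$ and is subsumed in the final error. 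For $S_{2}(n)$ the trivial bound $\psi(x_{j}^{k})\leq \psi(n)$ reduces matters to the difference $r_{s,k,\eta}^{\tilde{\varphi}}(n)-r_{s,k,\eta}^{\varphi}(n)$ with $\tilde{\varphi}(n)=n^{1-\tau_{0}}$, the choice $\tilde{\varphi}$ matching the lower threshold $n^{\tau_{0}/k}$ imposed in $\mathcal{R}^{+}(n)$; applying Corollary \ref{cor2} to both terms exhibits this difference as $O((\log n)^{-\nu/15}+\varphi(n)^{-\nu/4s})$.

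Inserting the asymptotic $r_{s,k,\eta}^{\varphi}(n)=c_{k,s}(\eta)\mathfrak{S}(n)+O((\log n)^{-\nu/15}+\varphi(n)^{-\nu/4s})$ from Corollary \ref{cor2} and collecting the resulting errors (with $\upsilon_{0}$ chosen as right before (\ref{hhh})) yields (\ref{bram}) under $\xi(n)=o(1)$. The lower bound in (\ref{mm}) is then extracted from $S_{1}(n)\gg \psi(n)\mathfrak{S}(n)\gg \psi(n)$, where the first step uses only the weaker hypothesis $|\xi_{n}-1|\asymp 1$ and the second the bound $\mathfrak{S}(n)\asymp 1$ for $s\geq 4k+1$; the complementary upper bound is immediate from the previous displays together with $\psi(x_{j}^{k})\leq \psi(n)$.

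When $\Delta_{s}^{*}\geq 0$ Corollary \ref{cor2} is not available, but for $n\in[N,2N]\setminus\mathcal{Z}(N)$ one has $n\notin\mathcal{Z}_{0}(N)$ by (\ref{deac}), and the definition (\ref{hhh}) combined with Corollary \ref{cor4} furnishes the same asymptotic for both $r_{s,k,\eta}^{\varphi}(n)$ and $r_{s,k,\eta}^{\tilde{\varphi}}(n)$, after which the argument proceeds identically. I expect the main technical point to be the transition from the $\psi$-weighted sum in $S_{1}(n)$ to the prefactor $\psi(n)$ with an explicit error of order $\xi_{n}$, an issue which is resolved by leveraging the regularity hypothesis (\ref{varphi}) together with the monotonicity of $\psi$.
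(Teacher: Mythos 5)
Your overall plan---reducing $\mathbb{E}(R_{\mathfrak{X},s}^{+}(n))$ to the counting function $r_{s,k,\eta}^{\varphi}(n)$ via monotonicity of $\psi$, then invoking Corollary \ref{cor2} together with $\mathfrak{S}(n)\asymp 1$---is close to the paper's, but your treatment of $S_{2}(n)$ contains a genuine error. You apply Corollary \ref{cor2} to $\tilde{\varphi}(n)=n^{1-\tau_{0}}$; however, that corollary is stated only for functions of uniform growth satisfying $2\varphi(x)\leq\exp\big((\log x)^{1/2}\big)$, and $\tilde{\varphi}$, a fixed positive power of $n$, badly violates this hypothesis. The claim can be repaired: one observes $r^{\varphi}_{s,k,\eta}(n)\leq r^{\tilde{\varphi}}_{s,k,\eta}(n)\leq r_{s,k}(n,P^{\eta})$ and squeezes, applying Proposition \ref{prop7.1} to the last quantity and Corollary \ref{cor2} to the first. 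But as written the step is incorrect, and the same defect recurs in the $\Delta_{s}^{*}\geq 0$ regime: the exceptional set $\mathcal{Z}_{0}(N)$ of (\ref{hhh}) only controls $r_{s,k}$ and $r^{\varphi}_{s,k,\eta}$ for the given $\varphi$, not $r^{\tilde{\varphi}}_{s,k,\eta}$, so citing (\ref{hhh}) does not supply the asymptotic for $r^{\tilde{\varphi}}_{s,k,\eta}(n)$. Note also that (\ref{deac}) is established in the text only under $\Delta_{s}^{*}<0$; what you actually need here is merely the inclusion $\mathcal{Z}_{0}(N)\subset\mathcal{Z}(N)$ furnished by (\ref{sas}).

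More fundamentally, the decomposition $S(n)=S_{1}(n)+S_{2}(n)$ is unnecessary and is precisely what creates these difficulties. The paper never isolates $S_{2}(n)$: the upper bound follows at once from $\mathbb{E}(R_{\mathfrak{X},s}^{+}(n))\leq\psi_{1}(n)\,r_{s,k}(n,P^{\eta})$ via monotonicity and Proposition \ref{prop7.1}, while the lower bound comes from restricting to the subsum with $x_{i}^{k}>n/\varphi(n)$ and subtracting the equal-coordinate contribution, giving $\mathbb{E}(R_{\mathfrak{X},s}^{+}(n))\geq\psi_{1}(n/\varphi(n))\,r^{\varphi}_{s,k,\eta}(n)-\psi_{1}(n)R_{s}^{=}(n)$. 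Together with $R_{s}^{=}(n)\ll n^{-2\tau_{0}/s}$, Corollary \ref{cor2} and $\mathfrak{S}(n)\asymp 1$, this yields both (\ref{mm}) and (\ref{bram}) without ever needing an asymptotic for $S_{2}(n)$---only its nonnegativity is used.
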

\begin{proof}
We recall (\ref{paar}), (\ref{rsk}) and the definition of $R_{\frak{X},s}^{+}(n)$ right after (\ref{jjs}) to observe that
\begin{align*}\mathbb{E}(R_{\frak{X},s}^{+}(n))&\leq r_{k,s}(n,P^{\eta})\psi_{1}(n),\end{align*}wherein we applied the monotonicity of $\psi_{1}(x)$ and (\ref{fasi}). We remind the reader of (\ref{phi1}) and note under the hypothesis $\Delta_{s}^{*}<0$ that then Proposition \ref{prop7.1} delivers the upper bound
\begin{equation}\label{jjp}\mathbb{E}(R_{\frak{X},s}^{+}(n))\leq \frak{S}(n)\psi(n)+C_{s}\psi(n)\big((\log n)^{-\upsilon_{0}}+\varphi(n)^{-\upsilon_{0}})\big)\end{equation} for some $C_{s}>0$. If instead $\Delta_{s}^{*}\geq 0$ and $n\in [N,2N]\setminus \mathcal{Z}(N)$ then an analogous formula holds by (\ref{hhh}). In order to derive the corresponding lower bound we draw the attention back to (\ref{xxx}) and (\ref{RRLL}) and note by the monotonicity of $\psi_{1}$ that 
\begin{align*}\mathbb{E}(R_{\frak{X},s}^{+}(n))&\geq\sum_{\substack{n=x_{1}^{k}+\ldots+x_{s}^{k}\\ x_{i}\in\mathcal{A}(x_{i},x_{i}^{\eta})\\ x_{i}\neq x_{j}\\ x_{i}> (n\varphi(n)^{-1})^{1/k}}}\prod_{i=1}^{s}x_{i}^{-1+k/s}\psi_{1}(x_{i}^{k})^{1/s}\geq\psi_{1}(n/\varphi(n))r_{s,k,\eta}^{\varphi}(n)-\psi_{1}(n)R_{s}^{=}(n).\end{align*}
Then, by the estimate after (\ref{RRLL}) and Corollary \ref{cor2} combined with (\ref{phi1}) we deduce that 
\begin{equation}\label{aaa1}\mathbb{E}(R_{\frak{X},s}^{+}(n))\geq \frak{S}(n)\psi(n/\varphi(n))-C_{s,1}\psi(n)\big((\log n)^{-\upsilon_{0}}+\varphi(n)^{-\upsilon_{0}}\big),\end{equation}
where $C_{s,1}>0$ is some constant. If instead $n\notin \mathcal{Z}(N)$, an analogous formula holds by (\ref{eqa}) and (\ref{sas}) in conjunction with the monotonicity of $\psi_{1}$ and the fact that $n\notin \mathcal{Z}_{0}(N)$.

We shall allude now to the classical theory of Waring's problem to note that under the assumptions on $s$ described above it follows by \cite[Theorems 4.3 and 4.6]{Vau} that \begin{equation}\label{SSS}\frak{S}(n)\asymp_{k,s} 1.\end{equation} Equipped with the above bound we observe that under the first assumption on $\xi$ one has
$\mathbb{E}(R_{\frak{X},s}^{+}(n))\gg \psi(n),$ which in conjunction with (\ref{jjp}) and (\ref{SSS}) delivers (\ref{mm}). If moreover $\xi(n)=o(1)$ then by (\ref{aaa1}) and (\ref{SSS}) one has
\begin{align*}\mathbb{E}(R_{\frak{X},s}^{+}(n))&\geq  \frak{S}(n)\psi(n)-\tilde{C}_{s}\psi(n)\big((\log n)^{-\upsilon_{0}}+\varphi(n)^{-\upsilon_{0}}+\xi_{n}\big),\end{align*} with $\tilde{C}_{s}>0$. The above equation combined with (\ref{jjp}) yields the desired result.
\end{proof}

\section{Proof of Theorems \ref{thm1.1} and \ref{thm1.2}}\label{sec12}
We have now reached a point from which to conclude the proof of Theorems \ref{thm1.1}, \ref{thm1.2} and Corollaries \ref{cooor1}, \ref{cooor2}. To such an end we first note that whenever $v\geq k^{2}+k-2$ then \cite[Corollary 10.2]{Woo6} and orthogonality permits one to deduce that $\Delta_{2v}=0$ is an admissible exponent. We thereby recall (\ref{eq8}) to deduce whenever $s\geq 2k^{2}+2k$ that
\begin{align}\label{all}\Delta_{s}^{*}&\leq -(s-2k^{2}-2k+4)(\min(\tau(k), k/s))<0.
\end{align}
If instead $s<2k^{2}+2k$ then $s\leq 4k^{2}$ and $\tau(k)\leq k/s$ by (\ref{taucer}), from where it follows that
\begin{equation*}\Delta_{s}^{*}= \min_{\substack{1\leq v\leq s/2}}\Big(\Delta_{2v}-(s-2v)\tau(k)\Big)=\tau(k)\min_{\substack{1\leq v\leq s/2}}\Big(2v+\frac{\Delta_{2v}}{\tau(k)}\Big)-s\tau(k),\end{equation*}
where $v\in\NN$. We recall (\ref{eq67}) and observe that if $s\geq \max(\lfloor G_{0}(k)\rfloor+1,4k+1)$ then $s>G_{0}(k)$. Therefore, the underlying $v$ therein attaining the minimum would satisfy $s> 2v$, and hence
\begin{equation}\label{s00}\Delta_{s}^{*}=\tau(k)(G_{0}(k)-s)<0.\end{equation} We combine (\ref{all}) with (\ref{s00}) to deduce that under the assumption of Theorem \ref{thm1.1} then \begin{equation}\label{ssssp}s\geq 4k+1\ \ \ \ \ \ \ \ \ \text{and}\ \ \ \ \ \ \ \ \ \Delta_{s}^{*}<0.\end{equation}

We next invoke Proposition \ref{prop81}, note that the requirements in Lemma \ref{lem9.4} are met and observe that using the notation underlying the discussion thereof, $\mathbb{E}_{1}(R_{\frak{X},s}^{+ }(n))=O(n^{-\tau_{0}^{2}/s}).$ We also note that $\mathbb{E}(R_{\frak{X},s}^{+ }(n))\ll n^{\varepsilon},$ such a conclusion stemming from (\ref{ssssp}), Proposition \ref{prop9.1} and the proviso $\psi(n)\ll n^{\varepsilon}.$ We then consider a function $\delta: \mathbb{N}\rightarrow (0,1)$, set $\varepsilon_{0}=n^{-\tau_{0}^{2}/s}$, take $\lambda=\delta(n)^{2}\mathbb{E}(R_{\frak{X},s}^{+ }(n))/4sK$ for large $K=K(k,s)$ and apply Proposition \ref{prop81} to get 
\begin{equation}\label{schu}\mathbb{P}\big(\lvert R_{\frak{X},s}^{+ }(n)-\mathbb{E}(R_{\frak{X},s}^{+ }(n))\rvert\geq \delta(n) \mathbb{E}(R_{\frak{X},s}^{+ }(n))\big)\ll  e^{-\delta(n)^{2}\mathbb{E}(R_{\frak{X},s}^{+ }(n))/16sK}+n^{-2}.\end{equation}
We next assume $\psi(n)= C_{k,s,\eta}\log n$ for a large constant $C_{k,s,\eta}>0$ and note that under the conditions of the first instance described in Proposition \ref{prop9.1} which we may apply as a consequence of (\ref{ssssp}), it transpires upon taking $\delta(n)=1/2$ that the above estimate yields 
$$\mathbb{P}\big(\lvert R_{\frak{X},s}^{+ }(n)-\mathbb{E}(R_{\frak{X},s}^{+ }(n))\rvert\geq (1/2)\mathbb{E}(R_{\frak{X},s}^{+ }(n))\big)\ll  n^{-2}.$$ Therefore, the application of Borel-Cantelli Lemma combined with the aforementioned proposition enables one to deduce with probability $1$ for sufficiently large $n\in\mathbb{N}$ the relation
\begin{equation}\label{land}R_{\frak{X},s}^{+ }(n)\asymp_{k,s} \log n.\end{equation} 

If moreover $\psi(n)$ satisfies the requisites in Theorem \ref{thm1.2} we take $\delta(n)=C_{K} (\log n/\psi(n))^{1/2}$ for some large enough constant $C_{K}=C_{K}(k,s)$ and combine (\ref{mm}) with (\ref{schu}) to get
$$\mathbb{P}\big(\lvert R_{\frak{X},s}^{+ }(n)-\mathbb{E}(R_{\frak{X},s}^{+ }(n))\rvert\geq \delta(n) \mathbb{E}(R_{\frak{X},s}^{+ }(n))\big)\ll n^{-2}. $$ Consequently, Borel-Cantelli Lemma \cite[Theorem 7, §3]{Hal} combined with Proposition \ref{prop9.1} and (\ref{SSS}) enables one to obtain with probability $1$ and for sufficiently large $n$ the formula
\begin{align}\label{realdo}R_{\frak{X},s}^{+ }(n)&=\mathbb{E}(R_{\frak{X},s}^{+}(n))\big(1+O(\delta(n))\big)\nonumber
\\
&=\frak{S}(n)\psi(n)+O\big(\psi(n)(\delta(n)+\xi(n)+\varphi(n)^{-\upsilon_{0}}+(\log n)^{-\upsilon_{0}})\big).
\end{align}
We complete the proof of Theorem \ref{thm1.1} by appealing to equations (\ref{zz}) and (\ref{zzz}) and noting that the conclusions (\ref{RC}) and (\ref{RC1}) combined with (\ref{land}) permit one to deduce that with probability at least $3/5$ then one has for sufficiently large $n$ the estimate \begin{equation}\label{bb}R_{\frak{X}}^{s}(n)\asymp_{k,s} \log n.\end{equation} In the context underlying Theorem \ref{thm1.2} then (\ref{realdo}) in conjunction with the above entails 
\begin{equation}\label{iiu}R_{\frak{X}}^{s}(n)=\frak{S}(n)\psi(n)+O\Big(\psi(n)\Big(\xi(n)+\varphi(n)^{-\upsilon_{0}}+(\log n)^{-\upsilon_{0}}+\Big(\frac{\log n}{\psi(n)}\Big)^{1/2}\Big)\Big).\end{equation} The existence of $\frak{X}\subset \mathbb{N}_{0}^{k}$ satisfying either (\ref{bb}) or (\ref{iiu}) follows from the above discussion.

In order to deduce the corollaries it suffices to provide upper bounds for $G_{0}(k)$, the one cognate to Corollary \ref{cooor1} stemming from the discussion right after \cite[(7.8)]{Bru-Woo} whenever $k> 20$, and the ones pertaining to Corollary \ref{cooor2} flowing from the analysis in the proof of \cite[Theorem 1.2]{Bru-Woo}, the corresponding $s$ thereby satisfying (\ref{ssssp}). Note that the underlying $v$ taken therein to bound $G_{0}(k)$ introduced on that memoir is an even number, the corresponding estimate thereby delivering a legitimate bound for the function $G_{0}(k)$ introduced herein. 

If $14\leq k\leq 20$ we employ \cite[Section 8]{Bru-Woo} to deduce bounds for $G_{0}(k)$ included in Table 2 and derive Corollary \ref{cooor2}. For $3\leq k\leq 13$ we allude to \cite[Theorem 5.1]{WooVu} and note that the function $\mathfrak{H}(k)$ presented therein has the property that whenever $s\geq \mathfrak{H}(k)$ then (\ref{XXX}) holds. In the context underlying Theorem \ref{thm1.2} we observe as is done in \cite[Theorem 5.1]{WooVu} that $\Delta_{\mathfrak{H}(k)-1}=0$ is an admissible exponent, and hence $\Delta_{\mathfrak{H}(k)}^{*}<0$. The preceding remarks then yield both corollaries and the theorem for such a range. If $k=2$ then $\Delta_{8}=0$ is an admissible exponent, and hence $\Delta_{9}^{*}<0$. We conclude the proofs by making recourse to \cite{ErTe} when $k=1$ in Corollaries \ref{cooor1}, \ref{cooor2}, that cognate to Theorem \ref{thm1.2} following mutatis mutandis.
\section{Thinner sequences}\label{sec13}
We shall prepare the ground for the application of probabilistic results presented in previous sections to deliver the existence of thinner sequences satisfying suitable properties. We introduce for $\delta: \mathbb{N}\rightarrow (0,1)$ satisfying (\ref{delll}), a function $\psi$ of uniform growth with $\psi(n)=O(\log n)$, a sufficiently large $N\in\mathbb{N}$ and fixed constants $c,C>0$ the parameters
\begin{equation}\label{ML}L_{C}(N)=\Big\lfloor\frac{C\log N}{\psi(N) \delta(N)^{2}}\Big\rfloor,\ \ \ \ \ M_{c}(N)=(\log N)e^{c\delta(N)^{2}\psi(N)},\end{equation} and set $L=L_{C}(N)$. Recall (\ref{probas}), (\ref{jjs}) and take for $\mathcal{N}_{L}=\{n_{1},.,n_{L}\}$ the random variables
\begin{equation}\label{ES1}R_{\frak{X}}^{s}(\mathcal{N}_{L})=\sum_{i=1}^{L}R_{\frak{X}}^{s}(n_{i})\ \ \ \ \ \ \text{and}\ \ \ \ \ \ R_{\frak{X},s}^{+}(\mathcal{N}_{L})=\sum_{i=1}^{L}R_{\frak{X},s}^{+}(n_{i}).\end{equation}
It is then worth recalling (\ref{xz}) first and presenting the counterpart of Proposition \ref{prop9.1}. 
\begin{prop}\label{prop12.1}
Let $s\geq 4k+1$ and let $\mathcal{N}_{L}\subset [N,2N]\setminus \mathcal{Z}(N)$ be a subset satisfying $\lvert \mathcal{N}_{L}\rvert=L$. Then when $\psi,\varphi$ are as in Proposition \ref{prop9.1} with $\xi(n)=o(1)$ it follows that
\begin{equation*}\mathbb{E}(R_{\frak{X},s}^{+}(\mathcal{N}_{L}))=\psi(N)\sum_{i=1}^{L}\frak{S}(n_{i})+O\Big(\psi(N)L\big(\xi(N)+\varphi(N)^{-\upsilon_{0}}+(\log N)^{-\upsilon_{0}}\big)\Big).\end{equation*}
\end{prop}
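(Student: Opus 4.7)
The plan is to apply linearity of expectation to the definition (\ref{ES1}) and then invoke Proposition \ref{prop9.1} pointwise on each $n_i \in \mathcal{N}_L$. The hypothesis $\mathcal{N}_L \subset [N,2N]\setminus \mathcal{Z}(N)$ is designed precisely so that the formula (\ref{bram}) applies to every $n_i$, independently of the sign of $\Delta_s^*$: when $\Delta_s^* < 0$ it holds for all $n \in [N,2N]$, while in the contrary case (\ref{bram}) remains valid on $[N,2N]\setminus \mathcal{Z}(N)$. This delivers
\begin{equation*}
\mathbb{E}(R_{\frak{X},s}^{+}(\mathcal{N}_L)) = \sum_{i=1}^{L} \frak{S}(n_i)\psi(n_i) + O\!\left(\sum_{i=1}^{L} \psi(n_i)\bigl(\xi_{n_i} + \varphi(n_i)^{-\upsilon_0} + (\log n_i)^{-\upsilon_0}\bigr)\right).
\end{equation*}

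The remaining task is to pass from $\psi(n_i)$ to $\psi(N)$ in the main term and to simplify the error terms. For the latter this is immediate: by the monotonicity of $\varphi$ and $\log$ one has $\varphi(n_i) \geq \varphi(N)$ and $\log n_i \geq \log N$, while by the very definition (\ref{xz}) we have $\xi_{n_i} \leq \xi(N)$ for every $n_i \in [N,2N]$. Combined with the upper bound $\psi(n_i) \ll \psi(N)$ (to be established next), this collects the error into $O(L\psi(N)(\xi(N) + \varphi(N)^{-\upsilon_0} + (\log N)^{-\upsilon_0}))$.

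For the main term, the key observation is that since $\varphi$ is of uniform growth we have $\varphi(2N)\geq 2$ for large $N$, so $2N/\varphi(2N)\leq N$ and the monotonicity of $\psi$ yields
\begin{equation*}
\psi(2N)(1-\xi_{2N}) = \psi(2N/\varphi(2N)) \leq \psi(N) \leq \psi(n_i) \leq \psi(2N).
\end{equation*}
Since $\xi_{2N}\leq \xi(N) = o(1)$, this gives $\psi(n_i) = \psi(N)(1+O(\xi(N)))$. Inserting this and using $\frak{S}(n)\ll 1$ (cf. (\ref{SSS}), valid as $s\geq 4k+1$) rewrites the main sum as $\psi(N)\sum_{i=1}^L \frak{S}(n_i) + O(\psi(N)L\xi(N))$, which absorbs into the stated error.

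The only genuine subtlety is the slow-variation step $\psi(n_i)=\psi(N)(1+O(\xi(N)))$, which must be extracted from the hypothesis (\ref{varphi}); once this is in hand the proof is a routine exercise in linearity of expectation and bookkeeping of monotone error terms.
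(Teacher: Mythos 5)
Your proposal is correct and follows essentially the same route as the paper: linearity of expectation, pointwise application of (\ref{bram}) (valid on $[N,2N]\setminus\mathcal{Z}(N)$ precisely by the definition of $\mathcal{Z}(N)$, with or without $\Delta_s^*<0$), followed by the slow-variation estimate $\psi(n_i)=\psi(N)+O(\xi(N)\psi(N))$ extracted from monotonicity of $\psi$ and the relation $n/\varphi(n)<N$, and finally $\mathfrak{S}(n)\asymp 1$ to absorb the resulting error. The paper derives the slow-variation bound at a general $n\in[N,2N]$ while you do it at the endpoint $2N$; this is a cosmetic difference only.
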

\begin{proof}
We begin taking expected values in (\ref{ES1}) and using linearity to obtain
\begin{equation}\label{su}\mathbb{E}\big(R_{\frak{X},s}^{+}(\mathcal{N}_{L})\big)=\sum_{i=1}^{L}\mathbb{E}\big(R_{\frak{X},s}^{+}(n_{i})\big).\end{equation} Observe by the condition on $\xi$ that $\psi(n)\asymp \psi\big(\frac{n}{\varphi(n)}\big),$ which entails in view of the fact that $\psi$ and $\varphi$ are of uniform growth the bound $\psi(n)\asymp \psi(N).$ We note by the increasing property of $\psi$ in conjunction with (\ref{xz}) that whenever $n\in[N,2N]$ and $N$ is large enough one has
$$0\leq \psi(n)-\psi(N)\leq \psi\Big(\frac{n}{\varphi(n)}\Big)+\xi(N)\psi(n)-\psi(N).$$ Since $\varphi,\psi$ are functions of uniform growth it transpires that $n/\varphi(n)<N$, and hence
\begin{equation}\label{fax}\psi(n)=\psi(N)+O(\xi(N)\psi(N)),\end{equation}
where we used the estimate $\psi(n)\asymp \psi(N)$. We then insert (\ref{bram}) into (\ref{su}) and thus get
$$\mathbb{E}\big(R_{\frak{X},s}^{+}(\mathcal{N}_{L})\big)=\sum_{i=1}^{L}\mathfrak{S}(n_{i})\psi(n_{i})+ O\Big(\sum_{i=1}^{L}\psi(n_{i})\big(\varphi(n_{i})^{-\upsilon_{0}}+(\log n_{i})^{-\upsilon_{0}}+\xi_{n_{i}}\big)\Big).$$ We conclude by combining the above equation, (\ref{SSS}) and (\ref{fax}) for the choice $n=n_{i}$.
\end{proof}

In what follows we shall prepare the ground for the application of Proposition \ref{prop81}.

\begin{lem}\label{lem12.1}
Let $s\geq 2$ and let $A=\{a_{1}^{k},\ldots,a_{d}^{k}\}\subset \mathbb{N}_{0}^{k}$ be a subset satisfying $\lvert A\rvert=d$ with $a_{j}\in\mathcal{A}(a_{j},a_{j}^{\eta})$ and $1\leq j\leq d\leq s-1$. Let $\mathcal{N}_{L}\subset [N,2N]\setminus \mathcal{Z}(N)$ with $\lvert \mathcal{N}_{L}\rvert=L$. Then,
$$\mathbb{E}_{A}(R_{\frak{X},s}^{+}(\mathcal{N}_{L}))\ll N^{-\nu_{0}\tau_{0}/2}.$$
\end{lem}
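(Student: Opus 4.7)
The plan is to reduce to the single-$n$ estimate already proved in Lemma \ref{lem9.4} and exploit the polylogarithmic size of $L$ to absorb the summation. First, by the additivity of both the partial derivative operator and expectation, together with the definition of $R_{\mathfrak{X},s}^{+}(\mathcal{N}_{L})$ in (\ref{ES1}), I would write
$$\mathbb{E}_{A}\bigl(R_{\mathfrak{X},s}^{+}(\mathcal{N}_{L})\bigr)=\sum_{i=1}^{L}\mathbb{E}_{A}\bigl(R_{\mathfrak{X},s}^{+}(n_{i})\bigr).$$

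Next, since each $n_{i}\in [N,2N]\setminus\mathcal{Z}(N)$, I would invoke the second clause of Lemma \ref{lem9.4}, which asserts that whenever $n\notin\mathcal{Z}(N)$ one has $\mathbb{E}_{A}(R_{\mathfrak{X},s}^{+}(n))\ll n^{-\nu_{0}\tau_{0}/2}$. It is worth emphasising that this clause does not in fact rely on the admissibility hypothesis $\Delta_{s}^{*}<0$ that appears in the first clause of Lemma \ref{lem9.4}: unpacking $\partial_{A}$ reduces the problem to controlling $F_{|A|,\mathbf{1}}(m_{i})$, where $m_{i}=n_{i}-a_{1}^{k}-\ldots-a_{d}^{k}$, and the defining condition (\ref{hhj}) of $\mathcal{Z}_{|A|}(N)\subset\mathcal{Z}(N)$ immediately furnishes $F_{|A|,\mathbf{1}}(m_{i})\leq \tilde{K}m_{i}^{-\nu_{0}}$; combined with the lower bound $m_{i}>n_{i}^{\tau_{0}}$ forced by the positivity condition $x_{i}>n_{i}^{\tau_{0}/k}$ in the definition of $R_{\mathfrak{X},s}^{+}$ and orthogonality as in the argument leading to (\ref{RF}), the required pointwise saving follows. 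Hence the absence of the assumption $\Delta_{s}^{*}<0$ in the statement of Lemma \ref{lem12.1} causes no difficulty.

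Summing the pointwise bounds over $i\leq L$ and using $n_{i}\geq N$ yields
$$\mathbb{E}_{A}\bigl(R_{\mathfrak{X},s}^{+}(\mathcal{N}_{L})\bigr)\ll L\cdot N^{-\nu_{0}\tau_{0}/2}.$$
Recalling (\ref{ML}), one has $L=L_{C}(N)\ll \log N/(\psi(N)\delta(N)^{2})=O((\log N)^{O(1)})$, which is in particular $O(N^{\varepsilon})$ for any $\varepsilon>0$; absorbing this polylogarithmic factor into a negligible loss of the exponent (or, equivalently, into the implicit constant) delivers the claimed bound.

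The only potentially delicate point is the verification that the second clause of Lemma \ref{lem9.4} indeed applies without the hypothesis $\Delta_{s}^{*}<0$; once this is observed, the proof becomes essentially a routine union over $L$ indices, with the logarithmic size of $L$ being crucial so as not to destroy the power saving coming from each individual term.
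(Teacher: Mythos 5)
Your reduction to the single-$n$ estimate is exactly the paper's strategy, and your observation that the second clause of Lemma~\ref{lem9.4} does not actually require $\Delta_{s}^{*}<0$ (since it only invokes $n\notin\mathcal{Z}_{|A|}(N)$ together with the lower bound $m_{i}\gg n_{i}^{\tau_{0}}$) is correct and is precisely why the lemma can be stated without that hypothesis. However, there is a genuine gap in your final step. Invoking Lemma~\ref{lem9.4} as a black box yields $\mathbb{E}_{A}(R_{\mathfrak{X},s}^{+}(n_{i}))\ll n_{i}^{-\nu_{0}\tau_{0}/2}$, and summing gives $L\cdot N^{-\nu_{0}\tau_{0}/2}$; since $L=L_{C}(N)\to\infty$ as $N\to\infty$, this factor cannot be ``absorbed into the implicit constant,'' nor is ``a negligible loss of the exponent'' equivalent to absorbing into a constant: the former gives $N^{-\nu_{0}\tau_{0}/2+\varepsilon}$, which is strictly weaker than the bound $N^{-\nu_{0}\tau_{0}/2}$ claimed in Lemma~\ref{lem12.1}. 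The packaged exponent $-\nu_{0}\tau_{0}/2$ is already the target, so there is no slack left to swallow $L$.

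The paper sidesteps this by not quoting the clean conclusion of Lemma~\ref{lem9.4} but instead retaining the sharper intermediate estimate: averaging over $A$ gives $\mathbb{E}_{A}(R_{\mathfrak{X},s}^{+}(\mathcal{N}_{L}))\ll N^{\varepsilon}\sum_{i\leq L}F_{s-l,\mathbf{1}}(m_{i})$, and since $n_{i}\notin\mathcal{Z}_{s-l}(N)$ one has $F_{s-l,\mathbf{1}}(m_{i})\ll m_{i}^{-\nu_{0}}\ll N^{-\nu_{0}\tau_{0}}$ (an exponent twice as large as the target). Then $N^{\varepsilon}L\cdot N^{-\nu_{0}\tau_{0}}\ll N^{-\nu_{0}\tau_{0}/2}$ since $L\ll(\log N)^{2}$. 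To repair your argument, either use this intermediate bound directly, or observe explicitly that the proof of Lemma~\ref{lem9.4} in fact establishes $\mathbb{E}_{A}(R_{\mathfrak{X},s}^{+}(n))\ll n^{\varepsilon-\nu_{0}\tau_{0}}$, which leaves exactly the required room.
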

\begin{proof}
We first write as is customary $\mathcal{N}_{L}=\{n_{1},\ldots,n_{L}\}$ and 
$$m_{i}=n_{i}-\sum_{y^{k}\in A}y^{k},\ \ \ \ \ \ \ \ \ \ \ \ \ \ 1\leq i\leq L,\ \ \ \ \ \ \ \ \ \ \ \ l=s-\lvert A\rvert.$$ We recall (\ref{Rn}) and draw the reader's attention to the description right after (\ref{zzz}) to get
$$\partial_{A}R_{\frak{X},s}^{+}(\mathcal{N}_{L})=\sum_{i=1}^{L}\sum_{\substack{\bfx\in \mathcal{R}^{+}(n_{i})\\ A\subset \text{Set}(\bfx)}}\prod_{x_{j}\in \text{Set}(\bfx)\setminus A}t_{x_{j}}.$$ It seems worth noting in view of the fact that each $m_{i}$ is expressible as a sum of $k$-th powers of some numbers in $\text{Set}(\bfx)$ for each $\bfx\in \mathcal{R}^{+}(n_{i})$ that then $m_{i}\gg N^{\tau_{0}}.$ Averaging on both sides of the preceding equation and recalling (\ref{Fda}) thereby delivers
\begin{equation}\label{ec13.1}\mathbb{E}_{A}\big(R_{\frak{X},s}^{+}(\mathcal{N}_{L})\big)\ll\sum_{i=1}^{L}\mathbb{E}\big(R_{\frak{X},l}^{+}(m_{i})\big)\ll N^{\varepsilon} \sum_{i=1}^{L}F_{s-l,\bf1}(m_{i}).\end{equation} Since $n_{i}\notin \mathcal{Z}_{s-l}(N)$ it transpires that $F_{s-l,\bf1}(m_{i})\ll m_{i}^{-\nu_{0}}$, the discussion in the above paragraph entailing $F_{s-l,\bf1}(m_{i})\ll N^{-\nu_{0}\tau_{0}}.$ We conclude by inserting the latter bound and the estimate $L\ll (\log N)^{2}$ in (\ref{ec13.1}), it in turn stemming from (\ref{delll}) and (\ref{ML}).
\end{proof}

We make further progress in the proof by presenting the counterparts of Lemmata \ref{lem9.1}, \ref{lem9.65} and \ref{lem9.5}, it being desirable recalling (\ref{eqa}), (\ref{hhj}) and (\ref{elli}). 
\begin{lem}\label{lem12.2}
Let $s\geq 2$ and $n\in [1,2N]\setminus \mathcal{Z}_{1}(N).$ Then one has
$$\mathbb{E}(R_{\frak{X},s}^{0}(n))\ll n^{-\nu_{0}/4}.$$Moreover, if $1\leq l\leq s-1$ and $\bfa\in[1,s]^{l}$ with either $n\in [1,2N]\setminus \widetilde{\mathcal{Z}}_{s-l,\bfa}(N)$ or $n\in [1,2N]\setminus \widetilde{\mathcal{Z}}_{s-l,\bf1}(N)$ one gets respectively the estimates \begin{equation*}\mathbb{E}(R_{\mathfrak{X},\bfa}^{l}(n))\ll n^{-\nu_{0}/2},\ \ \ \ \ \ \ \ \ \ \ \ \ \ \ \ \ \ \ \ \ \ \ \ \ \mathbb{E}(R_{\frak{X},l}^{\neq}(n))\ll n^{-\nu_{0}/2}.\end{equation*} 
\end{lem}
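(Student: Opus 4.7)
The plan is to reproduce the unconditional opening manipulations of Lemmata \ref{lem9.1}, \ref{lem9.2} and \ref{lem9.5}, and then replace the eventual invocation of the global hypothesis $\Delta_{s}^{*}<0$ by the pointwise consequences of the assumption that $n$ avoids the relevant exceptional sets defined in (\ref{eqa}) and (\ref{hhj}).

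For the second and third bounds the reduction is immediate. Following the proof of Lemma \ref{lem9.5}, one obtains after relabelling so that the first variable exceeds $P_{-}$ the estimate $\mathbb{E}(R_{\frak{X},\bfa}^{l}(n))\ll n^{\varepsilon}\lvert F_{s-l,\bfa}(n)\rvert$. Analogously, reproducing the opening of Lemma \ref{lem9.2} via the dyadic dissection (\ref{oo}) yields $\mathbb{E}(R_{\frak{X},l}^{\neq}(n))\ll n^{\varepsilon}\lvert F_{s-l,\bf1}(n)\rvert$. In each instance the respective hypothesis in combination with (\ref{eqa}) delivers the pointwise bound $F_{s-l,\bfa}(n),\,F_{s-l,\bf1}(n)\leq \tilde{K}n^{-\nu_{0}}$, whence absorbing the $n^{\varepsilon}$ factor gives the desired conclusion $\ll n^{-\nu_{0}/2}$.

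The estimate for $\mathbb{E}(R_{\frak{X},s}^{0}(n))$ is more delicate since one variable is constrained to be small. Proceeding as in the opening of Lemma \ref{lem9.1} and using $\psi(n)\ll \log n\ll n^{\varepsilon}$, I would single out the smallest variable $x\leq n^{\tau_{0}/k}$ (with an $s$-factor accounting for its position in the tuple) to bound
$$\mathbb{E}(R_{\frak{X},s}^{0}(n))\ll n^{\varepsilon}\sum_{x\leq n^{\tau_{0}/k}}x^{-1+k/s}\,G(n-x^{k}),$$
where $G(m)$ denotes the weighted count of representations $m=y_{1}^{k}+\ldots+y_{s-1}^{k}$ with $y_{i}\in\mathcal{A}(P,P^{\eta})$. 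Since $x^{k}\leq n^{\tau_{0}}$ is negligible one has $m\asymp n$, and the largest $y_{i}$ must exceed $P_{-}$ by the same comparison employed in the derivation of (\ref{rrrx}) in light of the choice of $\tilde{i}_{s}$ in (\ref{i0}). A union bound over which variable exceeds $P_{-}$, combined with the symmetry of the count in the $s-1$ positions, therefore yields $G(m)\leq (s-1)F_{1,\bf1}(m)$. Applying the assumption $n\notin \mathcal{Z}_{1}(N)$ with parameters $d=l=1$ and $\bfa=\bfb=\bf1$ to the admissible $x\in [1,n^{1/k}]$ then gives $F_{1,\bf1}(n-x^{k})\leq \tilde{K}n^{-\nu_{0}}$, and the desired bound follows upon noting $\sum_{x\leq n^{\tau_{0}/k}}x^{-1+k/s}\ll n^{\tau_{0}/s}$ together with the identity $\nu_{0}=2\tau_{0}/s$ from (\ref{tau0}), which produces the exponent $\varepsilon-\nu_{0}+\tau_{0}/s<-\nu_{0}/4$ for $\varepsilon$ sufficiently small.

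The principal technical hurdle lies in the $R_{\frak{X},s}^{0}$ case, specifically in the reduction $G(m)\leq (s-1)F_{1,\bf1}(m)$: this requires verifying that the truncation threshold $P_{-}$ lies strictly below the size of the largest variable in every representation of $m\asymp n$ as a sum of $s-1$ $k$-th powers of $R$-smooth numbers, so that the symmetry-union bound reducing the inner count to a single truncated Fourier coefficient is valid and the pointwise control from the exceptional set $\mathcal{Z}_{1}(N)$ can be brought to bear.
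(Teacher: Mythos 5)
Your proposal is correct and follows essentially the same route as the paper: the second and third bounds arise by rewriting the expectation as $\ll n^{\varepsilon}F_{s-l,\bfa}(n)$ and invoking the defining property of the exceptional sets, while the first is obtained by isolating the small variable $x\leq n^{\tau_{0}/k}$, bounding the inner $(s-1)$-fold representation count by a constant times $F_{1,\bf1}(n-x^{k})$ (valid since the choice of $\tilde{i}_{s}$ in (\ref{i0}) guarantees that the largest remaining variable exceeds $P_{-}$ for $m=n-x^{k}\asymp n$), applying the bound flowing from $n\notin\mathcal{Z}_{1}(N)$, and then summing $x^{-1+k/s}\ll n^{\tau_{0}/s}$ and using $\nu_{0}=2\tau_{0}/s$. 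The verification you flag as the "principal technical hurdle" is exactly the redundancy of the $\max$-condition already observed around (\ref{rrrx}), and it holds here too since $2^{k(\tilde{i}_{s}+1)}\geq 8s>4(s-1)$.
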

\begin{proof}
We draw the reader's attention to the proof of Lemma \ref{lem9.1}, recall (\ref{paar}) and get 
$$\mathbb{E}(R_{\frak{X},s}^{0}(n))\ll P^{\varepsilon}\sum_{\substack{(x_{1},\ldots,x_{s})\in \mathcal{C}(n,P^{\eta})\\ x_{1}\leq n^{\tau_{0}/k} }}(x_{1}\cdots x_{s})^{-1+k/s}\ll P^{\varepsilon}\sum_{\substack{ x\leq n^{\tau_{0}/k} }}x^{-1+k/s}F_{1,\bf1}(n-x^{k}),$$ wherein $\bf1=(1,\ldots,1)\in [1,s]^{s-1}$, whence upon noting that $n\notin \mathcal{Z}_{1}(N)$ one has
$$\mathbb{E}(R_{\frak{X},s}^{0}(n))\ll n^{\varepsilon-\nu_{0}}\sum_{\substack{ x\leq n^{\tau_{0}/k} }}x^{-1+k/s}\ll n^{\varepsilon+\tau_{0}/s-\nu_{0}}.$$
The first statement then would follow by recalling (\ref{tau0}). For the second one we recall (\ref{Rnb}) and note that $\mathbb{E}(R_{\mathfrak{X},\bfa}^{l}(n))\ll  n^{\varepsilon}F_{s-l,\bfa}(n)$. Consequently, since $n\notin \widetilde{\mathcal{Z}}_{s-l,\bfa}(N)$ then the right side of the previous equation is $O(n^{-\nu_{0}/2})$. The last claim follows by setting $\bfa=\bf1$.
\end{proof}

Having prepared the ground for the analysis of the preceding auxiliary random variables, we now proceed to estimate these with high probability. To such an end we recall (\ref{Rnb}) and (\ref{pps}), and introduce for $1\leq l\leq s-1$, each $\bfa\in[1,s]^{l}$ and $\mathfrak{X}\subset \mathbb{N}_{0}^{k}$ the set
\begin{equation}\label{Qaa}Q_{\mathfrak{X},\bfa}^{l}(n)=\Big\{\bfx\in \mathcal{R}_{\bfa,l}(n):\ \ \ \ \ \ \ \ x_{j}^{k}\in \mathfrak{X},\ \ \ \ \  1\leq j\leq l\Big\}.\end{equation}
We also write $Q_{\mathfrak{X}}^{l}(n)=Q_{\mathfrak{X},\bf1}^{l}(n)$ and denote by $\text{Disj}(Q_{\mathfrak{X}}^{l}(n))$ to the maximum $h$ such that $Q_{\mathfrak{X}}^{l}(n)$ contains $h$ pairwise disjoint tuples. Likewise, we further introduce for convenience
$$S_{\mathfrak{X},s}^{0}(n)=\Big\{\bfx\in Q_{\mathfrak{X}}^{s}(n):\ \ \ \ \ \ \ \ \ x_{i}\leq n^{\tau_{0}/k} \ \text{for some $1\leq i\leq s$}\Big\}.$$

\begin{lem}\label{lem12.3}
Let $s\geq 2$. There is some constant $K>0$ such that with probability at least $0.8$ and $n\notin \mathcal{Z}(N)$ for any $N\geq 1$ then
\begin{equation}\label{r0}R_{\mathfrak{X},s}^{0}(n)\leq K.\end{equation}
\end{lem}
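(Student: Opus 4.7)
The plan is to adapt the argument of \cite[page 128]{Vu} that was used in this paper to derive (\ref{RC}), now substituting the sharper estimates of Lemma \ref{lem12.2} — valid under the hypothesis $n\notin\mathcal{Z}(N)$ — for the weaker bounds coming from Lemmata \ref{lem9.1} and \ref{lem9.2}. Observe first that since $\mathcal{Z}_{1}(N)\subset\mathcal{Z}(N)$, Lemma \ref{lem12.2} furnishes $\mathbb{E}(R_{\mathfrak{X},s}^{0}(n))\ll n^{-\nu_{0}/4}$ for every $n\in[1,2N]\setminus\mathcal{Z}(N)$, which is a genuine improvement on the $n^{-\tau_{0}/s}$ decay of Lemma \ref{lem9.1} that forced the probability bound $4/5$ in (\ref{RC}) to rely on moment estimates.

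Next I would follow the sunflower-type decomposition from \cite{Vu}. If $R_{\mathfrak{X},s}^{0}(n)\geq K+1$, one of two things happens: either the family of tuples in $S_{\mathfrak{X},s}^{0}(n)$ contains $h=h(s,K)$ pairwise disjoint members, or by the sunflower lemma at least $K$ of them share a common core $A=\{a_{1}^{k},\ldots,a_{d}^{k}\}\subset\mathfrak{X}$ with $1\le d\le s-1$ and pairwise disjoint petals. In the first case, independence of the $t_{x}$ and the decay just quoted yield
\[
\mathbb{P}(\text{$h$ disjoint representations exist})\ll (\mathbb{E}R_{\mathfrak{X},s}^{0}(n))^{h}/h!\ll n^{-h\nu_{0}/4}.
\]
In the second, conditioning on $A\subset\mathfrak{X}$ and setting $m=n-a_{1}^{k}-\cdots-a_{d}^{k}$, the residual count is controlled by $R_{\mathfrak{X},s-d}^{\neq}(m)$ for some $m$, whose expectation by Lemma \ref{lem12.2} is $O(m^{-\nu_{0}/2})$ precisely because $n\notin\widetilde{\mathcal{Z}}_{s-d,\mathbf{1}}(N)\subset\mathcal{Z}(N)$ (see (\ref{sas})). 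Since the $a_{j}$ are forced to be small (at least one of them is $\le n^{\tau_{0}/k}$) one has $m\gg n^{\tau_{0}}$, so Markov combined with a sum over the $O(n^{d\tau_{0}/k})$ possible cores delivers a bound of the form $n^{-c(K)}$ for some $c(K)$ that grows linearly in $K$.

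Choosing $K=K(k,s)$ large enough so that $c(K)>1$ makes $\mathbb{P}(R_{\mathfrak{X},s}^{0}(n)\geq K+1)$ summable over $n\in\mathbb{N}$; a union bound then shows that with probability arbitrarily close to one — certainly at least $0.8$ after enlarging $K$ if necessary — the inequality $R_{\mathfrak{X},s}^{0}(n)\leq K$ holds simultaneously for every $n\notin\mathcal{Z}(N)$ and every $N\geq 1$. The remaining finitely many $n$ can be absorbed into the constant $K$.

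The main obstacle I foresee is the combinatorial bookkeeping in the sunflower reduction, specifically verifying that when the core $A$ is fixed the reduced problem $m=n-\sum a_{j}^{k}$ stays inside the class for which Lemma \ref{lem12.2} applies; this is guaranteed by the very definition (\ref{hhj}) of $\mathcal{Z}_{d}(N)$, which is designed exactly so that removing any admissible core from $n\notin\mathcal{Z}(N)$ leaves $m$ outside the corresponding $\widetilde{\mathcal{Z}}_{s-d,\mathbf{1}}(N)$. All remaining technicalities — the quantitative version of the sunflower lemma, the precise exponent $c(K)$, and the union bound — follow the pattern of Vu's original treatment and require no new ideas beyond those already contained in Sections \ref{sec11}–\ref{sec13}.
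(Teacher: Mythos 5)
Your high-level plan (sunflower decomposition plus disjointness concentration, using $n\notin\mathcal{Z}(N)$ to invoke Lemma~\ref{lem12.2}) matches the paper's strategy, and the bound for the pairwise-disjoint case is fine. But your treatment of the sunflower core contains a genuine gap. First, the claim that ``the $a_{j}$ are forced to be small (at least one of them is $\leq n^{\tau_{0}/k}$)'' is false: the small coordinate required by the definition of $S^{0}_{\mathfrak{X},s}(n)$ may sit in a petal rather than in the common core $A=\bigcap_{i}\text{Set}(\bfx_{i})$, so the core can consist entirely of elements close to $n^{1/k}$. Consequently both the count ``$O(n^{d\tau_{0}/k})$ possible cores'' and the deduction ``$m\gg n^{\tau_{0}}$'' fail --- the core may absorb almost all of $n$, leaving $m$ as small as a bounded constant. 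Second, and more seriously, your ``Markov plus sum over cores plus union bound over $n$'' scheme does not converge: reparametrizing the double union over $(n,A)$ by $(m,A)$ (where $n=m+\sum_{a^{k}\in A}a^{k}$), the union-bound mass becomes essentially $\big(\sum_{A}\mathbb{P}(A\subset\mathfrak{X})\big)\big(\sum_{m}m^{-\nu_{0}K'/2}\big)$, and $\sum_{A}\mathbb{P}(A\subset\mathfrak{X})\asymp\big(\sum_{a}a^{-1+k/s}\big)^{d}=\infty$.

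The paper circumvents the sum over cores entirely. It first establishes, by a union bound over $m$ alone (using $\mathbb{P}(\text{Disj}(Q^{l}_{\mathfrak{X}}(m))\geq K_{1}')\ll m^{-\nu_{0}K_{1}'/2}$ for $m\notin\widetilde{\mathcal{Z}}_{s-l,\bf1}(N)$, which is summable in $m$ once $K_{1}'>3/\nu_{0}$), that with probability at least $0.9$ one has $\text{Disj}(Q^{l}_{\mathfrak{X}}(m))<K_{1}'$ simultaneously for all admissible $m$, all $l$ and all $N$, and similarly $\text{Disj}(S^{0}_{\mathfrak{X},s}(m))\leq K_{2}'$ for all $m\notin\mathcal{Z}_{1}(N)$. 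On the intersection of these two events the sunflower lemma gives a \emph{deterministic} implication: if $R^{0}_{\mathfrak{X},s}(n)>K=(K_{3}'-1)^{s}s!$ for some $n\notin\mathcal{Z}(N)$, then either $K_{3}'$ tuples in $S^{0}(n)$ are pairwise disjoint, or a sunflower with core $A$ forces $\text{Disj}(Q^{l}_{\mathfrak{X}}(m'))\geq K_{3}'$ for the particular $m'=n-\sum_{a^{k}\in A}a^{k}$ (which lies outside $\widetilde{\mathcal{Z}}_{s-l,\bf1}(N)$ precisely because $n\notin\mathcal{Z}(N)$); either way one of the already-established bounds is contradicted. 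No sum over cores, and no per-$n$ probability estimate for $R^{0}_{\mathfrak{X},s}(n)$, is required.
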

\begin{proof}
We essentially follow the proof of \cite[Lemma 1.4]{Vu}. We note for fixed $K_{1}'>0$ that whenever $1\leq l\leq s-1$ and $m\notin\widetilde{\mathcal{Z}}_{s-l,\bf1}(N)$ for any $N\geq 1$ one has by Lemma \ref{lem12.2} that
\begin{align*}\mathbb{P}\big(\text{Disj}(Q_{\mathfrak{X}}^{l}(m))&\geq K_{1}'\big)\leq \Big(\mathbb{E}\Big(\sum_{\bfx\in Q_{\mathfrak{X}}^{l}(m)}\prod_{x_{i}\in\text{Set}(\bfx)}t_{x_{i}}\Big)\Big)^{K_{1}'}\leq \mathbb{E}\big(R_{\mathfrak{X},l}^{\neq }(m)\big)^{K_{1}'}\ll m^{-\nu_{0}K_{1}'/2}.
\end{align*}
Taking $K_{1}'>3/\nu_{0}$ sufficiently large it would follow with probability at least $0.9$ that then 
\begin{equation}\label{disj}\text{Disj}(Q_{\mathfrak{X}}^{l}(m))<K_{1}'\ \ \ \ \ \ \ \ \ \ \text{for all $m\notin\widetilde{\mathcal{Z}}_{s-l,\bf1}(N)$ for any $N\geq 1$}.\end{equation}

We return to Lemma \ref{lem12.2} for the purpose of noting that $\mathbb{E}(R_{\mathfrak{X},s}^{0}(n))=O(n^{-\nu_{0}/4})$, whence an analogous argument assures for sufficiently large $K_{2}'$ and probability at least $0.9$ that \begin{equation}\label{pranz}\text{Disj}(S_{\mathfrak{X},s}^{0}(m))\leq K_{2}'\ \ \ \ \ \ \ \ \text{for all $m\notin \mathcal{Z}_{1}(N)$ for any $N\geq 1$}.\end{equation} We next take $K=(K_{3}'-1)^{s} s!$ and $K_{3}'=\max(K_{2}',K_{1}')+1$ and note that by the Erd\H{o}s-Rado’s sunflower lemma \cite{Erd3} (see \cite[Lemma 1.4]{Vu}) one may deduce that whenever $R_{\mathfrak{X},s}^{0}(n)>K$ then either $\text{Disj}(S_{\mathfrak{X},s}^{0}(n))\geq K_{3}'$ or $\text{Disj}(Q_{\mathfrak{X}}^{l}(m'))\geq K_{3}'$ for some $1\leq l\leq s-1$ and some $$m'=n-\sum_{j=1}^{s-l}y_{j}^{k},\ \ \ \ \ \ \ \ \ \ y_{j}\in\mathbb{N}$$ according to whether the corresponding tuples $\bfx_{i}$ stemming from such an application satisfy $\cap_{i=1}^{K_{3}'}\text{Set}(\bfx_{i})=\o$ or not. In view of the assumption $n\notin \mathcal{Z}(N)$ for any $N\geq 1$ it transpires that $n\notin \mathcal{Z}_{s-l}(N)$, such a condition in turn implying $F_{s-l,\bf1}(m')\leq \tilde{K}(m')^{-\nu_{0}}$ and hence $m'\notin \widetilde{\mathcal{Z}}_{s-l,\bf1}(N)$. Consequently, by (\ref{disj}) and (\ref{pranz}), the union of the preceding events occurs with probability at most $0.2$, whence (\ref{r0}) happens with probability at least $0.8$.
\end{proof}

The perusal of $R_{\frak{X},s}^{=}(n)$ shall be analogous, concission being adopted at times. 
\begin{lem}\label{lem12.4}
Let $s\geq 2$. There is some constant $C^{=}>0$ such that with probability at least $0.9$ and $n\notin \mathcal{Z}(N)$ for any $N\geq 1$ then
\begin{equation*}R_{\mathfrak{X},s}^{=}(n)\leq C^{=}.\end{equation*}
\end{lem}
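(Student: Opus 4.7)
The plan is to mimic the sunflower argument of Lemma \ref{lem12.3}, adapted to the weighted equations $a_{1}x_{1}^{k}+\cdots+a_{l}x_{l}^{k}=n$ underlying $R_{\mathfrak{X},\bfa}^{l}(n)$. Recalling the decomposition (\ref{pps}), it suffices to produce, for each $1\leq l\leq s-1$ and each $\bfa\in[1,s]^{l}$ with $a_{1}+\cdots+a_{l}=s$, a constant $C_{l,\bfa}$ such that $R_{\mathfrak{X},\bfa}^{l}(n)\leq C_{l,\bfa}$ holds simultaneously for every $n\notin\mathcal{Z}(N)$ and every $N\geq 1$ on a single high-probability event; since the number of such pairs is $O_{s}(1)$, summing them yields $C^{=}$.

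For any $l'\in[1,s-1]$, $\bfa'\in[1,s]^{l'}$, and $m\notin\widetilde{\mathcal{Z}}_{s-l',\bfa'}(N')$ for every $N'$, Lemma \ref{lem12.2} combined with the independence of the variables $t_{x}$ across disjoint tuples yields
\[
\mathbb{P}\bigl(\text{Disj}(Q_{\mathfrak{X},\bfa'}^{l'}(m))\geq K\bigr)\leq \mathbb{E}\bigl(R_{\mathfrak{X},\bfa'}^{l'}(m)\bigr)^{K}\ll m^{-\nu_{0}K/2}.
\]
Choosing $K>4/\nu_{0}$ sufficiently large and performing a union bound over the countably many valid triples $(l',\bfa',m)$, we secure an event $\mathcal{E}$ of probability at least $0.9$ on which $\text{Disj}(Q_{\mathfrak{X},\bfa'}^{l'}(m))<K$ holds for every such triple.

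Assume $\mathcal{E}$ holds and $n\notin\mathcal{Z}(N)$ for every $N\geq 1$. If $R_{\mathfrak{X},\bfa}^{l}(n)>C_{l,\bfa}=(K-1)^{l}(l!)^{3}$, then, since the map $\bfx\mapsto\mathrm{Set}(\bfx)$ on $Q_{\mathfrak{X},\bfa}^{l}(n)$ has fibres of cardinality at most $l!$, at least $(K-1)^{l}(l!)^{2}$ distinct $l$-element sets arise. The Erd\H{o}s--Rado sunflower lemma then produces a sunflower of $K\cdot l!/(l-r)!$ such sets with common core $\{y_{1},\dots,y_{r}\}$ of some size $0\leq r<l$ and pairwise disjoint petals. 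Picking one valid tuple per set and pigeonholing over the $l!/(l-r)!$ possible orderings of the core inside an $l$-tuple extracts $K$ tuples in which the core values occupy a fixed set of positions $I\subset\{1,\dots,l\}$ in a prescribed order. Writing $m'=n-\sum_{i\in I}a_{i}v_{i}^{k}$ and $\bfa'=(a_{i})_{i\notin I}\in[1,s]^{l-r}$ (with the convention that $r=0$ gives $m'=n$, $\bfa'=\bfa$), the pairwise disjoint petals produce $\text{Disj}(Q_{\mathfrak{X},\bfa'}^{l-r}(m'))\geq K$; moreover $m'>0$ since each tuple contains at least one petal element.

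The crux, and the main technical obstacle, is to propagate the hypothesis $n\notin\mathcal{Z}(N)$ to the reduced target $m'$. When $r=0$ this is immediate since $n\notin\widetilde{\mathcal{Z}}_{s-l,\bfa}(N)$ places $(l,\bfa,n)$ in the scope of $\mathcal{E}$. When $r\geq 1$, invoking $n\notin\mathcal{Z}_{r}(N)$ for every $N$ with inner index $s-l+r\in[r,s-1]$, weight vector $\bfb=(a_{i})_{i\in I}\in[1,s]^{r}$ and integer vector $(v_{i})_{i\in I}$ in the definition (\ref{hhj}) forces $F_{s-l+r,\bfa'}(m')\leq\tilde{K}\,m'^{-\nu_{0}}$, hence $m'\notin\widetilde{\mathcal{Z}}_{s-l+r,\bfa'}(N')$ for any $N'$. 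Either way $(l-r,\bfa',m')$ falls in the scope of $\mathcal{E}$, contradicting the conclusion of the sunflower reduction. The argument works uniformly over all coefficient vectors that may arise as the core of $\bfa$, which is exactly why $\mathcal{Z}_{d}(N)$ in (\ref{hhj}) is configured to range $\bfb$ over all of $[1,s]^{d}$ rather than only over $\mathbf{1}$.
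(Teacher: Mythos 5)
Your proof follows the same route as the paper's: the disjointness bound (via Lemma \ref{lem12.2} together with Markov plus independence across disjoint tuples), the Erd\H{o}s--Rado sunflower reduction, and the propagation of $\mathcal{Z}$-avoidance from $n$ to the reduced target $m'$ through the $\mathcal{Z}_{r}$ component of (\ref{hhj}) for $r\geq 1$ and through $\widetilde{\mathcal{Z}}$ when $r=0$. You are more explicit than the paper about the fibre of $\bfx\mapsto\mathrm{Set}(\bfx)$ and the pigeonhole over core positions and orderings, which the paper sweeps through as ``routinary''; the only slip is that the threshold $(K-1)^{l}(l!)^{3}$ is a bit undersized for this — to guarantee a sunflower with $K\cdot l!$ petals (enough for the worst case $r=0$) from at least $(K-1)^{l}(l!)^{2}$ distinct sets one needs instead roughly $(Kl!-1)^{l}(l!)^{2}$ sets — but since the lemma only asserts the existence of \emph{some} finite $C^{=}$, this numerical slip is immaterial.
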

\begin{proof}
We recall (\ref{Qaa}) and Lemma \ref{lem12.2} to the end of observing that $\mathbb{E}(R_{\mathfrak{X},\bfa}^{j}(n))=O(n^{-\nu_{0}/4})$ for each $1\leq j\leq s-1$ and $\bfa\in[1,s]^{j}$. The argument in Lemma \ref{lem12.3} permits one to deduce with probability at least $1-0.1s^{-s}$ that for some sufficiently large $C_{1}>0$ one has $$\text{Disj}(Q_{\mathfrak{X},\bfa}^{j}(m))<C_{1},\ \ \ \ \ \ \ \ \ \ \ \ \ m\notin  \widetilde{\mathcal{Z}}_{s-j,\bfa}(N)\ \text{for any $N\geq 1$}.$$ Set $C_{2}=(C_{1}-1)^{s}s!+1$. Then, by a routinary application of the sunflower lemma it is apparent for $1\leq l\leq s-1$ and $\bfa\in[1,s]^{l}$ that if $R_{\mathfrak{X},\bfa}^{l}(n)>C_{2}$ for some $n\notin \mathcal{Z}(N)$ for any $N\geq 1$ then $\text{Disj}(Q_{\mathfrak{X},\tilde{\bfa}}^{l'}(n'))\geq C_{1}$ with $1\leq l'\leq l$, where $\tilde{\bfa}=(a_{i_{l-l'+1}},\ldots,a_{i_{l}})$, and
$$n'=n-\sum_{j=1}^{l-l'}a_{i_{j}}y_{i_{j}}^{k}\ \ \ \ \ \ \ \ \ \ \ y_{i_{j}}\in\NN,$$ wherein since $n\notin \mathcal{Z}(N)$ then $n'\notin  \widetilde{\mathcal{Z}}_{s-l',\tilde{\bfa}}(N)$, which by the above happens with probability at most $0.1s^{-s}$. Consequently, one has with probability at least $1-0.1s^{-s}$ that when $n\notin \mathcal{Z}(N)$ for any $N\geq 1$ then $R_{\mathfrak{X},\bfa}^{l}(n)\leq C_{2},$ such a bound combined with (\ref{pps}) entailing
$R_{\mathfrak{X},s}^{=}(n)\leq s^{s}C_{2}$ with probability at least $0.9$. The lemma follows by setting $C^{=}= s^{s}C_{2}.$
\end{proof}
It not being appropriate to dilate further on the preparatives, we present the aforementioned concentration inequality denoting beforehand $\mu_{\mathcal{N}_{L}}=\mathbb{E}\big(R_{\frak{X},s}^{+}(\mathcal{N}_{L})\big).$

\begin{prop}\label{prop12.2}
Let $s\geq 4k+1$ and let $\mathcal{N}_{L}\subset [N,2N]\setminus \mathcal{Z}(N)$ be a subset satisfying $\lvert \mathcal{N}_{L}\rvert=L$. Then for every $\beta>1$ there exists $\kappa_{0}=\kappa_{0}(k,s,\eta,\beta)>0$ with $\kappa_{0}\leq 1$ for which 
$$\mathbb{P}\big(\lvert R_{\frak{X},s}^{+}(\mathcal{N}_{L})-\mu_{\mathcal{N}_{L}}\rvert\geq \delta(N)\mu_{\mathcal{N}_{L}}\big)\ll N^{-\kappa_{0} C}+N^{-\beta}.$$
\end{prop}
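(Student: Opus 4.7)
The approach is to apply Vu's concentration inequality (Proposition \ref{prop81}) to the random polynomial $Y = R_{\frak{X},s}^{+}(\mathcal{N}_{L})$. First I would check that $Y$ qualifies: by (\ref{jjs}), (\ref{zzz}) and (\ref{Rn}), each monomial of $R_{\frak{X},s}^{+}(n_{i})$ is indexed by a tuple with pairwise distinct entries $x_{1},\ldots,x_{s}$, so every $t_{x}$ appears at most once per monomial, the coefficients are $1$, and the degree is exactly $s$. Summing over $i\leq L$ preserves these properties, so $Y$ is a positive simplified normal polynomial of degree $s$.

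Next I would fix the parameters required by Proposition \ref{prop81}. Taking $\varepsilon_{0}=C_{1}N^{-\nu_{0}\tau_{0}/2}$, Lemma \ref{lem12.1} gives $\mathbb{E}_{1}(Y)\leq \varepsilon_{0}$ for $N$ large. Next I would use Proposition \ref{prop12.1} together with $\frak{S}(n_{i})\asymp 1$ (see (\ref{SSS})) and the smallness hypothesis on $\xi(N)$, $\varphi(N)^{-\upsilon_{0}}$, $(\log N)^{-\upsilon_{0}}$ enforced by (\ref{delll}), to obtain $\mu_{\mathcal{N}_{L}}\asymp_{k,s}\psi(N)L$. Inserting the definition (\ref{ML}) of $L=L_{C}(N)$ then yields $\mu_{\mathcal{N}_{L}}\asymp_{k,s} C\log N/\delta(N)^{2}$. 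I would then set $\lambda=\delta(N)^{2}\mu_{\mathcal{N}_{L}}/(4sK)$, so that $(4sK\lambda\mu_{\mathcal{N}_{L}})^{1/2}=\delta(N)\mu_{\mathcal{N}_{L}}$; since $\delta(N)<1$, the hypothesis $4sK\lambda\leq \mu_{\mathcal{N}_{L}}$ is automatic, and the condition $\mu_{\mathcal{N}_{L}}=o(n)$ holds because the number of underlying Bernoulli variables is $\ll N^{1/k}$, which dominates $\log N/\delta(N)^{2}$ for any $\delta$ permissible in (\ref{delll}).

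Proposition \ref{prop81} then delivers
\begin{equation*}
\mathbb{P}\big(\lvert Y-\mu_{\mathcal{N}_{L}}\rvert\geq \delta(N)\mu_{\mathcal{N}_{L}}\big)\leq 2se^{-\lambda/4}+C_{2}n^{s}\varepsilon_{0}^{B}.
\end{equation*}
For the first term, $\lambda/4\geq c(k,s)C\log N$ for some $c(k,s)>0$ coming from the $\asymp$ in $\mu_{\mathcal{N}_{L}}\asymp C\log N/\delta(N)^{2}$ and the factor $16sK$, whence $2se^{-\lambda/4}\ll N^{-\kappa_{0}C}$ with $\kappa_{0}=c(k,s)/(16sK)$. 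For the second term, $n^{s}\varepsilon_{0}^{B}\ll N^{s/k-B\nu_{0}\tau_{0}/2}$, and so choosing $B=B(\beta,k,s,\eta)$ large enough that $B\nu_{0}\tau_{0}/2\geq s/k+\beta$ yields $n^{s}\varepsilon_{0}^{B}\ll N^{-\beta}$. Since $K=K(k,s,B)$ in Proposition \ref{prop81}, this choice of $B$ forces $\kappa_{0}=\kappa_{0}(k,s,\eta,\beta)$, exactly as stated; one may arrange $\kappa_{0}\leq 1$ by shrinking if necessary.

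The main obstacle is really of a bookkeeping nature rather than genuinely technical: the dependence of $\kappa_{0}$ on $\beta$ is forced through the chain $\beta\to B\to K\to\kappa_{0}$, so one must be careful that the two error terms in Vu's inequality are balanced by independent parameters (the free parameter $C$ controls the first, the free parameter $B$ controls the second). Once this structural point is recognised, the verification that $Y$ satisfies the hypotheses of Proposition \ref{prop81} is exactly what the inputs Lemma \ref{lem12.1} and Proposition \ref{prop12.1} were designed to furnish, and the proposition follows.
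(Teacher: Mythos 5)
Your argument follows the paper's own proof essentially step for step: apply Vu's concentration inequality (Proposition \ref{prop81}) to $Y=R_{\frak{X},s}^{+}(\mathcal{N}_{L})$, invoke Lemma \ref{lem12.1} for the bound on $\mathbb{E}_{1}(Y)$, use Proposition \ref{prop12.1} together with $\frak{S}(n)\asymp 1$ to pin down the size of $\mu_{\mathcal{N}_{L}}$, set $\lambda=\delta(N)^{2}\mu_{\mathcal{N}_{L}}/4sK$, and balance the two error terms via the chain $\beta\to B\to K$. Your write-up is in fact somewhat more explicit than the paper's on a few verifications the paper treats as implicit (that $Y$ is a positive simplified normal polynomial, that the number of Bernoulli variables is $O(N^{1/k})$ so $\mu_{\mathcal{N}_{L}}=o(n)$, and the precise mechanism forcing $\kappa_{0}$ to depend on $\beta$), but the route and all the key inputs coincide.
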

\begin{proof}
We invoke Proposition \ref{prop81} and note in view of Lemma \ref{lem12.1} that $\mathbb{E}_{1}(R_{\frak{X},s}^{+ }(\mathcal{N}_{L}))=O( N^{-\nu_{0}\tau_{0}/2}).$ We also observe that $\mu_{\mathcal{N}_{L}}=O(N^{\varepsilon}),$ such a conclusion stemming from Proposition \ref{prop12.1} and the bound $\psi(N)L\ll N^{\varepsilon}.$  Then, setting $\varepsilon_{0}=N^{-\nu_{0}\tau_{0}/2}$ and taking a sufficiently large $K=K(k,s,\eta, \beta)$ and $\lambda=\delta(N)^{2}\mu_{\mathcal{N}_{L}}/4sK$ we apply Proposition \ref{prop81} to obtain 
\begin{equation}\label{schu4}\mathbb{P}\big(\lvert R_{\frak{X},s}^{+ }(\mathcal{N}_{L})-\mu_{\mathcal{N}_{L}}\rvert\geq \delta(N) \mu_{\mathcal{N}_{L}}\big)\ll  e^{-\lambda/4}+N^{-\beta}.\end{equation}
We recall (\ref{ML}) and observe by Proposition \ref{prop12.1} that there is some $\kappa'(s,k)>0$ for which $$\delta(N)^{2}\mu_{\mathcal{N}_{L}}/16sK\geq \kappa'(s,k)\delta(N)^{2}L\psi(N)/K\geq C\kappa'(s,k)(\log N)/2K.$$ Inserting the previous bound in (\ref{schu4}) delivers the desired result.
\end{proof}

In order to make further progress we recall (\ref{ML}) and consider as in Theorem \ref{thm1.9} and for every fixed constant $\kappa\geq 1$ and $1\leq j\leq N^{\kappa}$ a collection of sets $\mathcal{M}_{j}(N)\subset [N,2N]\setminus  \mathcal{Z}(N) $ with $\lvert \mathcal{M}_{j}(N)\rvert\leq M_{c}(N)$ for sufficiently large $N$. We then make in (\ref{ML}) the choices \begin{equation}\label{kapa}c=\kappa_{0}/2,\ \ \ \ \ \ \ C=1+\frac{6\kappa}{\kappa_{0}},\ \ \ \ \ \ \ \ \beta=7\kappa+2,\end{equation} the parameter $\kappa_{0}=\kappa_{0}(k,s,\eta,\beta)\leq 1$ stemming from the application of Proposition \ref{prop12.2}.
\begin{thm}\label{prop12.3}
Let $\psi(n)=O(\log n)$ be of uniform growth and $\xi(n)=o(1)$. Assume that \begin{equation}\label{xist}\max\Big(\Big(\frac{C\log(\psi(N))}{c\psi(N)}\Big)^{1/2},\tilde{C}\big(\xi(N)+\varphi(N)^{-\upsilon_{0}}+(\log N)^{-\upsilon_{0}}\big)\Big)\leq \delta (N)<1\end{equation} for some $\tilde{C}>1$ and large $N$. With probability at least $0.7$ one has for sufficiently large $N\in\NN$, each $\mathcal{M}_{j}(N)$ as above and all $\mathcal{N}_{L}\subset \mathcal{M}_{j}(N)$ with $\lvert \mathcal{N}_{L}\rvert=L$ for $1\leq j\leq N^{\kappa}$ that
\begin{equation}\label{qqq}R_{\frak{X}}^{s}(\mathcal{N}_{L})=\psi(N)\sum_{i=1}^{L}\frak{S}(n_{i})+O\big(\psi(N)L\delta(N)\big).\end{equation}
\end{thm}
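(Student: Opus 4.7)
The plan is to combine three ingredients: the concentration inequality of Proposition \ref{prop12.2} applied uniformly over admissible pairs $(\mathcal{M}_j(N), \mathcal{N}_L)$; the mean value asymptotic of Proposition \ref{prop12.1}; and the almost-sure pointwise bounds of Lemmata \ref{lem12.3} and \ref{lem12.4} controlling the tame and diagonal contributions $R_{\mathfrak{X},s}^0$ and $R_{\mathfrak{X},s}^=$.

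First I would perform a union bound. The number of pairs $(j,\mathcal{N}_L)$ with $1\leq j\leq N^\kappa$ and $\mathcal{N}_L\subset\mathcal{M}_j(N)$ of size $L=L_C(N)$ is at most $N^\kappa\binom{M_c(N)}{L}\leq N^\kappa M_c(N)^L$, and using the identity $L\delta(N)^2\psi(N)=C\log N$ this equals $N^\kappa(\log N)^L N^{cC}$. Proposition \ref{prop12.2} supplies per-pair failure probability $\ll N^{-\kappa_0 C}+N^{-\beta}$, and the calibration $c=\kappa_0/2$, $C=1+6\kappa/\kappa_0$, $\beta=7\kappa+2$ in (\ref{kapa}) is designed so that the total failure probability is $\ll (\log N)^L\bigl(N^{-\kappa_0/2-2\kappa}+N^{\kappa_0/2-3\kappa-2}\bigr)$. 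The hypothesis (\ref{xist}) forces $\delta(N)^2\psi(N)\gg \log\psi(N)\to\infty$, which keeps $L\log\log N$ sufficiently small compared with $\log N$ that this bound is Borel-Cantelli summable; hence with probability one the concentration estimate $\lvert R_{\mathfrak{X},s}^+(\mathcal{N}_L)-\mu_{\mathcal{N}_L}\rvert\leq\delta(N)\mu_{\mathcal{N}_L}$ holds simultaneously for every sufficiently large $N$ and every admissible pair.

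On this good event I would invoke Proposition \ref{prop12.1} to write $\mu_{\mathcal{N}_L}=\psi(N)\sum_{i=1}^L\mathfrak{S}(n_i)+O(\psi(N)L(\xi(N)+\varphi(N)^{-\upsilon_0}+(\log N)^{-\upsilon_0}))$, the error being absorbed into $O(\psi(N)L\delta(N))$ by (\ref{xist}). Since (\ref{mm}) gives $\mu_{\mathcal{N}_L}\ll\psi(N)L$, the concentration tail is itself $O(\delta(N)\psi(N)L)$, and the triangle inequality establishes (\ref{qqq}) with $R_{\mathfrak{X},s}^+(\mathcal{N}_L)$ in place of $R_{\mathfrak{X}}^s(\mathcal{N}_L)$. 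To pass to the full count I apply the decomposition (\ref{zz})--(\ref{zzz}) and exploit that $\mathcal{M}_j(N)\subset[N,2N]\setminus\mathcal{Z}(N)$: Lemmata \ref{lem12.3} and \ref{lem12.4} then provide, on an intersected event of probability at least $0.8+0.9-1=0.7$, the uniform pointwise bounds $R_{\mathfrak{X},s}^0(n)\leq K$ and $R_{\mathfrak{X},s}^=(n)\leq C^=$ valid for every $n\notin\mathcal{Z}(N)$ at any $N$. Summed across $\mathcal{N}_L$ this contributes only $O(L)$, which is absorbed into $O(\psi(N)L\delta(N))$ because (\ref{xist}) together with $\psi(N)\to\infty$ forces $\psi(N)\delta(N)\to\infty$. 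Intersecting the probability-$0.7$ event with the probability-one Borel-Cantelli event completes the argument.

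The main obstacle is the first step: the combinatorial count $\binom{M_c(N)}{L}$ can be enormous, and the union bound only closes thanks to the delicate calibration of $c$, $C$, and $\beta$ in (\ref{kapa}), which balances the polynomial factor $N^{cC}$ from $M_c(N)^L$, the factor $N^\kappa$ enumerating the $\mathcal{M}_j(N)$, and the concentration decay $N^{-\kappa_0 C}$; the residual logarithmic factor $(\log N)^L$ is then tamed by the lower bound on $\delta(N)^2\psi(N)$ in (\ref{xist}), which keeps $L=O(\log N/(\delta(N)^2\psi(N)))$ manageably small.
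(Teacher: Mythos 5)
The overall architecture of your argument is correct and matches the paper's: a union bound over pairs $(j,\mathcal{N}_L)$ combined with Proposition \ref{prop12.2} and Borel--Cantelli for the $R_{\mathfrak{X},s}^+$ contribution, Proposition \ref{prop12.1} to identify the mean, and Lemmata \ref{lem12.3}--\ref{lem12.4} on an intersected event of probability $\geq 0.8+0.9-1=0.7$ to handle $R_{\mathfrak{X},s}^0$ and $R_{\mathfrak{X},s}^=$. The final intersection with the probability-one Borel--Cantelli event, and the absorption of the $O(L)$ tame/diagonal term into $O(\psi(N)L\delta(N))$ via $\psi\delta\to\infty$, are both correct.

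There is, however, a genuine gap in the combinatorial count. You replace $\binom{M_c(N)}{L}$ by $M_c(N)^L=(\log N)^L N^{cC}$ and claim that (\ref{xist}) "keeps $L\log\log N$ sufficiently small compared with $\log N$." But (\ref{xist}) only gives $\delta^2\psi\gg\log\psi$, whence $L\ll \log N/\log\psi$ and therefore $L\log\log N\ll \log N\cdot\log\log N/\log\psi$. Since $\psi=O(\log n)$ with no lower bound on its growth rate, the ratio $\log\log N/\log\psi$ need not be bounded: taking $\psi(n)=(\log n)^{a}$ with $a$ small gives $\log\psi\asymp a\log\log N$, so $(\log N)^{L}$ is of order $N^{c/a}$, and the factor $N^{c/a-2\kappa-\kappa_0/2}$ is not summable once $a$ is below roughly $\kappa_0/(4\kappa+\kappa_0-2)$ (for $\kappa=1$ this already fails below $a\approx 1/3$). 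In other words, the crude bound $\binom{M}{L}\leq M^L$ throws away the factor $L!$, and that is exactly where the paper's Stirling estimate $\binom{M}{L}\ll(M/L)^Le^{O(L)}$ earns its keep: the $\log N$ in $M=(\log N)e^{c\delta^2\psi}$ cancels against the $\log N$ in $L=\lfloor C\log N/(\delta^2\psi)\rfloor$, leaving $(M/L)^L\ll(\psi e^{c\delta^2\psi}/C)^L$, and then $L\log\psi\leq c\log N$ from (\ref{xist}) gives $\binom{M}{L}\ll N^{c(C+1)}$ with no residual $(\log N)^L$ factor. That cancellation is what makes $\Pi(k,s,\eta,\kappa)\ll\sum_N N^{-2\kappa}<\infty$ hold uniformly in the allowed $\psi$, and your version of the estimate does not achieve it.
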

\begin{proof}
We write $M=\lfloor M_{c}(N)\rfloor$ and examine firstly the associated binomial coefficient underlying the preceding statement, thereby obtaining by Stirling's formula
$$\binom{M}{L}\sim \frac{\Big(\frac{M}{e}\Big)^{M}\sqrt{2\pi M}}{\Big(\frac{L}{e}\Big)^{L}\sqrt{2\pi L} \Big(\frac{M-L}{e}\Big)^{M-L}\sqrt{2\pi (M-L)}}\ll \frac{1}{\big(1-\frac{L}{M}\big)^{M-L}}\Big(\frac{M}{L}\Big)^{L}.$$
Then, we recall (\ref{ML}) and employ the proviso (\ref{xist}) and the fact that $3/2<\log C$ to get
\begin{align}\label{ale}\binom{M}{L}&\ll e^{3(M-L)L/2M}\big(C^{-1}e^{c\delta(N)^{2}\psi(N)}\psi(N)\big)^{\frac{C(\log N)}{\delta(N)^{2}\psi(N)}}
\\
&\ll e^{cC\log N+\frac{C(\log N)}{\delta(N)^{2}\psi(N)}(3/2+\log (\psi(N))-\log C)}\ll  e^{cC\log N+\frac{C(\log N)\log (\psi(N))}{\delta(N)^{2}\psi(N)}}\ll  e^{c(C+1)\log N}.\nonumber
\end{align}

Equipped with the preceding bound we sum over the corresponding sets the probabilities examined in Proposition \ref{prop12.2}, it being pertinent to denote beforehand 
$$\Pi(k,s,\eta,\kappa)=\sum_{N=1}^{\infty}\sum_{j=1}^{N^{\kappa}}\sum_{\substack{\mathcal{N}_{L}\subset \mathcal{M}_{j}(N)\\ \lvert \mathcal{N}_{L}\rvert=L}}\mathbb{P}\big(\lvert R_{\frak{X},s}^{+}(\mathcal{N}_{L})-\mu_{\mathcal{N}_{L}}\rvert\geq \delta(N)\mu_{\mathcal{N}_{L}}\big).$$
We then apply the latter proposition for the choice $\beta= 7\kappa+2$ to derive
\begin{align*}
\Pi(k,s,\eta,\kappa)\ll \sum_{N=1}^{\infty}\sum_{j=1}^{N^{\kappa}}\sum_{\substack{\mathcal{N}_{L}\subset \mathcal{M}_{j}(N)\\ \lvert \mathcal{N}_{L}\rvert=L}}\big(N^{-\kappa_{0} C}+N^{- 7\kappa-2}\big)\ll \sum_{N=1}^{\infty}\sum_{j=1}^{N^{\kappa}}\binom{M}{L}N^{-\kappa_{0} C},
\end{align*}
it being appropiate to remark in view of (\ref{kapa}) and $\kappa_{0}\leq 1$ that
$\kappa_{0}C=6\kappa+\kappa_{0}\leq  7\kappa+2$. Consequently, inserting (\ref{ale}) in the above line permits one to deduce
$$\Pi(k,s,\eta,\kappa)\ll \sum_{N=1}^{\infty}\sum_{j=1}^{N^{\kappa}}N^{c(C+1)-\kappa_{0} C}\ll  \sum_{N=1}^{\infty} N^{\kappa+c(C+1)-\kappa_{0} C}. $$ We note upon recalling (\ref{kapa}) that the exponent in the right side of the above equation equals $-2\kappa,$ from where it follows that $\Pi(k,s,\eta,\kappa)<\infty.$ One thus may apply Borel-Cantelli to deduce with probability $1$ that for sufficiently large $N$ then every $\mathcal{N}_{L}\subset \mathcal{M}_{j}(N)$ satisfies
\begin{equation}\label{pere}R_{\frak{X},s}^{+}(\mathcal{N}_{L})=\mu_{\mathcal{N}_{L}}+O(\delta(N)\mu_{\mathcal{N}_{L}}).\end{equation}
Observe by Lemmata \ref{lem12.3} and \ref{lem12.4} that $R_{\frak{X},s}^{0}(n)+R_{\frak{X},s}^{=}(n)\ll 1$ for all $n \in [N,2N]\setminus \mathcal{Z}(N)$ with probability at least $0.7$. Recalling (\ref{ES1}) and inserting the above in (\ref{zz}), (\ref{zzz}) yields
$$ R_{\frak{X}}^{s}(\mathcal{N}_{L})=\sum_{i=1}^{L}R_{\frak{X},s}^{+}(n_{i})+O(L)=R_{\frak{X},s}^{+}(\mathcal{N}_{L})+O(L).$$
We conclude by combining the above line with (\ref{xist}), (\ref{pere}) and Proposition \ref{prop12.1}.
\end{proof}

\section{Upper bounds beyond the logarithmic barrier}\label{sec14}
We shall devote this section to derive upper bounds for the representation function via the application of concentration inequalities, it being required to such an end introducing some notation. We consider a finite set $\Gamma$ and the family $[\Gamma]^{\leq s}$ of subsets $I\subset \Gamma$ with the property that $\lvert I\rvert\leq s$. We then take $\mathcal{H}\subset [\Gamma]^{\leq s}$ and consider a family of non-negative random variables $Y_{I}$ for each $I\in [\Gamma]^{\leq s}$ such that $Y_{I}$ and $Y_{J}$ are independent whenever $I\cap J=\o.$ We further assume that there is another family $\xi_{\alpha}$, $\alpha\in\Gamma$ of independent random variables for which each $Y_{I}$ is a function of the collection $\{\xi_{\alpha}: \alpha\in I\}.$ We introduce $$X=\sum_{I\subset \mathcal{H}}Y_{I},\ \ \ \ \ \ \ X_{I}=\sum_{I\subset J}Y_{J} \ \ \ \ \  \text{for each $I\subset \mathcal{H}$}$$ and $\mu=\mathbb{E}(X).$ We write
$\mu_{I}= \sup \mathbb{E}(X_{I}| \xi_{\alpha}, \alpha\in I)$ and $\mu_{l}=\max_{\lvert I\rvert=l}\mu_{I}$ for each $1\leq l\leq s$.
\begin{prop}\label{prop10}
Let $\Gamma$ as above and $\lvert \Gamma\rvert=n$. Then, for every $t>0$ and every $r_{1},\ldots,r_{s}>0$ such that $r_{1}\ldots r_{l}\cdot\mu_{l}\leq t$ for each $1\leq l\leq s$, one has the estimate
$$\mathbb{P}(X\geq \mu+t)\leq \Big(1+\frac{t}{\mu}\Big)^{-r_{1}/8s}+\sum_{l=1}^{s-1}n^{l}\Big(1+\frac{t}{r_{1}\ldots r_{l}\mu_{l}}\Big)^{-r_{l+1}/8s}.$$
\end{prop}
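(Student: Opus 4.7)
The plan is to proceed by induction on $s$, the maximum size of sets $I \in \mathcal{H}$. For the base case $s = 1$, the random variables $(Y_{\{\alpha\}})_{\alpha \in \Gamma}$ are mutually independent since distinct singletons are disjoint, so $X - Y_\emptyset$ is a sum of independent non-negative variables. The hypothesis $r_1 \mu_1 \leq t$ with $\mu_1 = \max_\alpha \mathbb{E}(Y_{\{\alpha\}})$ supplies uniform control of the individual contributions, and a direct Chernoff computation (optimising $\lambda$ in $\mathbb{E}(e^{\lambda X})$ and applying Markov) yields the single term $(1+t/\mu)^{-r_1/8}$, which is already stronger than the target bound for $s=1$.

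For the inductive step I would run a peeling argument. At the first level, call an element $\alpha \in \Gamma$ \emph{exceptional} if $\mathbb{E}(X_{\{\alpha\}} \mid \xi_\alpha) > r_1 \mu_1$, and let $\mathcal{G}$ be the event that no $\alpha$ is exceptional. Splitting $\mathbb{P}(X \geq \mu + t) \leq \mathbb{P}(X \geq \mu + t, \mathcal{G}) + \mathbb{P}(\mathcal{G}^c)$, the first summand is handled by an exponential moment argument: on $\mathcal{G}$ the Doob martingale of $X$ with respect to the filtration revealing the $\xi_\alpha$ one at a time has one-step conditional contributions controlled by $r_1\mu_1$, and the same Chernoff optimisation as in the base case delivers the leading term $(1 + t/\mu)^{-r_1/(8s)}$. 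For the complementary event I would union-bound over $\alpha$ and apply the inductive hypothesis to each $X_{\{\alpha\}}$ conditioned on $\xi_\alpha$: after this conditioning $X_{\{\alpha\}}$ has the same structure but with maximum set-size $s-1$, mean $\mu_{\{\alpha\}}$, and the shifted sequence of parameters $r_2,\ldots,r_s$, producing a factor of $n$ and a tail $(1 + t/(r_1\mu_1))^{-r_2/(8s)}$.

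Iterating the peeling one more level per recursive call generates, at level $l$, the notion of an exceptional subset $I$ of size $l$ defined by $\mathbb{E}(X_I \mid \xi_\alpha : \alpha \in I) > r_1 \cdots r_l \mu_l$. The union bound over the at most $n^l$ such subsets supplies the combinatorial factor $n^l$, while the conditional polynomial $X_I$ has ``degree'' $s - l$ and is treated by the same Chernoff step with parameter $r_{l+1}/(8s)$, giving the summand $n^l(1 + t/(r_1 \cdots r_l \mu_l))^{-r_{l+1}/(8s)}$. Summing the Chernoff contribution at the top level with the exceptional-at-level-$l$ contributions for $l = 1,\ldots,s-1$ assembles the full right-hand side.

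The main obstacle will be the bookkeeping in the induction: one must check that each exponential moment step loses only the universal factor $1/(8s)$ rather than something depending on the recursion depth $l$, that the conditional-expectation inequalities defining the good event at level $l$ are strong enough to validate the Chernoff step at level $l-1$ (so the martingale increments are genuinely bounded by $r_1\cdots r_l \mu_l$), and that the residual tails produced after $l$ rounds of peeling match the advertised parameters exactly. The uniform choice of Chernoff parameter across all levels is simultaneously feasible precisely because of the assumption $r_1\cdots r_l \mu_l \leq t$ for every $l$, and verifying this compatibility is the delicate combinatorial step that the Janson--Ruci\'nski deletion framework is engineered to handle.
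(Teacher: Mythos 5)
The paper does not give a proof here: the statement is quoted verbatim from Janson and Ruci\'nski and the ``proof'' is simply the citation to \cite[Theorem 3.10]{Jan2}. So there is no argument in the paper against which to compare your sketch; the relevant question is whether your sketch is a sound reconstruction of the deletion method.

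Your high-level shape is right --- a peeling argument, union bounds over at most $n^{l}$ candidate bad sets of size $l$, and a Chernoff-type bound for the ``good'' event --- but the central step has a genuine gap. You claim that on the event $\mathcal{G}$ where $\mathbb{E}(X_{\{\alpha\}}\mid\xi_{\alpha})\leq r_{1}\mu_{1}$ for every $\alpha$, the Doob martingale of $X$ along the filtration revealing the $\xi_{\alpha}$ one at a time has increments controlled by $r_{1}\mu_{1}$, so that Azuma/Chernoff applies. That is not what $\mathcal{G}$ gives you. The increment at step $i$ is $\sum_{I\ni\alpha_{i}}\bigl(\mathbb{E}(Y_{I}\mid\xi_{1},\ldots,\xi_{i})-\mathbb{E}(Y_{I}\mid\xi_{1},\ldots,\xi_{i-1})\bigr)$, which involves conditioning on a growing $\sigma$-algebra, whereas $\mu_{1}$ (and hence $\mathcal{G}$) only controls the one-variable conditional expectation $\mathbb{E}(X_{\{\alpha\}}\mid\xi_{\alpha})$. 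These quantities are not comparable in general: fixing the other revealed variables can inflate $\mathbb{E}(Y_{I}\mid\xi_{1},\ldots,\xi_{i})$ far above $\sup_{\xi_{\alpha_{i}}}\mathbb{E}(Y_{I}\mid\xi_{\alpha_{i}})$. In addition, conditioning on $\mathcal{G}$ destroys the independence of the $\xi_{\alpha}$, so even a bounded-increment martingale argument would not be immediately available. For the base case $s=1$ the sketch happens to be fine, because there $\mu_{1}$ is a \emph{deterministic} bound on each singleton term; but that coincidence evaporates as soon as $s\geq 2$.

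The actual deletion method circumvents this by \emph{deleting} the bad blocks rather than conditioning on their absence, and by using a bespoke exponential moment inequality for sums of independent nonnegative blocks (Janson--Ruci\'nski's key lemma), not a generic Azuma-type bound. After deletion, $X$ is written as a sum over a family of disjointly supported nonnegative pieces whose individual expectations are now genuinely under control, and the recursion is run on the ``descendant'' polynomials $X_{I}$ rather than on a conditional version of $X$. Your induction-on-$s$ skeleton would also need to check that when you pass to $X_{\{\alpha\}}$, the parameter $\mu_{l}$ for the new polynomial is dominated by $\mu_{l+1}$ for the old one, and that the factor lost in the exponent is $1/(8s)$ uniformly rather than depending on the level. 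None of these are insuperable, but as written the exponential-moment step is not justified, and it is exactly the step where Janson--Ruci\'nski had to invent a new tool. Given that the result is standard and cited in the paper, the cleanest fix is simply to invoke \cite[Theorem 3.10]{Jan2} directly, as the paper does.
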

\begin{proof}
See \cite[Theorem 3.10]{Jan2}.
\end{proof}
Equipped with the preceding utensil we are prepared to deduce the desired upper bound by combining the previous proposition with the analysis in the above sections.
\begin{cor}\label{cor12}
Let $s\geq 4k+1$ and let $\psi(n)=o(\log n)$ be a function of uniform growth. Then, whenever $n\in [N,2N]\setminus\mathcal{Z}(N)$ one gets with probability $1$ the bound
\begin{equation*}R_{\frak{X},s}^{+}(n)\ll \frac{\log n}{\log\big(\frac{\log n}{\psi(n)}\big)}.\end{equation*} Consequently, $R_{\frak{X}}^{s}(n)\ll \log n \log^{-1}\big(\frac{\log n}{\psi(n)}\big)$ holds with probability at least $0.6$. If moreover $\Delta_{s}^{*}<0$ is an admissible exponent for minor arcs, the above bounds hold for large enough $n$.
\end{cor}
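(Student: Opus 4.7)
The plan is to apply the Janson--Ruci\'nski concentration inequality (Proposition \ref{prop10}) to the random variable $R_{\frak{X},s}^{+}(n)$ and combine the resulting deviation estimate with the bounds for $R_{\frak{X},s}^{0}$ and $R_{\frak{X},s}^{=}$ supplied by Lemmata \ref{lem12.3} and \ref{lem12.4}. First I would cast $R_{\frak{X},s}^{+}(n)$ in the format required by Proposition \ref{prop10}: take $\Gamma$ to be the set of admissible smooth bases $x\in\mathcal{A}(n^{1/k},n^{\eta/k})$ with $x>n^{\tau_{0}/k}$, let $\xi_{x}=t_{x}$, and for each $I\subset\Gamma$ of cardinality at most $s$ set $Y_{I}=s!\prod_{x\in I}t_{x}$ when $I$ corresponds to a valid tuple in $\mathcal{R}^{+}(n)$ and $Y_{I}=0$ otherwise, so that $R_{\frak{X},s}^{+}(n)=\sum_{I}Y_{I}$ and the independence hypothesis holds trivially.

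Next I would estimate the mean and the auxiliary quantities $\mu_{l}$. From the probability weights (\ref{probas}) together with the upper bound $r_{s,k}(n,P^{\eta})\ll 1$ stemming from Proposition \ref{prop7.1} one has $\mu=\mathbb{E}(X)\ll\psi(n)$. For $|I|=l$ with $1\leq l\leq s-1$, conditioning on $t_{\alpha}=1$ for each $\alpha\in I$ realises the supremum, and writing $m=n-\sum_{x\in I}x^{k}$ yields $\mu_{l}\ll\mathbb{E}(R_{\frak{X},s-l}^{\neq}(m))$. The lower bound $x>n^{\tau_{0}/k}$ on the remaining $s-l$ variables forces $m\gg n^{\tau_{0}}$, and since $n\notin\mathcal{Z}(N)\supset\mathcal{Z}_{l}(N)$ one has $F_{l,\bf1}(m)\ll m^{-\nu_{0}}$; invoking Lemma \ref{lem12.2} thereby delivers $\mu_{l}\ll n^{-\nu_{0}\tau_{0}/2}$, while trivially $\mu_{s}\leq s!$.

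I would then invoke Proposition \ref{prop10} with $t=KT$, where $T=\log n/\log(\log n/\psi(n))$, and the choices $r_{1}=8s\beta T$, $r_{2}=\cdots=r_{s}=K'$ for $\beta>1$, $K'=16s(s-1+\beta)/(\nu_{0}\tau_{0})$, and $K=8s\cdot s!\,\beta\,(K')^{s-1}$, so that the binding constraint $r_{1}\cdots r_{s}\mu_{s}\leq t$ is satisfied while the intermediate constraints follow from the polynomial smallness of $\mu_{l}$. Using the key identity $T\log(\log n/\psi(n))\asymp\log n$, the first term of Proposition \ref{prop10} satisfies $(1+t/\mu)^{-r_{1}/(8s)}\asymp n^{-\beta}$. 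For each $1\leq l\leq s-1$, the bound $\mu_{l}\ll n^{-\nu_{0}\tau_{0}/2}$ gives $t/(r_{1}\cdots r_{l}\mu_{l})\gg n^{\nu_{0}\tau_{0}/2}$, whence the $l$-th term is dominated by $n^{l-K'\nu_{0}\tau_{0}/(16s)}\ll n^{-\beta}$ by the choice of $K'$. Summing produces $\mathbb{P}(R_{\frak{X},s}^{+}(n)\geq (K+1)T)\ll n^{-\beta}$, and Borel--Cantelli then yields $R_{\frak{X},s}^{+}(n)\ll T$ with probability $1$ for every large $n\in[N,2N]\setminus\mathcal{Z}(N)$. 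The conclusion for $R_{\frak{X}}^{s}(n)$ follows via (\ref{zz}), (\ref{zzz}) together with Lemmata \ref{lem12.3} and \ref{lem12.4}, while when $\Delta_{s}^{*}<0$ the exceptional set is empty by (\ref{deac}), so the bound extends to all sufficiently large $n$.

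The main obstacle is the balancing of the parameters $r_{i}$. The first probability term forces $r_{1}$ to be of order $T$, since only then does the identity $T\log(\log n/\psi(n))\asymp\log n$ convert the inner logarithm into a saving of $n^{-\beta}$; but this choice makes the constraint $r_{1}\cdots r_{s}\mu_{s}\leq t$ essentially read $r_{2}\cdots r_{s}\ll t/T$, whereas the intermediate terms demand each $r_{i}$ for $i\geq 2$ to be at least a moderately large constant depending on $s$, $\beta$, $\nu_{0}$ and $\tau_{0}$. The only way to reconcile these opposing requirements is to let $t$ be a large constant multiple of $T$, a choice that remains compatible with the stated conclusion precisely because $T$ appears there only up to absolute constants.
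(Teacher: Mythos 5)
Your proposal is correct and follows essentially the same route as the paper: you cast $R_{\frak{X},s}^{+}(n)$ in the Janson--Ruci\'nski framework with the index family built from $\mathcal{R}^{+}(n)$, bound $\mu\ll\psi(n)$ via Proposition \ref{prop9.1}, use the restriction $n\notin\mathcal{Z}(N)$ (through Lemmata \ref{lem9.4}/\ref{lem12.2}) to get polynomial smallness of the conditional expectations $\mu_{l}$ for $1\leq l\leq s-1$, and then take $r_{1}\asymp T$, $r_{l}$ a large constant for $l\geq 2$, and $t$ a large multiple of $T$, exploiting $T\log(\log n/\psi(n))\asymp\log n$ to make the leading deviation term summable. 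The only cosmetic difference from the paper is your explicit bookkeeping of the factor $s!$ in $\mu_{s}$ (the paper quotes $\mu_{s}=1$, but this is immaterial since the orders of magnitude are unchanged), and your slightly different explicit constants; the conclusion for $R_{\frak{X}}^{s}(n)$ via Lemmata \ref{lem12.3}, \ref{lem12.4}, and the $\Delta_{s}^{*}<0$ case via (\ref{deac}), match the paper's ending.
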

\begin{proof}
In order to prepare the ground it seems desirable to recall (\ref{Rn}), introduce first $$\mathcal{R}_{n}=\Big\{B\subset\NN_{0}^{k}:\ \ \ \ \ B=\text{Set}(\bfx)\ \text{for some $\bfx\in\mathcal{R}^{+}(n)$}\Big\}$$ and use the above proposition by setting $X=R_{\frak{X},s}^{+}(n)$, the base set being $\Gamma=[1,n]$ and $$\mathcal{H}=\Big\{I\subset B_{n},\ \ \ \ B_{n}\in \mathcal{R}_{n} \Big\}.$$
We observe that with the notation presented above Proposition \ref{prop81} then $\mu_{I}=\mathbb{E}_{I}(R_{\frak{X},s}^{+}(n))$ for every $I\in\mathcal{H}$. We thereby deduce for $n\in [N,2N]\setminus \mathcal{Z}(N)$ and by means of Lemma \ref{lem9.4} that whenever $1\leq l\leq s-1$, then $\mu_{l}=O( n^{-2\beta_{k,s}})$ for some $\beta_{k,s}>0$. If moreover $\Delta_{s}^{*}<0$, the same holds for every sufficiently large $n$. We next set $$r_{1}(n)=\frac{32 s\log n}{\log\big(\frac{\log n}{\psi(n)}\big)},\ \ \ t_{n}=\frac{(32s)^{s}(s+1)!\beta_{k,s}^{-s+1}\log n}{2\log\big(\frac{\log n}{\psi(n)}\big)}, \ \ \ \ \  r_{l}=32s(l+1)\beta_{k,s}^{-1}\ \ \ \ \text{when $2\leq l\leq s$}.$$  It therefore transpires that
$r_{1}(n)\cdot r_{2}\ldots r_{s}=t_{n}$, and that for every $1\leq l\leq s-1$ then
$r_{1}(n)\cdot\ldots \cdot r_{l}\cdot \mu_{l}\ll n^{-\beta_{k,s}}\leq t_{n},$ the last inequality holding for sufficiently large $n$, and where we implicitly employed that $\psi(n)=o(\log n)$ and $\mu_{l}\ll  n^{-2\beta_{k,s}}$. Moreover, 
\begin{equation}\label{kkj}\frac{t_{n}}{r_{1}(n)\cdot r_{2}\ldots r_{l}\cdot \mu_{l}}\gg n^{\beta_{k,s}}.\end{equation}

We next write $\mu(n)=\mathbb{E}(R_{\frak{X},s}^{+}(n))$ and note in view of the fact that $\mu_{s}=1$ that the conditions described in the statement of Proposition \ref{prop10} hold, whence its application yields 
\begin{align}\label{sar}\mathbb{P}(R_{\frak{X},s}^{+}(n)\geq \mu(n)+t_{n})\leq \Big(1+\frac{t_{n}}{\mu(n)}\Big)^{-\frac{r_{1}(n)}{8s}}+\sum_{l=1}^{s-1}n^{l}\Big(1+\frac{t_{n}}{r_{1}(n)\cdot r_{l}\cdot \mu_{l}}\Big)^{-\frac{r_{l+1}}{8s}}.
\end{align}
We shall first examine the second summand and observe that (\ref{kkj}) delivers
\begin{equation}\label{sara}\sum_{l=1}^{s-1}n^{l}\Big(1+\frac{t_{n}}{r_{1}(n)\cdot r_{2}\ldots r_{l}\cdot \mu_{l}}\Big)^{-r_{l+1}/8s}\ll \sum_{l=1}^{s-1}n^{l-r_{l+1}\beta_{k,s}/8s}\ll \sum_{l=1}^{s-1}n^{-3l-8}\ll n^{-11}.\end{equation}
In order to examine the first one we recall Proposition \ref{prop9.1} both for the instance $\Delta_{s}^{*}<0$ and the situation in which $n\notin \mathcal{Z}(N)$ to note that there is some constant $C_{s,k}>0$ with $\mu(n)\leq C_{s,k}\psi(n).$ Therefore, upon denoting $c_{s,k}= \frac{1}{2}(32s)^{s}(s+1)!\beta_{k,s}^{-s+1} C_{s,k}^{-1}$ one has
$$ \Big(1+\frac{t_{n}}{\mu(n)}\Big)^{-r_{1}(n)/8s}\ll \Big(1+\frac{c_{s,k}\log n}{\psi(n)\log \big(\frac{\log n}{\psi(n)}\big)}\Big)^{-\frac{4\log n}{\log \big(\frac{\log n}{\psi(n)}\big)}}\ll e^{-4\log n+\frac{4(\log n)\log\big(c_{s,k}^{-1}\log\big(\frac{\log n}{\psi(n)}\big)\big)}{\log\big(\frac{\log n}{\psi(n)}\big)}}.$$
Then, in view of the condition $\psi(n)=o(\log n)$, it is apparent that 
$$\lim_{n\to\infty}\frac{\log\big(c_{s,k}^{-1}\log\big(\frac{\log n}{\psi(n)}\big)\big)}{\log\big(\frac{\log n}{\psi(n)}\big)}=0,$$ whence it follows that 
$\big(1+t_{n}/\mu(n)\big)^{-r_{1}(n)/8s}\ll n^{-3}.$ We combine the latter equation with (\ref{sar}), (\ref{sara}) to deduce $\mathbb{P}\big(R_{\frak{X},s}^{+}(n)\geq \mu(n)+t_{n}\big)=O(n^{-3}).$ Note that the relation $\mu(n)\leq C_{s,k}\psi(n)$ and the proviso $\psi(n)=o(\log n)$ entail $\mu(n)\leq  t_{n}$ for sufficiently large $n$. Therefore, $$\mathbb{P}\big(R_{\frak{X},s}^{+}(n)\geq 2t_{n}\big)\leq \mathbb{P}\big(R_{\frak{X},s}^{+}(n)\geq \mu(n)+t_{n}\big)\ll n^{-3},$$ a consequence of the above line being by a customary application of Borel-Cantelli that with probability $1$ one has for sufficiently large $N$ and $n\in [N,2N]\setminus\mathcal{Z}(N)$ the estimate
\begin{equation}\label{kkj2}R_{\frak{X},s}^{+}(n)< 2t_{n}\ll  \frac{\log n}{\log\big(\frac{\log n}{\psi(n)}\big)},\end{equation} the same bound holding for sufficiently large $n$ if $\Delta_{s}^{*}<0$. We conclude by noting that the bounds (\ref{RC}) and (\ref{RC1}) when $\Delta_{s}^{*}<0$ and Lemmata \ref{lem12.3}, \ref{lem12.4} if $n\notin \mathcal{Z}(N)$ yield 
$$ R_{\frak{X}}^{s}(n)=R_{\frak{X},s}^{+}(n)+O(1)$$ with probability at least $0.6$, which when combined with (\ref{kkj2}) completes the proof.
\end{proof}

\section{Proofs of Theorems \ref{thm1.3} and \ref{thm1.9}}\label{sec15}
Equipped with the preceding propositions we have reached a position from which to conclude our analysis concerning the aforementioned theorems.

\emph{Proof of Theorems \ref{thm1.3} and \ref{thm1.9}.} We write $\Xi_{s,k},\Xi_{2},\Xi_{1}>0$ to refer to the implicit constants latent in the error terms in (\ref{SSS}), (\ref{fax}) and (\ref{qqq}) respectively. Let $\Xi=\max(\Xi_{s,k}\cdot\Xi_{2},\Xi_{1})$ and consider for $N\in\mathbb{N}$ and $1\leq j\leq N^{\kappa}$ the collection of sets $\mathcal{M}_{j}(N)$ described right above (\ref{kapa}). It seems worth noting that in the context of Theorem \ref{thm1.9} one may assume that $\lvert \mathcal{M}_{j}(N)\rvert\leq M_{c}(N)$ since one might always write $\delta'(N)=\sqrt{c}\delta(N),$ the factor $\sqrt{c}$ being absorbed by the error term in (\ref{RsRs}). Whenever $\mathfrak{X}\subset \NN_{0}^{k}$ we thus introduce 
$$\mathcal{S}_{j,\mathfrak{X}}^{0}=\Big\{n\in\mathcal{M}_{j}(N):\ \ R_{\frak{X}}^{s}(n)\leq \frak{S}(n)\psi(n)-3\Xi\delta(N)\psi(N)\ \Big\}.$$ Theorem \ref{prop12.3} then entails the existence of a sequence $\mathfrak{X}\subset \mathbb{N}_{0}^{k}$ satisfying $\lvert \mathcal{S}_{j,\mathfrak{X}}^{0}\rvert\leq L-1$ for every $1\leq j\leq N^{\kappa}$ and sufficiently large $N$, since if otherwise $\lvert \mathcal{S}_{j,\mathfrak{X}}^{0}\rvert\geq L$ and $\{n_{1},\ldots,n_{L}\}\subset \mathcal{S}_{j,\mathfrak{X}}^{0}$ with $n_{i}\neq n_{l}$ for $i\neq l$ then by (\ref{delll}), (\ref{SSS}), (\ref{fax}) and (\ref{xist}) one would have
\begin{align*}\sum_{i=1}^{L}R_{\frak{X}}^{s}(n_{i})\leq\sum_{i=1}^{L}\frak{S}(n_{i})\psi(n_{i})- 3\Xi \delta(N)L\psi(N)&\leq\psi(N)\Big(\sum_{i=1}^{L}\frak{S}(n_{i})+\Xi L(\xi(N)-3\delta(N))\Big)
\\
&\leq\psi(N)\sum_{i=1}^{L}\frak{S}(n_{i})-2\psi(N)\Xi L\delta(N),\end{align*} which could only occur with probability at most $0.3$ by Theorem \ref{prop12.3}. A similar argument would apply whenever $1\leq j\leq N^{\kappa}$ for the sets
$$\mathcal{S}_{j,\mathfrak{X}}^{1}=\Big\{n\in\mathcal{M}_{j}(N):\ \ R_{\frak{X}}^{s}(n)\geq \frak{S}(n)\psi(n)+3\Xi\delta(N)\psi(N)\ \Big\},$$
and consequently, upon defining $\mathcal{S}_{j,\mathfrak{X}}=\mathcal{S}_{j,\mathfrak{X}}^{0}\cup\mathcal{S}_{j,\mathfrak{X}}^{1}$, one has $\lvert \mathcal{S}_{j,\mathfrak{X}}\rvert\leq 2L-2$. We observe that in the context of Theorem \ref{thm1.9} then (\ref{ssssp}) holds, whence (\ref{deac}) combined with Proposition \ref{prop7.1} and Corollary \ref{cor2} entails the bound $\lvert \mathcal{Z}(N)\rvert\ll 1.$ By the preceding discussion one has
\begin{equation*}R_{\frak{X}}^{s}(n)=\frak{S}(n)\psi(n)+O(\delta(N)\psi(N))\end{equation*} whenever $n\in\mathcal{M}_{j}(N)\setminus \mathcal{S}_{j,\mathfrak{X}}$ with $\mathcal{S}_{j,\mathfrak{X}}$ satisfying
$$\lvert\mathcal{S}_{j,\mathfrak{X}}\rvert\ll L=  \lvert \mathcal{M}_{j}(N)\rvert\frac{L}{ \lvert \mathcal{M}_{j}(N)\rvert}\ll\lvert\mathcal{M}_{j}(N)\rvert\omega(N)^{-1},$$ where $\omega$ is of uniform growth with $\omega(N)=O( e^{c\delta(N)^{2}\psi(N)})$, and where we employed both (\ref{ML}) and the fact that $\lvert \mathcal{M}_{j}(N)\rvert\gg \frac{(\log N)\omega(N)}{\delta(N)^{2}\psi(N)}$. This concludes the proof of Theorem \ref{thm1.9}.

In order to make progress in the proof of Theorem \ref{thm1.3} we introduce
$$\mathcal{S}_{N,\mathfrak{X}}^{\delta}=\Big\{n\in [N,2N]\setminus \mathcal{Z}(N):\ \lvert R_{\frak{X}}^{s}(n)-\frak{S}(n)\psi(n)\rvert\geq 3\Xi \delta(N)\psi(N) \Big\}.$$ 
We also partition $[N,2N]\setminus \mathcal{Z}(N) $ into $\mathcal{F}_{M}(N)$ sets $\mathcal{M}_{l}(N)$ with $\lvert \mathcal{M}_{l}(N)\rvert=M$ when $1\leq l\leq \mathcal{F}_{M}(N)-1$ and $\lvert \mathcal{M}_{\mathcal{F}_{M}}(N)\rvert\leq M$, and satisfying $\mathcal{M}_{l}(N)\cap \mathcal{M}_{j}(N)=\o$ whenever $l\neq j$. Noting that $ \mathcal{F}_{M}(N)\ll N/M$ and the fact that $(\mathcal{M}_{l}(N))_{l}$ is a partition one has
\begin{equation*}\lvert \mathcal{S}_{N,\mathfrak{X}}^{\delta}\lvert =\sum_{j\leq\mathcal{F}_{M}(N)}\lvert \mathcal{S}_{j,\mathfrak{X}}\rvert\leq 2(L-1)\mathcal{F}_{M}(N)\ll NL/M\ll \frac{Ne^{-c\delta(N)^{2}\psi(N)}}{\psi(N)\delta(N)^{2}},\end{equation*} where in the last steps we employed (\ref{ML}). The preceding discussion thereby yields
\begin{equation}\label{ash}R_{\frak{X}}^{s}(n)=\frak{S}(n)\psi(N)+O(\delta(N)\psi(N))\end{equation} whenever $n\in[N,2N]\setminus \big(\mathcal{Z}(N)\cup \mathcal{S}_{N,\mathfrak{X}}^{\delta}\big)$ with $\lvert\mathcal{S}_{N,\mathfrak{X}}^{\delta}\rvert$ satisfying the above bound. Observe that the constant $c$ in the previous estimate may be deleted upon writing $\delta'(N)=\sqrt{c}\delta(N)$.

In view of equation (\ref{ash}) and Corollary \ref{cor12} it remains to show that under the conditions concerning $k$ and $s$ described in Theorem \ref{thm1.3} then either the constraints in Propositions \ref{prop8.2}, \ref{prop911} and Lemma \ref{lem9.1111} are satisfied or else $\Delta_{s}^{*}<0$, the claims concerning the size $\lvert \mathfrak{X}_{k}\cap [1,X]\rvert$ following via Chernoff's inequality (see \cite[(10.2)]{Pli}). As a prelude to the discussion  we make recourse to \cite[Lemma 7.1]{Bru-Woo} and (\ref{tau}), assume that $k>20$ and note that one may take \begin{equation}\label{DDDD}D=9.027901.\end{equation} We allude to the analysis leading to (\ref{s00}) and observe that every $s_{0}\geq \max(\lfloor G_{0}(k)\rfloor +1,4k+1)$ has the property that $\Delta_{s_{0}}^{*}<0$, such a remark in conjunction with the choice for $D$, equation \cite[(7.8)]{Bru-Woo} and (\ref{eq67}) entailing that $s_{0}=\lfloor k(\log k+2+\log D)\rfloor +1$ satisfies the above. We then note first that if $s\geq s_{0}$ then $\Delta_{s}^{*}<0$, and hence $\lvert \mathcal{Z}(N)\rvert\ll 1$ by Proposition \ref{prop7.1} and Corollary \ref{cor2} combined with the conclusion leading to (\ref{deac}), as desired.

We next observe in view of the above definition for $s_{0}$ that it follows by the argument right after \cite[(7.8)]{Bru-Woo} that for any $s$ in the interval $k(\log k+3.20032)\leq s<s_{0}$ then
\begin{equation}\label{fl}k(\log k+1+\log D)\leq s\leq k(\log k+2+\log D)\end{equation} and hence $s_{0}-k-1\leq s<s_{0}$, as required in Proposition \ref{prop911}. We also note that $$2k(\log kD+1)\Big(1-\frac{1}{Dk}\Big)-2k-\frac{1}{2}> k(\log kD+2)+1$$ holds for $k\geq 100$ and $D$ as in (\ref{DDDD}), it thereby entailing when $d\leq s/Dk+k/4s$ that
$$s_{0}\leq k(\log kD+2)+1< 2k(\log kD+1)\Big(1-\frac{1}{Dk}\Big)-2k-\frac{1}{2}\leq 2(s-d-k),$$ as desired. We continue the verification of the constraints in Proposition \ref{prop911} noting when $k\geq 100$ that 
$\log kD+2\leq D(k-5)/4,$ it in turn entailing for $s$ in the range (\ref{fl}) that
\begin{equation}\label{sabes}\frac{s}{Dk}+\frac{k}{4s}\leq \frac{k}{4}-1.\end{equation}
We next recall (\ref{sto}) and remark that $s_{T_{0}}\geq s-d-T_{0}-1.$ Thus by \cite[Theorem 2.1]{Woo3} and when $s$ satisfies (\ref{fl}) and $T_{0}\leq T_{d}(k)\leq 3k/4$ there is some admissible exponent $\Delta_{s_{T_{0}}}$ with
\begin{equation}\label{sub}0\leq \Delta_{s_{T_{0}}}\leq ke^{1-s_{T_{0}}/k}\leq \frac{e^{(T_{0}+d+1)/k}}{D}\leq \frac{1}{D}\Big(1+\frac{T_{0}+d+1}{k}+\Big(\frac{T_{0}+d+1}{k}\Big)^{2}\Big),\end{equation} 
where we employed $d+1\leq k/4$, the last inequality holding in view of (\ref{sabes}). 

We may assume at this point first that $s< Dk$, and observe that whenever $s\geq k(\log (kD)+1)+2$ then by \cite[Theorem 2.1]{Woo3} it transpires that
\begin{equation}\label{jjj}\Delta_{2\lfloor \frac{s-1}{2}\rfloor}-\frac{k}{s}\leq  \frac{1}{D}-\frac{k}{s}<0.\end{equation}
Similarly, one has for $k(\log (kD)+1)\leq s< k(\log (kD)+1)+2$ and $s<Dke^{-2/k}$ that
\begin{equation}\label{jjjj}\Delta_{2\lfloor \frac{s-1}{2}\rfloor}-\frac{k}{s}\leq  \frac{e^{2/k}}{D}-\frac{k}{s}<0.\end{equation} By recalling (\ref{DDDD}) we note that $s$ as above exists only when $k(\log (kD)+1)<Dke^{-2/k},$ such an inequality being valid for $1\leq k\leq 300$. We conclude the cornucopia of instances for which an inequality of the flavour of (\ref{jjj}) holds by considering $s$ in the range (\ref{fl}) and satisfying $1\leq s/Dk<2.$ Under such circumstances, whenever $s\geq k\log(2ekD)+2$ then by \cite[Theorem 2.1]{Woo3} one has
$\Delta_{2\lfloor \frac{s-1}{2}\rfloor}\leq 1/2D$, and hence $\Delta_{2\lfloor \frac{s-1}{2}\rfloor}<k/s.$

Under the above assumptions it flows from (\ref{jjj}), (\ref{jjjj}), the last inequality and Lemma \ref{lem9.1111} combined with (\ref{afri}) that $\mathcal{Z}(N)=\mathcal{Z}_{0}(N)$. In view of (\ref{fl}), (\ref{s00}) and the discussion right after (\ref{iiu}) one then has $\Delta_{2s}^{*}<0,$ Proposition \ref{prop8.2} and Corollary \ref{cor4} in conjunction with the preceding observations entailing
\begin{equation}\label{vali2}\lvert \mathcal{Z}(N)\rvert=\lvert \mathcal{Z}_{0}(N)\rvert\ll N^{1-\zeta_{k}}.\end{equation}

We consider next $s$ as in (\ref{fl}) with $1\leq s/Dk+k/4s< 2$ and satisfying $s\leq k\log (2ekD)+2$, the combination of both inequalities entailing that $k\geq 200$. We shall next verify the conditions required in Proposition \ref{prop911}, it being worth noting first that in this range
\begin{equation}\label{vax}\frac{1}{k}+\frac{2s}{k^{2}}\leq \frac{1}{200}+\frac{2\log (2ekD)}{k}+\frac{1}{10000}\leq \frac{5}{1000}+\frac{93}{1000}= \frac{1}{10}\end{equation} and $\max(15/2,\sqrt{D}/2)k\leq s\leq Dk^{2}.$ We also indicate upon recalling (\ref{DDDD}) that 
$$1600\big(\log (2kD)+1+2/k\big)^{2}\big(\log(2kD)+1+D/2+2/k\big)\leq 1053Dk$$ holds whenever $k\geq 200$. Equipped with the above provisos it then follows that
$$ 1600s^{2}(s+Dk/2)\leq 1053Dk^{4}\leq 1170k^{4}D\Big(1-\frac{1}{k}-\frac{2s}{k^{2}}\Big)$$ when $k\geq 200$, such an inequality implying that $T_{1}(k)\leq 3k/4$ in the above range for $s$. Likewise, we remind the reader of (\ref{Td}) and note by (\ref{vax}) that whenever $d=1$ then \begin{equation*}4c_{k}\geq \frac{39\cdot 9}{200}\frac{D}{1+\frac{Dk}{2s}}\geq \frac{1053D}{1000}>D,\end{equation*} wherein we employed the assumption $3Dk\leq 4s$, as desired. In order to conclude the analysis of this instance we note that when $k\geq 200$ and $k(\log (kD)+1)\leq s< k(\log (ekD))+2$ with $Dke^{-2/k}\leq s\leq Dk$, then in particular $s\leq ke^{2/k}/4(e^{2/k}-1)$, and hence
$$1\leq e^{-2/k}+\frac{k}{4s}\leq \frac{s}{Dk}+\frac{k}{4s}\leq 5/4<2.$$

We assume next that $2\leq s/Dk+k/4s$ with $s$ in (\ref{fl}), such a proviso in particular entailing $2D(\log(kD)+7/8)\leq (\log(kD)+2)^{2},$ whence $k\geq 100000.$ Under such circumstances then the bound on $s$ right after (\ref{vax}) holds, and hence for $d\leq s/Dk+k/4s$ one has
\begin{align}\label{restr}\frac{d}{k}+\frac{(d+1)s}{k^{2}}&\leq \frac{s}{Dk^{2}}+\frac{1}{4s}+\frac{s}{k^{2}}+\frac{s^{2}}{Dk^{3}}+\frac{5}{4k}
\\
&\leq \frac{2}{k}+\frac{(\log (kD)+2)^{2}}{Dk}+\frac{(D+1)(\log(kD)+2)}{Dk}\leq \frac{1}{500}.\nonumber
\end{align}
Likewise, under the same assumptions on $k$ it is apparent that
$$(\log(kD)+2)^{2}\Big(\frac{3}{2}(\log(kD)+2\big)+D/8\Big)\leq \frac{58383Dk}{80000},$$ the combination of the preceding estimates and the restriction on $d$ delivering
\begin{align*} s^{2}(s+dDk/2)&\leq k^{3}(\log(kD)+2)^{2}\Big(\frac{3}{2}\big(\log(kD)+2\big)+\frac{D}{8}\Big)\leq\frac{117}{160}Dk^{4}\Big(1-\frac{d}{k}-\frac{(d+1)s}{k^{2}}\Big),
\end{align*} it in turn entailing $T_{d}(k)\leq 3k/4.$ We may further derive in an analogous manner as above 
$$4c_{k}\geq \frac{39\cdot 499}{10000}\frac{D}{1+\frac{dDk}{2s}}\geq \frac{39\cdot 499}{10000}\cdot\frac{D}{\frac{3}{2}+\frac{Dk^{2}}{8s^{2}}}\geq \frac{39\cdot 499D}{100\cdot 151}>D,$$ where we implicitly employed the fact that $Dk^{2}/8s^{2}\leq 1/100$. By the preceding discussion it transpires that the assumptions in Proposition \ref{prop911} hold for both instances.

We shall next verify the conditions in Lemma \ref{lem9.1111} and assume first that $1\leq \frac{s}{Dk}+\frac{k}{4s}< 2$ and $s\geq k(\log kD+1)+3$. We allude as is customary to \cite[Theorem 2.1]{Woo3} to deduce that
$$\delta_{2}-\frac{2k}{s}\leq  \frac{1}{D}-\frac{2k}{s}<\frac{1}{D}-\frac{1}{D(1-k/8s)}<0.$$
If instead $k(\log kD+1)\leq s< k(\log kD+1)+3$ then combining both constraints yields
$$D\Big(1-\frac{1}{4(\log kD+1)}\Big)\leq \frac{s}{k}\leq \log(kD)+1+\frac{3}{k},$$ from where it would follow that $k\geq 250.$ Moreover, the above theorem would entail 
$$\delta_{2}-\frac{2k}{s}\leq  \frac{e^{3/k}}{D}-\frac{2k}{s}<\frac{243}{80Dk}-\frac{k}{8Ds(1-k/8s)}\leq \frac{243}{80Dk}-\frac{2k}{(16D-1)s}$$ whenever $k\geq 250$, wherein we employed the fact that $s\leq 2Dk.$ One then has 
$$\frac{k^{2}}{s}\geq \frac{k}{\log(kD)+1+3/k}\geq \frac{250}{9}\geq 27,$$ the combination of the preceding bounds and the notation in Lemma \ref{lem9.1111} delivering
$$\delta_{2}-\frac{2k}{s}\leq \frac{1}{k}\Big(\frac{243}{80D}-\frac{54}{(16D-1)}\Big)\leq -\frac{27}{80Dk}.$$ 
If on the contrary $2<s/Dk+k/4s$ then by the conclusion stemming from (\ref{restr}) it transpires that $k\geq 100000$. We then set $d_{0}=\big\lceil\frac{s}{Dk}+\frac{k}{4s}\big\rceil$ and note that whenever $s$ is as in (\ref{fl}) and $k$ satisfies the above lower bound then 
$$d_{0}+1\leq \frac{s}{Dk}+\frac{k}{4s}+2\leq \frac{\log (kD)+2}{D}+\frac{9}{4}\leq \frac{k}{4},$$ as required right above (\ref{sub}). Therefore, \cite[Theorem 2.1]{Woo3} permits one to deduce that
\begin{align*}\delta_{d_{0}}-\frac{d_{0}k}{s}&\leq \frac{e^{(d_{0}+1)/k}}{D}-\frac{1}{D}-\frac{k^{2}}{4s^{2}}\leq \frac{1}{D}\Big(\frac{d_{0}+1}{k}+\Big(\frac{d_{0}+1}{k}\Big)^{2}\Big)-\frac{k^{2}}{4s^{2}}
\\
&\leq \frac{5(d_{0}+1)}{4Dk}-\frac{k^{2}}{4s^{2}}\leq \frac{5s}{4D^{2}k^{2}}+\frac{4}{Dk}-\frac{k^{2}}{4s^{2}}\leq \frac{8s}{Dk^{2}}-\frac{k^{2}}{4s^{2}}.
\end{align*}
Consequently, the inequality
$64(\log (kD)+2)^{3}\leq Dk,$ valid whenever $k\geq 100000$, in conjunction with the above bound yields
$s\delta_{d_{0}}<d_{0}k,$ as desired. The preceding discussion enables one to deduce that the required assumptions in Lemma \ref{lem9.1111} hold for the above choice of $d_{0}$. In view of the inequality $\Delta_{2s}^{*}<0$ right above (\ref{vali2}), Proposition \ref{prop8.2}, Corollary \ref{cor4} and the above conclusions one may apply Corollary \ref{cor9.1} to deduce
$\lvert \mathcal{Z}(N)\rvert\ll N^{1-\zeta_{k}},$ which completes when combined with Corollary \ref{cor12} the case $k> 20$ in Theorem \ref{thm1.3}.

If $14\leq k\leq 20$, the tables in \cite{Vau-Woo2} assure that the exponents $\Delta_{w-1}$ are admissible, $w$ being defined below.
\begin{table}[h!]
  \begin{center}
    \label{tab:table1}
    \begin{tabular}{l l l l l l c c c c c c r r r r r } 
      \hline
      $k$  & $14$ & $15$ & $16$ & $17$ & $18$ & $19$ & $20$\\
      $w$  &  $75$ & $81$ & $87$ & $95$ & $101$ & $109$ & $117$\\
      $\Delta_{w-1}$  & $0.1281620$ & $0.1355287$ & $0.1426626$ & $0.1318848$ & $0.1390360$ & $0.1306147$ & $0.1238487$\\
\hline
    \end{tabular}
  \end{center}
\end{table}
Consequently, whenever $w\leq s\leq k(\log k+4.20032)$ then 
$ s\Delta_{w-1}<k,$ from where it follows that $ s\delta_{1}<k$, the restriction in Lemma \ref{lem9.1111} holding for $d_{0}=1$. We further note by the tables in \cite{Vau-Woo2} that $\Delta_{2s-2}=0$, and hence $\Delta_{2s}^{*}<0$.  Therefore, the application of (\ref{afri}) combined with Proposition \ref{prop8.2} and Corollary \ref{cor4} entails $\lvert \mathcal{Z}(N)\rvert\ll N^{1-\zeta_{k}}$, as desired.

\end{document}